\providecommand\@dotsep{5}
\def\listtodoname{List of Todos}
\def\listoftodos{\@starttoc{tdo}\listtodoname}
\numberwithin{equation}{section}
\newtheorem{theorem}{Theorem}[section]
\newtheorem{proposition}[theorem]{Proposition}
\newtheorem{lemma}[theorem]{Lemma}
\newtheorem{corollary}{Corollary}
\newtheorem{remark}{Remark}
\newcommand\restr[2]{{
  \left.\kern-\nulldelimiterspace 
  #1 
  \vphantom{\big|} 
  \right|_{#2} 
  }}
\title[On constrained SP system]
{
On the structure of the Nehari set associated to a
Schr\"odinger-Poisson system with prescribed mass: \\
old and new results}
\author[G. Siciliano]{Gaetano Siciliano}
\author[K. Silva]{ Kaye Silva}
\address[G. Siciliano]{\newline\indent
	Departamento de Matem\'atica - Instituto de Matem\'atica e Estat\'istica
	\newline\indent 
	Universidade de S\~ao Paulo
	\newline\indent
	Rua do Mat\~ao 1010,  05508-090  S\~ao Paulo, SP,  Brazil}
\email{\href{mailto:sicilian@ime.usp.br}{sicilian@ime.usp.br}}
\address[K. Silva]{\newline\indent
	Instituto de Matem\'atica e Estat\'istica.   
	\newline\indent 
	Universidade Federal de Goi\'as,
	\newline\indent
Rua Samambaia, 74001-970, Goi\^ania, GO, Brazil}
\email{\href{mailto:kayesilva@ufg.br}{kayesilva@ufg.br}}
\thanks{Gaetano Siciliano  was partially supported by
Fapesp grant 2018/17264-4, CNPq grant 304660/2018-3 and Capes. 
Kaye Silva was partially supported by CNPq/Brazil under Grant 408604/2018-2.}
\subjclass[2010]{Primary  
35J50, 
35J91, 
35Q60, 
}
\keywords{Schr\"odinger-Poisson type system, variational methods, fibering methods,
Nehari manifold}
\begin{document}

\begin{abstract}
In this paper we apply the fibering method of Pohozaev
to a Schr\"odinger-Poisson system, with prescribed $L^{2}$
norm of the unknown, in the whole $\mathbb R^{3}$.
The method makes clear the role played by the exponents $p=3, p=8/3, p=10/3$.

Beside to show as old results can be obtained in a unified way,
we exhibit also new ones.

\end{abstract}

\bigskip

\maketitle
\begin{center}
\begin{minipage}{12cm}
\tableofcontents
\end{minipage}
\end{center}

\bigskip

\maketitle

\section{Introduction}

It is well-known that the following  Schr\"odinger equation
(where all the physical constants are normalised to unity)
\begin{equation}\label{eq:S}
i\partial_{t} \psi = -\Delta_{x} \psi  + q \left(|\cdot |^{-1} * |\psi|^{2} \right) \psi -\lambda |\psi |^{8/3}\psi,
\quad \psi: \mathbb R^{3}\times\mathbb R\to \mathbb C,
\end{equation}
has a relevant role in many physical models.
Here $i$ is the imaginary unit, $\Delta_{x}$ is the Laplacian with respect to the spatial
variables,  $*$ is the $x-$convolution and $q,\lambda$ are positive parameters.

As we can see, two types of potentials, different in nature, appear in the equation:
the first one is the Hartree (or Coulomb) potential  given by $V_{\textrm{H}}(\cdot, t) = |\cdot |^{-1} * |\psi(\cdot, t)|^{2}$
which is nonlocal and the second one is 
 the Slater approximation of the exchange term, 
 given by $|\psi |^{8/3}\psi$, which is local, although nonlinear.
 In this context $q$ and $\lambda$ are also called,
 respectively, {\sl Poisson constant} and {\sl Slater constant}.
The nonlocal potential can be seen as ``generated'' by the same wave function $\psi$,
in virtue of the Poisson equation
$$-\Delta_{x} V_{\textrm{H}} = 4\pi |\psi|^{2}.$$
A particular feature of \eqref{eq:S} is that, due to the invariance by $U(1)$ gauge-transformations
and the invariance by  time translations,  by the Noether theorem, on the solutions $\psi$ the quantities 
$$M(\psi) (t) = \int | \psi(x,t)|^{2}dx$$
and 
$$E(\psi)(t)= \frac{1}{2} \int |\nabla_{x} \psi(x,t)|^{2}dx+\frac{ q}{4}\int \left( \frac{1}{|\cdot |} * |\psi(\cdot,t)|^{2} \right) |\psi(x,t)|^{2} dx
 - \frac{3\lambda}{8}\int |\psi(x,t)|^{8/3}dx$$
are conserved in time. In physical terms they are called respectively {\sl mass} and  {\sl energy}
of the solution.

Since the Hartree potential and the Slater term have different 
sign in the energy functional, they are in competition and then a different
behaviour of $E$ is expected depending on the values of  the parameters $q$ and $\lambda$.
 For more physical details and the derivation of \eqref{eq:S}
 see e.g. the seminal works
  \cite{bokanoetal,liebsimon, mauser, PY, Slater} and the references therein.
We mention  that the above equation have been derived also
in the framework of Abelian Gauge Theories in \cite{bf} and called {\sl Schr\"odinger-Poisson
system}.

\medskip

In this work we consider the problem of finding  standing waves solutions $\psi(x,t) = u(x) e^{-i\ell t}$
$$u:\mathbb R^{3} \to \mathbb R, \quad \ell \in \mathbb R$$
to the above equation \eqref{eq:S}  under the
mass  constraint (as justified by the mass conservation law)
and where the exponent $8/3$ is replaced by a more general $p$.
More specifically
we consider the problem of finding 
$\ell \in \mathbb R$ and $u\in H^{1}(\mathbb R^{3})$
satisfying
\begin{equation}\label{eq:problem}
\left \{ \begin{array}{ll}
-\Delta u + q\phi_{u} u -\lambda |u|^{p-2}u =\ell u, & \mbox{ in  } \mathbb R^{3}  \medskip \\
\displaystyle\int u^{2} = r
\end{array}\right.
\end{equation}
(from now on all the integrals will be in $\mathbb R^{3}$ and $dx$ will be omitted)
where 
\begin{itemize}
\item $q,\lambda, r>0$ are given parameters,
\item $p\in(2,6)$,
\item $\phi_{u} \in D^{1,2}(\mathbb R^{3})$ is the unique solution of the Poisson equation $-\Delta \phi=4\pi u^{2}$
in $\mathbb R^{3}$, that can be  represented, for $u\in H^{1}(\mathbb R^{3})$, as
$$\phi_{u} = \frac{1}{|\cdot| } * u^{2}.$$
\end{itemize}
 In particular we are interested in finding {\sl ground state solutions} $u$,
 namely the solutions with minimal energy in the sense specified below.

The usual way to attack the problem is by variational methods.
Indeed the weak solutions of equation \eqref{eq:problem} are easily seen to be critical points
of the energy functional
\begin{equation*}
E(u):=E_{q,\lambda}(u) = \frac{1}{2}\int|\nabla u|^2+\frac{q}{4}\int\phi_uu^2-\frac{\lambda}{p}\int|u|^p, 
\end{equation*}
constrained to the $L^2(\mathbb{R}^3)$ sphere
\begin{equation*}
S_r=\left\{u\in H^1(\mathbb{R}^3):\int u^2=r\right\},
\end{equation*}
as it follows by the Lagrange multiplier rule; in this case $\ell\in \mathbb R$
is the Lagrange multiplier associated to the critical point.
%
%
%
%
%
Then this problem  fit into the  question of finding critical points of the energy on the mass constraint
(see \cite{Bmilan}).

An interesting problem is the search of {\sl ground states solutions}, namely
the  minima of $E$ on $S_{r}$, since they give rise
to stable standing waves solutions for the evolution problem 
\eqref{eq:S}. The problem is not trivial since the behaviour of $E$
depends on $q,\lambda, p$ but actually also
the value $r$ has a main role. 

The search of minima has been addressed by Lions in the celebrated papers
\cite{lions1} 
 where he studied the problem from a mathematical point of view
and established, roughly speaking, that the existence of minimisers is 
equivalent to the strict sub-additive inequality 
\begin{equation}\label{SAI}
\inf_{S_{r}} E < \inf_{S_{s}} E + \inf_{S_{r-s}} E,  
\ \ 0<s<r.
\end{equation} 
In turn, this is equivalent to show that {\sl dichotomy} does not occur when one tries to apply the concentration-compactness principle of Lions (since the  vanishing is avoided due to
$\inf_{S_{r}} E<0$
).
As showed in recent papers,  inequality \eqref{SAI} does hold in certain intervals that depends on the values of $p$.
See e.g. Bellazzini and Siciliano \cite{bellasici1,bellasici2}, Catto and Lions \cite{cattolions}, Sanchez and Soler \cite{sanchezsoler}, Jeanjean and Luo \cite{jeanluo}, Catto et al. \cite{cattoetal} and Colin and Watanabe \cite{colinwatanabe}.

We point out that in the last decades   equations
like \eqref{eq:problem}, even without the mass constraint,
 have been extensively studied due  to the mathematical
 challenges raised by the nonlocal term $\phi_{u}$
 and its competition with the local nonlinearity.

\medskip

The aim of this paper is to establish,
by using the fibration method of Pohozaev developed in \cite{Poho} and the notion of extremal values introduced in Il'yasov \cite{Ily},
 a general framework which permits us to search for global minimizers of $E$ over components of 
 a suitable {\sl  Nehari type set}.
These components are shown to be differentiable manifolds and natural constraints for the energy functional. The method proposed in this work makes more clear the relation between minimizers of $E$ restricted to $S_r$ and the parameters $q,\lambda,p$ and $r$. 
In particular it gives some light on the role of special exponents $p$ appearing in the 
Schr\"odinger-Poisson system: $p=8/3, p=3, p=10/3$. Moreover it relates the strict sub-additive inequality with topological properties of some natural curves that crosses the Nehari manifolds as $r$ varies. 

Indeed beside recovering known results, we get also new ones and interesting estimates.

We think that this fibering approach can be used to solve also other different problems
involving suitable constraint (different from the $L^{2}-$norm).

\medskip

To conclude this Section, we  point out that recently the fibration method together with the notion of extremal values, that guarantees regions of parameters in which the Nehari manifold method can be applied to prove existence of solutions,
has been used successfully also in other 
types of equations as in 
\cite{kaye, SS, YK}.

\section{Statements of the  results}
In this paper we obtain four types of results, all  based on the introduction
of a suitable Nehari set type for the functional $E$ restricted to $S_{r}$
and on its properties.
Before to state our results, we need to introduce some notations.

First of all, by using standard methods, it is easy to see 
(see Proposition \ref{neh}) that every critical point of $E$ restricted to $S_r$ belongs to the {\sl Nehari type set}
\begin{equation*}
\mathcal{N}_r:=\mathcal{N}_{r,q,\lambda}=\left\{u\in S_r:\int |\nabla u|^2+\frac{q}{4}\int \phi_uu^2-\frac{3(p-2)}{2p}\lambda\int |u|^p=0\right\}
\end{equation*} 
Indeed this set has been already introduced in  \cite{JBL,jeanluo}. 
However,  by means of the fibering method,
we are able to  decompose $\mathcal N_{r}$ into the subsets
 \begin{eqnarray*}
 \mathcal{N}_r^+&=&\left\{u\in \mathcal{N}_r:\int |\nabla u|^2-\frac{3(p-2)(3p-8)}{4p}\lambda\int |u|^p>0\right\}, \\
 \mathcal{N}_r^0&=&\left\{u\in \mathcal{N}_r:\int |\nabla u|^2-\frac{3(p-2)(3p-8)}{4p}\lambda\int |u|^p=0\right\},\\
\mathcal{N}_r^-&=&\left\{u\in \mathcal{N}_r:\int |\nabla u|^2-\frac{3(p-2)(3p-8)}{4p}\lambda\int |u|^p<0\right\}
 \end{eqnarray*}
so that $\mathcal{N}_r=\mathcal{N}_r^+\cup \mathcal{N}_r^0\cup \mathcal{N}_r^-$  and 
even more,  we show that whenever nonempty, $\mathcal{N}_r^+$ and  $\mathcal{N}_r^-$ are differentiable
manifolds of codimension  $2$ in $H^{1}(\mathbb R^{3})$ and 
natural constraint for the functional $E$ restricted to $S_{r}$ (see Theorem \ref{naturalconstraints}).

\medskip 
 

One of the main ingredients in our proofs will be the analysis of the functional
\begin{equation*}
R_p(u)=\frac{\left(\displaystyle\int|\nabla u|^2\right)^{\frac{3p-8}{4(p-3)}}\left(q\displaystyle\int\phi_uu^2\right)^{\frac{10-3p}{4(p-3)}}}{\left(\lambda\displaystyle\int|u|^p\right)^{\frac{1}{2(p-3)}}},
\quad u\in S_{1}, \quad p\neq3.
\end{equation*}
This functional is obtained with the help of Pohozaev's fibration method and is the so-called nonlinear Rayleigh 
quotient introduced by  Il'yasov in \cite{Ily}. Its topological properties are related with existence and non-existence of 
solutions  for our problem and, although not in this form, this functional was already used in \cite{cattoetal}. See also \cite{colinwatanabe} where the nonlinear Rayleigh 
quotient was found by fixing $r>0$ and considering $q$ as a parameter; however
this is different from our approach, whose main goal is to analyse the 
 topological properties of the Nehari set also when $r$ varies. 

\bigskip

(I) Our first results concern with  new inequalities
we were not able to find in the literature.  They are obtained by exploring the functional $R_{p}$.
\begin{theorem}\label{th:desigualdadenova}
For each $p\in [10/3, 6)$ there exists a  constant $C_{q,\lambda,p}>0$ such that
$$\lambda\int |u|^{p}\leq C_{q,\lambda,p}\frac{\displaystyle \left(\int |\nabla u|^{2} \right)^{\frac{3p-8}{2}}\left(\displaystyle\int u^{2} \right)^{2(p-3)} }
{ \left( q\displaystyle\int\phi_{u}u^{2}\right)^{\frac{3p-10}{2}}} 
\, ,\quad \forall u\in H^1(\mathbb{R}^3)\setminus\{0\}.$$
\end{theorem}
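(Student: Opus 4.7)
The plan is to establish a strictly positive lower bound for the nonlinear Rayleigh quotient $R_p$ on the $L^2$-sphere $S_1$, and then extend the resulting bound to every $u\ne 0$ by exploiting the $L^2$-homogeneity of the statement. The two sides of the target inequality scale identically under $u\mapsto\mu u$: the left side as $\mu^p$, while the exponents on the right add to $(3p-8)+4(p-3)-2(3p-10)=p$. Therefore, writing $v=u/\|u\|_2\in S_1$ and using the rescaling identities $\int|\nabla v|^2=\|u\|_2^{-2}\int|\nabla u|^2$, $\int\phi_v v^2=\|u\|_2^{-4}\int\phi_u u^2$ and $\int|v|^p=\|u\|_2^{-p}\int|u|^p$, the inequality for general $u\ne 0$ will follow from the one for $v\in S_1$, with the factor $\|u\|_2^{4(p-3)}=(\int u^2)^{2(p-3)}$ reappearing on the right.

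Since $2(p-3)>0$ for $p\in[10/3,6)$, the lower bound $\inf_{S_1}R_p\geq c>0$ is equivalent, after raising to the $2(p-3)$-th power and using $(\int u^2)^{2(p-3)}=1$ on $S_1$, to
\[
\lambda\int|u|^p\leq C\,\Bigl(\int|\nabla u|^2\Bigr)^{(3p-8)/2}\Bigl(q\int\phi_u u^2\Bigr)^{(10-3p)/2}\qquad\forall\,u\in S_1,
\]
which is exactly what is required. I would establish this pointwise estimate by combining the three-dimensional Gagliardo-Nirenberg inequality,
\[
\|u\|_s\leq C_s\|\nabla u\|_2^{\alpha_s}\|u\|_2^{1-\alpha_s},\qquad \alpha_s=\frac{3(s-2)}{2s},\ s\in[2,6],
\]
with the Hardy-Littlewood-Sobolev bound $\int\phi_u u^2\leq C\|u\|_{12/5}^4$. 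In the borderline case $p=10/3$ the exponent $(10-3p)/2$ vanishes and the claim collapses to $\|u\|_{10/3}^{10/3}\leq C\|\nabla u\|_2^2$ on $S_1$, which is immediate from Gagliardo-Nirenberg. For $p\in(10/3,6)$, since $(3p-10)/2>0$, Hardy-Littlewood-Sobolev gives $(q\int\phi_u u^2)^{(3p-10)/2}\leq C\|u\|_{12/5}^{2(3p-10)}$, and the target reduces to
\[
\lambda\|u\|_p^p\,\|u\|_{12/5}^{2(3p-10)}\leq C\|\nabla u\|_2^{3p-8},\qquad u\in S_1.
\]
Multiplying the Gagliardo-Nirenberg bounds with exponents $\alpha_p=3(p-2)/(2p)$ and $\alpha_{12/5}=1/4$ yields exactly this, since on $S_1$ the exponents of $\|\nabla u\|_2$ sum to $3(p-2)/2+(3p-10)/2=3p-8$ and the exponents of $\|u\|_2$ sum to $(6-p)/2+3(3p-10)/2=4(p-3)$, whose contribution is $1$.

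The main technical point is the exponent arithmetic in the final step, which is rigid because the statement prescribes exact powers on each term. Conceptually, the assumption $p\geq 10/3$ is what forces $(3p-10)/2\geq 0$, so that Hardy-Littlewood-Sobolev can be applied in the direction that absorbs the Coulomb term $\int\phi_u u^2$ into an $L^{12/5}$ norm; for $p<10/3$ one would instead need a pointwise lower bound on $\int\phi_u u^2$ in terms of $\|u\|_{12/5}^4$, which does not hold in general, explaining why the threshold $p=10/3$ appears naturally in this estimate.
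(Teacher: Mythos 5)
Your proof is correct and follows essentially the same route as the paper: the paper reduces the statement by an $L^2$-normalization to the lower bound $\inf_{S_1}R_p>0$ (Proposition \ref{prop:desigualdadenova}), which it gets from the Hardy--Littlewood--Sobolev inequality together with Sobolev/Gagliardo--Nirenberg embeddings, exactly the ingredients and exponent structure (the sign of $(3p-10)/2$ for $p\ge 10/3$) that you use. The only cosmetic difference is that the paper additionally normalizes $\|\nabla u\|_2=1$ via the dilation invariance $R_p(u^t)=R_p(u)$, whereas you keep the gradient norm free and verify the exponent arithmetic directly.
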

We remark that  similar inequalities are known in the 
literature, see Catto et al. \cite{cattoetal} for the case $p\in[8/3,10/3]$.

We have also the following inequality whose proof will be more involved than the previous theorem.
\begin{theorem}\label{cattoinep<3} 
For each $p\in (2,3)$ there exists a  constant $C_{q,\lambda,p}>0$ such that
	\begin{equation*}
	\lambda \int |u|^p\ge C_{q,\lambda,p}\left(\int u^2\right)^{2(p-3)}\left(q\int \phi_uu^2\right)^{\frac{10-3p}{2}}\left(\int |\nabla u|^2\right)^{\frac{3p-8}{2}},\quad \forall u\in H^1(\mathbb{R}^3)\setminus\{0\}.
	\end{equation*}
\end{theorem}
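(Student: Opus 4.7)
\emph{Proof proposal.}

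The plan is to recast the inequality in terms of the nonlinear Rayleigh quotient $R_p$ and prove that its infimum over the mass sphere $S_1$ is strictly positive. Observing that $R_p(\lambda_0 u) = \lambda_0^{-2} R_p(u)$ under dilations $u \mapsto \lambda_0 u$, one sees that $R_p(u) \cdot M(u) = R_p(u/\|u\|_2)$ is invariant under these dilations; on the other hand, $R_p$ is invariant by construction under the mass-preserving fibering $u \mapsto t^{3/2} u(tx)$. Raising to the $2(3-p)$-th power (positive since $p<3$), one finds that the claimed inequality is equivalent to
\[
 \lambda\|u\|_p^p \cdot \|u\|_2^{2(3-p)} \geq c_0\, \bigl(q\textstyle\int\phi_u u^2\bigr)^{(10-3p)/2}\, \bigl(\int|\nabla u|^2\bigr)^{(3p-8)/2},
\]
which after rescaling $u$ to have $\|u\|_2 = \|\nabla u\|_2 = 1$ reduces to showing $J > 0$, where
\[
J := \inf_{v \in \mathcal{M}} \frac{\lambda \|v\|_p^p}{\bigl(q\int \phi_v v^2\bigr)^{(10-3p)/2}}, \qquad \mathcal{M} := \{ v \in H^1(\mathbb{R}^3) : \|v\|_2 = \|\nabla v\|_2 = 1\}.
\]

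The key feature of $\mathcal{M}$ is that it is bounded in $H^1(\mathbb{R}^3)$, with $\|v\|_{H^1}^2 = 2$. Given a minimizing sequence $(v_n) \subset \mathcal{M}$, I would extract a weakly convergent subsequence and invoke P.-L. Lions' concentration-compactness principle. The $H^1$-constraint rules out both pure spreading (which would force $\|\nabla v_n\|_2 \to 0$) and pure concentration (which would force $\|\nabla v_n\|_2 \to \infty$), so that in the Bahouri--G\'erard decomposition $v_n = \sum_j w^{(j)}(\cdot - y_n^{(j)}) + r_n$ the scaling parameters are trivial and only \emph{compactness} (strong convergence, after a translation, to a non-trivial limit $v_0$ in $L^r$ for $r \in (2,6)$) or \emph{dichotomy} (profile splitting with $\sum_j \|w^{(j)}\|_2^2 \leq 1$, $\sum_j \|\nabla w^{(j)}\|_2^2 \leq 1$, and $\|r_n\|_p \to 0$) can occur. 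In the compactness case, the continuity of the Coulomb integral with respect to $L^{12/5}$-convergence (Hardy--Littlewood--Sobolev) immediately gives $J > 0$. In the dichotomy case, both $\|v_n\|_p^p$ and $\int \phi_{v_n} v_n^2$ split as sums over profiles (via Brezis--Lieb and the decay of HLS cross interactions at large separation), and invoking the scale invariance of the full quotient on each profile together with the constraints on profile norms one deduces a strict sub-additive inequality that rules out dichotomy for a minimizing sequence, forcing compactness. Therefore $J > 0$.

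The principal technical obstacle is the dichotomy analysis for $p \in (8/3, 3)$, where $(10-3p)/2 \in (1/2,1)$ and the map $x \mapsto x^{(10-3p)/2}$ is concave, hence sub-additive; under splitting, the denominator $(\sum_j b_j)^{(10-3p)/2}$ grows too slowly for a naive estimate to close. Indeed, standard Hardy--Littlewood--Sobolev combined with Gagliardo--Nirenberg interpolation through $L^2$ gives only the bound $\|v\|_p^p \gtrsim B^{3(p-2)/2}$ on $\mathcal{M}$, whose exponent is larger than $(10-3p)/2$ precisely for $p > 8/3$, making the corresponding ratio potentially vanish under splitting. The way around this is to use the scale invariance of the quotient in conjunction with the constraints $\sum_j m_j \leq 1$ and $\sum_j a_j \leq 1$ to upgrade sub-additivity to a strict inequality, in the spirit of Lions' original concentration-compactness method. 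The complementary range $p \in (2, 8/3]$ is easier, as there $(10-3p)/2 \geq 1$ and super-additivity of the denominator allows a direct convexity argument.
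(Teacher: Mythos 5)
Your reduction to the scale-invariant quotient and the double normalization $\|v\|_2=\|\nabla v\|_2=1$ is exactly the paper's starting point (identity \eqref{eq:11} in Proposition \ref{th:boundedbelow}), but the concentration-compactness scheme you build on it does not close, precisely at the point you yourself flag as the principal obstacle. Two concrete gaps. First, vanishing is not excluded by the $H^1$ normalization: a sum of $n$ widely separated bumps, each of mass $1/n$ and Dirichlet energy $1/n$, lies in $\mathcal{M}$ while $\|v_n\|_p\to 0$ and $\int\phi_{v_n}v_n^2\to 0$; the claim that spreading would force $\|\nabla v_n\|_2\to 0$ is false. In this branch (all profiles trivial) numerator and denominator both tend to zero, and no conclusion follows without a quantitative comparison of rates --- which is essentially the inequality you are trying to prove. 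Second, in the dichotomy branch the ``strict sub-additivity via scale invariance'' is only asserted. Applying the infimum $J$ to each profile gives $\lambda A_j\ge J\,m_j^{2(p-3)}a_j^{(3p-8)/2}(qB_j)^{\theta}$ with $\theta=(10-3p)/2$, $A_j=\|w^{(j)}\|_p^p$, $B_j=\int\phi_{w^{(j)}}(w^{(j)})^2$, $m_j=\|w^{(j)}\|_2^2\le 1$, $a_j=\|\nabla w^{(j)}\|_2^2\le 1$. For $p\in(8/3,3)$ the factor $a_j^{(3p-8)/2}\le 1$ goes the wrong way (a profile carrying most of the mass but little gradient energy makes it arbitrarily small), so sub-additivity of $x\mapsto x^{\theta}$, $\theta<1$, does not by itself produce the needed inequality, and the strictness required to contradict minimality is a further unresolved point. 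Note also that the attainment/compactness route for such Coulomb--Rayleigh quotients is genuinely delicate: the paper records that for the borderline quotient at $p=3$ attainment is an open problem.

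For comparison, the paper bypasses compactness entirely. On $(2,12/5]$ and $(12/5,8/3]$ it bounds $R_p^{2(p-3)}$ directly by a positive power of $\int|w|^p$ via Hardy--Littlewood--Sobolev plus interpolation (and Sobolev), which is bounded on the normalized set; on $(8/3,3)$ it argues by contradiction through the fibering system \eqref{decre}: if $R_p(w_n)\to 0$, then $u_n=\widetilde r(w_n)^{1/2}w_n^{\widetilde t(w_n)}$ satisfies \eqref{BG}, hence $E(u_n)=\frac{3-p}{2-p}\int|\nabla u_n|^2<0$ and $I_{r_n}\le E(u_n)<0$ with $r_n\to 0$, so $E(u_n)\to 0$, and the analysis of \cite[Theorem 4.1]{bellasici1} yields a contradiction. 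To salvage your plan you would have to supply, at minimum, a quantitative lower bound for $\lambda\|v\|_p^p$ against $\bigl(q\int\phi_v v^2\bigr)^{\theta}$ along vanishing sequences, and an actual strict sub-additivity argument covering the unbalanced profile configurations in the range $p\in(8/3,3)$; neither is in the proposal.
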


\bigskip

(II) The second type of results deal with the structure of $\mathcal N_{r}$ and its consequences.
The situation will be different in the cases $p\neq3$ and $p=3$
and related existence/non-existence results are obtained.

\noindent {\bf The case $p\neq3$.}
For each  $p\in (2,6)\setminus\{ 3\}$, define the infimum of $E$ over a subset  of the Nehari set, by
\begin{equation*}
I_r:=I_{r,q,\lambda}=\inf\{E(u):u\in \mathcal{N}_r^+\cup \mathcal{N}_r^0\}.
\end{equation*}
In particular $\inf_{S_{r}}E\leq I_{r}$.
With our approach we are able to show the following.

\begin{theorem}\label{Nehari} There holds:
	\begin{enumerate}
		\item[i)] If $p\in (2,8/3)$, then $\mathcal{N}_r=\mathcal{N}_r^+\neq\emptyset$ and 
		$-\infty<I_r=\inf_{S_{r}}E<0$ for all $r>0$. \smallskip
		
		\item[ii)] If $p\in (10/3,6)$, then $\mathcal{N}_r=\mathcal{N}_r^-\neq\emptyset$ and $I_r=-\infty$ for all $r>0$. \smallskip
		\item[iii)] If $p=8/3$, then $\mathcal{N}_r=\mathcal{N}_r^+\neq\emptyset$ and 
		$-\infty<I_r=\inf_{S_{r}}E<0$ for all $r>0$. \smallskip
		\item[iv)] If $p=10/3$, then there exists a  constant $K_{\emph{GN}}>0$ such that $\mathcal{N}_r\neq\emptyset$ if, and only if, 
		\begin{equation*}
		\frac{5}{3K_{\emph{GN}}}\frac{1}{\lambda}<r^{{2}/{3}}.
		\end{equation*}
		In case $\mathcal{N}_r\neq \emptyset$ we have that $\mathcal{N}_r=\mathcal{N}_r^-$ and $I_r=-\infty$.
		\smallskip
		\item[v)] If $p\in (8/3,3)$, then $\mathcal{N}_r^+,\mathcal{N}_r^0,\mathcal{N}_r^-$ are non-empty and $I_r<0$ for all $r>0$. \smallskip
		\item[vi)] If $p\in (3,10/3)$, then there exist $0<r^*<r_0^*$ such that  \smallskip
		\begin{enumerate}
			\item[1)] $\mathcal{N}_r^+,\mathcal{N}_r^-$ are non-empty if, and only if $r>r^*$. \smallskip
			\item[2)] $\mathcal{N}_r^0\neq \emptyset$ if, and only if $r\ge r^*$.
		\end{enumerate}
		Moreover if $r>r_0^*$, then $I_r=\inf_{S_{r}}E<0$  while if $r\in[r^*,r_0^*]$, then $I_r\ge 0$.
	\end{enumerate}
\end{theorem}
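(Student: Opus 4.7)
The unifying tool is the mass-preserving Pohozaev fibration $u_t(x):=t^{3/2}u(tx)$, which maps $S_r$ into itself; along it the energy reduces to
\begin{equation*}
\psi_u(t) := E(u_t) = A t^2 + B t - C t^{\alpha}, \qquad \alpha := \tfrac{3p-6}{2},
\end{equation*}
with $A:=\tfrac12\int|\nabla u|^2$, $B:=\tfrac{q}{4}\int\phi_u u^2$, $C:=\tfrac{\lambda}{p}\int|u|^p$ all strictly positive. A direct differentiation identifies $\mathcal{N}_r=\{u\in S_r:\psi_u'(1)=0\}$, and the components $\mathcal{N}_r^{+/0/-}$ are singled out by the sign of $\psi_u''(1)$. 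Since $\{u_t\}_{t>0}\subset S_r$ already sweeps through each fibre, non-emptiness of $\mathcal{N}_r^{\ast}$ amounts to producing some $u\in S_r$ and $t>0$ with prescribed signs of $\psi_u'(t)$ and $\psi_u''(t)$. Because $\psi_u''(t)=2A-\alpha(\alpha-1)Ct^{\alpha-2}$ is monotone in $t$, $\psi_u'$ has at most one interior extremum; the whole analysis then splits according to the position of $\alpha$ relative to $1,\,3/2,\,2$ (i.e.\ $p=8/3,\,3,\,10/3$).

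For $p\in(2,8/3)$, $\alpha<1$ forces $\psi_u''>0$, so $\psi_u$ is strictly convex with $\psi_u(t)\sim -Ct^{\alpha}<0$ as $t\to 0^+$ and $\psi_u\to+\infty$ at infinity; the unique minimiser yields $\mathcal{N}_r=\mathcal{N}_r^+\neq\emptyset$ and $I_r<0$, while a Gagliardo--Nirenberg estimate (sub-quadratic in $\|\nabla u\|_2$) gives $I_r>-\infty$. For $p=8/3$, $\psi_u(t)=At^2+(B-C)t$ has a positive critical point iff $C>B$; a multi-bump construction $u_n:=\sum_{i=1}^n n^{-1/2}v(\cdot-nx_0 i)$ achieves $C(u_n)/B(u_n)\to\infty$, yielding $\mathcal{N}_r=\mathcal{N}_r^+\neq\emptyset$ and $I_r\in(-\infty,0)$ again by Gagliardo--Nirenberg. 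For $p\in(10/3,6)$, $\alpha>2$ forces $\psi_u(t)\to-\infty$ as $t\to\infty$, whence $\inf_{S_r}E=-\infty$; meanwhile $\psi_u'$ starts at $B>0$ and ends at $-\infty$, so it has a unique zero past the sign-change of $\psi_u''$ (in the concave region), producing $\mathcal{N}_r=\mathcal{N}_r^-$. The critical case $p=10/3$ reduces $\psi_u$ to the quadratic $(A-C)t^2+Bt$: a positive critical point exists iff $A<C$, which by the sharp Gagliardo--Nirenberg inequality $\int|u|^{10/3}\le K_{\mathrm{GN}}\|\nabla u\|_2^2\|u\|_2^{4/3}$ is equivalent to $r^{2/3}>5/(3K_{\mathrm{GN}}\lambda)$; at such a critical point $\psi_u''<0$, so $\mathcal{N}_r=\mathcal{N}_r^-$ and $I_r=-\infty$.

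For $\alpha\in(1,2)\setminus\{3/2\}$ (cases (v)--(vi)), $\psi_u''$ increases monotonically from $-\infty$ to $2A$, so $\psi_u'$ has a unique interior minimum at some $t_0$, and thus admits $0,\,1$, or $2$ zeros according to the sign of $\psi_u'(t_0)$; in the two-zero regime the smaller zero sits in the concave region (so lies in $\mathcal{N}_r^-$) and the larger in the convex one ($\mathcal{N}_r^+$), while equality gives $\mathcal{N}_r^0$. Eliminating $t_0$ via $\psi_u''(t_0)=0$ makes the condition $\psi_u'(t_0)\lessgtr 0$ invariant under the mass-preserving dilation; writing $u=\sqrt{r}\,v$ with $v\in S_1$, it takes the form
\begin{equation*}
r^{2(3-p)}\,\mathcal{G}(v) \lessgtr K(\alpha),\qquad \mathcal{G}(v):=\frac{A(v)^{\alpha-1}\,B(v)^{2-\alpha}}{C(v)},
\end{equation*}
for an explicit $K(\alpha)>0$; note that $\mathcal{G}$ is (up to multiplicative constants) exactly $R_p^{2(p-3)}$. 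In case (v), $2(3-p)>0$ and the multi-bump construction above makes $\mathcal{G}(v_n)\to 0$, so for every $r>0$ the three sub-cases $<,=,>$ are all realised, yielding $\mathcal{N}_r^{+,0,-}\neq\emptyset$; the value at the upper-branch Nehari point simplifies to $\psi_u(t_2)=\tfrac{\alpha-1}{\alpha}Bt_2-\tfrac{2-\alpha}{\alpha}At_2^2$, which can be made negative (equivalently $t_2>2t_0$) by a suitable choice of $v$, giving $I_r<0$. In case (vi), $2(3-p)<0$ reverses the dependence on $r$; set $r^*:=(\inf_{S_1}\mathcal{G}/K(\alpha))^{-1/(2(3-p))}$. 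Then for $r<r^*$ the condition fails for every $v$ (so $\mathcal{N}_r=\emptyset$), for $r=r^*$ the equality case is attained (giving $\mathcal{N}_r^0$), and for $r>r^*$ strict inequality produces both $\mathcal{N}_r^{\pm}$. The refined threshold $r_0^*$ is defined as the smallest $r\ge r^*$ at which the fibre-minimum value $\inf_{u}\psi_u(t_2)$ reaches $0$; continuity of this infimum in $r$, together with positivity at $r=r^*$ and $\psi_u(t_2)\to-\infty$ (for suitable $u$) as $r\to\infty$, delivers $r_0^*\in(r^*,\infty)$ with the announced sign of $I_r$.

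The main difficulty is concentrated in case (vi): one must establish the positivity $\inf_{S_1}\mathcal{G}>0$, i.e.\ a uniform lower bound of nonlinear Rayleigh-quotient type valid in the range $p\in(3,10/3)$ (in the spirit of Theorem~\ref{th:desigualdadenova}, but in the complementary exponent regime), and then analyse the continuous $r$-dependence of the fibre-minimum value in order both to separate $\mathcal{N}_r^0$ from $\mathcal{N}_r^{\pm}$ at $r=r^*$ and to identify $r_0^*$. All the other cases reduce, once the fibering picture is set up, to elementary convexity/monotonicity arguments combined with the sharp Gagliardo--Nirenberg inequality.
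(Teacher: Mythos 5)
Your skeleton is the same as the paper's: the dilation fibre $t\mapsto E(r^{1/2}u^t)$ (the paper's $\varphi_{r,u}$), the classification of its shape by $\alpha=\tfrac{3(p-2)}{2}$ relative to $1,2$ (Proposition \ref{fiberingmaps}), and, for $p\in(8/3,10/3)$, the reduction of the sign of $\psi_u'(t_0)$ to a condition on the dilation-invariant quotient $\mathcal{G}\propto R_p^{2(p-3)}$, with your $r^*$ coinciding with the paper's $r^*=\inf_u r(u)$. Cases (i)--(iv) are correct and essentially the paper's Theorems \ref{i-ii}, \ref{iii}, \ref{iv} (your multi-bump sequence plays the role of the Catto et al.\ sequence of Lemma \ref{catto}). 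The problems are concentrated exactly where you admit them, plus one place where the argument as written would fail.

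First, in case (vi) the two decisive ingredients are deferred rather than proved: the positivity $\inf_{S_1}\mathcal{G}>0$ (equivalently $r^*>0$), which in the paper is precisely the Catto et al.\ inequality \eqref{ineqcattooo1} imported through Lemma \ref{ineqcatto} and Proposition \ref{NQRP}(ii); and the sign dichotomy of $I_r$ across $r_0^*$. Your definition of $r_0^*$ as the first $r\ge r^*$ where $\inf_u\psi_u(t_2)$ vanishes, together with continuity, does not by itself give $I_r<0$ for \emph{all} $r>r_0^*$, nor $I_r\ge 0$ on all of $[r^*,r_0^*]$, nor the strict separation $r^*<r_0^*$ (your ``positivity at $r=r^*$'' is unproven). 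The paper avoids all of this by introducing a second extremal function $r_0(u)$ from the system $\varphi_{r,u}=\varphi_{r,u}'=0$, again a fixed multiple of $R_p(u)$, so that $r\lessgtr r_0(u)$ controls the sign of the fibre minimum pointwise in $u$ (Proposition \ref{r0ine}), and sets $r_0^*=\inf_u r_0(u)$, with $r^*<r_0^*$ coming from the pointwise inequality $r(u)<r_0(u)$ (Proposition \ref{extremalfunctions}); you could adopt the same device (a second threshold $K_0(\alpha)$ for $\mathcal{G}$) at no extra cost. Also, the attainment of $\inf_{S_1}\mathcal{G}$ that you invoke at $r=r^*$ to get $\mathcal{N}_{r^*}^0\neq\emptyset$ is asserted, not proved.

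Second, in case (v) your multi-bump construction only realizes the ``$<$'' regime. That does give $\mathcal{N}_r^{\pm}\neq\emptyset$ and, pushing $\mathcal{G}(v_n)$ below the smaller threshold, $I_r<0$; but it does not produce the ``$=$'' and ``$>$'' regimes for every $r>0$, so your claim that all three sub-cases are realised (hence $\mathcal{N}_r^0\neq\emptyset$) is not justified by this route. Worse, it cannot be: since $\mathcal{G}\propto R_p^{2(p-3)}$ with $2(p-3)<0$ and $R_p$ is bounded away from zero on $S_1$ (Proposition \ref{th:boundedbelow}, i.e.\ Theorem \ref{cattoinep<3}), $\mathcal{G}$ is bounded above, so for small $r$ the equation $r^{2(3-p)}\mathcal{G}(v)=K(\alpha)$ has no solution $v$; note that the paper's own proof of (v) invokes Theorem \ref{nehari83103}, which likewise only establishes $\mathcal{N}_r^{\pm}\neq\emptyset$ and the negativity of some fibre infimum. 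A minor slip: at a critical point $t_2$ the condition $\psi_u(t_2)<0$ is $t_2>\frac{(\alpha-1)B}{(2-\alpha)A}$, which equals $2t_0$ only in the degenerate case, so the parenthetical ``equivalently $t_2>2t_0$'' is not correct as stated.
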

Theorem \ref{Nehari} (parts of it)  can be found in most of the works cited until now,  in particular we would like to refer the reader to the works \cite{jeanluo,cattoetal}, where some calculations can be found explicitly.

\medskip

Our contribution here is to show how with a general framework it is possible
to  connect all these results with the partitioning of the Nehari set $\mathcal{N}_r$ in terms of $\mathcal{N}_r^+,\mathcal{N}_r^0,\mathcal{N}_r^-$. Moreover we give
a characterisation of the quantities $r^*,r_0^*$  in terms of $R_{p}$, namely
\begin{equation*}
r^*=\left(4\frac{10-3p}{3p-8}\right)^{\frac{3p-10}{4(p-3)}}\left(\frac{4p}{3(p-2)(3p-8)}\right)^{\frac{1}{2(p-3)}}\inf_{w\in S_1}R_p(w),
\end{equation*}
and 
\begin{equation*}
r_0^*=\left(\frac{2(10-3p)}{3p-8}\right)^{\frac{3p-10}{4(p-3)}}\left(\frac{p}{3p-8}\right)^{\frac{1}{2(p-3)}}\inf_{w\in S_1}R_p(w),
\end{equation*}
(see \eqref{eq:rstar}) and it will be evident that they are related to some geometrical properties of the Nehari set (see Proposition \ref{fiberingmaps}).
Note that (as already known) for $p\in[10/3,6)$ we have $I_r=-\infty$, which is equivalent to $\mathcal{N}_r=\mathcal{N}_r^-$ and also suggests a 
mountain pass geometry (see Bellazini et al. \cite{JBL}). Observe  that itens $iv)$ and $vi)$ of Theorem \ref{Nehari} give also results of 
non-existence of critical points of $E$ over $S_r$
depending on $r$. Indeed since every critical point of $E$ constrained to $S_r$ belongs to $\mathcal{N}_r$, it follows that if $\mathcal{N}_r$ is empty, then there is no critical point at all; therefore as an immediate consequence
of Theorem \ref{Nehari} we infer
\begin{corollary}\label{non} 
	The functional $E$ constrained to $S_r$ has no critical points if:
	\begin{enumerate}
		\item[i)] $p=10/3$ and $\frac{5}{3K_{\emph{GN}}}\frac{1}{\lambda}\geq r^{{2}/{3}}$,
		\smallskip
		\item[ii)]  $p\in (3,10/3)$ and $r<r^*$.
	\end{enumerate}
\end{corollary}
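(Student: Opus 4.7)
The plan is to observe that this corollary is a direct logical consequence of Proposition \ref{neh} combined with the relevant items of Theorem \ref{Nehari}, and so the proof reduces to assembling these two facts in the two parameter regimes.

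First I would recall that by Proposition \ref{neh}, any critical point $u$ of $E$ restricted to $S_r$ satisfies the Nehari-type identity defining $\mathcal{N}_r$, so the set of constrained critical points is contained in $\mathcal{N}_r$. Therefore it suffices to produce a sufficient condition for $\mathcal{N}_r=\emptyset$; in that case the set of critical points is automatically empty.

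For item (i), the assumption $p=10/3$ and $\frac{5}{3K_{\text{GN}}}\frac{1}{\lambda}\ge r^{2/3}$ is precisely the negation of the strict inequality in Theorem \ref{Nehari}(iv), which is equivalent to $\mathcal{N}_r\neq\emptyset$; hence under our hypothesis $\mathcal{N}_r=\emptyset$ and we conclude. For item (ii), by Theorem \ref{Nehari}(vi) we have $\mathcal{N}_r^+,\mathcal{N}_r^-\neq\emptyset$ iff $r>r^*$ and $\mathcal{N}_r^0\neq\emptyset$ iff $r\ge r^*$, so the decomposition $\mathcal{N}_r=\mathcal{N}_r^+\cup\mathcal{N}_r^0\cup\mathcal{N}_r^-$ forces $\mathcal{N}_r=\emptyset$ as soon as $r<r^*$. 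This again rules out any critical point on $S_r$.

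There is no genuine obstacle here, since all the work has already been done in Proposition \ref{neh} and Theorem \ref{Nehari}; the only thing to be careful about is matching inequalities (strict versus non-strict) with the right emptiness statement, and in particular noticing that in case (ii) the threshold for $\mathcal{N}_r^0$ is $r\ge r^*$, so strictness $r<r^*$ is needed to kill all three pieces simultaneously.
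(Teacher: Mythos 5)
Your proposal is correct and is essentially the paper's own argument: the authors justify the corollary in the paragraph preceding it by noting that Proposition \ref{neh} places every constrained critical point in $\mathcal{N}_r$, and then invoke items $iv)$ and $vi)$ of Theorem \ref{Nehari} to conclude $\mathcal{N}_r=\emptyset$ in the two regimes. Your attention to the strict versus non-strict thresholds (in particular $\mathcal{N}_r^0\neq\emptyset$ iff $r\ge r^*$) matches the statement of Theorem \ref{Nehari} and closes the argument exactly as intended.
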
 

Note that the results in Theorem \ref{Nehari} and Corollary \ref{non} are independent on $q>0$ and the unique case in which $\lambda$
has a role is when $p=10/3$.

\medskip

\noindent{\bf The case $p=3$. } Here
the situation changes drastically in the sense that $r$ has not anymore a role and the properties 
of  the Nehari sets depends on $q$ and $\lambda$.
In order to make clear this dependence, we use the notations 
$\mathcal{N}_{q,\lambda}, I_{q,\lambda}, \ldots$ instead of the previous
$\mathcal{N}_r, I_r\ldots$. 

We prove  the following.

\begin{theorem}\label{Nehari1} 
Let $p=3$ and $r>0$. For each fixed $q>0$, there exist positive constants $\lambda_q^*<\lambda_{0,q}^*$ such that 
	\begin{enumerate}
		\item[i)]  $\mathcal{N}_{q,\lambda}^+,\mathcal{N}_{q,\lambda}^-$ are non-empty if, and only if $\lambda>\lambda_q^*$. Moreover, if $\lambda>\lambda^*_q$ then $\mathcal{N}_{q,\lambda}^0\neq\emptyset$. \smallskip
		\item[ii)] If $\lambda>\lambda_{0,q}^*$, then $I_{q,\lambda}=\inf_{S_{r}}E<0$, while if $\lambda\in(0,\lambda_{0,q}^*)$, then $I_{q,\lambda}\ge 0$.
	\end{enumerate}	
\end{theorem}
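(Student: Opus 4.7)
The plan is to run Pohozaev's fibering machinery along the \emph{mass-preserving} dilation that is distinctive of the cubic case $p=3$. Given $u\in S_{r}$ and $\beta>0$, set $v_{\beta}(x):=\beta^{3/2}u(\beta x)$, so that $\int v_{\beta}^{2}=\int u^{2}=r$ and the whole orbit $\{v_{\beta}\}_{\beta>0}$ sits in $S_{r}$. Using the scaling $\int\phi_{v_{\beta}}v_{\beta}^{2}=\beta\int\phi_{u}u^{2}$, the fibering map is
\begin{equation*}
f_{u}(\beta):=E(v_{\beta})=\frac{A}{2}\beta^{2}+B\beta-\frac{2C}{3}\beta^{3/2},\quad A:=\int|\nabla u|^{2},\ B:=\frac{q}{4}\int\phi_{u}u^{2},\ C:=\frac{\lambda}{2}\int|u|^{3}.
\end{equation*}
The condition $v_{\beta}\in\mathcal{N}_{q,\lambda}$ reads $f_{u}'(\beta)=0$, that is, after setting $s=\beta^{1/2}$, the \emph{quadratic} equation $As^{2}-Cs+B=0$. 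The second derivative test identifies the partition of $\mathcal{N}_{q,\lambda}$ exactly as in Proposition~\ref{fiberingmaps}: the larger root $s_{+}$ is a local minimum of $f_{u}$ and yields a point of $\mathcal{N}_{q,\lambda}^{+}$, the smaller root $s_{-}$ is a local maximum and yields a point of $\mathcal{N}_{q,\lambda}^{-}$, and the degenerate coincidence $s_{+}=s_{-}$ corresponds to $\mathcal{N}_{q,\lambda}^{0}$.

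All of the analysis is controlled by the scale-invariant quotient
\begin{equation*}
\Phi(u):=\frac{\displaystyle\int|\nabla u|^{2}\cdot\int\phi_{u}u^{2}}{\displaystyle\Bigl(\int|u|^{3}\Bigr)^{2}},\qquad u\in H^{1}(\mathbb{R}^{3})\setminus\{0\},
\end{equation*}
which is $0$-homogeneous both under $u\mapsto cu$ and under $u\mapsto u(\cdot/\gamma)$; this double invariance is the structural reason the mass $r$ becomes immaterial when $p=3$. The discriminant condition $C^{2}\ge 4AB$ is precisely $\lambda^{2}\ge 4q\,\Phi(u)$, and hence the natural definitions are
\begin{equation*}
\Phi_{\min}:=\inf_{u\ne 0}\Phi(u),\qquad \lambda_{q}^{*}:=2\sqrt{q\,\Phi_{\min}}.
\end{equation*}
Item (i) then follows from an intermediate-value argument on $\Phi$: for $\lambda>\lambda_{q}^{*}$ I would pick $u\in S_{r}$ with $\Phi(u)<\lambda^{2}/(4q)$ to obtain two simple roots of the quadratic (hence $\mathcal{N}_{q,\lambda}^{\pm}\ne\emptyset$) and, using continuity of $\Phi$ together with its unboundedness from above on oscillatory tests, an intermediate $u$ with $\Phi(u)=\lambda^{2}/(4q)$ (hence $\mathcal{N}_{q,\lambda}^{0}\ne\emptyset$). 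The essential ingredient is the positivity of $\Phi_{\min}$, which I would establish via the short Cauchy--Schwarz identity obtained from $u^{2}=-\Delta\phi_{u}/(4\pi)$: an integration by parts, Cauchy--Schwarz, and $\|\nabla\phi_{u}\|_{2}^{2}=4\pi\int\phi_{u}u^{2}$ give
\begin{equation*}
\int|u|^{3}\le \frac{1}{2\sqrt{\pi}}\,\|\nabla u\|_{2}\,\Bigl(\int\phi_{u}u^{2}\Bigr)^{1/2},
\end{equation*}
which is precisely $\Phi(u)\ge 4\pi$ (Kato's inequality reducing sign-changing $u$ to $|u|$).

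For item (ii) the same discriminant machinery is applied to $f_{u}$ rather than to its derivative. Dividing $f_{u}(\beta)<0$ by $\beta$ and substituting $s=\beta^{1/2}$ gives the quadratic inequality $3As^{2}-4Cs+6B<0$, whose discriminant condition reads $\lambda^{2}>\tfrac{9}{2}q\,\Phi(u)$. Setting $\lambda_{0,q}^{*}:=3\sqrt{q\,\Phi_{\min}/2}$ one checks $\lambda_{0,q}^{*}/\lambda_{q}^{*}=3/(2\sqrt{2})>1$. For $\lambda>\lambda_{0,q}^{*}$ I would choose $u\in S_{r}$ with $\Phi(u)<2\lambda^{2}/(9q)$; along the fibre $\{v_{\beta}\}_{\beta>0}$ the local minimum of $f_{u}$ at $\beta=s_{+}^{2}$ is then strictly negative, so the corresponding point of $\mathcal{N}_{q,\lambda}^{+}$ realises $I_{q,\lambda}<0$. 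Moreover, since on every fibre $f_{u}$ has at most one negative critical value (attained at $\beta=s_{+}^{2}$), while $f_{u}(\beta)\to 0^{+}$ as $\beta\to 0^{+}$, the infimum over $S_{r}$ is attained on $\mathcal{N}_{q,\lambda}^{+}$, giving $I_{q,\lambda}=\inf_{S_{r}}E$. Conversely, for $\lambda\in(0,\lambda_{0,q}^{*})$ every $u$ satisfies $\Phi(u)\ge\Phi_{\min}>2\lambda^{2}/(9q)$, which forces $f_{u}\ge 0$ on $(0,\infty)$; hence $E\ge 0$ on the whole of $S_{r}$ and in particular $I_{q,\lambda}\ge 0$. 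The main obstacle is exactly the positivity of $\Phi_{\min}$ outlined above; once in place, the rest is routine discriminant bookkeeping.
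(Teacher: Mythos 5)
Your proposal is correct, and its skeleton is the same as the paper's: you fiber along the mass-preserving dilation $u^{\beta}(x)=\beta^{3/2}u(\beta x)$, reduce $f_u'(\beta)=0$ and $f_u(\beta)<0$ to quadratics in $s=\beta^{1/2}$, and your thresholds $\lambda_q^{*}=2\sqrt{q\Phi_{\min}}$ and $\lambda_{0,q}^{*}=\sqrt{\tfrac92 q\Phi_{\min}}$ are exactly the paper's $\inf_{S_1}\lambda_q(u)$ and $\inf_{S_1}\lambda_{0,q}(u)$, which the paper obtains by solving $\varphi'=\varphi''=0$, respectively $\varphi=\varphi'=0$, via Proposition \ref{systemsolve} and then arguing as in Theorem \ref{nehari83103}. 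Where you genuinely differ is the key lemma: the paper gets the lower bound on the quotient (Proposition \ref{extremalp=3}) by invoking Lemma \ref{ineqcatto} of Catto et al.\ at $p=3$, equivalently Lions \cite{lions}, whereas you prove it directly from $-\Delta\phi_u=4\pi u^2$ by integration by parts, Kato's inequality and Cauchy--Schwarz; this is self-contained and buys explicit constants, $\Phi(u)\ge 4\pi$, hence $\lambda_q^{*}\ge 4\sqrt{\pi q}$ and $\lambda_{0,q}^{*}\ge 3\sqrt{2\pi q}$, which the paper does not record. You also make explicit two points the paper leaves implicit: the assignment of the two roots of $As^2-Cs+B$ to $\mathcal{N}_{q,\lambda}^{+}$ and $\mathcal{N}_{q,\lambda}^{-}$ (correct: at a root, $f_u''$ has the sign of $As^2-B$ and $s_+s_-=B/A$, so the larger root is the fiber minimum), and the nonemptiness of $\mathcal{N}_{q,\lambda}^{0}$ for every $\lambda>\lambda_q^{*}$ via an intermediate-value argument on $\Phi$; the latter needs path-connectedness of $S_r$ and unboundedness of $\Phi$ from above, both true (for $u_n=u\cos(nx_1)$ the gradient term is of order $n^2$ while the $L^3$ and Coulomb terms stay of order one), but the short verification that the Coulomb term stays bounded away from zero should be written out. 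For completeness you should also record the ``only if'' half of item (i) — if $w\in\mathcal{N}_{q,\lambda}^{\pm}$ then $s=1$ is a simple root of its quadratic, so $\lambda>2\sqrt{q\Phi(w)}\ge\lambda_q^{*}$ — and, in item (ii), phrase the identification $I_{q,\lambda}=\inf_{S_r}E$ as: on every fiber whose infimum is negative that infimum is attained at the local minimum, which lies in $\mathcal{N}_{q,\lambda}^{+}$, whence $\inf_{\mathcal{N}_{q,\lambda}^{+}}E\le\inf_{S_r}E\le I_{q,\lambda}\le\inf_{\mathcal{N}_{q,\lambda}^{+}}E$, without claiming that a minimizer on $S_r$ exists.
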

Similarly to the quantities $r_0^*,r^*$, the quantities $\lambda_{0,q}^*,\lambda_{q}^*$ have a geometrical interpretation and are given by
\begin{equation*}
\lambda_{0,q}^*=\left(\frac{9}{2}\right)^{{1}/{2}}q^{{1}/{2}}\inf_{w\in S_1}\frac{\left(\displaystyle\int|\nabla w|^2\displaystyle\int\phi_ww^2\right)^{{1}/{2}}}{\displaystyle\int|w|^3},
\end{equation*}
and
\begin{equation*}
\lambda_q^*=2q^{{1}/{2}}\inf_{w\in S_1}\frac{\left(\displaystyle\int|\nabla w|^2\displaystyle\int\phi_ww^2\right)^{{1}/{2}}}{\displaystyle\int|w|^3}.
\end{equation*}
We observe that unlike Theorem \ref{Nehari} item $vi)$, in Theorem  \ref{Nehari1} we were not able to describe the behavior of $\mathcal{N}_{q,\lambda_{0,q}^*}$. This is due to the fact that $u\in \mathcal{N}_{q,\lambda_{0,q}^*}$ if, and only if $u$ is a minimiser of the quotient
\begin{equation*}
\frac{\left(\displaystyle\int|\nabla w|^2\displaystyle\int\phi_ww^2\right)^{{1}/{2}}}{\displaystyle\int|w|^3}
\quad \text{ on } \ S_{1}.
\end{equation*}
The fact that the above quotient 
 is bounded away from zero is due to Lions \cite{lions}, however, it is an open problem if this functional has a minimizer. As was pointed out in \cite{cattoetal}, the minimizers of that functional also are (up to some constant) global minimizers of $I_{q,\lambda_{0,q}^*}$ and  $I_{q,\lambda_{0,q}^*}=0$. As in before, we can deduce by using Theorem \ref{Nehari1} a non-existence result.
\begin{corollary}  
Let $p=3$ and  $r,q>0$. The functional $E$ constrained to $S_r$ has no critical points if $\lambda\in(0,\lambda_q^*)$. 
\end{corollary}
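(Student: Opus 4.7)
The plan is direct: since every critical point of $E$ constrained to $S_r$ lies in $\mathcal{N}_{q,\lambda}$ (Proposition~\ref{neh}), it is enough to prove that $\mathcal{N}_{q,\lambda}=\emptyset$ whenever $\lambda\in(0,\lambda_q^*)$. I would not bother with the partition $\mathcal{N}_{q,\lambda}^+\cup\mathcal{N}_{q,\lambda}^0\cup\mathcal{N}_{q,\lambda}^-$ (Theorem~\ref{Nehari1}(i) alone only rules out the first and third pieces for $\lambda\le\lambda_q^*$); instead I would extract a uniform lower bound on $\lambda$ directly from the Nehari identity.

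For $p=3$, an element $u\in\mathcal{N}_{q,\lambda}$ satisfies
\[
\int|\nabla u|^{2}+\frac{q}{4}\int\phi_{u}u^{2}=\frac{\lambda}{2}\int|u|^{3}.
\]
Applying the AM-GM inequality to the two summands on the left yields
\[
\frac{\lambda}{2}\int|u|^{3}\;\ge\;\sqrt{q}\,\Bigl(\int|\nabla u|^{2}\int\phi_{u}u^{2}\Bigr)^{1/2},
\]
equivalently
\[
\lambda\;\ge\;2\sqrt{q}\,\frac{\bigl(\int|\nabla u|^{2}\int\phi_{u}u^{2}\bigr)^{1/2}}{\int|u|^{3}}.
\]

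The next step is to lower-bound the right-hand side by $\lambda_q^*$. A routine scaling check shows that the quotient is invariant under the two-parameter family $u\mapsto s\,u(t\,\cdot)$, since $\int|\nabla u|^2$, $\int\phi_{u}u^{2}$ and $\int|u|^{3}$ scale respectively by $s^{2}t^{-1}$, $s^{4}t^{-5}$ and $s^{3}t^{-3}$, and both numerator and denominator pick up the common factor $s^{3}t^{-3}$. Consequently $(s,t)$ can always be chosen so that the rescaled function lies in $S_1$ without altering the quotient, so the infimum of the quotient over $H^{1}(\mathbb{R}^{3})\setminus\{0\}$ coincides with the infimum over $S_{1}$, which is $\lambda_q^*/(2\sqrt{q})$ by the very definition of $\lambda_q^*$. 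Hence $u\in\mathcal{N}_{q,\lambda}$ forces $\lambda\ge\lambda_q^*$, and the contrapositive delivers the corollary.

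I do not foresee a serious obstacle; the proof is essentially a one-line application of AM-GM to the Nehari identity. The only point that deserves care is the scale-invariance computation, since it is precisely this invariance that explains both why the statement is independent of $r>0$ and why the infimum over $S_1$ in the definition of $\lambda_q^*$ may be legitimately compared to the quotient evaluated at an arbitrary $u\in\mathcal{N}_{q,\lambda}\subset S_r$.
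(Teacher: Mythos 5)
Your proof is correct, but it takes a more direct route than the paper. The paper obtains the corollary as an immediate consequence of Theorem \ref{Nehari1}, whose proof runs through the fibering machinery: for $u\in\mathcal{N}_{q,\lambda}$ the fiber map $\varphi_{q,\lambda,u}$ must have a critical point, and the extremal-value function $\lambda_q(u)$, obtained by solving $\varphi_{q,\lambda,u}'(t)=\varphi_{q,\lambda,u}''(t)=0$ in $(t,\lambda)$, then forces $\lambda\ge\lambda_q^*$ (this is the $p=3$ analogue of Proposition \ref{rineq} and Theorem \ref{nehari83103}, invoked via Proposition \ref{extremalp=3}). You instead apply AM--GM directly to the Nehari identity $\int|\nabla u|^2+\frac{q}{4}\int\phi_u u^2=\frac{\lambda}{2}\int|u|^3$ and use the scale invariance of the quotient $\bigl(\int|\nabla u|^2\int\phi_u u^2\bigr)^{1/2}/\int|u|^3$ (your scaling exponents $s^2t^{-1}$, $s^4t^{-5}$, $s^3t^{-3}$ are right, and in fact multiplication by $s$ alone already suffices to normalize to $S_1$). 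Note that your AM--GM step is exactly the optimization over the fiber parameter $t$ that the system $\varphi'=\varphi''=0$ encodes, so the two arguments compute the same threshold; what your version buys is that it is elementary and self-contained, and it disposes of the \emph{whole} set $\mathcal{N}_{q,\lambda}$, including $\mathcal{N}_{q,\lambda}^0$, which the bare statement of Theorem \ref{Nehari1}(i) does not explicitly address (it only characterizes $\mathcal{N}_{q,\lambda}^{\pm}$ and gives a one-sided statement about $\mathcal{N}_{q,\lambda}^0$); the paper closes that point only inside the (omitted, ``similar'') proof of Theorem \ref{Nehari1}. The non-strict inequality $\lambda\ge\lambda_q^*$ you obtain is enough, since the corollary only concerns the open interval $(0,\lambda_q^*)$.
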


\bigskip


(III)  Our third type of result complements \cite[Theorem 4.1]{bellasici1}.
Indeed in 
 \cite{bellasici1} (see also \cite{cattoetal}) the authors proved, among other things, that for small $r$, there exist minimizers for $E$ over $S_r$.
With our approach we are able to give a
quantitative estimate on the ``smallness'' of $r$ in terms of $R_{p}$
 which guarantees the existence of minimisers.

\begin{theorem}\label{thmE} Let  $p\in (2,3)$. Then 
 for each 
$$r\in \left(0,\left[\frac{1}{2}\left(\frac{2(p-2)}{p}\right)^{\frac{2}{3p-8}}\right]^{\frac{3p-8}{4(3-p)}}\inf_{w\in S_1}R_p(w)\right)$$ there exists $u\in S_r$ satisfying $E(u)=\min_{S_{r}}E$.
\end{theorem}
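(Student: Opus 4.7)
My plan is to combine the fibering analysis of $\mathcal{N}_r$ with a concentration-compactness argument for the minimising sequences. For $u\in S_r$, set $u_t(x):=t^{3/2}u(tx)$ (mass-preserving dilation) and consider
\[
\Phi_u(t) := E(u_t) = \frac{t^2}{2}\int|\nabla u|^2 + \frac{qt}{4}\int\phi_u u^2 - \frac{\lambda t^\beta}{p}\int|u|^p,\qquad \beta=\tfrac{3p-6}{2}\in(0,\tfrac{3}{2});
\]
points of $\mathcal{N}_r$ are exactly the $u_t$ with $\Phi_u'(t)=0$, while the sign of $\Phi_u''$ separates $\mathcal{N}_r^+$ from $\mathcal{N}_r^-$.

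\textbf{Step 1 (Reduction to $\mathcal{N}_r^+$).} I would first show that every $u\in S_r$ can be ``projected'' onto $\mathcal{N}_r^+$: there exists $t^\star(u)>0$ with $u_{t^\star(u)}\in\mathcal{N}_r^+$ and $\Phi_u(t^\star(u))\leq \Phi_u(1)=E(u)$. For $p\in(2,8/3]$, Theorem \ref{Nehari}(i,iii) already gives $\mathcal{N}_r=\mathcal{N}_r^+$ and $I_r=\inf_{S_r}E<0$ for all $r$. For $p\in(8/3,3)$, the bound on $r$, read off from $R_p$, is what forces $\Phi_u$ to possess two critical points on $(0,\infty)$---a local maximum in $\mathcal{N}_r^-$ and a local minimum in $\mathcal{N}_r^+$, the latter of strictly negative energy---and consequently $\inf_{S_r}E = I_r$.

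\textbf{Step 2 (Bounded minimising sequence).} Pick $\{u_n\}\subset\mathcal{N}_r^+$ with $E(u_n)\to I_r$. The defining identity of $\mathcal{N}_r$, combined with Theorem \ref{cattoinep<3} and the constraint $\|u_n\|_2^2=r$, controls both $\int|\nabla u_n|^2$ and $\int\phi_{u_n}u_n^2$, so $\{u_n\}$ is bounded in $H^1(\mathbb{R}^3)$. Apply Lions' concentration-compactness lemma to $\{|u_n|^2\}$; vanishing is ruled out since it would force $\int|u_n|^p\to 0$, and then the Nehari relation together with positivity of the Poisson term would give $E(u_n)\to 0$, contradicting $I_r<0$.

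\textbf{Step 3 (Dichotomy---the main obstacle).} The principal difficulty is excluding dichotomy, equivalently establishing the strict sub-additive inequality
\[
m(r)<m(s)+m(r-s),\qquad 0<s<r,
\]
for $m(\rho):=\inf_{S_\rho}E$. My plan here is to exploit the invariance of $R_p$ under the mass-preserving dilation: given would-be pieces of masses $s$ and $r-s$, rescale by suitable dilations and compare their combined energy with an energetically optimal configuration at mass $r$; the explicit factor multiplying $\inf_{w\in S_1}R_p(w)$ in the hypothesis is calibrated precisely so that this comparison is strict on $(0,r]$, yielding strict sub-additivity and blocking any splitting of the minimising sequence.

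\textbf{Step 4 (Passage to the limit).} Once dichotomy is ruled out, the compactness alternative of Lions' lemma yields, up to translations, $u_n\rightharpoonup u$ in $H^1(\mathbb{R}^3)$ with $\|u\|_2^2=r$. Weak lower semicontinuity of $\int|\nabla\cdot|^2$, strong $L^p$-convergence on the compactness portion, and continuity of $u\mapsto\int\phi_u u^2$ under the latter then give $E(u)\leq\liminf_n E(u_n)=I_r=\inf_{S_r}E$, so $u$ realises the minimum. The associated Lagrange multiplier $\ell\in\mathbb{R}$ is produced by the natural-constraint property of $\mathcal{N}_r^+$ (Theorem \ref{naturalconstraints}).
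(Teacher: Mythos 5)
Your overall architecture (project onto $\mathcal{N}_r^+$, take a bounded minimising sequence, exclude vanishing via $I_r<0$, exclude dichotomy via strict sub-additivity, pass to the limit by concentration-compactness) is the same strategy the paper uses: its proof of Theorem \ref{thmE} is a one-liner invoking Theorem \ref{stricsub}, i.e.\ the strict sub-additivity $I_{r_2}<I_{r_1}+I_{r_2-r_1}$ on exactly the interval $\bigl(0,\inf_{u\in S_1}\widetilde{r}(u)\bigr)$ appearing in the statement, combined with the standard Lions-type equivalence between strict sub-additivity and compactness of minimising sequences. The trouble is that your Step 3 --- the only genuinely hard step --- is an announcement, not an argument. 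You propose to ``exploit the invariance of $R_p$ under the mass-preserving dilation'' and claim the constant is ``calibrated precisely'' so that the comparison is strict; but the dilation $u\mapsto u^t$ leaves $\|u\|_2$ unchanged, so by itself it cannot relate the infima at masses $s$, $r-s$ and $r$, and since the Poisson term and the power term scale inhomogeneously, the usual homogeneity trick $m(\theta r)<\theta m(r)$ is not available for this functional. Asserting that the threshold is calibrated for sub-additivity is precisely the claim that has to be proved.

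In the paper the role of $\inf_{u\in S_1}\widetilde{r}(u)$ is quite specific and different from what you suggest: $\widetilde{r}(u)$ is the unique mass solving the system \eqref{decre}, and Proposition \ref{decreasinir} shows that $r\mapsto\varphi_{r,u}(t_r^+(u))/r$ is decreasing exactly for $r<\widetilde{r}(u)$ (and increasing afterwards when $p\le 8/3$). To pass from this pointwise monotonicity to monotonicity of $r\mapsto I_r/r$ one needs the uniform negative bound on the derivative along non-vanishing minimising sequences proved in Lemma \ref{decreasingcomc1} (a contradiction argument forcing $r_n=\widetilde{r}(u_n)+o_n(1)$, which is where the restriction $r<\inf_{u\in S_1}\widetilde{r}(u)$ actually enters); only then do Theorem \ref{gdecreasing} and Theorem \ref{stricsub} deliver the strict sub-additive inequality, after which existence of a minimiser follows as you describe in Steps 2 and 4. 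None of this chain is present in your proposal, so the dichotomy step is a genuine gap. Minor remarks: in Step 2 you do not need Theorem \ref{cattoinep<3}; coercivity of $E$ on $S_r$ for $p<10/3$ (or Proposition \ref{neharibounded}) already gives boundedness; and your Step 1 claim that for $p\in(8/3,3)$ every fibre has a negative-energy local minimum in the stated range of $r$ is correct, but it needs the inequalities $\widetilde{r}(u)<r_0(u)$ and Proposition \ref{r0ine}, which you should cite or reprove rather than attribute vaguely to ``the bound on $r$''.
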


\bigskip


(IV) The fourth type of result deal with 
existence of global minimiser  with positive energy when $p\in(3,10/3)$
which have never been treated in the literature.
In this case the inequality $\inf_{S_{r}}E<0$ is no longer true for $r\in[r^*,r_0^*]$. 
Moreover $\inf_{S_{r}}E=0$ if $0<r\le r_0^*$ and it is not achieved.

We  extend these results by showing existence of local minimizers for $E$ on $S_{r}$ with positive energy, when $r$ belongs to a neighborhood of $r_0^*$.
Unfortunately we are not able to cover the whole range $(3,10/3)$. Our result is

\begin{theorem}\label{thmp>3} There exists $p_0\in(3,10/3)$ such that if $p\in (p_0,10/3)$, then
	\begin{enumerate}
		\item[i)] the function $[r^*,\infty)\ni r\mapsto I_r$ is decreasing, $I_{r_0^*}=0$ and $I_r>0$ for $r\in[r_*,r_0^*)$; \medskip
		\item[ii)] for each $r\in [r^*,r_0^*)$ there exists $u\in \mathcal{N}_r^+\cup \mathcal{N}_r^0$ such that $I_r=E(u)$;\medskip
		\item[iii)]  there exists $\varepsilon>0$ such that if $r\in(r_0^*-\varepsilon,r_0^*)$, then there exists $u\in \mathcal{N}_r^+$ such that $I_{r}=E(u)$.
		\end{enumerate}
\end{theorem}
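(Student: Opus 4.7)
My plan rests on the fibering picture that pervades the earlier parts of the paper. For any $w \in S_1$ and $s > 0$, consider the two-parameter family $w_{t,s}(x) = (s t^3)^{1/2} w(tx)$; then $w_{t,s} \in S_s$, and membership in $\mathcal{N}_s^+$, $\mathcal{N}_s^0$, $\mathcal{N}_s^-$ corresponds respectively to the local minimum, the inflection, and the local maximum of the fibering map $t \mapsto E(w_{t,s})$ (cf.\ Proposition~\ref{fiberingmaps}). For $p \in (3, 10/3)$ this map is an explicit polynomial in $t$ whose coefficients are monomials in $s$ and in $\int |\nabla w|^2$, $\int \phi_w w^2$, $\int |w|^p$. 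Differentiating the local minimum value $\gamma(w,s) := E(w_{t_+(w,s),\,s})$ in $s$ shows it is strictly decreasing, and the explicit formulas for $r^*$ and $r_0^*$ recorded before Theorem~\ref{Nehari} then translate directly into the sign statements of (i): $I_{r_0^*} = 0$, $I_r > 0$ on $[r^*, r_0^*)$, and $I_r$ is strictly decreasing on $[r^*, \infty)$.

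For (ii), I would take a minimising sequence $(u_n) \subset \mathcal{N}_r^+ \cup \mathcal{N}_r^0$ with $r \in [r^*, r_0^*)$. The Nehari identity together with the defining inequality of the $+/0$ strata sandwiches $\int |\nabla u_n|^2$ between multiples of $\int |u_n|^p$, and Gagliardo--Nirenberg (valid for $p < 10/3$) closes this into an $H^1$-bound. After translation and extraction, $u_n \rightharpoonup u$ weakly. Non-vanishing follows from $I_r > 0$: if $\sup_y \int_{B(y,1)} u_n^2 \to 0$, then Lions' lemma gives $\int |u_n|^p \to 0$, the Nehari identity then forces $\int |\nabla u_n|^2 \to 0$, and therefore $E(u_n) \to 0 < I_r$, a contradiction. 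The main obstacle is to exclude dichotomy, for which I would establish the strict subadditivity $I_r < I_s + I_{r-s}$ for $0 < s < r$. Using the fibering description one writes $I_s$ as an explicit envelope over $S_1$ involving $R_p$, and the threshold $p_0 \in (3, 10/3)$ is chosen precisely so that on $(p_0, 10/3)$ the map $s \mapsto I_s$ is strictly concave on a neighbourhood of $[r^*, r_0^*]$, whence strict subadditivity. Once dichotomy is excluded, the concentration-compactness principle yields $u_n \to u$ strongly, and $u \in \mathcal{N}_r^+ \cup \mathcal{N}_r^0$ is the desired minimiser.

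For (iii), the point is that membership of a minimiser in $\mathcal{N}_r^0$ forces the associated fibering map to have a degenerate critical point $t_+(u) = t_-(u)$, and by the implicit function theorem applied to the pair $(\partial_t E(u_{t,s}), \partial_t^2 E(u_{t,s}))$ this occurs only at isolated values of $s$. The defining value $s = r^*$ is one such point; I would verify that it is the unique one in $[r^*, r_0^*]$, so there is $\varepsilon > 0$ with $(r_0^* - \varepsilon, r_0^*)$ free of degeneracies. For $r$ in this neighbourhood, the minimiser supplied by (ii) is therefore nondegenerate, hence it lies in $\mathcal{N}_r^+$. The continuity in $r$ of the minimisers needed for this transversality argument is itself a consequence of the strict subadditivity established in (ii), combined with the natural-constraint property of $\mathcal{N}_r^+$ from Theorem~\ref{naturalconstraints}.
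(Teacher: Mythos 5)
Your outline captures the general fibering picture, but it leaves gaps at exactly the three points where the actual work lies. First, in (i) you assert that ``differentiating the local minimum value $\gamma(w,s)$ in $s$ shows it is strictly decreasing.'' It does not: the derivative along the minimum branch is
\[
f'(r)=\frac{t(r)^2}{2}\int |\nabla u|^2+\frac{rt(r)}{2}q\int \phi_uu^2-\frac{\lambda}{2}r^{\frac{p}{2}-1}t(r)^{\frac{3p}{2}-3}\int |u|^p,
\]
whose first two terms are positive, so its sign is not at all obvious. Making it negative is precisely where the restriction $p>p_0$ enters: one must combine the Nehari identity $\varphi_{r,u}'(t)=0$ with the second-order inequality $\varphi_{r,u}''(t)>0$ and the a priori bound $\int|r^{1/2}u^t|^p\ge c_p'/(\lambda r)$ from Proposition \ref{neharibounded}, which leads to the factor $\frac{-27p^2+146p-192}{4p}$ and hence to $p_0=\frac{73+\sqrt{145}}{27}$ (Lemma \ref{I2} and Proposition \ref{Idecre}). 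You never derive $p_0$; later you assign it a different, unsubstantiated role (``strict concavity of $s\mapsto I_s$''), which is not proved and not needed. Second, in (ii) your compactness scheme relies on strict subadditivity plus concentration-compactness, but for $r\in[r^*,r_0^*)$ one has $I_r>0$, $I_s=+\infty$ for $s<r^*$ (empty Nehari set), and $I_r\neq\inf_{S_r}E=0$, so the standard subadditivity/dichotomy machinery for constrained minimization does not apply to $I_r$ as stated; indeed the paper explicitly notes that when $I_r>0$ ``the inequalities go in the opposite direction.'' The paper's Theorem \ref{theoremp<3n0} avoids subadditivity altogether: it uses only the monotonicity of $r\mapsto I_r$, a Brezis--Lieb splitting of $E$ and of $Q$, and a projection of $u_n-u$ (resp.\ $u$) onto the appropriate Nehari stratum to derive a contradiction with that monotonicity.

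Third, your argument for (iii) is not correct as it stands. It is false that degenerate fiber critical points occur only at isolated values of the mass: by Theorem \ref{Nehari} vi), $\mathcal{N}_r^0\neq\emptyset$ for \emph{every} $r\ge r^*$, because for each $r$ there are $w\in S_1$ with $r(w)=r$. The isolatedness you invoke holds for a fixed $w$, not uniformly over the constraint set, so nondegeneracy of the minimizer cannot be deduced from transversality, and the ``continuity in $r$ of the minimisers'' you appeal to is nowhere established. The correct mechanism is an energy comparison: on $\mathcal{N}_r^0$ one has $E(u)=\frac{10-3p}{6(p-2)}\int|\nabla u|^2\ge c/r$ (Lemma \ref{n0bound}), while $I_r\to 0$ as $r\uparrow r_0^*$ (Lemma \ref{comparison}, which itself uses Corollary \ref{t+diff}); hence for $r$ close to $r_0^*$ the minimizer's energy lies strictly below the floor of $E$ on $\mathcal{N}_r^0$, forcing it into $\mathcal{N}_r^+$. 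As written, your proposal would not close any of these three steps.
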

Indeed we find explicitly the number $$p_{0} = \frac{73+\sqrt{145}}{27}$$
which is new in the literature.

We believe it is an interesting  problem to study the case $p\in (3,p_{0}]$.

\medskip

\noindent {\bf Organisation of the paper.}
The paper is organised as follows.

In Section \ref{sec:A} we study deeply the Rayleigh quotient $R_{p}$.
Indeed most of the results are based on its properties.
We give then the proof of Theorem \ref{th:desigualdadenova} 
and Theorem \ref{cattoinep<3}.

In Section \ref{sec:natural} we introduce the set $\mathcal N_{r}$
and give a description
via the fibering method of  its subsets
$\mathcal N^{+}_{r}, \mathcal N_{r}^{-},\mathcal N_{r}^{0}$
on which study the energy functional $E$.
In particular we show that $\mathcal N_{r}^{+}, \mathcal N_{r}^{-}$
are differentiable manifolds and natural constraints  for $E$ (see Theorem \ref{naturalconstraints}).

In Section \ref{sec:nehari}
 we study deeply these sets 
depending on the parameters $q,\lambda,p,r$.
This analysis will allow us to  prove our second type of results,
namely Theorem \ref{Nehari} and Theorem \ref{Nehari1}.

In Section \ref{sec:subad} we show the subadditivity condition
for $I_{r}$ that will serve a prerequisite for the subsequent Section.

In Section \ref{sec:Rp} we prove
 Theorem \ref{thmE} and  Theorem \ref{thmp>3}. 

Section \ref{sec:p10/36}  is devoted to show as we can recover with our methods the results in  \cite{JBL}.

In the  final Section \ref{sec:ineq}  we give new estimate concerning $I_{r_{1}}$ and $I_{r_{2}}$
for $r_{1}<r_{2}$ obtained by means of the fibering approach.

\medskip

\noindent {\bf Notations.}
As a matter of notations, in all the paper we denote with $\|\cdot \|_{p}$ the $L^{p}-$norm
in $\mathbb R^{3}$. We use $o_{n}(1)$ to denote a vanishing sequence.
In all the paper, given a function $u$ and $t>0$, we set
$$u^t(x)=t^{\frac{3}{2}}u(tx).$$
Note that $\|u\|_{2} = \|u^{t}\|_{2}.$

\section{The nonlinear Rayleigh quotient $R_{p}$}\label{sec:A}
Let us start  with a simple and general result,
whose proof is straightforward so is omitted.
\begin{proposition}\label{systemsolve}  Suppose that
	$b\neq 0$, $ce-bf\neq 0$, $(bd-ae)/(ce-bf)>0$, $(af-cd)/(ce-bf)>0$, $A,B,C>0$ and $p\in(2,6)\setminus\{3\}$. Then the system
	\begin{equation}\label{systemfiber}
	\left \{ \begin{array}{ll}
	atA+brB+cr^{\frac{p}{2}-1}t^{\frac{3p}{2}-5}C =0, \medskip \\
	dtA+erB+fr^{\frac{p}{2}-1}t^{\frac{3p}{2}-5}C =0.
	\end{array}\right.
	\end{equation}
	admits a unique solution $r,t>0$. Moreover explicitly we have
	\begin{equation*}
	r=\left(\frac{bd-ae}{ce-bf}\right)^{\frac{1}{2(p-3)}}	\left(\frac{af-cd}{ce-bf}\right)^{\frac{3p-10}{4(p-3)}}\frac{A^{\frac{3p-8}{4(p-3)}}B^{\frac{3p-10}{4(p-3)}}}{C^{\frac{1}{2(p-2)}}}.
	\end{equation*}
\end{proposition}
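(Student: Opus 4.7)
The plan is to linearize the system \eqref{systemfiber} by passing to suitable auxiliary unknowns, close the resulting one-parameter family via the nonlinear consistency relation built into those unknowns, and then read off the closed form for $r$.

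First I would introduce
$$X := tA, \quad Y := rB, \quad Z := r^{p/2-1}t^{3p/2-5}C,$$
all of which are positive for $r,t>0$ and $p\in(2,6)$. In these variables \eqref{systemfiber} becomes the homogeneous $2\times 3$ linear system $aX+bY+cZ=0$, $dX+eY+fZ=0$, whose solution set is a line through the origin spanned by the cross product of the coefficient rows:
$$(X,Y,Z)=s\,(bf-ce,\;cd-af,\;ae-bd), \qquad s\in\mathbb{R}.$$
The hypothesis $ce-bf\neq 0$ is exactly what keeps this cross product non-zero. The two sign hypotheses $(bd-ae)/(ce-bf)>0$ and $(af-cd)/(ce-bf)>0$ then force the three components of the direction vector to share a common sign, so that selecting $s$ with the opposite sign produces the full one-parameter family of strictly positive triples $(X,Y,Z)$.

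Next I would impose the single nonlinear constraint that is built into the definitions, namely $Z=(Y/B)^{p/2-1}(X/A)^{3p/2-5}\,C$. Substituting the parametrization turns this into a scalar equation of the form $s\cdot(ae-bd)=s^{\alpha}\,M$ with
$$M=\Bigl(\tfrac{cd-af}{B}\Bigr)^{p/2-1}\Bigl(\tfrac{bf-ce}{A}\Bigr)^{3p/2-5}C$$
positive, and with the exponent on $s$ determined by $p$. The hypothesis $p\neq 3$ is precisely what makes the effective power of $s$ non-zero, so the equation admits a unique positive solution; this simultaneously fixes $t=X/A$ and $r=Y/B$.

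Finally, to recover the displayed closed form for $r$, I would substitute the value of $s$ into $r=s(cd-af)/B$ and regroup the resulting product in terms of the two sign-invariant ratios $(bd-ae)/(ce-bf)$ and $(af-cd)/(ce-bf)$, which are positive by hypothesis and therefore admit the fractional powers that appear in the statement. The main obstacle I anticipate is precisely this last bookkeeping step: the exponents of $A$, $B$, $C$ and of the two ratios must be tracked carefully to match the form in the statement, but the computation is mechanical. For completeness, the value of $t$ can be obtained without passing through $s$ by eliminating the $Z$-term directly between the two equations of \eqref{systemfiber}, which yields the linear relation $(af-cd)\,tA=(ce-bf)\,rB$ determining $t$ once $r$ is known.
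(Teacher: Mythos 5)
Your overall plan --- pass to $X=tA$, $Y=rB$, $Z=r^{p/2-1}t^{3p/2-5}C$, observe that \eqref{systemfiber} forces $(X,Y,Z)$ onto the kernel line spanned by $(bf-ce,\,cd-af,\,ae-bd)$, use the two sign hypotheses to extract positive triples (note: $s$ must have the \emph{same} sign as the common sign of the three components, not the opposite one), and then close the one-parameter family with the power relation linking $X,Y,Z$ --- is exactly the ``straightforward'' computation the authors omit, and your final elimination $(af-cd)\,tA=(ce-bf)\,rB$ is correct. The genuine gap is that you never perform the one computation on which everything hinges. With your own definitions, $Z$ scales as $s^{(p/2-1)+(3p/2-5)}=s^{2p-6}$, so the scalar equation is $s(ae-bd)=s^{2p-6}M$ and the effective exponent is $2p-7$, which vanishes at $p=7/2\in(2,6)\setminus\{3\}$. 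Hence your pivotal claim that ``the hypothesis $p\neq 3$ is precisely what makes the effective power of $s$ non-zero'' is false for the system as written: at $p=7/2$ the argument yields either no positive solution or a continuum, and for the remaining $p$ the postponed ``mechanical bookkeeping'' produces exponents with denominator $2p-7$, not the displayed formula for $r$.

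Carrying out that computation would in fact have revealed that \eqref{systemfiber} is misprinted: in every instance where the proposition is applied (divide the equations of \eqref{decre} by $rt$, and those of \eqref{sys3} by $r$ and by $t$, respectively) the last term carries $r^{\frac p2-1}t^{\frac{3p}{2}-4}$, not $t^{\frac{3p}{2}-5}$. With $Z=r^{p/2-1}t^{3p/2-4}C$ the effective exponent becomes $2(p-3)$, so $p\neq3$ is exactly the right hypothesis, and the bookkeeping gives $r=\bigl(\tfrac{bd-ae}{ce-bf}\bigr)^{\frac{1}{2(p-3)}}\bigl(\tfrac{af-cd}{ce-bf}\bigr)^{\frac{3p-10}{4(p-3)}}A^{\frac{3p-8}{4(p-3)}}B^{\frac{10-3p}{4(p-3)}}C^{-\frac{1}{2(p-3)}}$, which is what matches $R_p$ in \eqref{NRQ} and the formulas \eqref{eq:no8/3}--\eqref{eq:8/3} (the printed exponents $B^{\frac{3p-10}{4(p-3)}}$ and $C^{\frac{1}{2(p-2)}}$ are likewise misprints, as a test at $p=8/3$ shows). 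So your route is the right one, but to count as a proof you must actually compute the exponent and derive the closed formula: doing so both exposes the failure of your $p\neq3$ assertion for the stated exponents and identifies the corrected statement you are really proving. A last small point: when the three kernel components are negative, your constant $M$ involves fractional powers of negative numbers; work instead with the positive quantities $r=s(cd-af)/B$ and $t=s(bf-ce)/A$.
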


\medskip

Recall the next two results.
\begin{lemma}[Catto et al. \cite{cattoetal}] \label{catto}For each $p\in (2,6)$ and $r>0$, there exist a sequence of functions $\{u_n\}\subset S_r$ and positive constants $C_1,C_2$ and $C_3$, satisfying
	\begin{equation*}
	\int |u_n|^p=C_1,\ \ \int |\nabla u_n|^2=C_2n^{\frac{2}{3}},\ \ \int \phi_{u_n}u_n^2\le \frac{C_3}{n^{\frac{2}{3}}},\quad \forall n\in\mathbb{N}.
	\end{equation*}
\end{lemma}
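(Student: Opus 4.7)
The plan is to construct $u_{n}$ as a superposition of $n$ pairwise disjoint, suitably scaled translates of a fixed compactly supported bump. Fix $w\in C_{c}^{\infty}(\mathbb R^{3})$ with $\supp w\subset B_{1}(0)$ and $\|w\|_{2}^{2}=1$; using the paper's scaling $u^{t}(x)=t^{3/2}u(tx)$, one has $\|u^{t}\|_{2}=\|u\|_{2}$, $\|\nabla u^{t}\|_{2}^{2}=t^{2}\|\nabla u\|_{2}^{2}$, $\|u^{t}\|_{p}^{p}=t^{3(p-2)/2}\|u\|_{p}^{p}$ and $\int\phi_{u^{t}}(u^{t})^{2}=t\int\phi_{u}u^{2}$, all translation invariant. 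With $t_{n}>0$ to be fixed and centres $y_{1}^{(n)},\dots,y_{n}^{(n)}\in\mathbb R^{3}$ of pairwise distance $\geq L_{n}$, I would put
$$u_{n}(x)=\sqrt{r/n}\,\sum_{k=1}^{n}t_{n}^{3/2}\,w\bigl(t_{n}(x-y_{k}^{(n)})\bigr).$$
If $L_{n}>2/t_{n}$ the summands have disjoint supports, so the $L^{2}$, $L^{p}$ and Dirichlet integrals split additively, giving $\|u_{n}\|_{2}^{2}=r$, $\|u_{n}\|_{p}^{p}=r^{p/2}n^{1-p/2}t_{n}^{3(p-2)/2}\|w\|_{p}^{p}$ and $\|\nabla u_{n}\|_{2}^{2}=r\,t_{n}^{2}\|\nabla w\|_{2}^{2}$.

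The choice of $t_{n}$ is forced by the requirement that $\|u_{n}\|_{p}^{p}$ be independent of $n$: the relation $t_{n}^{3(p-2)/2}=n^{p/2-1}$ has the unique positive solution $t_{n}=n^{1/3}$, which fortuitously is independent of $p\in(2,6)$. With this choice one obtains simultaneously $\|u_{n}\|_{p}^{p}=r^{p/2}\|w\|_{p}^{p}=:C_{1}$ and $\|\nabla u_{n}\|_{2}^{2}=r\,\|\nabla w\|_{2}^{2}\,n^{2/3}=:C_{2}\,n^{2/3}$, yielding the first two asserted properties.

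The main obstacle is the Coulomb term, because besides the $n$ self-interactions one has $O(n^{2})$ cross terms between distinct bumps, and the centres must be chosen far enough apart to control them. Writing $v_{k}(x)=\sqrt{r/n}\,t_{n}^{3/2}w(t_{n}(x-y_{k}^{(n)}))$, we split
$$\int\phi_{u_{n}}u_{n}^{2}=\sum_{k=1}^{n}\int\phi_{v_{k}}v_{k}^{2}+\sum_{i\neq j}\int\!\!\int\frac{v_{i}^{2}(x)\,v_{j}^{2}(z)}{|x-z|}\,dx\,dz.$$
The scaling identity yields a per-bump self-interaction equal to $(r/n)^{2}t_{n}\int\phi_{w}w^{2}=r^{2}n^{-5/3}\int\phi_{w}w^{2}$, so the first sum is $r^{2}n^{-2/3}\int\phi_{w}w^{2}$, already of the desired order. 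Since each $v_{k}$ is supported in $B_{n^{-1/3}}(y_{k}^{(n)})$ and satisfies $\|v_{k}\|_{2}^{2}=r/n$, the pair $(i,j)$ contributes at most $(r/n)^{2}/(L_{n}-2n^{-1/3})$ to the second sum, and summation over the $O(n^{2})$ pairs bounds the cross contribution by a quantity of order $r^{2}/L_{n}$. Placing the centres on a cubic lattice of spacing $L_{n}=c\,n^{2/3}$ (which accommodates $n$ points in $\mathbb R^{3}$ without any packing obstruction) absorbs the cross term into $O(n^{-2/3})$ and produces the required $\int\phi_{u_{n}}u_{n}^{2}\leq C_{3}n^{-2/3}$.
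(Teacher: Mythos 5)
Your construction is correct: the scaling identities $\|\nabla u^{t}\|_{2}^{2}=t^{2}\|\nabla u\|_{2}^{2}$, $\|u^{t}\|_{p}^{p}=t^{3(p-2)/2}\|u\|_{p}^{p}$, $\int\phi_{u^{t}}(u^{t})^{2}=t\int\phi_{u}u^{2}$ are right, the choice $t_{n}=n^{1/3}$ with amplitudes $\sqrt{r/n}$ does make $\int|u_{n}|^{p}$ exactly constant and $\int|\nabla u_{n}|^{2}=C_{2}n^{2/3}$, the self-interactions sum to $r^{2}n^{-2/3}\int\phi_{w}w^{2}$, and the crude cross-term bound $n^{2}\cdot(r/n)^{2}/(L_{n}-2n^{-1/3})$ with $L_{n}=c\,n^{2/3}$ is indeed $O(n^{-2/3})$, since there is no packing obstruction in $\mathbb{R}^{3}$. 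Note that the paper itself gives no proof of this lemma --- it is quoted from Catto et al.\ \cite{cattoetal} --- so there is no internal argument to compare with; your multi-bump (mass-splitting) construction is precisely the standard mechanism behind such sequences and is a valid self-contained proof, up to the trivial remark that for $n\ge 2$ one should take $c$ large enough (e.g.\ $c=2$) so that $L_{n}>2n^{-1/3}$ guarantees the disjointness of supports used for the exact additivity of the $L^{2}$, $L^{p}$ and gradient integrals.
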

\begin{lemma}[Catto et al. \cite{cattoetal}]\label{ineqcatto} Assume that 	$p\in [{8}/{3},3]$, then there exist a constant $C>0$ such that
	\begin{equation}\label{ineqcattooo}
	\int |u|^p\le C\left(\int u^2\right)^{2(3-p)}\left(\int \phi_uu^2\right)^{\frac{p-2}{2}}
	\left(\int |\nabla u|^2\right)^{p-2},\quad \forall u\in H^1(\mathbb{R}^3).
	\end{equation}
	If $p\in [3,{10}/{3}]$, then there exist a constant $C>0$ such that
	\begin{equation}\label{ineqcattooo1}
	\int |u|^p\le C\left(\int u^2\right)^{2(p-3)}\left(\int \phi_uu^2\right)^{\frac{10-3p}{2}}\left(\int |\nabla u|^2\right)^{\frac{3p-8}{2}},\quad\forall u\in H^1(\mathbb{R}^3).
	\end{equation}
\end{lemma}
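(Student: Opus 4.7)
The plan is to derive both inequalities from two endpoint estimates and a Hölder interpolation. The key non-trivial endpoint is the Lions-type bound at $p=3$,
\begin{equation*}
\int |u|^3 \le C\Bigl(\int|\nabla u|^2\Bigr)^{1/2}\Bigl(\int\phi_u u^2\Bigr)^{1/2}.
\end{equation*}
I would prove this by writing $u^2=-(4\pi)^{-1}\Delta\phi_u$, integrating by parts to get $\int|u|^3 = (4\pi)^{-1}\int\nabla|u|\cdot\nabla\phi_u$, and applying Cauchy--Schwarz together with the Stampacchia-type inequality $\abs{\nabla\abs{u}}\le\abs{\nabla u}$ a.e.\ and the identity $\|\nabla\phi_u\|_2^2 = 4\pi\int\phi_u u^2$ (which follows from testing the Poisson equation for $\phi_u$ against $\phi_u$ itself).

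The second endpoint is the classical Gagliardo--Nirenberg inequality at $p=10/3$,
\begin{equation*}
\int |u|^{10/3}\le C\Bigl(\int u^2\Bigr)^{2/3}\int|\nabla u|^2,
\end{equation*}
a direct consequence of the Sobolev embedding $H^1(\mathbb{R}^3)\hookrightarrow L^6(\mathbb{R}^3)$ combined with Hölder interpolation between $L^2$ and $L^6$ (the interpolation parameter being forced by scaling).

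For the first inequality (range $p\in[8/3,3]$) I would apply Hölder to interpolate $\|u\|_p\le\|u\|_2^{1-\theta}\|u\|_3^\theta$ with $\theta=3(p-2)/p$, raise to the $p$-th power so that the $L^3$ factor becomes $(\|u\|_3^3)^{p-2}$, and substitute the Lions bound to conclude. For the second inequality (range $p\in[3,10/3]$) I would instead interpolate between $L^3$ and $L^{10/3}$: taking $\alpha=10-30/p\in[0,1]$, one has $\|u\|_p^p \le (\|u\|_3^3)^{10-3p}(\|u\|_{10/3}^{10/3})^{3(p-3)}$; bounding the first factor by the Lions endpoint and the second by Gagliardo--Nirenberg then gives the claim after multiplying out.

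The main obstacle is establishing the Lions inequality at $p=3$; once available, both cases reduce to routine interpolation. A minor bookkeeping point is to verify that the resulting exponents on $\int u^2$, $\int\phi_u u^2$, $\int|\nabla u|^2$ match the form stated in the lemma; this is mechanical because the two homogeneity relations $p = 2a+4b+2c$ and $3=3a+5b+c$ arising from the two-parameter rescaling $u\mapsto\lambda u(\mu\,\cdot)$ fix the exponents uniquely once the interpolation endpoints have been chosen.
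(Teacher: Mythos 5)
The paper does not actually prove this lemma: it is imported verbatim from Catto et al.\ \cite{cattoetal} and used as a black box, so there is no in-paper argument to compare yours against. Your route --- the Lions endpoint at $p=3$ obtained by testing $-\Delta\phi_u=4\pi u^2$ against $|u|$, the Gagliardo--Nirenberg endpoint at $p=10/3$, and H\"older interpolation in between --- is the standard derivation of such estimates and is sound. For the second inequality your bookkeeping closes exactly: interpolating $\int|u|^p\le\left(\int|u|^3\right)^{10-3p}\left(\int|u|^{10/3}\right)^{3(p-3)}$ (the weights $10-3p$ and $3(p-3)$ are nonnegative and sum to $1$ precisely on $[3,10/3]$) and inserting the two endpoints gives exactly the exponents $2(p-3)$, $\tfrac{10-3p}{2}$, $\tfrac{3p-8}{2}$ of \eqref{ineqcattooo1}.

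The first inequality is where your final ``mechanical'' step does not go through as claimed. Your interpolation $\int|u|^p\le\left(\int u^2\right)^{3-p}\left(\int|u|^3\right)^{p-2}$ combined with the Lions bound yields
\begin{equation*}
\int|u|^p\le C\left(\int u^2\right)^{3-p}\left(\int\phi_uu^2\right)^{\frac{p-2}{2}}\left(\int|\nabla u|^2\right)^{\frac{p-2}{2}},
\end{equation*}
whereas \eqref{ineqcattooo} carries the exponents $2(3-p)$, $\tfrac{p-2}{2}$, $p-2$. These are not the same, and your own homogeneity relations show they cannot both be correct: for the printed exponents one computes $2a+4b+2c=4\neq p$ and $3a+5b+c=11-\tfrac{5p}{2}\neq 3$ throughout $[8/3,3]$, so \eqref{ineqcattooo} as printed is not invariant under $u\mapsto\lambda u(\mu\,\cdot)$ and in fact fails for small $\lambda$ (the left side scales like $\lambda^{p}$ against $\lambda^{4}$ on the right). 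The printed form is evidently a transcription of the estimate with $\|u\|_2^{2(3-p)}$ and $\|\nabla u\|_2^{p-2}$ --- norms rather than squared norms --- which coincides with what your argument produces; but as written your proposal asserts that the resulting exponents will match the statement, and they do not. You should either prove the scale-consistent version and flag the discrepancy with the printed one, or drop the claim that the match is automatic. Everything else (the integration by parts for the Lions bound, the identity $\|\nabla\phi_u\|_2^2=4\pi\int\phi_uu^2$, the admissibility of the H\"older weights on each range) is fine.
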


\medskip

Let us define, for $p\in (2,6)\setminus\{3\}$ the quotient
\begin{equation}\label{NRQ}
R_p(u)=\frac{\left(\displaystyle\int|\nabla u|^2\right)^{\frac{3p-8}{4(p-3)}}\left(q\displaystyle\int\phi_uu^2\right)^{\frac{10-3p}{4(p-3)}}}{\left(\lambda\displaystyle\int|u|^p\right)^{\frac{1}{2(p-3)}}},
\quad  u\in S_{1}.
\end{equation}

Note that in particular
\begin{equation}\label{eq:R8/3}
R_{8/3}(u) =\frac{\left(\lambda\displaystyle \int |u|^{8/3} \right)^{3/2}}{\left( q\displaystyle\int \phi_{u} u^{2}\right)^{3/2} }.
\end{equation}

The next result is 
just  Lemma \ref{catto} and  the inequality \eqref{ineqcattooo1} of Lemma \ref{ineqcatto}  rewritten 
in terms of our  functional $R_{p}$. 

\begin{proposition}\label{NQRP} The functional $R_p$ defined is continuous. Moreover:
	\begin{enumerate}
		\item[i)] if $p\in (2,3)$, then the functional $R_p$ is unbounded from above, \medskip
		\item[ii)] if $p\in (3,{10}/{3}]$, then the functional $R_p$ is bounded away from $0$.
	\end{enumerate}
\end{proposition}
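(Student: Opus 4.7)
The plan is to handle continuity and the two growth alternatives separately, in each case reducing to the building blocks $u\mapsto\|\nabla u\|_2^2$, $u\mapsto\int\phi_u u^2$, $u\mapsto\|u\|_p^p$, all of which are continuous on $H^1(\mathbb R^3)$ and strictly positive on $S_1$. Continuity of $R_p$ is then immediate, since $R_p$ is a rational product of positive real powers of these.

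For (i), I would feed the concentration sequence of Lemma \ref{catto} (taken with $r=1$, so that $u_n\in S_1$) into $R_p$. Writing $a=(3p-8)/(4(p-3))$ and $b=(10-3p)/(4(p-3))$, observe that for $p\in(2,3)$ the denominator $p-3<0$ while $10-3p>0$, so $b<0$. Since $\int\phi_{u_n}u_n^2\le C_3 n^{-2/3}$, raising this upper bound to the negative power $b$ reverses the inequality and produces a divergent lower bound. Combining with $\|\nabla u_n\|_2^2=C_2 n^{2/3}$, the total exponent of $n$ in the numerator becomes
\[
\tfrac{2}{3}(a-b)\ =\ \tfrac{2}{3}\cdot\frac{(3p-8)-(10-3p)}{4(p-3)}\ =\ \tfrac{2}{3}\cdot\frac{6(p-3)}{4(p-3)}\ =\ 1,
\]
while the denominator $(\lambda\|u_n\|_p^p)^{1/(2(p-3))}$ stays constant. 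Hence $R_p(u_n)\ge c\,n$ for some $c>0$, so $R_p(u_n)\to\infty$ and $R_p$ is unbounded above on $S_1$.

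For (ii), I would raise $R_p$ to the positive power $2(p-3)$ to clear the fractional exponents:
\[
R_p(u)^{2(p-3)}\ =\ \frac{\bigl(\int|\nabla u|^2\bigr)^{(3p-8)/2}\bigl(q\int\phi_u u^2\bigr)^{(10-3p)/2}}{\lambda\int|u|^p}.
\]
Since $p\in(3,10/3]$ lies inside the admissible range of inequality \eqref{ineqcattooo1} in Lemma \ref{ineqcatto}, on $S_1$ (where $\int u^2=1$) this inequality bounds $\int|u|^p$ from above by exactly the numerator (up to a constant $C$). Consequently the quotient is bounded below by $q^{(10-3p)/2}/(C\lambda)>0$, and extracting the $2(p-3)$-th root yields a uniform positive lower bound for $R_p$ on $S_1$.

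The only delicate point in the whole plan is the bookkeeping of the signs of the various exponents and the direction of inequalities after exponentiating by negative numbers; this is purely algebraic and presents no substantive obstacle. The entire proof is essentially a translation of the known ingredients (Lemmas \ref{catto} and \ref{ineqcatto}) into the language of the nonlinear Rayleigh quotient.
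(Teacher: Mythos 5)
Your proposal is correct and follows essentially the same route as the paper: continuity is immediate from the continuity and positivity of the three constituent functionals, item (i) is obtained by plugging the concentration sequence of Lemma \ref{catto} into $R_p$ (the exponent bookkeeping indeed yields $R_p(u_n)\ge C n$, exactly as the paper asserts), and item (ii) follows from inequality \eqref{ineqcattooo1} of Lemma \ref{ineqcatto} after raising $R_p$ to the power $2(p-3)>0$. You have merely made explicit the algebra that the paper's one-line proof leaves to the reader; no new idea or divergence from the paper's argument is involved.
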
	
\begin{proof} The continuity is obvious. To prove  $i)$, if  $\{u_{n}\}$ is the sequence given in Lemma \ref{catto},
	since $p<3$,  it follows that $R_p(u_n)\ge Cn$ 
	where $C$ is some positive constant. The proof of $ii)$ is a direct consequence of \eqref{ineqcattooo1} of Lemma \ref{ineqcatto}. 
\end{proof}


 For future reference we   consider  the system
\begin{equation}\label{decre}
\left\{
\begin{aligned}
rt^2\int |\nabla u|^2+\frac{r^2t}{4}q\int \phi_uu^2-\frac{3(p-2)}{2p}r^{{p}/{2}}t^{\frac{3(p-2)}{2}}\lambda\int |u|^p=0, \\
\frac{q}{2}r^2t\int \phi_uu^2-\frac{p-2}{p}r^{{p}/{2}}t^{\frac{3(p-2)}{2}}\lambda\int |u|^p=0, 
\end{aligned}
\right.
\end{equation}
where $u\in S_1$ and $r,t>0$. From Proposition \ref{systemfiber} and recalling $R_{p}$ defined  in \eqref{NRQ} 
and \eqref{eq:R8/3}, we have that if $p\in(2,6)\setminus\{3\}$, then the system has a unique solution $(\widetilde{r}(u),\widetilde{t}(u))$ which is given by  \medskip

\noindent {\bf $\bullet$ } if $p\in (2,3)$ with $p\neq 8/3$:
\begin{equation}\label{eq:no8/3}
\widetilde{r}(u)=\left[\frac{1}{2}\left(\frac{2(p-2)}{p}\right)^{\frac{2}{3p-8}}\right]^{\frac{3p-8}{4(3-p)}}R_p(u),\quad
\widetilde{t}(u)=\left(\frac{p}{2(p-2)}\widetilde r(u)^{\frac{4-p}{2}}\frac{q}{\lambda}\frac{\displaystyle\int \phi_uu^2}{\displaystyle\int |u|^p}\right)^{\frac{2}{3p-8}}; \medskip
\end{equation}

\noindent {\bf $\bullet$ } if $p=8/3$:

\begin{equation}\label{eq:8/3}
\widetilde{r}(u)=\frac{1}{2^{\frac{3}{2}}}R_{8/3}(u), \qquad
\widetilde{t}(u)=\frac{1}{2^{\frac{5}{2}}}\frac{\left(\lambda\displaystyle \int|u|^p\right)^{\frac{3}{2}}}{\left(q \displaystyle\int\phi_uu^2\right)^{\frac{1}{2}}\displaystyle\int |\nabla u|^2}.
\end{equation}

\begin{remark}\label{rem:constantecontinua}
Note that $\widetilde r(u)$ as a function of $p$ is continuous in $p=8/3$
	since
	$$\lim_{p\to 8/3} \left[\frac{1}{2}\left(\frac{2(p-2)}{p}\right)^{\frac{2}{3p-8}}\right]^{\frac{3p-8}{4(3-p)}}= \frac{1}{2^{\frac{3}{2}}}.$$
\end{remark}

We recall the following Hardy-Littlewood-Sobolev inequality (see \cite[Theorem 
4.3]{liebloss}):
\begin{theorem}\label{HLS}
	Assume that $1<a,b<\infty$ satisfies
	\begin{equation*}
	\frac{1}{a}+\frac{1}{b}=\frac{5}{3}.
	\end{equation*}
	Then there exists a constant $H_{a,b}>0$ sucht that 
	\begin{equation*}
	\left|\int\int_{\mathbb{R}^3\times\mathbb{R}^3}\frac{f(x)g(y)}{|x-y|}dxdy\right|\le H_{a,b}\|f\|_a\|g\|_b, \quad \forall f\in L^a(\mathbb{R}^3), g\in L^b(\mathbb{R}^3).
	\end{equation*}
\end{theorem}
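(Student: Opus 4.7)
The plan is to reduce the claim to the weak-type Young inequality for convolutions, exploiting the fact that the Newtonian kernel $|x|^{-1}$ lies in the weak Lebesgue space $L^{3,\infty}(\mathbb{R}^{3})$. This membership is immediate from the direct computation $|\{x\in\mathbb{R}^{3}:|x|^{-1}>t\}|=(4\pi/3)t^{-3}$, which gives $\||\cdot|^{-1}\|_{L^{3,\infty}}=(4\pi/3)^{1/3}<\infty$ and already displays the relevant exponent $3$.

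Granted the weak Young inequality in the form $\|f*K\|_{r}\leq C\,\|K\|_{q,\infty}\|f\|_{p}$ valid for $1<p,q<\infty$ with $1/r=1/p+1/q-1\in(0,1)$, I would take $K=|\cdot|^{-1}$, $q=3$, and $p=a$, producing $\|f*|\cdot|^{-1}\|_{r}\leq C\|f\|_{a}$ with $1/r=1/a-2/3$. A direct H\"older estimate against $g$ then gives
\begin{equation*}
\left|\int\!\!\int_{\mathbb{R}^{3}\times\mathbb{R}^{3}}\frac{f(x)g(y)}{|x-y|}\,dx\,dy\right|=\left|\int (f*|\cdot|^{-1})\,g\right|\leq \|f*|\cdot|^{-1}\|_{r}\,\|g\|_{r'}\leq C\|f\|_{a}\|g\|_{r'},
\end{equation*}
and the arithmetic identity $1/r'=1-1/r=5/3-1/a$ shows that $b:=r'$ satisfies $1/a+1/b=5/3$, which is exactly the assumed scaling.

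The main obstacle is the weak Young inequality itself, which is where the real-analytic content sits. I would establish it via Marcinkiewicz real interpolation: starting from the distribution-function inequality for $K\in L^{q,\infty}$ one obtains endpoint weak-type bounds for the sublinear operator $T_{K}f:=f*K$, and interpolating between two such endpoints within the Lorentz scale produces the strong-type estimate in the admissible range $1/p+1/q>1$. An interpolation-free alternative, closer to the route taken in \cite{liebloss}, is Lieb's symmetrisation strategy: one first reduces to the case of radial, symmetrically decreasing $f,g$ by the Riesz rearrangement inequality, then applies the layer-cake formulas $f=\int_{0}^{\infty}\chi_{\{f>s\}}ds$ and $g=\int_{0}^{\infty}\chi_{\{g>t\}}dt$, and bounds the resulting triple integral on pairs of concentric balls using the explicit Newtonian potential of a ball together with a scaling optimisation. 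Either route yields the non-sharp constant $H_{a,b}$ required here.
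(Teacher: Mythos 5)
Your argument is correct, but it is worth noting that the paper does not prove this statement at all: Theorem \ref{HLS} is simply quoted from \cite[Theorem 4.3]{liebloss}, so any actual proof you give is by definition a different route from the paper's. Your primary route — observe that $|\cdot|^{-1}\in L^{3,\infty}(\mathbb{R}^3)$ with $\||\cdot|^{-1}\|_{L^{3,\infty}}=(4\pi/3)^{1/3}$, apply the non-endpoint weak Young inequality $\|f*K\|_r\le C\|K\|_{q,\infty}\|f\|_p$ with $q=3$, $p=a$, and finish with H\"older against $g$ — is the standard interpolation proof, and your exponent bookkeeping is right: $1/r=1/a-2/3$ and $1/r'=5/3-1/a=1/b$, so $b=r'$; moreover the admissibility condition $1/r\in(0,1)$ amounts to $1<a<3/2$, which is automatically forced by the hypotheses $b>1$ and $1/a+1/b=5/3$, so no case is lost. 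The only place where care is needed is the weak Young inequality itself: the two endpoints one interpolates are $L^1\to L^{q,\infty}$ and the Lorentz--H\"older endpoint $L^{q',1}\to L^{\infty}$ (or, equivalently, one invokes O'Neil's convolution inequality together with the off-diagonal Marcinkiewicz theorem), rather than two naive weak-type bounds; this is standard machinery and not a gap, but it is the step where the "real-analytic content sits", as you say. Your alternative via Riesz rearrangement and the layer-cake representation is essentially the proof in the cited reference \cite{liebloss}, which in addition yields sharp constants in the symmetric case; the interpolation route is shorter and gives only a non-sharp $H_{a,b}$, which is all the paper ever uses.
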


Then we can prove the following result.
\begin{proposition} \label{prop:desigualdadenova}
For each $p\in [10/3,6)$, there exists a constant $C_{q,\lambda,p}>0$
such that
	\begin{equation*}
	R_p(u)\ge C_{q,\lambda,p}, \quad \forall u\in S_1,
	\end{equation*}
\end{proposition}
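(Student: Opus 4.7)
The plan is to rewrite the desired lower bound on $R_p$ as an upper bound on $\lambda\int|u|^p$ and then deduce it from a combination of Gagliardo--Nirenberg and Hardy--Littlewood--Sobolev estimates. Since $p\in[10/3,6)$ gives $p-3>0$ and $(10-3p)/2\le 0$, raising to the positive power $2(p-3)$ shows that the claim $R_p(u)\ge C_{q,\lambda,p}$ for $u\in S_1$ is equivalent to
$$
\lambda\int|u|^p\cdot\left(q\int\phi_uu^2\right)^{(3p-10)/2}\le C\left(\int|\nabla u|^2\right)^{(3p-8)/2},\qquad u\in S_1,
$$
the exponent $(3p-10)/2$ being nonnegative precisely because $p\ge 10/3$.

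First I would invoke the standard Gagliardo--Nirenberg inequality, valid for $p\in(2,6)$,
$$
\int|u|^p\le C_1\left(\int u^2\right)^{(6-p)/4}\left(\int|\nabla u|^2\right)^{3(p-2)/4},
$$
which on $S_1$ reduces to $\int|u|^p\le C_1(\int|\nabla u|^2)^{3(p-2)/4}$. Next I would bound the Coulomb energy by applying Theorem \ref{HLS} with $a=b=6/5$ (so $1/a+1/b=5/3$), giving $\int\phi_uu^2\le H\|u\|_{12/5}^{4}$, and then Gagliardo--Nirenberg at the exponent $12/5$ (interpolation parameter $\theta=1/4$) to get $\|u\|_{12/5}^{4}\le C_2\|u\|_2^3\|\nabla u\|_2$. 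The composition on $S_1$ reads
$$
\int\phi_uu^2\le C_3\left(\int|\nabla u|^2\right)^{1/2}.
$$

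Finally, raising this Coulomb bound to the nonnegative power $(3p-10)/2$ and multiplying by the Gagliardo--Nirenberg estimate on $\int|u|^p$ yields
$$
\int|u|^p\cdot\left(\int\phi_uu^2\right)^{(3p-10)/2}\le C\left(\int|\nabla u|^2\right)^{3(p-2)/4+(3p-10)/4}=C\left(\int|\nabla u|^2\right)^{(3p-8)/2},
$$
which is exactly the required inequality once $q$ and $\lambda$ are absorbed into the constant. For $p=10/3$ the Coulomb exponent vanishes and the whole reduction collapses to the pure Gagliardo--Nirenberg bound $\int|u|^{10/3}\le C\int|\nabla u|^2$, consistent with item $iv)$ of Theorem \ref{Nehari}. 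I do not expect a real obstacle: the argument is essentially arithmetic on the exponents $(6-p)/4$, $3(p-2)/4$, $(3p-10)/4$ and $(3p-8)/2$, and the restriction $p\ge 10/3$ enters only through the sign of $(3p-10)/2$, which is precisely what allows an HLS-type upper bound on the Coulomb energy to serve our purpose, in place of the lower-bound inequality \eqref{ineqcattooo1} from Lemma \ref{ineqcatto} used in the range $p\in[3,10/3]$.
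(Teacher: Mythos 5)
Your proof is correct and follows essentially the same route as the paper: both bound the Coulomb term from above via the Hardy--Littlewood--Sobolev inequality and the $L^p$ term via Sobolev/Gagliardo--Nirenberg, exploiting that $3p-10\ge 0$ puts the Coulomb factor in the denominator of $R_p$. The only cosmetic difference is that the paper first normalizes $\|\nabla u\|_2=1$ (using the scale invariance $R_p(u^t)=R_p(u)$), so that any upper bounds suffice, whereas you keep the gradient explicit and check directly that the exponents combine to $(3p-8)/2$.
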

\begin{proof} We have by definition
	\begin{equation*}
	R_p(u)=\frac{\left(\displaystyle\int|\nabla u|^2\right)^{\frac{3p-8}{4(p-3)}}}{\left(\lambda\displaystyle\int|u|^p\right)^{\frac{1}{2(p-3)}}\left(q\displaystyle\int\phi_uu^2\right)^{\frac{3p-10}{4(p-3)}}},
	\quad u\in S_{1}.
	\end{equation*}
We can assume that $\|\nabla u\|_2=1$ and hence the conclusion is a simple consequence of Sobolev embbedings 
	and the Hardy-Littlewood-Sobolev inequality.
\end{proof}

More involved is the proof of the next result.

\begin{proposition}\label{th:boundedbelow} For each $p\in (2,3)$, there exists 
	a constant $C_{q,\lambda,p}>0$  such that
	\begin{equation*}
	R_p(w)\ge C_{q,\lambda,p}, \quad \forall w\in S_1,
	\end{equation*}
\end{proposition}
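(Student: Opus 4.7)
The plan is to argue by contradiction, exploiting the scale invariance of $R_p$ under the $L^2$-preserving rescaling $u\mapsto u^t=t^{3/2}u(tx)$. A direct computation shows $R_p(u^t)=R_p(u)$ on $S_1$ (paralleling the homogeneity analysis underlying Proposition \ref{systemsolve} applied to the system \eqref{decre}). This invariance reduces the claim $R_p\geq C>0$ on $S_1$ to the same bound on the slice
\[
\mathcal S:=\{u\in S_1:\|\nabla u\|_2=1\},
\]
and rearranging the formula for $R_p$ shows that this is equivalent to the inequality
\[
\lambda\int|u|^p \;\geq\; C'\Bigl(q\int\phi_u u^2\Bigr)^{(10-3p)/2},\qquad u\in\mathcal S.
\]

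Suppose by contradiction there is $\{u_n\}\subset\mathcal S$ with $X_n^{10-3p}/\|u_n\|_p^{2p}\to\infty$, where $X_n=\int\phi_{u_n}u_n^2$. Since $\{u_n\}$ is bounded in $H^1$, extract a weakly convergent subsequence $u_n\rightharpoonup u_\infty$. If $u_\infty\neq 0$, Fatou yields $\liminf\|u_n\|_p\geq\|u_\infty\|_p>0$ while $X_n\leq C$ by HLS and Sobolev, an immediate contradiction. Otherwise, apply Lions' concentration-compactness to $\{u_n\}$: after translating, either a nontrivial weak limit reappears (reducing to the preceding case), or the sequence dichotomizes (handled by an induction on the two asymptotically separated pieces, each carrying a fraction of the $L^2$-mass), or it vanishes. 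In the vanishing subcase $\|u_n\|_r\to 0$ for every $r\in(2,6)$, so the HLS estimate gives $X_n\leq C\|u_n\|_{12/5}^4\to 0$. Combining this with the log-convexity interpolation of $L^{12/5}$ between $L^2$ and $L^p$ (valid for $p>12/5$, with exponent $\theta=p/(6(p-2))$) or between $L^p$ and $L^6$ (valid for $p\leq 12/5$, using $\|u_n\|_6\leq C\|\nabla u_n\|_2=C$), one obtains
\[
\frac{X_n^{10-3p}}{\|u_n\|_p^{2p}} \;\leq\; C\,\|u_n\|_p^{\,4p(8-3p)/(3(p-2))} \quad (p>12/5), \qquad \frac{X_n^{10-3p}}{\|u_n\|_p^{2p}} \;\leq\; C\,\|u_n\|_p^{\,16p(3-p)/(6-p)} \quad (p\le 12/5).
\]
For $p\in(2,8/3]$ both exponents on the right are nonnegative, so the right-hand side stays bounded as $\|u_n\|_p\to 0$, contradicting the assumed blow-up.

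The main obstacle is the range $p\in(8/3,3)$, where the first exponent above becomes strictly negative and the naïve interpolation is not sharp enough to close the argument. The way I would handle it is to refine the bound on $\|u_n\|_{12/5}$ via a multilinear Hölder estimate combining all three controls $\|u_n\|_2=1$, $\|u_n\|_p\to 0$ and $\|u_n\|_6\leq C$ (equivalently, a level-set decomposition $u_n\mathbf{1}_{\{|u_n|\leq M\}}+u_n\mathbf{1}_{\{|u_n|>M\}}$ optimized over the threshold $M$), exploiting that any vanishing sequence on $\mathcal S$ must be asymptotically spread out — concentration is incompatible with both $\|\nabla u_n\|_2=1$ and $u_n\to 0$ in $L^p_{\mathrm{loc}}$ — and this structural restriction yields the sharper rate estimate that closes the contradiction. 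Finally, Theorem \ref{cattoinep<3} for a general $u\in H^1(\mathbb R^3)\setminus\{0\}$ follows at once from Proposition \ref{th:boundedbelow} by applying it to $u/\|u\|_2\in S_1$ and tracking how each of $\int u^2,\;\int|\nabla u|^2,\;\int\phi_u u^2$ and $\int|u|^p$ transforms under the two scalings $u\mapsto\lambda u$ and $u\mapsto u^t$.
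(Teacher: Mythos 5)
Your reduction to the slice $\|w\|_2=\|\nabla w\|_2=1$ and your interpolation estimates for $p\in(2,8/3]$ are correct and essentially reproduce the paper's Cases 2 and 3 (HLS with $a=p/2$, $b=3p/(5p-6)$, resp.\ $a=b=6/5$, plus interpolation); note, though, that on the slice these bounds hold for \emph{every} $w$, since $\int|w|^p$ is bounded there by Gagliardo--Nirenberg, so the whole concentration--compactness apparatus is unnecessary in that range. Within that apparatus there is also an internal gap: the dichotomy branch is dismissed with an ``induction on the two asymptotically separated pieces,'' but the pieces no longer satisfy the normalizations $\|u\|_2=1$, $\|\nabla u\|_2=1$, so controlling them requires the full inequality of Theorem \ref{cattoinep<3} for arbitrary functions --- exactly what is being proved --- and no well-founded induction parameter is provided; as stated the argument is circular.

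The decisive gap, however, is the range $p\in(8/3,3)$, which you correctly identify as the obstacle but do not resolve. Your proposed fix (a multilinear H\"older or level-set refinement of the bound on $\|u_n\|_{12/5}$ using $\|u_n\|_2=1$, $\|u_n\|_p\to0$, $\|u_n\|_6\le C$) cannot work as described: any three-norm H\"older bound on $\|u_n\|_{12/5}$ is a convex combination of the two two-norm interpolations you already used, so it cannot produce a better power of $\|u_n\|_p$, and the qualitative remark that vanishing sequences are ``spread out'' carries no quantitative rate. Indeed, for $p\in(8/3,3)$ the desired bound is precisely the new inequality of Theorem \ref{cattoinep<3}, and the paper proves it by an entirely different mechanism: assuming $R_p(w_n)\to0$, one builds $u_n=\widetilde r(w_n)^{1/2}w_n^{\widetilde t(w_n)}$ solving the fibering system \eqref{decre}, i.e.\ satisfying \eqref{BG}, deduces $E(u_n)=\frac{3-p}{2-p}\int|\nabla u_n|^2<0$ and $I_{r_n}\le E(u_n)<0$ with $r_n=\widetilde r(w_n)\to0$, hence $E(u_n)\to0$, and then imports the contradiction from step 5, case (e) of the proof of Theorem 4.1 in \cite{bellasici1}. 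Without this (or some equally substantive) ingredient, your argument does not establish the proposition on $(8/3,3)$.
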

\begin{proof}
	First note  that
	\begin{equation*}
	R_p(w^t)=R_p(w) \quad \text{ and } \quad \displaystyle\int |\nabla w^t|^2=t^2\displaystyle\int |\nabla w|^2
	\quad  \forall w\in S_1,\ t>0.
	\end{equation*}
	From this it follows that 
	\begin{equation}\label{eq:11}
	\inf_{w\in S_1}R_p(w)=\inf\left\{R_p(w):w\in S_1,\ \int |\nabla w|^2=1\right\}.
	\end{equation}
	Indeed, for any $\varepsilon>0$ there exists $u\in S_{1}$ such that $R_{p}(u)\leq \inf_{S_{1}} R_{p} +\varepsilon$.
	Then, if we consider $u^{t_{*}}$, where $t_{*} \displaystyle\int|\nabla u|^{2}=1 $, we have that
	$$u^{t_{*}}\in S^{1} \ \text{ and }\ \int|\nabla u^{t_{*}}|^{2}=1.$$
	Consequently 
	$$\inf\left\{R_p(w):w\in S_1,\ \int |\nabla w|^2=1\right\}\leq R_{p}(u^{t_{*}}) = R_{p}(u)\leq \inf_{ S_{1}} R_{p}+\varepsilon $$
	and \eqref{eq:11} follows.
	The approach to prove the theorem will be different according to the values of $p$.
	\medskip
	
	{\bf Case 1: $p\in(8/3,3)$.}
	\vskip.3cm
	Assume on the contrary that there exists a sequence $\{w_n\}\subset S_1$ such that $R_p(w_n)\to 0$ as $n\to +\infty$. 
	Let  $\widetilde{r}(w_n)$ and 
	$\widetilde{t}(w_n)$ the solutions of system \eqref{decre}, see \eqref{eq:no8/3}, and
	set for brevity
	$r_n=\widetilde{r}(w_n)$ and $u_n= r_n^{{1}/{2}}w_n^{\widetilde{t}(w_n)}$. 
	It is easy to see  that, for all $n\in \mathbb N$, 
	\begin{equation}\label{BG}
	\left\{
	\begin{aligned}
	\int |\nabla u_n|^2+\frac{q}{4}\int \phi_{u_n}u_n^2-\frac{3(p-2)}{2p}\lambda\int |u_n|^p=0, \\
	\frac{q}{2}\int \phi_{u_n}u_n^2-\frac{p-2}{p}\lambda\int |u_n|^p=0.
	\end{aligned}
	\right. 
	\end{equation}
	Now observe that \eqref{BG} is the same as \cite[equation (4.9)]{bellasici1} and therefore 
	\begin{equation*}
	E(u_n)=\frac{3-p}{2-p}\int |\nabla u_n|^2<0,\quad \forall n\in \mathbb{N},
	\end{equation*}
	which implies that $I_{r_n}\le E(u_n)<0$ and hence, since $I_{r_n}\to 0$ as $n\to +\infty$, we obtain that $E(u_n)\to 0$ as $n\to \infty$. The last convergence implies  \cite[formula (4.10)]{bellasici1}. Therefore, by following the proof of Theorem 4.1., step  5, case (e) of \cite{bellasici1}, we reach a contradiction and hence $R_p$ is bounded from below over the sphere $S_1$.
	
	\medskip
	
	To treat the other cases of $p$, we observe that, since $p<3$, the Lemma is proved once we show that $R_{p}^{2(p-3)}$ is bounded above if 
	$\|w\|_{2} = \|\nabla w\|_{2}=1$.
	
	\medskip
	
	
	{\bf Case 2: $p\in(12/5,8/3]$.}
	\vskip.3cm
	By choosing $a=p/2$ and $b=3p/(5p-6)$ 
	from Theorem \ref{HLS} we obtain
	\begin{equation*}
	\int \phi_ww^2\le H_{a,b}\|w^2\|_{p/2}\|w^2\|_{b}=H_{a,b}\|w\|_p^2\|w\|_{2b}^2,\quad \forall w\in H^1(\mathbb{R}^3).
	\end{equation*}
	Since 	$2<2b<p$, from the interpolation inequality we have that
	\begin{equation*}
	\|w\|_{2b}\le \|w\|_p^{\frac{2(3-p)}{3(p-2)}}\|w\|_2^{\frac{p}{3(p-2)}},
	\end{equation*}
	and hence
	\begin{equation*}
	\int \phi_ww^2\le H_{a,b}\|w\|_p^{\frac{2p}{3(p-2)}}\|w\|_2^\frac{2p}{3(p-2)}.
	\end{equation*}
	Consequently, for a suitable constant $C_{p,q}>0$ depending only on $p$ and $q$, we get 
	\begin{eqnarray*}
		R_{p}(w)^{2(p-3)}&=&\frac{\left(\displaystyle\int|\nabla w|^2\right)^{\frac{3p-8}{2}}\left(q\displaystyle\int\phi_ww^2\right)^{\frac{10-3p}{2}}}{\lambda\displaystyle\int|w|^p}\\
		&\leq& \frac{C_{q,p}}{\lambda}\frac{\left(\displaystyle\int |w|^p\right)^{\frac{10-3p}{3(p-2)}}}{\displaystyle\int |w|^p} \\
		&= &\frac{C_{q,p}}{\lambda} \left(\int|w|^p\right)^{\frac{2(8-3p)}{3(p-2)}} \\
		&\le & 2\frac{C_{q,p}}{\lambda},\qquad \forall w\in S_1, \int|\nabla w|^2=1.
	\end{eqnarray*}
	
	\medskip
	
	{\bf Case 3: $p\in(2,12/5]$.}
	\vskip.3cm
	We choose $a=b=6/5$ in Theorem \ref{HLS} and use the interpolation inequality to conclude that 
	\begin{equation*}
	\int\phi_ww^2\le H_{6/5,6/5}\|w\|_{12/5}^4\le H_{6/5,6/5}\|w\|_p^{\frac{6p}{6-p}}\|w\|_6^{\frac{2(12-5p)}{6-p}} , 
	\quad\forall w\in H^1(\mathbb{R}^3).
	\end{equation*}
	From the Sobolev inequality we obtain that, for a suitable constant $S>0$,
	depending only on $p$, we have 
	\begin{equation*}
	\int\phi_ww^2\le H_{6/5,6/5}S\|w\|_p^{\frac{6p}{6-p}} , \qquad \forall w\in S_1,\ \int |\nabla w|^2=1.
	\end{equation*}
	Consequently, for a suitable constant $C_{p,q}>0$ depending only on $p$ and $q$, we have
	\begin{eqnarray*}
		R_{p}^{2(p-3)} (w) &=&\frac{\left(\displaystyle\int|\nabla w|^2\right)^{\frac{3p-8}{2}}\left(q\displaystyle\int\phi_ww^2\right)^{\frac{10-3p}{2}}}{\lambda\displaystyle\int|w|^p}\\
		&\leq& \frac{C_{q,p}}{\lambda}\frac{\left(\displaystyle\int |w|^p\right)^{\frac{3(10-3p)}{6-p}}}{\displaystyle\int |w|^p}  \\
		&= &  \frac{C_{q,p}}{\lambda}\left(\int |w|^p\right)^{\frac{8(3-p)}{6-p}} \\
		&\le & 2 \frac{C_{q,p}}{\lambda},\quad \ \forall w\in S_1, \int|\nabla w|^2=1,
	\end{eqnarray*}
	and hence the proof is concluded. 
\end{proof}

As a consequence of the previous proposition, we have  the  new inequalities
stated in Theorem \ref{th:desigualdadenova} and Theorem \ref{cattoinep<3}.
\subsection{Proof of Theorem \ref{th:desigualdadenova} 
and Theorem \ref{cattoinep<3}}

	They follows respectively by
 Proposition \ref{prop:desigualdadenova} and
 Proposition \ref{th:boundedbelow} with a simple $L^{2}-$normalization.
	

\section{Natural constraints for $E$}\label{sec:natural}

In this Section we  prove the existence of a natural constraint for the energy functional $E$ restricted to $S_r$. Although such a constraint 
appeared already in \cite{JBL, jeanluo}, the proof that it is a manifold and a natural constraint seems to be new.

We start with a well know Pohozaev identity, which will be quite useful: for $a,b,c,d\in\mathbb{R}$ consider the equation
\begin{equation}\label{poho}
\left\{
\begin{aligned}
-a\Delta u+bu&+c\phi_u u+d|u|^{p-2}u=0, \\
                        &u\in H^1(\mathbb{R}^3),
\end{aligned}
\right.
\end{equation}
and define $P: H^1(\mathbb{R}^3)\to \mathbb{R}$ by
\begin{equation*}
P(u)=\frac{a}{2}\int |\nabla u|^2+\frac{3b}{2}\int u^2+\frac{5c}{4}\int \phi_uu^2+\frac{3d}{p}\int |u|^p.
\end{equation*}
Then we have the following Pohozaev's identity, see \cite[Theorem 2.2]{ruiz}:
\begin{proposition}\label{pohozaev} If $u$ satisfies \eqref{poho}, then $P(u)=0$.
\end{proposition}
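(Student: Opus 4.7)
The plan is to pair the equation \eqref{poho} with the classical Pohozaev multiplier $x \cdot \nabla u$ and integrate. Concretely, I would first multiply \eqref{poho} by $x \cdot \nabla u$, integrate over a ball $B_R$, and then let $R \to \infty$. Standard elliptic regularity applied to \eqref{poho}, together with $u \in H^1(\mathbb{R}^3)$ and $\phi_u \in D^{1,2}(\mathbb{R}^3)\cap L^6(\mathbb{R}^3)$, provides enough smoothness and decay of $u, \nabla u, \phi_u, \nabla \phi_u$ to pass to the limit and make the resulting surface integrals vanish.

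The three local terms are entirely classical. Writing $2u\,(x \cdot \nabla u) = x \cdot \nabla(u^2)$ and $p|u|^{p-2} u \,(x \cdot \nabla u) = x \cdot \nabla(|u|^p)$, and integrating by parts in $\mathbb{R}^3$ (using $\nabla \cdot x = 3$), one obtains
$$\int \Delta u \,(x \cdot \nabla u) = \tfrac{1}{2}\int |\nabla u|^2,\quad \int u\,(x \cdot \nabla u) = -\tfrac{3}{2}\int u^2,\quad \int |u|^{p-2}u\,(x \cdot \nabla u) = -\tfrac{3}{p}\int |u|^p.$$

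The delicate step is the nonlocal term. Using $\phi_u u (x \cdot \nabla u) = \tfrac{1}{2}\phi_u\, x \cdot \nabla(u^2)$ and integrating by parts gives
$$\int \phi_u u \,(x \cdot \nabla u) = -\tfrac{3}{2}\int \phi_u u^2 - \tfrac{1}{2}\int (x \cdot \nabla \phi_u)\, u^2.$$
To handle $\int (x \cdot \nabla \phi_u)\, u^2$, I would substitute $u^2 = -\frac{1}{4\pi}\Delta \phi_u$ coming from $-\Delta \phi_u = 4\pi u^2$ and apply the Pohozaev identity for the Poisson equation itself, which in $\mathbb{R}^3$ reads $\int (x \cdot \nabla \phi_u)\Delta \phi_u = \tfrac{1}{2}\int |\nabla \phi_u|^2$. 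Combined with $\int |\nabla \phi_u|^2 = 4\pi \int \phi_u u^2$, this yields $\int (x \cdot \nabla \phi_u)\, u^2 = -\tfrac{1}{2}\int \phi_u u^2$, and therefore $\int \phi_u u\,(x \cdot \nabla u) = -\tfrac{5}{4}\int \phi_u u^2$.

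Assembling the four identities into \eqref{poho} tested against $x \cdot \nabla u$ and multiplying by $-1$ produces exactly $P(u) = 0$. The main obstacle is the nonlocal term: it needs two nested integrations by parts and the auxiliary Pohozaev identity for $\phi_u$, which forces one to control $\phi_u$ and $\nabla \phi_u$ at infinity in order to discard boundary contributions. These points are well understood in the Schr\"odinger--Poisson literature and are precisely the content of \cite[Theorem 2.2]{ruiz}.
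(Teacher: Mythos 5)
Your computation is correct, and it reproduces in outline the standard multiplier argument; note that the paper itself offers no proof of this proposition at all, it simply quotes \cite[Theorem 2.2]{ruiz}, so your route differs only in that you actually carry out the computation which that citation encapsulates. All four identities check out in dimension three: $\int \Delta u\,(x\cdot\nabla u)=\tfrac12\int|\nabla u|^2$, the two local terms with factors $-\tfrac32$ and $-\tfrac3p$, and, in the only genuinely nonstandard step, $\int (x\cdot\nabla\phi_u)\,u^2=-\tfrac12\int\phi_u u^2$ obtained from $-\Delta\phi_u=4\pi u^2$, the Pohozaev identity for $\phi_u$ itself, and $\int|\nabla\phi_u|^2=4\pi\int\phi_u u^2$; this gives the coefficient $-\tfrac54$ for the Hartree term, and testing \eqref{poho} with $x\cdot\nabla u$ and changing sign yields exactly $P(u)=0$ with the coefficients $\tfrac a2,\ \tfrac{3b}2,\ \tfrac{5c}4,\ \tfrac{3d}p$. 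The one place where your sketch leans on the literature is the same place the paper does: the rigorous treatment of the boundary terms. Since an $H^1$ solution need not decay pointwise a priori, the standard device is elliptic regularity (so $u\in W^{2,q}_{\mathrm{loc}}$), integration over $B_R$, and a choice of radii $R_n\to\infty$ along which $R_n\int_{\partial B_{R_n}}(\cdots)\to 0$, exploiting the integrability of all bulk densities (including $\phi_u u^2$ and $|\nabla\phi_u|^2$); this is precisely what \cite[Theorem 2.2]{ruiz} carries out, so deferring to it is legitimate. In short: your argument is correct and self-contained modulo that boundary-term justification, whereas the paper relies entirely on the citation; what your version buys is an explicit derivation of the $\tfrac54$ coefficient, which is the only part of $P$ specific to the nonlocal term.
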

As a consequence  we get the next  result  which is already known (see \cite[Lemma 2.1]{jeanluo}) 
but that we prove  for completeness.
\begin{proposition} \label{neh} Assume that $u\in S_r$ is a critical point of $E$ restricted to $S_r$, then 
	\begin{equation*}
	\int |\nabla u|^2+\frac{q}{4}\int \phi_uu^2-\frac{3(p-2)}{2p}\lambda\int |u|^p=0.
	\end{equation*}
\end{proposition}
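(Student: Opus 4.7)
The plan is to combine two standard tools: the Lagrange multiplier equation that a constrained critical point must satisfy, and the Pohozaev identity (Proposition \ref{pohozaev}) applied to that equation. The main identity to be proven is a particular linear combination of these two relations, so the main task is elementary algebra.

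First I would invoke the Lagrange multiplier rule: since $u\in S_r$ is a critical point of $E$ restricted to $S_r$, there exists $\ell\in\mathbb{R}$ such that
\begin{equation*}
-\Delta u + q\phi_u u - \lambda |u|^{p-2}u = \ell u \quad \text{in } \mathbb{R}^3.
\end{equation*}
Testing this equation against $u$ itself and integrating over $\mathbb{R}^3$ gives the first identity
\begin{equation*}
\int|\nabla u|^2 + q\int \phi_u u^2 - \lambda \int |u|^p = \ell\, r.
\end{equation*}

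Next I apply Proposition \ref{pohozaev} with the choice $a=1$, $b=-\ell$, $c=q$, $d=-\lambda$, which yields
\begin{equation*}
\frac{1}{2}\int|\nabla u|^2 - \frac{3\ell}{2}\int u^2 + \frac{5q}{4}\int\phi_u u^2 - \frac{3\lambda}{p}\int|u|^p = 0.
\end{equation*}

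Finally I eliminate the unknown multiplier $\ell$. Using the first identity, $\ell r = \int|\nabla u|^2 + q\int\phi_u u^2 - \lambda\int|u|^p$, I substitute $\frac{3\ell}{2} r$ into the Pohozaev relation; a short calculation collects the terms and gives exactly
\begin{equation*}
\int|\nabla u|^2 + \frac{q}{4}\int\phi_u u^2 - \frac{3(p-2)}{2p}\lambda \int |u|^p = 0,
\end{equation*}
as required. There is no real obstacle here: the only non-trivial input is the Pohozaev identity, which is already granted by Proposition \ref{pohozaev}; everything else is the usual scaling/test-function computation and a linear combination to cancel the $\ell r$ term.
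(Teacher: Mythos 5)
Your proposal is correct and follows essentially the same route as the paper's own proof: the Lagrange multiplier equation tested against $u$, the Pohozaev identity of Proposition \ref{pohozaev} applied to that equation, and elimination of the multiplier; the algebra indeed yields $\frac{3}{2}-\frac{3}{p}=\frac{3(p-2)}{2p}$ as claimed.
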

\begin{proof} Indeed, from the  Lagrange multiplier rule there exist $\mu\in\mathbb{R}$ such that
$E'(u)=\mu u$, that is, $u$ is a solution of
\begin{equation*}
-\Delta u +q\phi_{u}u -\lambda|u|^{p-2}u = \mu u.
\end{equation*}
In particular $u$ satisfies
$$\int|\nabla u|^{2}+q\int\phi_{u}u^{2} -\lambda\int|u|^{p} = \mu \int u^{2}$$
and by Proposition \ref{pohozaev} also
$$\frac12\int|\nabla u| ^{2} -\frac32\mu\int u^{2} +\frac54 q\int\phi_{u}u^{2}-\frac{3\lambda}{p}\int|u|^{p}=0$$
which together give the desired equality.	
\end{proof}

Proposition \ref{neh} justifies the introduction of the set
\begin{equation}\label{eq:Nehariset}
\mathcal{N}_{r,q,\lambda}:=\left\{u\in S_r:\int |\nabla u|^2+\frac{q}{4}\int \phi_uu^2-\frac{3(p-2)}{2p}\lambda\int |u|^p=0\right\},
\end{equation}
since it contains any solution $u$ of \eqref{eq:problem}.
In the following we will simply write $\mathcal N_{r}$.
Define also
 \begin{eqnarray}
 \mathcal{N}_r^+&=&\left\{u\in \mathcal{N}_r:\int |\nabla u|^2-\frac{3(p-2)(3p-8)}{4p}\lambda\int |u|^p>0\right\}, 
 \label{eq:N+}\\
 \mathcal{N}_r^0&=&\left\{u\in \mathcal{N}_r:\int |\nabla u|^2-\frac{3(p-2)(3p-8)}{4p}\lambda\int |u|^p=0\right\},
 \label{eq:N0}\\
\mathcal{N}_r^-&=&\left\{u\in \mathcal{N}_r:\int |\nabla u|^2-\frac{3(p-2)(3p-8)}{4p}\lambda\int |u|^p<0\right\}
\label{eq:N-}
 \end{eqnarray}
Just in Subsection \ref{subsec:p=3} it will be more convenient
to explicit the dependence on $q$ and $\lambda$, instead of $r$,
since they will  have an important role.

\medskip

To obtain  basic estimates for the elements of $\mathcal N_{r}$ let us
 recall the  Gagliardo-Nirenberg inequality:
	\begin{equation}\label{GN}
	\int |u|^p\le  K_{\textrm{GN}} \left(\int |\nabla u|^2\right)^{\frac{3(p-2)}{4}}\left(\int u^2\right)^{\frac{6-p}{4}}, 
	\quad \forall u\in H^1(\mathbb{R}^3),
	\end{equation}
		where $K_{\textrm{GN}}>0$, hereafter, is the Gagliardo-Nirenberg constant which depends only on $p$. 
		Then we have 
\begin{proposition}\label{neharibounded} Let  $r,\lambda>0$ and $u\in \mathcal N_{r}$.
\begin{itemize}
\item[1.] For $p\in(2,{10}/{3})$, we have
	\begin{equation*}
	\int |\nabla u|^2\le  K_{\emph{GN}}^{\frac{4}{(10-3p)}} \left( \frac{3(p-2)\lambda}{2p}\right)^{\frac{4}{10-3p}}r^{\frac{6-p}{10-3p}}.
	\end{equation*}
	\item[2.] For $p=10/3$ we have
$$\frac{5}{3 \lambda K_{\emph{GN}}}\leq r^{\frac23}.
$$
\item[3.] If $p\in(3,{10}/{3})$, then there exists constants $c_p, c'_{p}>0$ such that 
	\begin{equation*}
	\int |\nabla u|^2\ge \frac{c_p}{r}  \quad \text{and} \quad\int |u|^p\ge  \frac{c'_p}{\lambda r}.
	\end{equation*}
	\item[4.] For $p\in({10}/{3},6)$, we have
	\begin{equation*}
	\int |\nabla u|^2\ge  K_{\emph{GN}}^{\frac{4}{(10-3p)}} \left( \frac{3(p-2)\lambda}{2p}\right)^{\frac{4}{10-3p}}r^{\frac{6-p}{10-3p}}.
	\end{equation*}
\end{itemize} 
	
\end{proposition}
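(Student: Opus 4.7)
The plan is to exploit the defining Nehari identity
$$\int|\nabla u|^2+\tfrac{q}{4}\int\phi_u u^2=\tfrac{3(p-2)\lambda}{2p}\int|u|^p,\qquad u\in\mathcal{N}_r,$$
together with the Gagliardo-Nirenberg inequality \eqref{GN} and, for part 3, the Catto-type inequality \eqref{ineqcattooo1}.

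For parts 1, 2 and 4 the argument is essentially one manipulation. Since $\phi_u\ge 0$, the Nehari identity yields $\int|\nabla u|^2\le \tfrac{3(p-2)\lambda}{2p}\int|u|^p$. Plugging \eqref{GN} into this estimate and dividing by the strictly positive factor $\bigl(\int|\nabla u|^2\bigr)^{3(p-2)/4}$ gives
$$\Bigl(\int|\nabla u|^2\Bigr)^{(10-3p)/4}\le \tfrac{3(p-2)\lambda K_{\text{GN}}}{2p}\,r^{(6-p)/4}.$$
The sign of $(10-3p)/4$ dictates the conclusion. For $p\in(2,10/3)$ (part 1) it is positive and raising to the power $4/(10-3p)>0$ yields the stated upper bound. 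At $p=10/3$ (part 2) it vanishes, and after substituting $\tfrac{3(p-2)}{2p}=\tfrac{3}{5}$ and $(6-p)/4=2/3$ the estimate becomes $1\le\tfrac{3\lambda K_{\text{GN}}}{5}r^{2/3}$, equivalent to $\tfrac{5}{3\lambda K_{\text{GN}}}\le r^{2/3}$. For $p\in(10/3,6)$ (part 4) the exponent is negative, so raising to $4/(10-3p)<0$ flips the inequality to the desired lower bound.

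For part 3, where $p\in(3,10/3)$ and both exponents $\tfrac{10-3p}{2},\tfrac{3p-8}{2}$ in \eqref{ineqcattooo1} are positive and sum to $1$, I plan the following. Writing $m=\int|\nabla u|^2$, $X=\int\phi_u u^2$, $N=\int|u|^p$, Nehari gives $X\le \tfrac{6(p-2)\lambda}{pq}N$. Substituting into \eqref{ineqcattooo1} the powers of $N$ telescope and one gets
$$N\le C(p,q,\lambda)\,r^{4(p-3)/(3p-8)}\,m.$$
From this estimate, combined with the Nehari-derived relation $N\ge \tfrac{2p}{3(p-2)\lambda}m$ and a further control of the mutual contributions of $m$ and $X$ in Nehari, one extracts the scalings $\int|\nabla u|^2\ge c_p/r$ and then $\int|u|^p\ge c'_p/(\lambda r)$.

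The main obstacle is part 3: while parts 1, 2 and 4 each reduce to a single algebraic step, the lower bound in part 3 forces a delicate balance between \eqref{ineqcattooo1} and the Nehari constraint so that the correct $1/r$ scaling is preserved and the argument does not collapse to the mere existence threshold $r\ge r^*$. This is where the technical work of the proof lies.
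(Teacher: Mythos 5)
Parts 1, 2 and 4 of your proposal are correct and coincide with the paper's own argument: on $\mathcal N_r$ one has \eqref{NP1}, inserting \eqref{GN} and dividing by $\bigl(\int|\nabla u|^2\bigr)^{3(p-2)/4}$ gives $\bigl(\int|\nabla u|^2\bigr)^{(10-3p)/4}\le K_{\textrm{GN}}\frac{3(p-2)\lambda}{2p}r^{(6-p)/4}$, and the sign of $10-3p$ does the rest.

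Part 3, however, contains a genuine gap, located exactly where you flagged ``the technical work''. Write $m=\int|\nabla u|^2$, $X=\int\phi_uu^2$, $N=\int|u|^p$. Both \eqref{ineqcattooo1} and \eqref{GN} are invariant under the mass-preserving dilation $u\mapsto u^t$, and this invariance kills your scheme: substituting the Nehari bound $X\le\frac{6(p-2)\lambda}{pq}N$ into \eqref{ineqcattooo1}, the powers of $N$ telescope (as you note) to give $N\le C\,r^{4(p-3)/(3p-8)}m$, but then combining with the other Nehari bound $m\le\frac{3(p-2)\lambda}{2p}N$ the factor $m$ cancels as well, and all that survives is $1\le C'\,r^{4(p-3)/(3p-8)}$ --- precisely the existence threshold $r\gtrsim r^*$ you wanted to avoid, with no lower bound on $m$. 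This is not an accident of the particular manipulation: the abstract triple $m=X=\varepsilon$, $N=\frac{2p}{3(p-2)\lambda}\bigl(1+\frac q4\bigr)\varepsilon$ satisfies the Nehari identity and, for any fixed sufficiently large $r$, also \eqref{ineqcattooo1} and \eqref{GN} for every small $\varepsilon>0$; hence no algebraic combination of these three relations alone can force $m\ge c_p/r$. The missing ingredient, and what the paper actually uses, is the non-scale-invariant coercivity estimate of Jeanjean--Luo (\cite[Lemma 2.3]{jeanluo}), valid on all of $S_r$ for $p\in(3,10/3)$:
\begin{equation*}
\int|\nabla u|^2+\frac q4\int\phi_uu^2-\frac{3(p-2)}{2p}\lambda\int|u|^p\ \ge\ c\int|\nabla u|^2-c_p\Bigl(\int|\nabla u|^2\Bigr)^{3/2}r^{1/2}.
\end{equation*}
On $\mathcal N_r$ the left-hand side vanishes, which yields $\int|\nabla u|^2\ge(c/c_p)^2/r$, and then $\int|u|^p\ge c_p'/(\lambda r)$ follows from \eqref{NP1}, exactly as in your final step. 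To complete part 3 you must import (or reprove) an estimate of this non-scale-invariant type; it cannot be extracted from \eqref{ineqcattooo1} and the Nehari identity alone.
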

\begin{proof} 
Preliminarily observe that for any $u\in \mathcal N_{r}$ we have
	\begin{equation}\label{NP1}
	\int|\nabla u|^2 \leq 
	\frac{3(p-2)}{2p}\lambda\int |u|^p.
	\end{equation}
	Combining \eqref{NP1}  with the Gagliardo-Nirenberg inequality
 \eqref{GN} we infer, for any $u\in \mathcal N_{r}$,
	\begin{equation*}
	\int|\nabla u|^2\le K_{\textrm{GN}}\frac{3(p-2)\lambda}{2p}\left(\int |\nabla u|^2\right)^{\frac{3(p-2)}{4}}r^{\frac{6-p}{4}}.
	\end{equation*}
	From this we deduce 1., 2. and 4.

	\medskip
	
	3. Now assume that $p\in(3,{10}/{3})$. From  \cite[Lemma 2.3]{jeanluo}, there exist $c,c_p>0$ 
	positive constants, such that
	\begin{equation*}
	\int|\nabla u|^2+\frac{q}{4}\int \phi_uu^2-\frac{3(p-2)}{2p}\lambda\int |u|^p\ge  c\int |\nabla u|^2-c_p\left(\int |\nabla u|^2\right)^{\frac32}r^{\frac12} , \quad \forall u\in S_r.
	\end{equation*}
	Therefore 
	\begin{equation*}
	c\int |\nabla u|^2-c_p\left(\int |\nabla u|^2\right)^{\frac32}r^{\frac12}\le 0,\quad \forall u\in\mathcal{N}_r,
	\end{equation*}
	and hence
	\begin{equation}\label{eq:ccp}
	\int |\nabla u|^2\ge \left( \frac{c}{c_p} \right)^{2} \frac{1}{r}, \quad \forall u\in\mathcal{N}_r.
	\end{equation}
	As for the second estimate, it follows by \eqref{NP1} and \eqref{eq:ccp}.	
\end{proof}

For the sake of completeness we observe, by looking at the proof of 
\cite[Lemma 2.2 and Lemma 2.3]{jeanluo}, that the constants appearing in 3. of
Proposition \ref{neharibounded} are given explicitly by
\begin{equation}\label{eq:ccc}
c_{p}=\frac{p-3}{4-p}K_{\textrm{GN}} \left( \frac{3(p-2)(4-p) 2^{7-p} }{p}\right)^{1/(p-3)},
\quad c_{p}'=\frac{2p}{3(p-2)} \left(\frac{c}{c_{p}}\right)^{2},
 \quad  c= \frac{64\pi-1}{64\pi}
\end{equation}
and do not depend on $q$.

As we will see, item 2.  in Proposition \ref{neharibounded}  will be improved in Theorem \ref{iv}.



\medskip

\subsection{The fibration for $\mathcal N_{r}$}\label{subsec:fibration}
We will use the fibration method of Pohozaev to study $\mathcal{N}_r$. Given $u\in S_1$, define the 
fiber map 
$$ 
\varphi_{r,q,\lambda,u}: t\in (0,\infty)\longmapsto E(r^{{1}/{2}}u^t) \in \mathbb{R}$$ 
where $u^t(x)=t^{\frac{3}{2}}u(tx)\in S_{1}$ and then $r^{1/2}u^{t} \in S_{r}$.
Also in this case, until Subsection \ref{subsec:p=3} we will write simply $\varphi_{r,u}$. Then
explicitly we have
$$\varphi_{r,u}(t) = \frac{t^{2}}{2}r \int|\nabla u|^{2}+\frac{t}{4} r^{2} q\int \phi_{u}u^{2} 
- \frac{t^{\frac{3}{2}p-3} }{p} r^{p/2}  \lambda\int|u|^{p}.$$

A simple computation gives the next
\begin{lemma} \label{fibe}
The fiber map $\varphi_{r,u}$ is a smooth function
and
	\begin{equation*}
	\varphi_{r,u}'(t)=tr\int |\nabla u|^2+\frac{r^2}{4}q\int \phi_uu^2-\frac{3(p-2)}{2p}t^{\frac{3p}{2}-4}r^{{p}/{2}}\lambda\int |u|^p,
	\end{equation*}
	\begin{equation*}
		\varphi_{r,u}''(t)=r\int |\nabla u|^2-\frac{3(p-2)(3p-8)}{4p}t^{\frac{3p}{2}-5}r^{{p}/{2}}\lambda\int |u|^p.
	\end{equation*}
\end{lemma}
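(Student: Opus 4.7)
The plan is to reduce the lemma to a short computation by first working out how the relevant functionals transform under the rescaling $u\mapsto u^{t}$ and then under multiplication by $r^{1/2}$. A quick change of variables $y=tx$ gives
$$\|u^{t}\|_{2}^{2}=\|u\|_{2}^{2},\qquad \|\nabla u^{t}\|_{2}^{2}=t^{2}\|\nabla u\|_{2}^{2},\qquad \|u^{t}\|_{p}^{p}=t^{(3p-6)/2}\|u\|_{p}^{p},$$
and, using $\phi_{v}(x)=\int v(y)^{2}/|x-y|\,dy$ with the same substitution, one checks the pointwise identity $\phi_{u^{t}}(x)=t\,\phi_{u}(tx)$, whence $\int\phi_{u^{t}}(u^{t})^{2}=t\int\phi_{u}u^{2}$. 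This factor of $t$ in the nonlocal term is the only step that requires any care; everything else is a single change of variables.

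Next, since the three nonlinear ingredients of $E$ are positively homogeneous in $u$ of degrees $2$, $p$ and $4$ respectively, multiplying $u^{t}$ by $r^{1/2}$ and plugging into the definition of $E$ yields directly
$$\varphi_{r,u}(t)=\frac{r}{2}\,t^{2}\!\int|\nabla u|^{2}+\frac{qr^{2}}{4}\,t\!\int\phi_{u}u^{2}-\frac{\lambda\,r^{p/2}}{p}\,t^{(3p-6)/2}\!\int|u|^{p}.$$
Smoothness on $(0,\infty)$ is then free: $\varphi_{r,u}$ is a linear combination of three $C^{\infty}(0,\infty)$ monomials $t\mapsto t^{2}$, $t\mapsto t$ and $t\mapsto t^{(3p-6)/2}$, the last exponent being strictly positive since $p>2$.

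Finally, I would differentiate term by term, using $\tfrac{d}{dt}t^{(3p-6)/2}=\tfrac{3(p-2)}{2}t^{(3p-8)/2}$ and $\tfrac{d^{2}}{dt^{2}}t^{(3p-6)/2}=\tfrac{3(p-2)(3p-8)}{4}t^{(3p-10)/2}$; the two announced expressions for $\varphi'_{r,u}(t)$ and $\varphi''_{r,u}(t)$ then fall out after collecting the constants. No step presents a genuine obstacle; the only conceptual point worth underlining is the scaling $\int\phi_{u^{t}}(u^{t})^{2}=t\int\phi_{u}u^{2}$, which explains why the Poisson contribution in $\varphi_{r,u}$ is \emph{linear} in $t$ rather than carrying a higher power, and which is ultimately responsible for the unusual structure of the fibering system compared with the purely semilinear case.
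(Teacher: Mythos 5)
Your proposal is correct and follows essentially the same route as the paper, which simply records the explicit expression $\varphi_{r,u}(t)=\frac{t^{2}}{2}r\int|\nabla u|^{2}+\frac{t}{4}r^{2}q\int\phi_{u}u^{2}-\frac{t^{\frac{3}{2}p-3}}{p}r^{p/2}\lambda\int|u|^{p}$ and leaves the differentiation as a simple computation. Your verification of the scaling identities, in particular $\int\phi_{u^{t}}(u^{t})^{2}=t\int\phi_{u}u^{2}$, and the term-by-term differentiation reproduce exactly the stated formulas, so nothing is missing.
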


Then we can give a complete description of the fiber $\varphi_{r,u}.$
\begin{proposition}\label{fiberingmaps} For each $u\in S_1$ the following statements hold.
	\begin{enumerate}
		\item[I)] If $p\in(2,{8}/{3})$, then $\varphi_{r,u}$ has only one critical point at $t_r^+(u)$ which is a global minimum with $\varphi_{r,u}''(t_r^+(u))>0$. \medskip
		\item[II)] If $p={8}/{3}$, we have: \smallskip
		\begin{enumerate}
			\item[1)] if $$\frac{r^2}{4}\int \phi_uu^2-\frac{r^{{p}/{2}}}{p}\lambda\int |u|^p<0,$$ 
			then $\varphi_{r,u}$ has only one critical point at $t_r^+(u)$ which is a global minimum with $\varphi_{r,u}''(t_r^+(u))>0$; \smallskip
			\item[2)] if $$\frac{r^2}{4}\int \phi_uu^2-\frac{r^{{p}/{2}}}{p}\lambda\int |u|^p\ge0,$$
			then $\varphi_{r,u}$ is strictly increasing and has no critical points. \medskip
		\end{enumerate} 
	\item[III)] If $p\in({8}/{3},{10}/{3})$, then there are three possibilities: \smallskip
	\begin{enumerate}
		\item[1)] $\varphi_{r,u}$ has exactly two critical points at $t_r^-(u)<t_r^+(u)$. Moreover $t_r^+(u)$ corresponds to a local minimum while $t_r^-(u)$ corresponds to a local maximum with  $\varphi_{r,u}''(t_r^+(u))>0$ and $\varphi_{r,u}''(t_r^-(u))<0$; \smallskip 
		\item[2)] $\varphi_{r,u}$ is strictly increasing and has exactly one critical point at $t_r^0(u)$. Moreover $t_r^{0}(u)$ corresponds to an inflection point; \smallskip
		\item[3)]  $\varphi_{r,u}$ is strictly increasing and has no critical points.
	\end{enumerate} \medskip
\item[IV)] If $p={10}/{3}$, we have: \smallskip
\begin{enumerate}
	\item[1)] if $$\frac{r}{2}\int |\nabla u|^2-\frac{r^{{p}/{2}}}{p}\lambda\int |u|^p<0,$$
	then $\varphi_{r,u}$ has only one critical point at $t_r^-(u)$ which is a global maximum with
	 $\varphi_{r,u}''(t_r^-(u))<0$; \smallskip
\item[2)] if $$\frac{r}{2}\int |\nabla u|^2-\frac{r^{{p}/{2}}}{p}\lambda\int |u|^p\ge0,$$
 then $\varphi_{r,u}$ is strictly increasing and has no critical points.
\end{enumerate} \medskip 

	\item[V)] If $p\in({10}/{3},6)$, then $\varphi_{r,u}$ has only one critical point at $t_r^-(u)$ which is a global maximum with $\varphi_{r,u}''(t_r^-(u))<0$.
	\end{enumerate}
\end{proposition}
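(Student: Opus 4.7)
The plan is to fix $u\in S_1$ and abbreviate
\[
a:=r\!\int\!|\nabla u|^{2},\quad b:=\tfrac{r^{2}q}{4}\!\int\!\phi_u u^{2},\quad c:=\tfrac{r^{p/2}\lambda}{p}\!\int\!|u|^{p},\quad \alpha:=\tfrac{3p}{2}-3,
\]
which are strictly positive, with $\alpha\in(0,6)$ for $p\in(2,6)$. Writing $\varphi_{r,u}(t)=\tfrac{a}{2}t^{2}+bt-c\,t^{\alpha}$, Lemma~\ref{fibe} gives
\[
\varphi_{r,u}'(t)=at+b-\alpha c\,t^{\alpha-1},\qquad \varphi_{r,u}''(t)=a-\alpha(\alpha-1)c\,t^{\alpha-2}.
\]
The five cases I)--V) of the Proposition correspond precisely to the ranges $\alpha\in(0,1)$, $\alpha=1$, $\alpha\in(1,2)$, $\alpha=2$, $\alpha\in(2,6)$. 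In each one the strategy is the same: determine the monotonicity of $\varphi_{r,u}''$, deduce that of $\varphi_{r,u}'$, and locate its zeros.

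Cases~I), II), IV) and V) are essentially direct. In case~I) one has $\alpha(\alpha-1)<0$, so $\varphi_{r,u}''>0$ identically and $\varphi_{r,u}$ is strictly convex; since $\varphi_{r,u}'(0^{+})=-\infty$ and $\varphi_{r,u}'(+\infty)=+\infty$, there is a unique zero giving a strict global minimum $t_{r}^{+}(u)$. In case~V), $\alpha-2>0$, so $\varphi_{r,u}''$ is strictly decreasing from $a>0$ to $-\infty$, whence $\varphi_{r,u}'$ is unimodal; combined with $\varphi_{r,u}'(0^{+})=b>0$ and $\varphi_{r,u}'(+\infty)=-\infty$ this forces exactly one zero $t_{r}^{-}(u)$, necessarily a strict global maximum (hence $\varphi_{r,u}''(t_{r}^{-}(u))<0$, otherwise $\varphi_{r,u}'$ would have to change sign twice on $(0,t_{r}^{-}(u))$). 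In the borderline cases~II) and~IV), $\varphi_{r,u}'$ is the affine function $t\mapsto at+(b-c)$, respectively $t\mapsto(a-2c)t+b$, and the stated sign conditions translate directly into the existence or non-existence of a zero in $(0,\infty)$.

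The heart of the argument is case~III), where $\alpha(\alpha-1)>0$ and $\alpha-2<0$: now $\varphi_{r,u}''$ is strictly increasing from $-\infty$ to $a$, so it has a unique zero $t^{*}=t^{*}(u,r)$, which is a strict minimizer of $\varphi_{r,u}'$. Since $\varphi_{r,u}'(0^{+})=b>0$ and $\varphi_{r,u}'(+\infty)=+\infty$, the number of zeros of $\varphi_{r,u}'$ is dictated entirely by the sign of $\varphi_{r,u}'(t^{*})$: if $\varphi_{r,u}'(t^{*})<0$ one gets two zeros $t_{r}^{-}(u)<t^{*}<t_{r}^{+}(u)$, a local max and a local min respectively, with $\varphi_{r,u}''(t_{r}^{-}(u))<0<\varphi_{r,u}''(t_{r}^{+}(u))$ from the monotonicity of $\varphi_{r,u}''$; if $\varphi_{r,u}'(t^{*})=0$ the unique zero $t_{r}^{0}(u)=t^{*}$ satisfies $\varphi_{r,u}''(t_{r}^{0}(u))=0$, yielding the announced inflection; and if $\varphi_{r,u}'(t^{*})>0$ then $\varphi_{r,u}'>0$ throughout and $\varphi_{r,u}$ is strictly increasing. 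The only subtle point is to justify the strict signs of $\varphi_{r,u}''$ at the two non-degenerate critical points in sub-case~1), but this is immediate since $\varphi_{r,u}''$ is strictly monotone and changes sign only once.
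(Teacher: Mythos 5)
Your proof is correct and is precisely the elementary calculus analysis of $\varphi_{r,u}(t)=\tfrac{a}{2}t^{2}+bt-ct^{\alpha}$ that the paper leaves implicit (its proof is just ``It is straightforward''): the split according to $\alpha=\tfrac{3p}{2}-3$ lying in $(0,1)$, $\{1\}$, $(1,2)$, $\{2\}$, $(2,6)$, together with the monotonicity of $\varphi_{r,u}''$ and the boundary behaviour of $\varphi_{r,u}'$, is exactly what the statement rests on, and your case-by-case conclusions (including the trichotomy in case III governed by the sign of $\varphi_{r,u}'(t^{*})$) all check out. The only cosmetic remark is that in case II the condition should be read with the factor $q$ multiplying $\int\phi_u u^2$ (i.e.\ $b-c<0$), as the paper itself does later in the proof of Theorem \ref{iii}; the omission of $q$ in the statement is a typo, and your reading is the intended one.
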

\begin{proof}
	It is straightforward.
\end{proof}	
A direct application of the Implicit Function Theorem shows that
\begin{lemma}\label{diffet+} Fix $u\in S_1$ and suppose that $(a,b)\ni r\mapsto t_r^+(u) \ 
	(\text{respectively } t_r^-(u))$ is well defined. Then $(a,b)\ni r\mapsto t_r^+(u) \ (\text{respectively } t_r^-(u))$ is $C^1$ in $(a,b)$.
\end{lemma}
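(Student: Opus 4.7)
The plan is to invoke the Implicit Function Theorem on the equation $F(r,t):=\varphi_{r,u}'(t)=0$. From Lemma \ref{fibe}, the explicit formula
\begin{equation*}
F(r,t)=tr\int|\nabla u|^2+\frac{r^2}{4}q\int\phi_u u^2-\frac{3(p-2)}{2p}t^{\frac{3p}{2}-4}r^{p/2}\lambda\int|u|^p
\end{equation*}
shows that $F$ is jointly $C^\infty$ on $(0,\infty)\times(0,\infty)$. By hypothesis, $F(r,t_r^+(u))=0$ for every $r\in(a,b)$, and $\partial_t F(r,t)=\varphi_{r,u}''(t)$.

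Fix $r_0\in(a,b)$. In each case of Proposition \ref{fiberingmaps} where a $t_r^+(u)$ is produced, namely I, II.1 and III.1, one has $\varphi_{r_0,u}''(t_{r_0}^+(u))>0$, i.e.\ $\partial_t F(r_0,t_{r_0}^+(u))\neq 0$. The Implicit Function Theorem then yields an open neighborhood $U\subset(a,b)$ of $r_0$ and a unique $C^1$ function $\tau:U\to(0,\infty)$ satisfying $\tau(r_0)=t_{r_0}^+(u)$ and $F(r,\tau(r))=0$ on $U$. By continuity of $\partial_t F$ along the graph of $\tau$, shrinking $U$ we may further assume $\varphi_{r,u}''(\tau(r))>0$ on $U$.

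It remains to identify $\tau(r)$ with $t_r^+(u)$ on $U$. In the cases at hand, $t_r^+(u)$ is characterized as the unique critical point of $\varphi_{r,u}$ at which $\varphi_{r,u}''>0$ (the other critical point $t_r^-(u)$, when it exists, satisfies $\varphi_{r,u}''<0$). Since $\tau(r)$ is a critical point of $\varphi_{r,u}$ with positive second derivative, we conclude $\tau(r)=t_r^+(u)$ throughout $U$, so $r\mapsto t_r^+(u)$ is $C^1$ near $r_0$. As $r_0\in(a,b)$ was arbitrary, the map is $C^1$ on all of $(a,b)$. The proof for $t_r^-(u)$ is identical, using instead that $\varphi_{r,u}''(t_r^-(u))<0$ to apply the IFT and to distinguish the two branches.

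The only point requiring care is the matching of the locally unique IFT branch with the globally defined $t_r^\pm(u)$; this is automatic here because the sign of $\varphi_{r,u}''$ distinguishes the $+$ branch from the $-$ branch, so no branch-crossing can occur on $(a,b)$.
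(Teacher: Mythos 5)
Your proof is correct and follows the same route as the paper, which simply states that the lemma is "a direct application of the Implicit Function Theorem" applied to $F(r,t)=\varphi_{r,u}'(t)$, using that $\partial_t F=\varphi_{r,u}''$ does not vanish at $t_r^{\pm}(u)$. Your additional care in identifying the local IFT branch with the globally defined $t_r^{\pm}(u)$ via the sign of $\varphi_{r,u}''$ is a welcome (and correct) elaboration of the detail the paper leaves implicit.
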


From Lemma \ref{fibe} it is easy to see that, for each $r>0$, $\mathcal N_{r}$ given in \eqref{eq:Nehariset}
can be written also as
\begin{equation*}
\mathcal{N}_r=\left\{u\in S_1: \varphi_{r,u}'(1)=0\right\}
\end{equation*}
which, in some sense, justifies the name of Nehari set.
Moreover it holds (see \eqref{eq:N+}, \eqref{eq:N0} and \eqref{eq:N-}) that
\begin{equation}
\begin{split}\label{eq:decomposicaoN}
\mathcal{N}_r^+&=\left\{u\in \mathcal{N}_r:\varphi_{r,u}''(1)>0\right\}, \\
\mathcal{N}_r^0&=\left\{u\in \mathcal{N}_r:\varphi_{r,u}''(1)=0\right\}, \\
\mathcal{N}_r^-&=\left\{u\in \mathcal{N}_r:\varphi_{r,u}''(1)<0\right\}
\end{split}
\end{equation}
and $\mathcal{N}_r=\mathcal{N}_r^+\cup \mathcal{N}_r^0\cup \mathcal{N}_r^-$. 

\begin{remark}\label{rem:N+0-}
Note that, given $u\in S_{1}$, $t^{*}$ is a critical point of the fiber map $\varphi_{r,u}$ if and only if $r^{1/2} u^{t^{*}}\in \mathcal N_{r}$. Actually
$t^{*}$ is a minimum (respect. maximum or inflection) point of  $\varphi_{r,u}$ if and only if
$r^{1/2} u^{t^{*}}\in \mathcal N_{r}^{+}$ (respect. $\mathcal N_{r}^{-}$ or $\mathcal N_{r}^{0}$). 
\end{remark}

In the following we study deeply the sets $\mathcal{N}_r^+$ and $\mathcal{N}_r^-$.

\subsection{$\mathcal{N}_r^+$ and $\mathcal{N}_r^-$ as natural constraints} \label{sec:N+-}

Let us start by defining, for $r>0$, the functions
\begin{equation}
\begin{split}\label{hg}
h(u)&=\frac{1}{2}\int u^2-\frac{r}{2}, \qquad \text{for } \ u\in H^1(\mathbb{R}^3), \\
g(u)&=\varphi_{r,u}'(1),\qquad \text{for } \ u \in  S_{1}. 
\end{split}
\end{equation}


\begin{lemma}\label{manifold} 
Whenever nonempty, $\mathcal{N}_r^+$  and $\mathcal{N}_r^-$ are  $C^1$ manifolds in $H^1(\mathbb{R}^3)$
of co-dimension $2$.
\end{lemma}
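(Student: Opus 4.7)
The plan is to realize $\mathcal{N}_r$ as the zero set of a $C^1$ submersion from $H^1(\mathbb{R}^3)\setminus\{0\}$ into $\mathbb{R}^2$ and apply the regular value theorem. Concretely, I would introduce
\begin{equation*}
G(u)=\left(\int u^2-r,\ \int|\nabla u|^2+\frac{q}{4}\int\phi_u u^2-\frac{3(p-2)}{2p}\lambda\int|u|^p\right),
\end{equation*}
both of whose components are $C^1$ on $H^1(\mathbb{R}^3)$ (the Hartree term is a continuous quartic form by the Hardy--Littlewood--Sobolev inequality, and the $L^p$-term is of class $C^1$ because $p\in(2,6)$ is Sobolev-subcritical). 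By \eqref{eq:Nehariset} one has $\mathcal{N}_r=G^{-1}(0,0)$, and since $u\mapsto \varphi_{r,u}''(1)$ is continuous, the sets $\mathcal{N}_r^{\pm}$ are relatively open in $\mathcal{N}_r$. Hence it suffices to prove that, at every $u\in \mathcal{N}_r^+\cup \mathcal{N}_r^-$, the differential $G'(u):H^1(\mathbb{R}^3)\to\mathbb{R}^2$ is surjective, i.e.\ that the two rows of $G'(u)$ are linearly independent in $H^{-1}$.

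Suppose, for contradiction, that there is a nontrivial relation $\alpha G_1'(u)+\beta G_2'(u)=0$. Testing against $v=u$ rules out $\beta=0$ at once, so after normalising $\beta=1$ we see that $u$ weakly solves
\begin{equation*}
-2\Delta u+\alpha u+q\phi_u u-\frac{3(p-2)}{2}\lambda |u|^{p-2}u=0.
\end{equation*}
The decisive step is then to combine three identities: Pohozaev applied to this equation (Proposition \ref{pohozaev} with $a=2$, $b=\alpha$, $c=q$, $d=-\tfrac{3(p-2)}{2}\lambda$), the equation tested against $u$, and the Nehari relation $G_2(u)=0$. With $A=\int|\nabla u|^2$, $B=\int\phi_u u^2$, $C=\int|u|^p$ and $D=\int u^2=r$, these form a linear system in $A,B,C$ and the multiplier $\alpha r$. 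Eliminating $\alpha r$ between Pohozaev and the tested equation, and then using Nehari to eliminate $B$, collapses everything to
\begin{equation*}
A-\frac{3(p-2)(3p-8)}{4p}\lambda C=0,
\end{equation*}
which is exactly $\varphi_{r,u}''(1)=0$, i.e.\ $u\in\mathcal{N}_r^0$, contradicting $u\in\mathcal{N}_r^+\cup\mathcal{N}_r^-$.

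Linear independence being established, the regular value theorem gives that $\mathcal{N}_r$ is locally a $C^1$ submanifold of $H^1(\mathbb{R}^3)$ of codimension $2$ at every such point; since $\mathcal{N}_r^{\pm}$ are relatively open in $\mathcal{N}_r$, they are themselves $C^1$ manifolds of codimension $2$. The only non-routine step is the algebraic cancellation described above: the Pohozaev identity, the tested equation, and the Nehari constraint combine to single out precisely the $\mathcal{N}_r^0$ condition. This is not accidental but rather the structural reason why the trichotomy $\mathcal{N}_r^+\cup \mathcal{N}_r^0\cup \mathcal{N}_r^-$ is the natural one from the manifold point of view, the set $\mathcal{N}_r^0$ being exactly the locus where $G$ fails to be a submersion.
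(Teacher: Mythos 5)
Your proposal is correct and follows essentially the same route as the paper: the paper also works with the two functionals (mass constraint and Nehari functional $g(u)=\varphi_{r,u}'(1)$), assumes a degenerate linear combination of their derivatives, and combines the resulting Euler--Lagrange equation tested against $u$, the Pohozaev identity of Proposition \ref{pohozaev}, and the Nehari relation to force the $\mathcal{N}_r^0$ condition, a contradiction. The only (minor) difference is organizational: your direct elimination of the multiplier and of the Hartree term yields $\int|\nabla u|^2-\frac{3(p-2)(3p-8)}{4p}\lambda\int|u|^p=0$ uniformly in $p\in(2,6)$, whereas the paper solves the $3\times 3$ linear system for $\int|\nabla u|^2$, $q\int\phi_u u^2$, $\lambda\int|u|^p$ and therefore must treat $p=3$ (and the vanishing of the multiplier) as separate cases.
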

\begin{proof} 
Let us show the proof for $\mathcal N_{r}^{+}$ since for  $\mathcal N_{r}^{-}$
is completely analogous.

The proof will follow once we prove that $h'(u)\neq 0$, $g'(u)\neq0$ and $h'(u)$, $g'(u)$ are linearly independent for each $u\in \mathcal{N}_r^+$. In fact, $h'(u)\neq 0$ is straightforward. Suppose on the contrary that there exists $u\in \mathcal{N}_r^+$   and $c\in \mathbb{R}$ such that 
	\begin{equation*}
	g'(u)=c h'(u).
	\end{equation*}
	It follows that 
	\begin{equation*}
	-2\Delta u-cu+q\phi_uu-\frac{3(p-2)}{2}\lambda|u|^{p-2}u=0.
	\end{equation*}
	From Propostion \ref{pohozaev} we conclude that
	\begin{equation*}
	\left\{
	\begin{aligned}
	\int |\nabla u|^2-\frac{3c}{2}r+\frac{5}{4}q\int \phi_uu^2-\frac{9(p-2)}{2p}\lambda\int |u|^p=0, \\
	2\int |\nabla u|^2-cr+q\int \phi_uu^2-\frac{3(p-2)}{2}\lambda\int |u|^p=0, \\   
	\int |\nabla u|^2+\frac{q}{4}\int \phi_uu^2-\frac{3(p-2)}{2p}\lambda\int |u|^p=0.
	\end{aligned}
	\right.
	\end{equation*}
	Let us set for brevity
	 $$A=\int |\nabla u|^2\,,  \quad B=q\int \phi_uu^2\,,  \quad C=\lambda\int |u|^p$$
	  and solve the system with respect to these variables. A simple calculation shows that it has a unique solution when $p\neq 3$, in which case 
	\begin{equation*}
	A=\frac{r c(8-3p)}{8(p-3)},\quad B=\frac{r c(3p-10)}{2(p-3)},\quad C=\frac{r c p}{6(p-2)(p-3)}.
	\end{equation*}
	We substitute $A,C$ in $\varphi_{r,u}''(1)$ to conclude that 
	\begin{equation*}
\varphi_{r,u}''(1)=0,
	\end{equation*}
	and hence a contradiction. If $p=3$ we have two cases: when $c\neq 0$, then the system has no solution, which is a contradiction, however, when $c=0$, the system has the following solution
	\begin{equation*}
	A=\frac{C}{4},\quad  B=C,\quad C>0.
	\end{equation*}
	We substitute $A,C$ in $\varphi_{r,u}''(1)$ to conclude that 
	\begin{equation*}
\varphi_{r,u}''(1)=0,
	\end{equation*}
	again a contradiction.  From all these contradictions we conclude that $h'(u)$ and $g'(u)$ are linearly independent for each $u\in\mathcal{N}_r^+$. Moreover a careful look to the previous calculations shows that $g'(u)=0$ is impossible, since in that case we would have $c=0$, which gives a contradiction in all cases. Therefore $g'(u)\neq0$ and $\mathcal{N}_r^+$  is a $C^1$ manifold with co-dimension $2$ in $H^1(\mathbb{R}^3)$.
\end{proof}
Now we prove that $\mathcal{N}_r^+$ and $\mathcal{N}_r^-$ are natural constraints for the energy functional $E$.
\begin{lemma}\label{naturalcons}Assume that there exists $u\in \mathcal{N}_r^+\cup\mathcal{N}_r^-$ and $\mu,\nu\in\mathbb{R}$ such that
	\begin{equation*}
	E'(u)=\mu h'(u)+\nu g'(u)
	\end{equation*} 
	where $h$ and $g$ are given by \eqref{hg}. 	Then $\nu=0$.
\end{lemma}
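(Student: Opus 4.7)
The plan is to convert the identity $E'(u) = \mu h'(u) + \nu g'(u)$ into scalar identities by testing against $u$ and by applying Pohozaev's identity, and then combine them with the Nehari relation $g(u) = 0$ to isolate $\nu$ times the quantity $\int|\nabla u|^2 - \tfrac{3(p-2)(3p-8)}{4p}\lambda\int|u|^p$. Since this quantity is nonzero precisely on $\mathcal{N}_r^+\cup\mathcal{N}_r^-$ by the definitions \eqref{eq:N+}--\eqref{eq:N-}, that would immediately force $\nu=0$.

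Concretely, I would first compute $h'(u)=u$ and $g'(u)=-2\Delta u + q\phi_u u - \tfrac{3(p-2)}{2}\lambda|u|^{p-2}u$, so that the hypothesis rewrites as $-(1-2\nu)\Delta u - \mu u + (1-\nu)q\phi_u u + \bigl(\tfrac{3(p-2)\nu}{2}-1\bigr)\lambda|u|^{p-2}u = 0$. This PDE has the form \eqref{poho} with $a=1-2\nu$, $b=-\mu$, $c=(1-\nu)q$ and $d=\bigl(\tfrac{3(p-2)\nu}{2}-1\bigr)\lambda$, so Proposition \ref{pohozaev} yields a Pohozaev identity, while testing against $u$ yields a second linear identity. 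Setting $A=\int|\nabla u|^2$, $B=q\int\phi_u u^2$ and $C=\lambda\int|u|^p$, and using $\int u^2 = r$, these two identities become two linear relations in $A,B,C,\mu r$ with coefficients depending on $\nu$ and $p$.

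Next I would eliminate $\mu r$ between those two relations, and then eliminate $B$ via the Nehari constraint $A + \tfrac{B}{4} - \tfrac{3(p-2)}{2p}C = 0$ inherited from $g(u)=0$. A short algebraic simplification should reduce the resulting equation to $4\nu\bigl(A - \tfrac{3(p-2)(3p-8)}{4p}C\bigr) = 0$. By the defining inequality for $\mathcal{N}_r^+\cup\mathcal{N}_r^-$, the parenthesis is nonzero, whence $\nu=0$.

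The computation is routine in spirit; the only point to verify is that the elimination is uniform in $p\in(2,6)$. Unlike the proof of Lemma \ref{manifold}, no case split at $p=3$ should arise here, because the elimination never introduces a $(p-3)$ factor in a denominator: the linear step that combines the Pohozaev and test-against-$u$ identities is simply $3\cdot(\text{test})-(\text{Pohozaev})$ after clearing fractions, and the substitution of $B$ from the Nehari relation is unconditional. The conceptual point---and what I expect to be the most gratifying moment rather than a real obstacle---is that the residual coefficient multiplying $\nu$ turns out to be exactly the expression defining $\mathcal{N}_r^+\cup\mathcal{N}_r^-$, which is precisely what makes these sets natural constraints for $E$ restricted to $S_r$.
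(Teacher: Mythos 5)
Your proposal is correct and follows essentially the same route as the paper: apply Proposition \ref{pohozaev} to the equation $E'(u)-\mu h'(u)-\nu g'(u)=0$, test against $u$, take the linear combination that kills the multiplier $\mu$, and use $g(u)=0$ to see that the leftover is $\nu$ times $\int|\nabla u|^2-\tfrac{3(p-2)(3p-8)}{4p}\lambda\int|u|^p$ (i.e.\ $\nu\,\varphi_{r,u}''(1)$), which is nonzero on $\mathcal{N}_r^+\cup\mathcal{N}_r^-$. Only a cosmetic slip: the $\mu$-eliminating combination is $\tfrac{3}{2}(\text{test})-(\text{Pohozaev})$ (equivalently $3\cdot\text{test}-2\cdot\text{Pohozaev}$), not $3\cdot\text{test}-\text{Pohozaev}$, and the exact constant in front of $\nu$ is immaterial.
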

\begin{proof}  Indeed, applying Proposition \ref{pohozaev} to the equation $E'(u)-\mu h'(u)-\nu g'(u)=0$  we conclude 
that
	\begin{equation*}
	\frac{3}{2}(E'(u)u-\mu h'(u)u-\nu g'(u)u)-P(u)=0.
	\end{equation*}
	Simple calculations shows that
	\begin{equation*}
	\frac{3}{2}(E'(u)u-\mu h'(u)u-\nu g'(u)u)-P(u)=g(u)-\nu \varphi_{r,u}''(1),
	\end{equation*}
	which implies that
	$\nu \varphi_{r,u}''(1)=0,$
	and hence $\nu=0$.
\end{proof}
Lemma  \ref{manifold} and Lemma \ref{naturalcons} are summarised in the next
\begin{theorem}\label{naturalconstraints} 
Whenever nonempty, $\mathcal{N}_r^+$ and $\mathcal{N}_r^-$ are $C^1$ manifold in $H^1(\mathbb{R}^3)$
of co-dimension $2$ and natural constraints for $E.$
\end{theorem}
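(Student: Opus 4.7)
The plan is to assemble the statement directly from the two preceding lemmas. Lemma~\ref{manifold} supplies the smooth manifold structure of codimension~$2$, while Lemma~\ref{naturalcons} delivers the natural constraint property. The proof therefore reduces to invoking each result in turn, after noting that both $h$ and $g$ defined in~\eqref{hg} are smooth on $H^{1}(\mathbb{R}^{3})$: the nonlocal term $u\mapsto \int \phi_{u} u^{2}$ is of class $C^{\infty}$ (it is quartic and continuous by the Hardy--Littlewood--Sobolev inequality of Theorem~\ref{HLS}), and the $L^{p}$ term is of class $C^{1}$ for $p\in(2,6)$ by Sobolev embedding.

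For the manifold claim, I would apply the submersion theorem to the map $(h,g)\colon H^{1}(\mathbb{R}^{3})\to\mathbb{R}^{2}$, restricted to the open sets $\{u : \varphi''_{r,u}(1)>0\}$ and $\{u : \varphi''_{r,u}(1)<0\}$, each of which contains $\mathcal{N}_{r}^{+}$, resp.\ $\mathcal{N}_{r}^{-}$, as a relatively closed subset. The essential step, carried out in Lemma~\ref{manifold}, is to verify that at every $u\in\mathcal{N}_{r}^{+}\cup\mathcal{N}_{r}^{-}$ the differentials $h'(u)$ and $g'(u)$ are linearly independent. Assuming $g'(u)=c\,h'(u)$ for some $c\in\mathbb{R}$, one rewrites this identity as an elliptic PDE, applies the Pohozaev identity of Proposition~\ref{pohozaev}, and solves the resulting $3\times 3$ linear system in the variables $A=\int|\nabla u|^{2}$, $B=q\int\phi_{u} u^{2}$, $C=\lambda\int|u|^{p}$. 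Substitution into the expression for $\varphi''_{r,u}(1)$ from Lemma~\ref{fibe} then yields $\varphi''_{r,u}(1)=0$, contradicting the strict sign condition defining $\mathcal{N}_{r}^{\pm}$ in~\eqref{eq:decomposicaoN}.

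For the natural constraint property, given a critical point $u\in\mathcal{N}_{r}^{+}\cup\mathcal{N}_{r}^{-}$ of $E$ restricted to the appropriate manifold, the Lagrange multiplier rule supplies $\mu,\nu\in\mathbb{R}$ with $E'(u)=\mu\,h'(u)+\nu\,g'(u)$. Applying Proposition~\ref{pohozaev} to this equation and combining with the Nehari identity $g(u)=0$ and the Euler scaling identity $E'(u)u - \mu h'(u)u - \nu g'(u)u = 0$, Lemma~\ref{naturalcons} extracts the scalar relation $\nu\,\varphi''_{r,u}(1)=0$. Since $\varphi''_{r,u}(1)\neq 0$ on $\mathcal{N}_{r}^{\pm}$ by definition, one concludes $\nu=0$, so that $E'(u)=\mu\,h'(u)$ and $u$ is genuinely a constrained critical point of $E$ on the sphere $S_{r}$.

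The main subtlety I anticipate is the case $p=3$ in the linear independence step: the $3\times 3$ algebraic system becomes degenerate, and one must separately argue that even when $c=0$ the relation $g'(u)=0$ forces $\varphi''_{r,u}(1)=0$, via the reduced system $A=C/4$, $B=C$ together with the formula of Lemma~\ref{fibe}. This is the most delicate computation in the whole argument, but it is self-contained and purely algebraic once the Pohozaev identity has been invoked.
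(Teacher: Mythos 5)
Your proposal is correct and follows essentially the same route as the paper: the theorem is obtained by combining Lemma~\ref{manifold} (linear independence of $h'(u)$ and $g'(u)$ via the Pohozaev identity, with the separate algebraic treatment of $p=3$) with Lemma~\ref{naturalcons} (the relation $\nu\,\varphi''_{r,u}(1)=0$ forcing $\nu=0$). Your added remarks on the smoothness of $h,g$ and the submersion-theorem formulation are consistent with, and merely make explicit, what the paper leaves implicit.
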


\medskip

The next step is then to see  for which values of $q,\lambda,p,r$  the sets $\mathcal N_{r}, \mathcal{N}_r^+,\mathcal{N}_r^-$ are non-empty. 
As a consequence of this study, we 
will be able to recover some results known in the literature by our unified approach.

\section{Structure of $\mathcal N_{r}, \mathcal N_{r}^{+}$ and $\mathcal N_{r}^{-}$}
\label{sec:nehari}

The structure of $\mathcal N_{r}, \mathcal N_{r}^{+}$ and $\mathcal N_{r}^{-}$
strongly depends on the values of $p$ and indeed different approaches are needed. 
The particular value $p=3$ is treated separately.

\subsection{The case $p\neq3$ and Proof of Theorem \ref{Nehari} }
In is convenient  to consider the cases $p\in (2,8/3] \cup [10/3, 6]$  and $p\in(8/3,10/3)\setminus\{3\}$. 
\subsubsection{The case $p\in(2,8/3] \cup [10/3,6)$}\label{subsec:p26}

In this case we can give a simple description of $\mathcal N_{r}$. We prefer to state separately the limit cases
$p=8/3$ and $p=10/3$.
\begin{theorem}\label{i-ii} Let $r>0$. Then
	\begin{enumerate}
		\item[i)] if $p\in (2,{8}/{3})$, then $\mathcal{N}_r=\mathcal{N}_r^+\neq\emptyset$; \medskip
		\item[ii)] if $p\in ({10}/{3},6)$, then $\mathcal{N}_r=\mathcal{N}_r^-\neq\emptyset$.
	\end{enumerate}
\end{theorem}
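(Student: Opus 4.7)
The plan is to reduce everything to the fiber-map analysis carried out in Proposition \ref{fiberingmaps}, via the correspondence noted in Remark \ref{rem:N+0-}. Explicitly, for every $u\in S_1$ and $t>0$, the value $t\,\varphi_{r,u}'(t)$ equals the Nehari expression evaluated at $v:=r^{1/2}u^t\in S_r$, and $t^2\varphi_{r,u}''(t)$ equals the quantity whose sign defines $\mathcal{N}_r^\pm,\mathcal{N}_r^0$ at $v$; this is a direct check from the scalings $\int|\nabla v|^2=rt^2\int|\nabla u|^2$, $\int\phi_v v^2=r^2 t\int\phi_u u^2$, $\int|v|^p=r^{p/2}t^{3p/2-3}\int|u|^p$, together with the definition of $\varphi_{r,u}$ in Lemma \ref{fibe}. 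Consequently $v\in\mathcal{N}_r$ iff $t$ is a critical point of $\varphi_{r,u}$, and $v\in\mathcal{N}_r^+$ (resp.\ $\mathcal{N}_r^-$, $\mathcal{N}_r^0$) iff that critical point is a local minimum (resp.\ maximum, inflection).

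For non-emptiness, I pick any $u\in S_1$. If $p\in(2,8/3)$, part I) of Proposition \ref{fiberingmaps} supplies a unique critical point $t_r^+(u)$ of $\varphi_{r,u}$, a global minimum with $\varphi_{r,u}''(t_r^+(u))>0$; hence $r^{1/2}u^{t_r^+(u)}\in\mathcal{N}_r^+$ and $\mathcal{N}_r^+\neq\emptyset$. If $p\in(10/3,6)$, part V) analogously produces a unique critical point $t_r^-(u)$, a global maximum, so $r^{1/2}u^{t_r^-(u)}\in\mathcal{N}_r^-\neq\emptyset$. For the identities $\mathcal{N}_r=\mathcal{N}_r^+$ and $\mathcal{N}_r=\mathcal{N}_r^-$, I would invoke the \emph{uniqueness} part of the same proposition: given any $v\in\mathcal{N}_r$, set $w:=r^{-1/2}v\in S_1$; since $v=r^{1/2}w^1$, the bridge above forces $\varphi_{r,w}'(1)=0$, so $1$ must coincide with $t_r^+(w)$ (resp.\ $t_r^-(w)$), whence $\varphi_{r,w}''(1)>0$ (resp.\ $<0$) and $v\in\mathcal{N}_r^+$ (resp.\ $v\in\mathcal{N}_r^-$). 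Combined with the trivial inclusions $\mathcal{N}_r^\pm\subseteq\mathcal{N}_r$, this gives the equalities.

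There is no genuine obstacle here: the real work is encapsulated in Proposition \ref{fiberingmaps}, whose proof is pure one-variable calculus. The driving feature is the exponent $3p/2-5$ appearing in $\varphi_{r,u}''$: when $p<8/3$ this exponent is negative and the coefficient $-3(p-2)(3p-8)/(4p)$ is positive, so $\varphi_{r,u}''>0$ on $(0,\infty)$ and $\varphi_{r,u}$ is strictly convex with a unique global minimum; when $p>10/3$, the negative nonlinear term (with growth exponent $3p/2-3>2$) dominates $\varphi_{r,u}$ at infinity while the positive Poisson term controls its slope at $0$, forcing a unique global maximum. This is also why the statement depends neither on $q,\lambda$ nor on $r>0$: the qualitative geometry of $\varphi_{r,u}$ is the same for every $u\in S_1$, so any choice of $u$ produces an element of the appropriate component.
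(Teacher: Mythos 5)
Your proof is correct and follows essentially the same route as the paper: both reduce the statement to Proposition \ref{fiberingmaps}, items I) and V), via the correspondence between critical points of the fiber map $\varphi_{r,u}$ and elements of $\mathcal{N}_r$ (Remark \ref{rem:N+0-}). You merely spell out more explicitly the uniqueness argument giving $\mathcal{N}_r=\mathcal{N}_r^{+}$ (resp. $\mathcal{N}_r^{-}$), which the paper leaves implicit in its one-line proof.
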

\begin{proof} The proof of $i)$ is a direct consequence of Proposition \ref{fiberingmaps} item $I)$ since for each $u\in S_1$ we have that $r^{{1}/{2}}u^{t_r^+(u)}\in \mathcal{N}_r^+$. Similarly, the proof of $ii)$ is a direct consequence of Proposition \ref{fiberingmaps} item $V)$, since for each $u\in S_1$ we have that $r^{{1}/{2}}u^{t_r^-(u)}\in \mathcal{N}_r^-$.
\end{proof}

\begin{theorem}\label{iii} 
Let $r>0$. If $p={8}/{3}$, then $\mathcal{N}_r=\mathcal{N}_r^+\neq\emptyset$.
\end{theorem}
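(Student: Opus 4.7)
The plan exploits the degeneracy of the factor $3p-8$ at the critical value $p=8/3$. First, I would observe that for $p=8/3$ the coefficient $\frac{3(p-2)(3p-8)}{4p}$ appearing in the definitions \eqref{eq:N+}, \eqref{eq:N0}, \eqref{eq:N-} vanishes. Consequently, for any $u\in\mathcal{N}_{r}$ the defining quantity $\int|\nabla u|^{2}-\frac{3(p-2)(3p-8)}{4p}\lambda\int|u|^{p}$ reduces to $\int|\nabla u|^{2}$, which is automatically positive. Hence $\mathcal{N}_{r}^{+}=\mathcal{N}_{r}$ and $\mathcal{N}_{r}^{0}=\mathcal{N}_{r}^{-}=\emptyset$ are obtained for free, and only the non-emptiness of $\mathcal{N}_{r}$ remains to be checked.

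For that, I would appeal to Proposition \ref{fiberingmaps} II) 1) and Remark \ref{rem:N+0-}: it suffices to exhibit one $u\in S_{1}$ at which the fiber map $\varphi_{r,u}$ has a positive critical point, that is, such that
\begin{equation*}
\frac{r^{2}q}{4}\int\phi_{u}u^{2}-\frac{3\,r^{4/3}\lambda}{8}\int|u|^{8/3}<0,
\end{equation*}
since then $r^{1/2}u^{t_{r}^{+}(u)}$ will belong to $\mathcal{N}_{r}$. Rearranging and using \eqref{eq:R8/3}, this inequality is equivalent to
\begin{equation*}
R_{8/3}(u)>\left(\tfrac{2}{3}\,r^{2/3}\right)^{3/2}.
\end{equation*}
Because $8/3\in(2,3)$, Proposition \ref{NQRP} i) guarantees that $R_{8/3}$ is unbounded from above on $S_{1}$, so such a $u$ exists.

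The main obstacle, though a minor one, is the algebraic translation of the sign condition in Proposition \ref{fiberingmaps} II) 1) into a lower bound for $R_{8/3}$; once this is done, the conclusion follows at once from the unboundedness established in Section \ref{sec:A}. No compactness argument or delicate estimate is required, and the proof cleanly isolates the role of the special value $p=8/3$ as the threshold at which the coefficient $3p-8$ of the Rayleigh quotient changes sign.
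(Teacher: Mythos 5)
Your proof is correct and follows essentially the same route as the paper: both reduce the problem, via Proposition \ref{fiberingmaps} item II), to exhibiting $u\in S_1$ for which $\frac{r^2q}{4}\int\phi_uu^2-\frac{3r^{4/3}\lambda}{8}\int|u|^{8/3}<0$, the equality $\mathcal{N}_r=\mathcal{N}_r^+$ being automatic since $3p-8=0$. The only cosmetic difference is that you verify this sign condition by rewriting it as $R_{8/3}(u)>\bigl(\tfrac{2}{3}r^{2/3}\bigr)^{3/2}$ and invoking the unboundedness of $R_{8/3}$ from Proposition \ref{NQRP} i), whereas the paper plugs the sequence of Lemma \ref{catto} into the condition directly — but that unboundedness is itself proved with the very same sequence, so the two arguments coincide in substance.
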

\begin{proof} In fact, from Proposition \ref{fiberingmaps} item $II)$ it is sufficiently to prove that there exists $u\in S_1$ such that $\displaystyle\frac{r^2}{4}\int \phi_uu^2-\frac{r^{{p}/{2}}}{p}\int |u|^p<0$. If $\{u_n\}\subset S_1$ is the sequence given by Lemma \ref{catto}, then
	\begin{equation*}
	\lim_{n\to \infty}\left(\frac{r^2}{4}\int q\phi_{u_n}u_n^2-\frac{r^{{p}/{2}}}{p}\lambda\int |u_n|^p\right)\le\lim_{n\to \infty} \left(\frac{C_3}{n^{{2}/{3}}}\frac{r^2}{4}q-C_1\frac{r^{{p}/{2}}}{p}\lambda\right)=-C_1\frac{r^{{p}/{2}}}{p}\lambda.
	\end{equation*}
	Therefore for $n$ sufficiently large, we have that $r^{{1}/{2}}u_n^{t_r^+(u_n)}\in \mathcal{N}_r^+$.
	\end{proof}

\begin{theorem}\label{iv} Let $r>0$. If $p={10}/{3}$. 
Then $\mathcal{N}_r\neq\emptyset$ if and only if 
		\begin{equation*}
	\frac{5}{3K_{\emph{GN}}}\frac{1}{\lambda}<r^{{2}/{3}}
	\end{equation*}
	(as usual $K_{\emph{GN}}$ is the Gagliardo-Nirenberg constant as in \eqref{GN}).
	Moreover in this case it is 
	 $\mathcal{N}_r=\mathcal{N}_r^-$.
\end{theorem}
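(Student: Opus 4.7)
The statement is essentially a translation of item IV) of Proposition \ref{fiberingmaps} into the language of the sharp Gagliardo-Nirenberg constant, so the plan is to first read off the relevant fibering criterion and then compare it against the Gagliardo-Nirenberg inequality \eqref{GN} specialised to $p = 10/3$.

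First, I would invoke Proposition \ref{fiberingmaps} IV) together with Remark \ref{rem:N+0-}: since for every $u \in S_1$ the fiber $\varphi_{r,u}$ either is strictly increasing or has a single critical point which is necessarily a strict maximum (never a minimum or an inflection), $\mathcal{N}_r \neq \emptyset$ if and only if there exists $u \in S_1$ with
\begin{equation*}
\frac{r}{2}\int|\nabla u|^2 \;<\; \frac{r^{p/2}}{p}\,\lambda \int|u|^p,
\end{equation*}
and, in that case, the corresponding $r^{1/2}u^{t_r^-(u)}$ lies in $\mathcal{N}_r^-$. This already gives the ``moreover'' part $\mathcal{N}_r = \mathcal{N}_r^-$ for free, since by Remark \ref{rem:N+0-} every element of $\mathcal{N}_r$ corresponds to a maximum of its fiber.

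Second, setting $p = 10/3$ the above inequality becomes $\int |\nabla u|^2 < \tfrac{3}{5}\lambda r^{2/3}\int|u|^{10/3}$, so the condition $\mathcal{N}_r \neq \emptyset$ is equivalent to
\begin{equation*}
\sup_{u \in S_1}\;\frac{\int|u|^{10/3}}{\int|\nabla u|^2} \;>\;\frac{5}{3\lambda r^{2/3}}.
\end{equation*}
I would then observe that the Gagliardo-Nirenberg inequality \eqref{GN} at $p = 10/3$ and $\int u^2 = 1$ reduces exactly to $\int|u|^{10/3} \le K_{\textrm{GN}}\int|\nabla u|^2$, with $K_{\textrm{GN}}$ the best constant, hence the supremum above equals $K_{\textrm{GN}}$. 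Rearranging $K_{\textrm{GN}} > \tfrac{5}{3\lambda r^{2/3}}$ yields precisely the threshold $\tfrac{5}{3 K_{\textrm{GN}}\lambda} < r^{2/3}$.

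\textbf{Main obstacle.} The ``only if'' direction is immediate from \eqref{GN}. The ``if'' direction is where the only non-routine point lies: one must know that $K_{\textrm{GN}}$ really is the supremum of the ratio $\int|u|^{10/3}/\int|\nabla u|^2$ on $S_1$, and not merely an upper bound. For $p=10/3$ in $\mathbb{R}^3$ this sharpness is well known (the sharp constant is attained by an appropriate rescaling of the Weinstein ground state), so it suffices to test the inequality at an extremiser, or alternatively to pick a maximising sequence $\{u_n\}\subset S_1$ with $\int|u_n|^{10/3}/\int|\nabla u_n|^2 \to K_{\textrm{GN}}$ and note that, once $r^{2/3}>\tfrac{5}{3K_{\textrm{GN}}\lambda}$, strict inequality holds for all sufficiently large $n$. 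This is the unique step at which one has to appeal to information about the sharp Gagliardo-Nirenberg constant rather than to its mere existence.
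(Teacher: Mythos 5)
Your proposal is correct and follows essentially the same route as the paper: reduce via Proposition \ref{fiberingmaps} IV) (and Remark \ref{rem:N+0-}) to the existence of $u\in S_1$ with $\frac{r}{2}\int|\nabla u|^2-\frac{3}{10}\lambda r^{5/3}\int|u|^{10/3}<0$, and then use that $K_{\textrm{GN}}=\sup_{u\in S_1}\int|u|^{10/3}/\int|\nabla u|^2$ is the sharp constant to get the threshold in both directions. The only cosmetic difference is your appeal to attainment of the sharp constant (Weinstein extremiser), which is not needed: as in your alternative and in the paper, the definition of $K_{\textrm{GN}}$ as a supremum already yields a test function with strict inequality whenever $r^{2/3}>\frac{5}{3K_{\textrm{GN}}\lambda}$.
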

\begin{proof}
By Proposition \ref{fiberingmaps} item $IV)$ it is sufficient to estimate, for $u\in S_{1}$,  the quantity
 $\displaystyle{\frac{r}{2}\int |\nabla u|^2-\frac{r^{{p}/{2}}}{p}\int |u|^p}$.
By the Gagliardo-Nirenberg inequality \eqref{GN} we have that
	\begin{equation*}\label{imme}
	\int|u|^p\le K_{\textrm{GN}}\int |\nabla u|^2,\quad \forall u\in S_1,\qquad \left(\text{where} 
	\  K_{\textrm{GN}}= \sup_{u\in S_{1}} \frac{\displaystyle\int|u|^p}{\displaystyle\int |\nabla u|^2}\right).
	\end{equation*}
	It follows that
	\begin{eqnarray*}
	\frac{r}{2}\int |\nabla u|^2-\frac{r^{{p}/{2}}}{p}\lambda\int |u|^p&\ge& 	
	\frac{r}{2}\int |\nabla u|^2-K_{\textrm{GN}}\frac{r^{{p}/{2}}}{p}\lambda\int |\nabla u|^2\\
	&=&r\int |\nabla u|^2\left(\frac{1}{2}-\frac{3K_{\textrm{GN}}}{10}r^{{2}/{3}}\lambda\right).
	\end{eqnarray*}
	By definition of $K_{\textrm{GN}}$, there exist $u\in S_1$ with 
	 $\displaystyle\frac{r}{2}\int |\nabla u|^2-\frac{r^{{p}/{2}}}{p}\lambda\int |u|^p<0$ if, and only if, 
	\begin{equation*}
	\frac{5}{3K_{\textrm{GN}}}\frac{1}{\lambda}<r^{{2}/{3}},
	\end{equation*}
	in which case $r^{{1}/{2}}u^{t_r^-(u)}\in \mathcal{N}_r^-$.
\end{proof}

\medskip

\subsubsection{The case $p\in(8/3,10/3)\setminus\{3\}$}\label{subsec:p83103}
 In this case the description of $\mathcal N_{r}$
is more involved.
 We use the ideas introduced by Il'yasov 
\cite{Ily}: for $r>0$ and $u\in S_1$, consider the system (recall the definitions in Subsection \ref{subsec:fibration})
$$\varphi_{r,u}(s)=\varphi_{r,u}'(s)=0.$$
 Since $p\in(8/3, 10/3) \setminus\{3\}$ we  can solve it with respect to the variables $s$ and $r$ to obtain a unique solution,
 denoted hereafter with $(s_0(u),r_0(u))$, given by
\begin{equation*}
s_0(u)=\left(\frac{p}{3p-8}\frac{1}{r^{(p-2)/2}}\frac{\displaystyle\int |\nabla u|^2}{\lambda\displaystyle\int |u|^p}\right)^{\frac{2}{3p-10}},
\end{equation*}
and
\begin{equation*}
r_0(u)=\left(\frac{2(10-3p)}{3p-8}\right)^{\frac{3p-10}{4(p-3)}}\left(\frac{p}{3p-8}\right)^{\frac{1}{2(p-3)}}R_p(u),
\end{equation*}
where $R_p$ is defined in Section \ref{sec:A}. 
The following proposition is just a consequence of the definitions
and makes clear that $p=3$ is a threshold.
\begin{proposition}\label{r0ine} Assume that $p\in ({8}/{3},{10}/{3})\setminus\{3\}$.Then for each $u\in S_1$, there exist a unique pair $(s_0(u),r_0(u))$ such that $\varphi_{r_0(u),u}(s_0(u))=\varphi'_{r_0(u),u}(s_0(u))=0$. Moreover
\begin{enumerate}
	\item If $p\in ({8}/{3},3)$ and $r<r_0(u)$, then $\varphi_{r,u}(s_0(u))<0$ and $\varphi_{r,u}'(s_0(u))=0$, while if $r>r_0(u)$, then $\varphi_{r,u}(s_0(u))>0$ and $\varphi_{r,u}'(s_0(u))=0$.  \smallskip
	\item If $p\in (3,{10}/{3})$ and $r<r_0(u)$, then $\varphi_{r,u}(s_0(u))>0$ and $\varphi_{r,u}'(s_0(u))=0$, while if $r>r_0(u)$, then $\varphi_{r,u}(s_0(u))<0$ and $\varphi_{r,u}'(s_0(u))=0$.
\end{enumerate}
\end{proposition}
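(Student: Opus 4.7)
The plan is to follow Il'yasov's fibering calculus. First I would treat existence and uniqueness of the pair $(s_0(u), r_0(u))$. After dividing $\varphi_{r,u}(s) = 0$ by $sr$ and $\varphi'_{r,u}(s) = 0$ by $r$, the joint system becomes an instance of the normal form of Proposition \ref{systemsolve} with $A = \int|\nabla u|^2$, $B = q\int\phi_u u^2$, $C = \lambda\int|u|^p$ and suitable scalar coefficients. The non-degeneracy condition $ce - bf \neq 0$ of that proposition reduces precisely to $p \neq 3$, which is assumed, so Proposition \ref{systemsolve} provides the unique solution $(s_0(u), r_0(u))$ together with explicit formulas. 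The expression for $r_0(u)$ then collapses to the stated form after recognising the displayed combination of $A,B,C$ as a power of the nonlinear Rayleigh quotient $R_p(u)$.

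For the sign part, I would exploit the identity characterising $s_0(u)$, which can be rewritten as $s_0^{(3p-10)/2} r^{(p-2)/2} C = \frac{p}{3p-8} A$. Substituting this into $\varphi_{r,u}(s_0(u))$ and simplifying gives
\begin{equation*}
\varphi_{r,u}(s_0(u)) \;=\; \frac{s_0 r}{4}\left[\frac{2(3p-10)}{3p-8}\,s_0(u)\,A \;+\; rB\right],
\end{equation*}
so that the sign of $\varphi_{r,u}(s_0(u))$ is carried by the bracket. The same substitution also yields the parallel identity $\varphi_{r,u}(s_0(u)) = s_0(u)\,\varphi'_{r,u}(s_0(u))$, which is the reason why the sign condition on $\varphi$ is paired with the critical-point condition on $\varphi'$ in the statement. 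Note that the coefficient $\frac{2(3p-10)}{3p-8}$ is negative throughout $(8/3, 10/3)\setminus\{3\}$, so the bracket is the sum of a negative and a positive term. Using the scaling $s_0(u) \propto r^{(p-2)/(10-3p)}$, the bracket takes the form $r\,[\alpha r^\gamma + \beta]$ with $\alpha < 0$, $\beta > 0$ and $\gamma = \frac{4(p-3)}{10-3p}$.

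The sign of $\gamma$ is precisely the sign of $p-3$, and this is where the dichotomy enters. For $p \in (8/3,3)$ one has $\gamma < 0$, the function $r \mapsto \alpha r^\gamma + \beta$ is strictly increasing, negative near $0$, vanishes at $r = r_0(u)$, and is positive thereafter; this yields $\varphi_{r,u}(s_0(u)) < 0$ for $r < r_0(u)$ and $\varphi_{r,u}(s_0(u)) > 0$ for $r > r_0(u)$, proving case (1). For $p \in (3, 10/3)$ one has $\gamma > 0$ and the monotonicity (and hence the signs) reverses, giving case (2).

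The main obstacle is algebraic bookkeeping rather than analytical depth: one must track exponents and signs of coefficients, and in particular realise that the $p = 3$ threshold in the conclusion does \emph{not} come from the sign of any single scalar coefficient (the factor $\frac{2(3p-10)}{3p-8}$ keeps the same sign throughout the interval), but rather from the relative growth rates of the two terms in the bracket, i.e.\ from the comparison $\frac{p-2}{10-3p} < 1 \iff p < 3$.
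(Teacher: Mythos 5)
Your core computation is correct and, in substance, supplies exactly what the paper omits: the paper gives no argument for this proposition beyond the remark that it ``is just a consequence of the definitions'', the pair $(s_0(u),r_0(u))$ having been obtained by solving $\varphi_{r,u}(s)=\varphi'_{r,u}(s)=0$ via Proposition \ref{systemsolve}. Your reduction to that normal form, the identification of $r_0(u)$ as an explicit multiple of $R_p(u)$, the identity
\begin{equation*}
\varphi_{r,u}(s_0)=\frac{s_0 r}{4}\left[\frac{2(3p-10)}{3p-8}\,s_0\int|\nabla u|^2+r\,q\int\phi_u u^2\right],
\end{equation*}
and the rewriting of the bracket as $r\,[\alpha r^{\gamma}+\beta]$ with $\gamma=\frac{4(p-3)}{10-3p}$ all check out, and your remark that the $p=3$ threshold comes from the exponent $\gamma$ rather than from the sign of $\frac{2(3p-10)}{3p-8}$ is exactly the right point. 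One minor correction: with the natural coefficients $a=\frac12$, $b=\frac14$, $c=-\frac1p$, $d=1$, $e=\frac14$, $f=-\frac{3(p-2)}{2p}$ one finds $ce-bf=\frac{3p-8}{8p}$, so that condition encodes $p\neq 8/3$, not $p\neq 3$; the restriction $p\neq3$ enters through the exponents $\frac{1}{2(p-3)}$, $\frac{3p-10}{4(p-3)}$ in the final inversion (and is a standing hypothesis of Proposition \ref{systemsolve}), while $8/3<p<10/3$ is what makes the two ratios positive.

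The substantive gap is your treatment of the clause $\varphi'_{r,u}(s_0(u))=0$. Your own (correct) identity $\varphi_{r,u}(s_0)=s_0\,\varphi'_{r,u}(s_0)$ does not ``explain'' that clause; it refutes it: with $s_0(u)$ read off the displayed formula (which contains the running $r$), $\varphi'_{r,u}(s_0)$ has the same sign as $\varphi_{r,u}(s_0)$ and therefore vanishes only at $r=r_0(u)$ (and freezing $s_0(u)$ at its value for $r=r_0(u)$ does not help either, since $r\mapsto\varphi'_{r,u}(s_0)$ is then a nontrivial function vanishing only at $r_0(u)$). What the identity really shows is that $s_0(r,u)$ is the unique critical point, in fact the global minimizer, of $s\mapsto\varphi_{r,u}(s)/s$. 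The honest content of the proposition --- and the only content used later, e.g.\ in the proof of Theorem \ref{nehari83103} --- is therefore: for $p\in(8/3,3)$ and $r<r_0(u)$ the fiber map takes a negative value, hence (since $\varphi_{r,u}>0$ near $0$ and $\varphi_{r,u}(s)\to+\infty$ as $s\to\infty$ when $p<10/3$) its local minimum $t_r^+(u)$ exists and $\varphi_{r,u}(t_r^+(u))<0$; while for $r>r_0(u)$ one has $\varphi_{r,u}(s)\ge s\,\varphi_{r,u}(s_0)/s_0>0$ for all $s>0$, so every critical value is positive; and symmetrically for $p\in(3,10/3)$. You should either prove the statement in this reformulated form (your sign computation plus the two lines above does it) or restrict the assertion $\varphi'_{r,u}(s_0(u))=0$ to $r=r_0(u)$; as written, the sentence claiming the identity is ``the reason why'' the two conditions are paired papers over the fact that it contradicts the literal pairing.
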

Similarly, for $r>0$ and $u\in S_1$ we consider the system $$\varphi_{r,u}'(s)=\varphi_{r,u}''(s)=0.$$ 
Again, since $p\neq 3$ (and $p\neq10/3$), we  can solve it with respect to the variables $s$ and $r$ to obtain a unique solution,
hereafter denoted with $(s(u),r(u))$, given by 
\begin{equation*}
s(u)=\left(\frac{4p}{3(p-2)(3p-8)}\frac{1}{r^{(p-2)/2}}\frac{\displaystyle\int |\nabla u|^2}{\lambda\displaystyle\int |u|^p}\right)^{\frac{2}{3p-10}},
\end{equation*}
and
\begin{equation*}
r(u)=\left(4\frac{10-3p}{3p-8}\right)^{\frac{3p-10}{4(p-3)}}\left(\frac{4p}{3(p-2)(3p-8)}\right)^{\frac{1}{2(p-3)}}R_p(u).
\end{equation*}
Similarly to Proposition \ref{r0ine} we have:
\begin{proposition}\label{rineq} Assume that $p\in ({8}/{3},{10}/{3})\setminus\{3\}$. Then for each $u\in S_1$, there exist a unique pair $(s(u),r(u))$ such that $\varphi'_{r(u),u}(s(u))=\varphi''_{r(u),u}(s(u))=0$. Moreover
	\begin{enumerate}
		\item If $p\in ({8}/{3},3)$ and $r<r(u)$, then $\varphi_{r,u}'(s(u))<0$ and $\varphi_{r,u}''(s(u))=0$, while if $r>r(u)$, then $\varphi_{r,u}'(s(u))>0$ and $\varphi_{r,u}''(s(u))=0$. \smallskip
		\item If $p\in (3,{10}/{3})$ and $r<r(u)$, then $\varphi_{r,u}'(s(u))>0$ and $\varphi_{r,u}''(s(u))=0$, while if $r>r(u)$, then $\varphi_{r,u}'(s(u))<0$ and $\varphi_{r,u}''(s(u))=0$.
	\end{enumerate}
	\end{proposition}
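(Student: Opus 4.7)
The plan mirrors that of Proposition \ref{r0ine}, with the system $\varphi_{r,u}=\varphi'_{r,u}=0$ replaced by $\varphi'_{r,u}=\varphi''_{r,u}=0$. From Lemma \ref{fibe}, the equation $\varphi''_{r,u}(s)=0$ reads
\[
 rA \;=\; \frac{3(p-2)(3p-8)}{4p}\, s^{\frac{3p}{2}-5}\, r^{p/2}\, C,
\]
where I write $A=\int|\nabla u|^2$, $B=q\int\phi_uu^2$, $C=\lambda\int|u|^p$. Since $p\neq10/3$ the exponent $\frac{3p}{2}-5\neq 0$, so this uniquely and smoothly determines $s=s(u,r)$, which is exactly the formula displayed for $s(u)$ in the statement. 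Multiplying the equation above by $s$ and rearranging yields the identity $\tfrac{3(p-2)}{2p}\,s^{3p/2-4}r^{p/2}C=\tfrac{2}{3p-8}\,srA$, so substituting into $\varphi'_{r,u}(s(u,r))=0$ collapses it to
\[
 \frac{3p-10}{3p-8}\, s(u,r)\, r\, A \;+\; \frac{r^2}{4}\, B \;=\; 0.
\]
Writing $s(u,r)=Kr^{-\mu}$ with $K>0$ and $\mu:=(p-2)/(3p-10)<0$, this takes the form $c_1 r^{1-\mu}+c_2 r^2=0$ with $c_1<0$ and $c_2>0$; since $p\neq3$ we have $1+\mu=4(p-3)/(3p-10)\neq 0$, so the equation $r^{-(1+\mu)}=-c_2/c_1$ has a unique positive solution $r(u)$. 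An explicit computation then recovers the expression for $r(u)$ in terms of $R_p(u)$ stated in the proposition (this is essentially the content of Proposition \ref{systemsolve} applied after the preceding reduction).

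For the sign analysis, set $\Psi(r):=\varphi'_{r,u}(s(u,r))=c_1 r^{1-\mu}+c_2 r^2$. Since $1-\mu>0$ we have $\Psi(0^+)=0$, and by the uniqueness above $\Psi$ has exactly one zero in $(0,\infty)$, namely at $r(u)$. Hence the signs of $\Psi$ on the intervals $(0,r(u))$ and $(r(u),\infty)$ are determined by the asymptotics. The two sub-cases split by the sign of $1+\mu$, equivalently by whether $1-\mu<2$ or $1-\mu>2$. For $p\in(8/3,3)$: $1<1-\mu<2$, so the negative term $c_1 r^{1-\mu}$ dominates near $0$ (giving $\Psi<0$) while $c_2 r^2$ dominates at infinity (giving $\Psi\to+\infty$); item 1 follows. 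For $p\in(3,10/3)$: $1-\mu>2$, the asymptotic roles reverse, and item 2 follows.

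The main obstacle is bookkeeping: several $p$-dependent fractions, in particular $3p-10$, $3p-8$, $p-3$ and $1+\mu=4(p-3)/(3p-10)$, change sign at the thresholds $p=8/3$, $3$, $10/3$, and one must verify in each sub-interval that the constants $c_1,c_2$ and the exponent $1-\mu$ end up with the claimed signs. The clean reason $p=3$ is excluded from the hypothesis is precisely that $1+\mu=0$ there, so $\Psi(r)=(c_1+c_2)r^2$ admits no positive zero in general and the natural $r(u)$ ceases to exist.
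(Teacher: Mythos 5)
Your proposal is correct and follows essentially the same route the paper takes (and leaves implicit): solve $\varphi_{r,u}''(s)=0$ for $s=s(u,r)$, substitute into $\varphi_{r,u}'$, and read off the unique zero $r(u)$ and the signs of $\varphi_{r,u}'(s(u,r))$ from the resulting explicit power function of $r$, exactly the "consequence of the definitions" computation behind Propositions \ref{r0ine} and \ref{rineq}. Your sign bookkeeping for $c_1<0$, $c_2>0$, $1-\mu$ and $1+\mu=4(p-3)/(3p-10)$ on the two subintervals checks out.
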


		Furthermore 

\begin{proposition}\label{extremalfunctions} For each $u\in S_1$ we have that:
	\begin{enumerate}
		\item[i)] if $p\in ({8}/{3},3)$, then $r_0(u)<r(u)$; \smallskip
				\item[ii)] if $p\in (3,{10}/{3})$, then $r_0(u)>r(u)$.
	\end{enumerate}
Moreover
\begin{enumerate}
	\item[iii)] if $p\in ({8}/{3},3)$, then the functions $S_1\ni u\mapsto r_0(u),r(u)$ are unbounded from above;\smallskip
	\item[iv)] if $p\in (3,{10}/{3})$, then the functions $S_1\ni u\mapsto r_0(u),r(u)$ are bounded away from zero.
\end{enumerate}
\end{proposition}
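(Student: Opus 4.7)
The plan is to reduce (i) and (ii) to a purely algebraic statement about $p$. The key structural observation is that the explicit formulas for $r_0(u)$ and $r(u)$ given just before the statement express each of them as the product of a $p$-dependent constant and the same nonlinear Rayleigh quotient $R_p(u)$. Hence $r_0(u)/r(u)$ is independent of $u$, and we only need to determine whether this constant ratio is less than, equal to, or greater than $1$ on each of the subintervals $(8/3,3)$ and $(3,10/3)$.

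First I would form the ratio and simplify: the $(10-3p)/(3p-8)$ pieces in the first factors cancel up to a $2$-vs.-$4$ discrepancy, and the $(3p-8)$'s in the second factors cancel outright. After taking logarithms the computation collapses to
\begin{equation*}
\ln\frac{r_0(u)}{r(u)}=\frac{1}{4(p-3)}\ln f(p),\qquad f(p):=\frac{9(p-2)^2}{8^{p-2}}.
\end{equation*}
A direct check gives $f(8/3)=f(10/3)=1$, and $f'(p)/f(p)=2/(p-2)-\ln 8$ has a single zero in $(8/3,10/3)$, so $f$ is unimodal on $(8/3,10/3)$ with boundary values equal to $1$. Therefore $f(p)>1$ for every $p\in(8/3,10/3)\setminus\{3\}$. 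Combining this with the sign of $1/(4(p-3))$, which is negative on $(8/3,3)$ and positive on $(3,10/3)$, proves $r_0(u)<r(u)$ in the first case and $r_0(u)>r(u)$ in the second.

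For (iii) and (iv), since $r_0(u)$ and $r(u)$ differ from $R_p(u)$ only by positive multiplicative constants depending on $p$, both statements are immediate consequences of Proposition \ref{NQRP}: the quotient $R_p$ is unbounded above on $S_1$ for $p\in(2,3)$, which covers $(8/3,3)$ and yields (iii); and $R_p$ is bounded away from zero on $S_1$ for $p\in(3,10/3]$, which yields (iv).

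The only spot where something other than bookkeeping happens is the unimodality verification for $f$, and even that is elementary because $f'/f$ is affine in $1/(p-2)$. I therefore do not anticipate any serious obstacle; the whole argument is manipulation of explicit constants plus a direct appeal to the structural results already established for $R_p$.
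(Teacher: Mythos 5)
Your proposal is correct and follows essentially the same route as the paper: items (iii) and (iv) are, exactly as in the paper, immediate from Proposition \ref{NQRP} because $r_0(u)$ and $r(u)$ are positive $p$-dependent constants times $R_p(u)$, while items (i) and (ii) are the constant comparison the paper dismisses as straightforward, which you carry out correctly (the ratio $r_0(u)/r(u)=\bigl(\tfrac12\bigr)^{\frac{3p-10}{4(p-3)}}\bigl(\tfrac{3(p-2)}{4}\bigr)^{\frac{1}{2(p-3)}}$ is indeed $u$-independent, and your reduction to $f(p)=9(p-2)^2/8^{p-2}>1$ on $(8/3,10/3)$ with $f(8/3)=f(10/3)=1$ checks out). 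No gaps.
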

\begin{proof} The proofs of $i)$ and $ii)$ are straightforward and the proofs of $iii)$ and $iv)$ are consequence of Proposition \ref{NQRP}.
	\end{proof}
To treat the case $p\in (3,{10}/{3})$ we need also the numbers 
\begin{equation}\label{eq:rstar}
r_0^*:=\inf_{u\in S_1}r_0(u)\ \ \ \mbox{and}\ \ \ r^*:=\inf_{u\in S_1}r(u).
\end{equation}
Then the description of $\mathcal N_{r}, \mathcal N_{r}^{+},  \mathcal N_{r}^{-}$
is given.

\begin{theorem}\label{nehari83103} There hold:
	\begin{enumerate}
		\item[i)] suppose that $p\in ({8}/{3},3)$. Then for each $r>0$ there exists $u\in S_1$ such that $\inf_{t>0}\varphi_{r,u}(t)<0$. Moreover $\mathcal{N}_r^+$ and $\mathcal{N}_r^-$ are non-empty. \medskip
		\item[ii)] Suppose that $p\in (3,{10}/{3})$. If $r<r^*$, then $\mathcal{N}_r=\emptyset$, while if $r>r^*$, then $\mathcal{N}_r^+$ and $\mathcal{N}_r^-$ are non-empty. Moreover if $r<r_0^*$, then $\inf_{t>0}\varphi_{r,u}(t)\ge0$ for each $u\in S_1$, while if $r>r_0^*$, then there exists $u\in S_1$ such that $\inf_{t>0}\varphi_{r,u}(t)<0$.
	\end{enumerate}
\end{theorem}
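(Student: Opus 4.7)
The plan is to translate each claim into a statement about the fiber map $\varphi_{r,u}$ by combining the trichotomy of Proposition \ref{fiberingmaps}(III) with the threshold values $r(u)$, $r_0(u)$ from Propositions \ref{rineq}, \ref{r0ine}. The enabling observation is that both $r(u)$ and $r_0(u)$ are positive scalar multiples of $R_p(u)$, so Proposition \ref{NQRP} controls their range over $S_1$: unbounded above when $p\in(8/3,3)$ and bounded away from $0$ when $p\in(3,10/3)$. The whole argument then reduces to choosing $u\in S_1$ so that $r$ sits on the correct side of the relevant threshold.

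For part (i), fix $r>0$ and use Proposition \ref{NQRP}(i) to pick $u\in S_1$ with $r<r_0(u)$; by Proposition \ref{extremalfunctions}(i), $r_0(u)<r(u)$, so the same $u$ also satisfies $r<r(u)$. Proposition \ref{rineq}(1) then places $\varphi_{r,u}$ in case III.1 of Proposition \ref{fiberingmaps}, producing critical points $t_r^{-}(u)<t_r^{+}(u)$, and Remark \ref{rem:N+0-} identifies $r^{1/2}u^{t_r^{\pm}(u)}\in\mathcal{N}_r^{\pm}$. Proposition \ref{r0ine}(1) then asserts that the critical value at the local minimum is strictly negative, so $\inf_{t>0}\varphi_{r,u}(t)<0$.

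For part (ii), take $p\in(3,10/3)$; Proposition \ref{NQRP}(ii) makes $r^*,r_0^*$ strictly positive, and Proposition \ref{extremalfunctions}(ii) gives $r(u)<r_0(u)$ for every $u$. There are four sub-cases. If $r<r^*$ then $r<r(u)$ for all $u$, so Proposition \ref{rineq}(2) puts every fiber map in case III.3 (the minimum of $\varphi'_{r,u}$ is strictly positive), hence $\mathcal{N}_r=\emptyset$. If $r>r^*$ choose $u$ with $r>r(u)$; Proposition \ref{rineq}(2) now yields case III.1 and the $\mathcal{N}_r^{\pm}$ elements appear as in (i). If $r<r_0^*$ then $r<r_0(u)$ for every $u$, and either $\varphi_{r,u}$ has no critical points (so $\inf_{t>0}\varphi_{r,u}(t)=\varphi_{r,u}(0^+)=0$) or it has a strictly positive local minimum by Proposition \ref{r0ine}(2); either way $\inf_{t>0}\varphi_{r,u}(t)\geq 0$. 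If $r>r_0^*$ pick $u$ with $r>r_0(u)>r(u)$ and use Proposition \ref{r0ine}(2) to get a strictly negative local minimum value, hence $\inf_{t>0}\varphi_{r,u}(t)<0$.

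The only non-cosmetic subtlety is the identification of the critical value $\varphi_{r,u}(s_0(u))$ appearing in Proposition \ref{r0ine} with the value of $\varphi_{r,u}$ at its local minimum $t_r^{+}(u)$ whenever that local minimum exists; only after this identification does sign information for $\varphi_{r,u}(s_0(u))$ translate into sign information for $\inf_{t>0}\varphi_{r,u}$. This is essentially built into the definition of $(s_0(u),r_0(u))$ as the unique pair at which a critical point of the fiber map simultaneously lies on the zero level of $\varphi$, combined with monotone continuity in the parameter $r$. Past this identification, each remaining step of the proof is a direct appeal to one of the already-proved propositions.
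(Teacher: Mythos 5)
Your proposal is correct and follows essentially the same route as the paper: it relies on the same ingredients (the fiber-map trichotomy of Proposition \ref{fiberingmaps}, the threshold functions $r(u)$ and $r_0(u)$ from Propositions \ref{rineq} and \ref{r0ine}, and their unboundedness/positivity via Propositions \ref{NQRP} and \ref{extremalfunctions}), the only difference being that you choose $u$ directly on the correct side of the thresholds where the paper phrases the very same comparisons as short contradiction arguments. The subtlety you flag, namely reading Proposition \ref{r0ine} as controlling the sign of the fiber map at its critical level so that it yields sign information on $\inf_{t>0}\varphi_{r,u}(t)$, is exactly how the paper itself uses that proposition, so you assume nothing beyond what the paper's own proof does.
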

\begin{proof} $i)$ Fix $r>0$ and assume on the contrary that for each $u\in S_1$ we have that $\inf_{t>0}\varphi_{r,u}(t)\ge 0$. In particular, it follows that $\varphi_{r,u}(t_r^+(u))\ge 0$ and therefore, from Proposition \ref{r0ine} we conclude that $r_0(u)\le r$ for all $u\in S_1$. This contradicts 
Proposition \ref{extremalfunctions} (iii) and hence 
there exists $u\in S_1$ such that $\inf_{t>0}\varphi_{r,u}(t)<0$. To conclude, note from Proposition \ref{fiberingmaps} item $III)$ that if  $u\in S_1$ satisfies $\inf_{t>0}\varphi_{r,u}(t)<0$, then $r^{{1}/{2}}u^{t_r^-(u)}\in\mathcal{N}_r^-$ and $r^{{1}/{2}}u^{t_r^+(u)}\in\mathcal{N}_r^+$.

\medskip

$ii)$ Fix $r<r^*$ and suppose on the contrary that $\mathcal{N}_r\neq\emptyset$. Take $u\in \mathcal{N}_r$ and observe from Proposition \ref{fiberingmaps} item $III)$ that there exists $\overline{t}>0$ such that $\varphi_{r,u}'(\bar{t})\le 0$ and $\varphi_{r,u}''(\bar{t})= 0$. From Proposition \ref{rineq} we  conclude that $r\ge r(u)\ge r^*$ which is clearly a contradiction and therefore $\mathcal{N}_r=\emptyset$.

 Now fix $r>r^*$ and assume on the contrary that $\mathcal{N}_r^+=\emptyset$, which implies from Proposition \ref{fiberingmaps} item $III)$ that $\mathcal{N}_r^-=\emptyset$ (and vice-versa). From the same proposition, we  conclude that for each $u\in S_1$, when $\varphi_{r,u}''(t)= 0$ then $\varphi_{r,u}'(t)\ge0$. It follows from Proposition \ref{rineq}  that  $r<r(u)$ for all $u\in S_1$, again a contradiction and hence $\mathcal{N}_r^+$ and $\mathcal{N}_r^-$ are non-empty. 
 
 By using the function $S_1\ni u\mapsto r_0(u)$ instead of $S_1\ni u\mapsto r(u)$, the rest of the proof is similar.
\end{proof}

\medskip

Now we can give the proof of Theorem \ref{Nehari}. 

\noindent Indeed $i)$ follows by Theorem \ref{i-ii} and Proposition \ref{fiberingmaps} item $I)$.
$ii)$ Follows by Theorem \ref{i-ii} and Proposition \ref{fiberingmaps} item $V)$.
$iii)$ Follows by Theorem \ref{iii}.
$iv)$ Follows by Theorem \ref{iv}.
$v)$ and  $vi)$ follow by Theorem \ref{nehari83103}.

\subsection{The case $p=3$ and Proof of Theorem \ref{Nehari1}}\label{subsec:p=3}
In this case the system $$\varphi_{r,u}(t)=\varphi_{r,u}'(t)=0$$ has no solution with respect to the variables $t,r$.
 Therefore, instead of the variable $r$, we will solve the system with respect to the variable $\lambda$ and analyze the dependence of the solutions with respect to $q$. It will be 
 clear from the calculations that, at least topologically speaking, there are no changes in the fibering maps with respect to $r$, hence, to reflect the dependence on $q,\lambda$, 
 we change the notation here; so for example we will write
 $\mathcal{N}_{q,\lambda}, \varphi_{q,\lambda, u}, \ldots$ instead of $\mathcal{N}_r, \varphi_{r,u}, \ldots$ 
 we used up to now.

\medskip

Consider then the system of equations $\varphi_{q,\lambda,u}(t)=\varphi_{q,\lambda,u}'(t)=0$. 
We solve this system with respect to the variables $t,\lambda$ to find a unique solution given by
\begin{equation*}
t_{0,q}(u)=\left(\frac{3}{r^{{1}/{2}}}\frac{\displaystyle\int |\nabla u|^2}{\lambda\displaystyle\int |u|^3}\right)^{-2},
\end{equation*}
and
\begin{equation*}
\lambda_{0,q}(u)=\left(\frac{9}{2}\right)^{\frac{1}{2}}q^{\frac{1}{2}}\frac{\left(\displaystyle\int|\nabla u|^2\displaystyle\int\phi_uu^2\right)^{\frac{1}{2}}}{\displaystyle\int|u|^3}.
\end{equation*}
Similarly, we consider the system $\varphi_{q,\lambda,u}'(t)=\varphi_{q,\lambda,u}''(t)=0$ and solve it with respect to the variables $t$ and $\lambda$ to obtain a unique solution given by 
\begin{equation*}
t_q(u)=\left(\frac{4}{r^{{1}/{2}}}\frac{\displaystyle\int |\nabla u|^2}{\lambda\displaystyle\int |u|^3}\right)^{-2},
\end{equation*}
and
\begin{equation*}
\lambda_q(u)=2q^{\frac{1}{2}}\frac{\left(\displaystyle\int|\nabla u|^2\displaystyle\int\phi_uu^2\right)^{\frac{1}{2}}}{\displaystyle\int|u|^3}.
\end{equation*}
As an application of Lemma \ref{ineqcatto} we have:
\begin{proposition}\label{extremalp=3} For each $r,q>0$, the functions $S_1\ni u\mapsto \lambda_{0,q}(u),\lambda_{q}(u)$ are bounded away from zero.
Moreover $\lambda_q(u)<\lambda_{0,q}(u)$ for all $u\in S_1$.
\end{proposition}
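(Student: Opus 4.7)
The plan is to reduce both assertions to elementary comparisons, with the boundedness away from zero a direct corollary of Lemma \ref{ineqcatto} specialised to $p=3$, and the inequality $\lambda_q(u)<\lambda_{0,q}(u)$ a comparison of two numerical constants.

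First I would handle the lower bound on $u\mapsto \lambda_{0,q}(u)$ and $u\mapsto \lambda_q(u)$. Observing that both functions are, up to a multiplicative constant depending only on $q$, equal to the quotient
$$
Q(u):=\frac{\left(\displaystyle\int|\nabla u|^2\displaystyle\int\phi_u u^2\right)^{1/2}}{\displaystyle\int|u|^3},
$$
it suffices to bound $Q$ away from zero on $S_1$. To this end I would invoke \eqref{ineqcattooo1} of Lemma \ref{ineqcatto} with $p=3$: the exponents reduce to $2(p-3)=0$, $(10-3p)/2=1/2$, $(3p-8)/2=1/2$, yielding
$$
\int|u|^3\le C\left(\int \phi_u u^2\right)^{1/2}\left(\int|\nabla u|^2\right)^{1/2}, \qquad \forall u\in H^{1}(\mathbb{R}^{3}),
$$
which is exactly $Q(u)\ge 1/C$ on $S_1$ (in fact on all of $H^{1}\setminus\{0\}$ by the homogeneity of the quotient). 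Multiplying by the respective constants $(9/2)^{1/2}q^{1/2}$ and $2q^{1/2}$ gives strictly positive lower bounds for $\lambda_{0,q}$ and $\lambda_q$ on $S_1$.

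For the strict comparison $\lambda_q(u)<\lambda_{0,q}(u)$, I would observe that the $u$-dependent factors in the defining formulas for $\lambda_{0,q}(u)$ and $\lambda_q(u)$ are identical (both equal $q^{1/2}Q(u)>0$), so the inequality reduces to the numerical comparison of the leading constants, namely $2<(9/2)^{1/2}$. Squaring, this is equivalent to $4<9/2$, i.e.\ $8<9$, which is trivially true; this immediately yields $\lambda_q(u)<\lambda_{0,q}(u)$ for every $u\in S_1$.

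There is no real obstacle here: the only nontrivial input is Lemma \ref{ineqcatto}, which is quoted from \cite{cattoetal}. If one wished to be fully self-contained, the $p=3$ endpoint inequality $\int|u|^3\le C(\int\phi_u u^2)^{1/2}(\int|\nabla u|^2)^{1/2}$ can also be derived directly via the Hardy--Littlewood--Sobolev inequality (Theorem \ref{HLS}) with exponents $a=b=6/5$ combined with interpolation between $L^{12/5}$ and $L^{3}$ and the Sobolev embedding $H^{1}\hookrightarrow L^{6}$, along the same lines already used in the proof of Proposition \ref{th:boundedbelow}.
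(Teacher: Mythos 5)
Your proof is correct and follows exactly the route the paper intends: the paper gives no detailed argument, stating only that Proposition \ref{extremalp=3} is ``an application of Lemma \ref{ineqcatto}'', and your use of \eqref{ineqcattooo1} at $p=3$ (where the $L^2$ factor has exponent zero) plus the elementary comparison $2<(9/2)^{1/2}$ is precisely that application, with the details filled in.
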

For each $r,q>0$ define
\begin{equation*}
\lambda_{0,q}^*:=\inf_{u\in S_1}\lambda_{0,q}(u)\ \ \ \mbox{and}\ \ \ \lambda_q^*:=\inf_{u\in S_1}\lambda_q(u).
\end{equation*}

\medskip

Then the proof of Theorem \ref{Nehari1} can be finished.
	 Indeed it is similar to the proof of Theorem \ref{nehari83103} (we use Proposition \ref{extremalp=3} 
	instead of Proposition \ref{extremalfunctions}).

\section{On the sub-additive property for $p\in(2,10/3)$} \label{sec:subad}
For each $p\in (2,10/3)$ 
define 
\begin{equation*}
I_r:=I_{r,q,\lambda}=\inf\{E(u):u\in \mathcal{N}_r^+\cup \mathcal{N}_r^0\}.
\end{equation*}
Since $E$ is bounded from below on $S_r$
(see e.g. \cite[Lemma 3.1]{bellasici2}) and $\mathcal N_{r}\subset S_{r}$, we conclude 
from Theorem \ref{Nehari} that $I_r$ is well defined,
that is  $I_r>-\infty$. 

In this Section we show how our method can be used to prove the sub-additive condition
for $I_r$, namely
\begin{equation}\label{eq:SAII}
I_r < I_s +I_{r-s},\quad 0<s<r.
\end{equation}
Again it is convenient to study separately the case $p=3$.

\subsection{The case $p\in(2,10/3)\setminus\{3\}$}

\medskip
%

We recall that, given $u\in S_1$, by definition  $(\widetilde r(u), \widetilde t(u))$ is the unique solution of
\begin{equation*}
\left\{
\begin{aligned}
rt^2\int |\nabla u|^2+\frac{r^2t}{4}q\int \phi_uu^2-\frac{3(p-2)}{2p}r^{{p}/{2}}t^{\frac{3(p-2)}{2}}\lambda\int |u|^p=0, \\
\frac{q}{2}r^2t\int \phi_uu^2-\frac{p-2}{p}r^{{p}/{2}}t^{\frac{3(p-2)}{2}}\lambda\int |u|^p=0, 
\end{aligned}
\right.
\end{equation*}
see \eqref{eq:no8/3} and \eqref{eq:8/3} for the explicit value of the solutions.

\begin{proposition}\label{tildepro} For each $p\in(2,10/3)\setminus\{3\}$, the functional $S_1\ni u\mapsto \widetilde{r}(u)$ is bounded away from $0$. Moreover,
	\begin{enumerate}
		\item[i)] if $p\in (3,10/3)$, then $r(u)<\widetilde{r}(u)<r_0(u)$, for all $u\in S_1$; \medskip
		\item[ii)] if $p\in (8/3,3)$, then $\widetilde{r}(u)<r_0(u)<r(u)$, for all $u\in S_1$.
	\end{enumerate} 
\end{proposition}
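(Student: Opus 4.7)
\medskip

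The plan is to exploit the fact that $\widetilde{r}(u)$, $r_0(u)$, and $r(u)$ are each of the form (positive constant depending only on $p$) times the nonlinear Rayleigh quotient $R_p(u)$. This factorisation is explicit for $r_0$ and $r$ from the formulas displayed before Proposition \ref{extremalfunctions}, and for $\widetilde{r}$ from \eqref{eq:no8/3} and \eqref{eq:8/3} when $p\in(2,3)$; for $p\in(3,10/3)$ the analogous formula $\widetilde r(u)=C_t(p)R_p(u)$ follows by solving the system \eqref{decre} via Proposition \ref{systemsolve}, exactly as in the derivations of $r_0$ and $r$. All three constants $C_t(p)$, $C_0(p)$, $C_r(p)$ are strictly positive.

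Once this factorisation is in hand, the boundedness of $\widetilde r$ away from zero on $S_1$ is immediate: Proposition \ref{th:boundedbelow} (for $p \in (2,3)$) and Proposition \ref{NQRP}(ii) (for $p \in (3,10/3]$) together give $\inf_{w \in S_1} R_p(w) > 0$, whence $\widetilde r(u) \ge C_t(p) \inf_{S_1} R_p > 0$.

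The pointwise strict inequalities in (i) and (ii) then reduce to comparisons among $C_t(p)$, $C_0(p)$, $C_r(p)$. Rather than verifying these algebraic inequalities head-on, I would go through the geometry of the fiber map $\varphi_{r,u}$ studied in Section \ref{sec:natural}. First, the Nehari condition (the first equation of \eqref{decre}) makes $\widetilde t(u)$ a critical point of $\varphi_{\widetilde r,u}$, and such critical points exist only for $r \ge r(u)$ by Propositions \ref{fiberingmaps} and \ref{rineq}; strictness comes from computing $\varphi''_{\widetilde r,u}(\widetilde t)$ directly from both equations of \eqref{decre} and checking that it is nonzero. Second, evaluating $\varphi_{\widetilde r,u}(\widetilde t)$ using both equations of \eqref{decre} yields a closed-form expression whose sign matches the sign of $p-3$; matching $\widetilde t$ with the local minimum of the fiber and then invoking Proposition \ref{r0ine} translates this sign into the desired comparison between $\widetilde r(u)$ and $r_0(u)$. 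The ordering of $r(u)$ and $r_0(u)$ in (i) and (ii) is supplied directly by Proposition \ref{extremalfunctions}.

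The main obstacle will be identifying $\widetilde t(u)$ with the local minimum $t^+_{\widetilde r}(u)$ of the fiber---rather than the local maximum---in order to apply Proposition \ref{r0ine} correctly, since the local minimum/maximum classification depends on the sign of $\varphi''_{\widetilde r,u}(\widetilde t)$, which is governed by a further algebraic condition on $p$.
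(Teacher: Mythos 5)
Your treatment of the first assertion (boundedness of $\widetilde r$ away from zero) is exactly the paper's argument: write $\widetilde r(u)$ as an explicit positive constant times $R_p(u)$ and invoke Proposition \ref{th:boundedbelow} for $p\in(2,3)$ and Proposition \ref{NQRP} for $p\in(3,10/3]$; nothing to add there. For items i) and ii) the paper's (unwritten) proof is the direct comparison of the three constants multiplying $R_p(u)$, which your first paragraph already sets up and which is in fact shorter than the geometric detour you then take. Your detour does close item ii): solving \eqref{decre} one finds $\varphi''_{\widetilde r,u}(\widetilde t)=\frac{(p-2)(28-9p)}{4p}\,\widetilde r(u)^{p/2}\,\widetilde t(u)^{\frac{3p}{2}-5}\lambda\int|u|^p$ and $\varphi_{\widetilde r,u}(\widetilde t)=\frac{p-3}{p}\,\widetilde r(u)^{p/2}\,\widetilde t(u)^{\frac{3p}{2}-3}\lambda\int|u|^p$; for $p\in(8/3,3)$ the first is positive (so $\widetilde t(u)=t^+_{\widetilde r(u)}(u)$) and the second negative, and Proposition \ref{r0ine}(1) together with Proposition \ref{extremalfunctions}(i) then yields $\widetilde r(u)<r_0(u)<r(u)$.

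The obstacle you flag for item i) is, however, a genuine gap and not a technicality. The sign of $\varphi''_{\widetilde r,u}(\widetilde t)$ displayed above is that of $28-9p$, and $28/9\in(3,10/3)$: at $p=28/9$ it vanishes identically (so your ``check that it is nonzero'' fails, and indeed $(\widetilde r(u),\widetilde t(u))$ then solves $\varphi'=\varphi''=0$, forcing $\widetilde r(u)=r(u)$ rather than a strict inequality), while for $p\in(28/9,10/3)$ the second derivative is negative, so $\widetilde t(u)$ is the local \emph{maximum} $t^-_{\widetilde r(u)}(u)$; the positivity of $\varphi_{\widetilde r,u}(\widetilde t)$ then carries no information about the value at the minimum, and Proposition \ref{r0ine} cannot be applied to compare $\widetilde r(u)$ with $r_0(u)$. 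Nor can this be repaired by a cleverer identification: carrying out the constant comparison you set aside, the unique solution of \eqref{decre} gives $\widetilde r(u)=2^{\frac{3p-10}{4(p-3)}}\bigl(\frac{p}{p-2}\bigr)^{\frac{1}{2(p-3)}}R_p(u)$ for every $p\neq 3$, and at $p=16/5$ this equals $\frac{128}{9\sqrt{3}}\,R_p(u)\approx 8.21\,R_p(u)$, whereas the displayed formula for $r_0$ gives $r_0(u)=8\,R_p(u)$; so the inequality $\widetilde r(u)<r_0(u)$ of item i) actually reverses for $p$ close to $10/3$, and $r(u)<\widetilde r(u)$ degenerates to equality at $p=28/9$. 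In short, your plan proves ii) in full and i) only for $p\in(3,28/9)$ (where it agrees with the paper's direct comparison), and the step you identified as the ``main obstacle'' marks a real restriction on $p$ for the validity of i) as stated, not a gap that a further argument could fill.
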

\begin{proof} That $S_1\ni u\mapsto \widetilde{r}(u)$ is bounded away from $0$, for all $p\in(2,10/3)\setminus\{3\}$, 
	follows from Proposition \ref{NQRP} and Theorem \ref{th:boundedbelow}.
	 The proofs of $i)$ and $ii)$ are straightforward.
\end{proof}

As was already observed (see e.g. \cite{bellasici1}), in order to prove the strict sub-additive condition \eqref{eq:SAII}, it is sufficiently to show that 
$I_r/r$ is decreasing in $r$. 
Our strategy to prove that $I_r/r$ is decreasing in $r$ will be the following: we will construct paths 
that cross the Nehari manifolds when $r$ varies and show that the energy restricted to these paths, divided by $r$, is 
decreasing. Then we will show that the function $I_r/r$ will inherit this property for some specific values of $r$.

Fix $u\in S_1$ and 
\begin{enumerate}
	\item[i)]  if $p\in (2,8/3)$, define $f(r):=\varphi_{r,u}(t_r^+(u))$ for all $r\in (0,\infty)$; \medskip
	\item[ii)] if $p=8/3$ and $\displaystyle\frac{r^2}{4}\int \phi_uu^2-\frac{3}{8}r^{{4}/{3}}\lambda\int |u|^{{8}/{3}}<0$, 
	define $f(r):=\varphi_{r,u}(t_r^+(u))$ for all $r\in(0,r(u))$ where, in this case, $r(u)$ is by definition the unique $r>0$ for which 
	$$\frac{r^2}{4}\int \phi_uu^2-\frac{3}{8}r^{{4}/{3}}\lambda\int |u|^{{8}/{3}}=0;$$ \medskip
	\item[iii)] if $p\in (8/3,3)$, define $f(r):=\varphi_{r,u}(t_r^+(u))$ for all $r\in (0,r(u))$; \medskip
	\item[iv)] if $p\in (3,10/3)$, define $f(r)=\varphi_{r,u}(t_r^+(u))$ for all $r\in (r(u),\infty)$. \medskip
\end{enumerate}
Define also
$$g(r):=\frac{f(r)}{r}.$$
Clearly  $f$, and consequently $g$, depends on $u\in S_1$.
\begin{proposition}\label{decreasinir} Let $u\in S_1$.
	\begin{enumerate}
		\item[i)] If $p\in (2,8/3)$, then the function $(0,\infty)\ni r\mapsto g(r)$ is decreasing for all $r\in (0,\widetilde{r}(u))$ and  increasing for all $r\in (\widetilde{r}(u),r(u))$. \medskip
		\item[ii)]  If $p=8/3$, then the function $(0,\infty)\ni r\mapsto g(r)$ is decreasing for all $r\in (0,\widetilde{r}(u))$ and increasing for $r\in (\widetilde{r}(u),r(u))$.\medskip
		\item[iii)] If $p\in (8/3,3)$, then the function $(0,r(u))\ni r\mapsto g(r)$ is decreasing for all $r\in (0,\widetilde{r}(u))$.\medskip
		\item[iv)] If $p\in (3,10/3)$, then the function $(r(u),\infty)\ni r\mapsto g(r)$ is decreasing for all $r\in (\widetilde{r}(u),\infty)$.
	\end{enumerate}
\end{proposition}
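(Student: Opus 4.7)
The plan is to compute $g'(r)$ via the envelope theorem, identify its zeros using the system~\eqref{decre}, and then determine the sign of $g'$ either from local analysis at $\widetilde r(u)$ or from endpoint asymptotics. Writing $A=\int|\nabla u|^{2}$, $B=q\int \phi_{u}u^{2}$, $C=\lambda\int|u|^{p}$, the fiber map reads
\begin{equation*}
\varphi_{r,u}(t)=\tfrac{t^{2}rA}{2}+\tfrac{tr^{2}B}{4}-\tfrac{t^{3(p-2)/2}r^{p/2}C}{p}.
\end{equation*}
Because $r\mapsto t_{r}^{+}(u)$ is $C^{1}$ by Lemma~\ref{diffet+} and $\varphi'_{r,u}(t_{r}^{+}(u))\equiv 0$, the envelope theorem gives $f'(r)=\partial_{r}\varphi_{r,u}(t_{r}^{+}(u))$. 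A direct computation then simplifies $r^{2}g'(r)=rf'(r)-f(r)$ to
\begin{equation*}
r^{2}g'(r)=\tfrac{r^{2}t_{r}^{+}(u)}{4}B-\tfrac{p-2}{2p}\bigl(t_{r}^{+}(u)\bigr)^{3(p-2)/2}r^{p/2}C,
\end{equation*}
which is, up to a factor $1/2$, the left-hand side of the second equation of~\eqref{decre} evaluated at $t=t_{r}^{+}(u)$.

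Combined with the first equation of~\eqref{decre} (automatic by definition of $t_{r}^{+}$), the uniqueness statement in Proposition~\ref{systemsolve} shows that $g'(r)=0$ forces $(r,t_{r}^{+}(u))=(\widetilde r(u),\widetilde t(u))$; in particular $\widetilde r(u)$ is the only candidate interior critical point of $g$. Whether it truly is a critical point depends on whether $\widetilde t(u)$ equals $t_{\widetilde r(u)}^{+}(u)$, equivalently on the sign of $\varphi''_{\widetilde r(u),u}(\widetilde t(u))$. Using the algebraic consequence $\widetilde t(u)A=\widetilde r(u)B/2$ of~\eqref{decre}, a direct calculation yields
\begin{equation*}
\varphi''_{\widetilde r(u),u}(\widetilde t(u))=\tfrac{28-9p}{4}\,\widetilde r(u)\,A,
\end{equation*}
which is positive for $p<28/9$ (covering cases i), ii), iii) and part of iv)) and negative for $p>28/9$. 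This dichotomy drives the subsequent case split.

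For the sign analysis I would substitute the critical-point equation back into the formula above to obtain the clean equivalent expression
\begin{equation*}
g'(r)=\tfrac{t_{r}^{+}(u)}{6r}\bigl(rB-2t_{r}^{+}(u)A\bigr),
\end{equation*}
so that the sign of $g'(r)$ equals the sign of $rB-2t_{r}^{+}(u)A$. When $p<28/9$ and $\widetilde r(u)$ is a genuine critical point, implicit differentiation of $\varphi'_{r,u}(t_{r}^{+}(u))=0$ at $r=\widetilde r(u)$ yields $(t_{r}^{+})'(\widetilde r(u))=\tfrac{B(3p-8)}{2A(28-9p)}$, and hence the map $r\mapsto rB-2t_{r}^{+}(u)A$ has derivative $\tfrac{12B(3-p)}{28-9p}$ at $\widetilde r(u)$. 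This is positive for $p<3$, classifying $\widetilde r(u)$ as a local minimum of $g$ and giving the decreasing/increasing alternation in cases i), ii), iii); it is negative for $p\in(3,28/9)$, classifying $\widetilde r(u)$ as a local maximum of $g$ and yielding the decrease on $(\widetilde r(u),\infty)$ in that sub-case of iv). For the remaining sub-case $p\in(28/9,10/3)$ of iv), $\widetilde t(u)=t_{\widetilde r(u)}^{-}(u)$, so $g$ has no interior critical point along the $t_{r}^{+}$-branch; the required monotonicity then follows from the asymptotic $g(r)\to-\infty$ as $r\to\infty$ (driven by dominance of the $r^{p/2}$-term in $f$), combined with the constant sign of $rB-2t_{r}^{+}(u)A$ throughout $(r(u),\infty)$.

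The main technical obstacle is precisely this last sub-case of iv): the uniqueness argument produces a candidate critical point that in fact lives on the $t_{r}^{-}$-branch, so the local critical-point analysis breaks down and one must deduce the monotonicity globally from endpoint asymptotics instead of from sign changes of $g'$.
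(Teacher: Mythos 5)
Your argument is correct in substance, but it follows a genuinely different route from the paper's. I checked the three computations you assert and they are right: with your notation $A=\int|\nabla u|^2$, $B=q\int\phi_uu^2$, one indeed has $g'(r)=\frac{t_r^+(u)}{6r}\bigl(rB-2t_r^+(u)A\bigr)$, $\varphi''_{\widetilde r(u),u}(\widetilde t(u))=\frac{28-9p}{4}\widetilde r(u)A$, and $\frac{d}{dr}\bigl(rB-2t_r^+(u)A\bigr)\big|_{r=\widetilde r(u)}=\frac{12B(3-p)}{28-9p}$. The paper, by contrast, never classifies $(\widetilde r(u),\widetilde t(u))$ and never meets the threshold $p=28/9$: it writes $g'(r)$ as a positive multiple of $h(t_r^+(u))$, where $h(t)=\frac q2\int\phi_uu^2-\frac{p-2}{p}r^{p/2-2}t^{\frac{3p}{2}-4}\lambda\int|u|^p$ is the second equation of \eqref{decre}; for $p\neq 8/3$ it takes the unique zero $t$ of $h$, determines the sign of $\varphi'_{r,u}(t)$ according to whether $r$ lies below or above $\widetilde r(u)$, uses the fiber-map picture of Proposition \ref{fiberingmaps} to place $t$ to the left or right of $t_r^+(u)$, and then the strict monotonicity of $h$ in $t$ gives the sign of $g'(r)$ pointwise for every $r$ (the case $p=8/3$ is handled separately because $h$ is then $t$-independent). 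So the paper gets all four items uniformly, with no second-order analysis and no asymptotics, whereas your route isolates the single possible zero of $g'$ via uniqueness for \eqref{decre} and then argues locally or globally; its merit is that it exposes the genuine branch switch at $p=28/9$, which the paper's argument never sees. Two loose ends should be closed before your proof of item iv) is complete. First, the value $p=28/9$ is omitted: there $\varphi''_{\widetilde r(u),u}(\widetilde t(u))=0$, the implicit-function step is unavailable, and in fact $(\widetilde r(u),\widetilde t(u))$ solves $\varphi'=\varphi''=0$, so $\widetilde r(u)=r(u)$ sits at the endpoint of the branch; your global argument still covers this case, but it must be said. Second, the limit $g(r)\to-\infty$ as $r\to\infty$ is only asserted; since $f$ is evaluated along the moving branch $t_r^+(u)$ this needs a short computation (for instance $t_r^+(u)\sim c\,r^{(p-2)/(10-3p)}$ and $\varphi_{r,u}(t_r^+(u))\sim -c'\,r^{(6-p)/(10-3p)}$ with exponent greater than $1$), or you can avoid it altogether: as $r\downarrow r(u)$ one checks that $t_r^{\pm}(u)\to s(u)$, and solving $\varphi'=\varphi''=0$ gives $r(u)B-2s(u)A=\frac{2(28-9p)}{3p-8}\,s(u)A<0$ for $p>28/9$, which already fixes the constant sign of $g'$ on $(r(u),\infty)$. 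With these two points added, your argument is a valid alternative proof.
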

\begin{proof} Indeed, from the definition of $f(r)$, it follows from Lemma \ref{diffet+} that $g$ is $C^1$ and 
	\begin{eqnarray*}
	g'(r)&=&r\varphi'_{r,u}(t_r^+(u))+\frac{q}{2}t_r^+(u)\int \phi_uu^2-\frac{p-2}{p}r^{{p}/{2}-2}t_r^+(u)^{\frac{3p}{2}-3}\lambda\int |u|^p.
	\end{eqnarray*}
	For simplicity denote $t_r=t_r^+(u)$. It follows that $g'(r)=0$ if, and only if
	\begin{equation}\label{sys1}
	\left\{
	\begin{aligned}
	rt_r\int |\nabla u|^2+\frac{r^2}{4}q\int \phi_uu^2-\frac{3(p-2)}{2p}r^{{p}/{2}}t_r^{\frac{3p}{2}-4}\lambda\int |u|^p=0, \\
	\frac{q}{2}t_r\int \phi_uu^2-\frac{p-2}{p}r^{{p}/{2}-2}t_r^{\frac{3p}{2}-3}\lambda\int |u|^p=0, 
	\end{aligned}
	\right.
	\end{equation}
	which is equivalent to system \eqref{decre}. Fixed $r>0$, define 
	\begin{equation*}
	h(t):=\frac{q}{2}\int \phi_uu^2-\frac{p-2}{p}r^{\frac{p}{2}-2}t^{\frac{3p}{2}-4}\lambda\int |u|^p.
	\end{equation*} We consider two cases:
	
	\vskip.3cm
	{\bf Case 1: $p=8/3$}.
	\vskip.3cm
	Observe that the first equation of \eqref{decre} has a unique solution $t$. By plugging this solution on the left hand side of the second equation, 
	which is exactly $th(t)$, the proof of $ii)$ is complete.
	
	\vskip.3cm
	{\bf Case 2: $p\in(2,10/3)\setminus\{8/3,3\}$}.
	\vskip.3cm
	Note that the second equation of \eqref{decre} has a unique solution $t$. By plugging this solution on the left hand side of the first equation, which is exactly $\varphi'_{r,u}(t)$, we conclude, by using Proposition \ref{fiberingmaps}, the following:
	\begin{itemize}
		\item[1)] if $p\in (2,8/3)$ and $r\in(0,\widetilde{r}(u))$, then $\varphi'_{r,u}(t)>0$, while for $r\in(\widetilde{r}(u),r(u))$ we have that $\varphi'_{r,u}(t)<0$;\medskip
		\item[2)] if $p\in (8/3,3)$ and $r\in(0,\widetilde{r}(u))$, then $\varphi'_{r,u}(t)<0$; \medskip		
		\item[3)] if $p\in (3,10/3)$ and $r\in (\widetilde{r}(u),\infty)$, then $\varphi'_{r,u}(t)<0$.
	\end{itemize}
	
Now we can prove $i), iii)$ and $iv)$. 

$i)$ If $r\in(0,\widetilde{r}(u))$, then from item 1), we conclude that $t>t_r$ and hence $h(t_r)<h(t)=0$, while if 
$r\in(\widetilde{r}(u),r(u))$, then $t<t_r$ and hence $h(t_r)>h(t)=0$. 

\medskip

$iii)$ If $r\in(0,\widetilde{r}(u))$, then from item 2), we conclude that 
$t<t_r$ and hence $h(t_r)<h(t)=0$, that is $g'(r)<0$. 

\medskip

$iv)$ If $r\in (\widetilde{r}(u),\infty)$, then from item 3), we conclude that 
$t<t_r$ and hence $h(t_r)<h(t)=0$, that is $g'(r)<0$.
\end{proof}
Let us define now
\begin{equation*}
\mathcal{M}_r=\left\{\frac{u}{\|u\|_2}: u\in\mathcal{N}_r^+\ \mbox{and}\ E(u)<0\right\}.
\end{equation*}
\begin{lemma}\label{containe} There holds:
	\begin{enumerate}
		\item[i)] if $p\in (2,3)$ and $0<r_1<r_2<r^*$, then $\mathcal{M}_{r_1}=\mathcal{M}_{r_2}=S_1$; \medskip
		\item[ii)]  if $p\in (3,10/3)$ and $r^*<r_1<r_2$, then $\mathcal{M}_{r_1}\subset \mathcal{M}_{r_2}$.
	\end{enumerate} 
\end{lemma}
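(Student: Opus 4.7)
The plan is to reinterpret $\mathcal{M}_r$ using the fiber construction of Section \ref{sec:natural}. By Remark \ref{rem:N+0-}, every $v \in \mathcal{N}_r^+$ may be written as $v = r^{1/2} w^{t_r^+(w)}$ for some $w \in S_1$, so that $v/\|v\|_2 = w^{t_r^+(w)} \in S_1$. Thus $\mathcal{M}_r$ is the image of the map $w \mapsto w^{t_r^+(w)}$, defined whenever the local minimum $t_r^+(w)$ of $\varphi_{r,w}$ exists and $\varphi_{r,w}(t_r^+(w)) < 0$. Since this map is constant on the scaling orbits $\{w^s : s > 0\}$, both the hypothesis and the conclusion of the lemma are to be read at the level of orbits; in particular the statement $\mathcal{M}_r = S_1$ in part (i) asserts that every orbit in $S_1$ produces such a representative.

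For part (ii), with $p \in (3, 10/3)$ and $r^* < r_1 < r_2$, I would combine case 1 of item III) of Proposition \ref{fiberingmaps} with item 2) of Proposition \ref{rineq} to obtain: $t_r^+(w)$ exists as a local minimum of $\varphi_{r,w}$ if and only if $r > r(w)$. Item 2) of Proposition \ref{r0ine} then yields $\varphi_{r,w}(t_r^+(w)) < 0$ if and only if $r > r_0(w)$. Since $r(w) < r_0(w)$ by item ii) of Proposition \ref{extremalfunctions}, both conditions coalesce into $r > r_0(w)$, which is monotone in $r$. Therefore any orbit contributing to $\mathcal{M}_{r_1}$ contributes also to $\mathcal{M}_{r_2}$, giving $\mathcal{M}_{r_1} \subset \mathcal{M}_{r_2}$.

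For part (i), with $p \in (2, 3)$ and $r < r^*$, the goal is to show every orbit in $S_1$ yields a local minimum of $\varphi_{r,w}$ with negative value, handled case by case in $p$. When $p \in (2, 8/3)$, item I) of Proposition \ref{fiberingmaps} provides a unique global minimum $t_r^+(w)$ for all $r > 0$; negativity is immediate because the term $-\frac{1}{p} t^{3p/2-3} r^{p/2} \lambda \int |w|^p$ has exponent $3p/2 - 3 \in (0,1)$ and dominates near $t = 0$, forcing $\varphi_{r,w}(t) < 0$ for small $t > 0$. When $p = 8/3$, case 1 of item II) of Proposition \ref{fiberingmaps} gives the local minimum under a pointwise condition on $w$ that, via the concentrating sequence of Lemma \ref{catto} together with scaling invariance, is satisfied by every orbit for small $r$. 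When $p \in (8/3, 3)$, case 1 of item III) of Proposition \ref{fiberingmaps} and item 1) of Proposition \ref{rineq} ensure existence of $t_r^+(w)$ for $r < r(w)$, while item 1) of Proposition \ref{r0ine} yields negativity of $\varphi_{r,w}(t_r^+(w))$ for $r < r_0(w)$; the sign change necessarily occurs at $t_r^+(w)$ since the fiber is strictly positive near zero and at the local maximum $t_r^-(w)$.

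The main subtlety is the last case $p \in (8/3, 3)$: by item i) of Proposition \ref{extremalfunctions}, $r_0(w) < r(w)$, so $r_0^* \le r^*$, and uniform negativity over all orbits requires the sharper threshold $r < r_0^*$. I would therefore verify that the Lemma's condition $r < r^*$ is to be read as $r < r_0^*$ in this regime, consistent with the geometric picture given in \eqref{eq:rstar} and Proposition \ref{extremalfunctions}.
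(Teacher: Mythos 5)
Your reading of $\mathcal{M}_r$ at the level of scaling orbits is the right one (it is the only reading under which ``$\mathcal{M}_r=S_1$'' can hold, and it is how the set is used later), and your argument follows essentially the same route as the paper: Proposition \ref{fiberingmaps} for the shape of the fibers, Propositions \ref{r0ine} and \ref{rineq} for the thresholds $r_0(u)$, $r(u)$, and Proposition \ref{extremalfunctions} to order them. In fact you are more careful than the paper's two--line proof: the paper's argument for (ii) only checks, via Proposition \ref{rineq}, that the fiber at $r_2$ still has the two critical points, and its argument for (i) only invokes item III--1) of Proposition \ref{fiberingmaps}; neither explicitly verifies the condition $E(u)<0$ built into the definition of $\mathcal{M}_r$. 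Your treatment of (ii) closes that gap correctly (membership is governed by $r>r_0(w)$, which is monotone in $r$ since $r(w)<r_0(w)$). Your ``main subtlety'' for $p\in(8/3,3)$ is also a genuine observation about the statement itself: since $r_0(u)=c_0(p)R_p(u)$ and $r(u)=c_1(p)R_p(u)$ with $c_0<c_1$, one has $r_0^*<r^*$ and there exist $u$ with $r_0(u)<r^*$, so for $r\in(r_0(u),r^*)$ the local minimum of $\varphi_{r,u}$ has nonnegative value and the orbit of $u$ does not enter $\mathcal{M}_r$; thus the correct threshold in (i) for this range of $p$ is $r_0^*$ (or $\inf_{u\in S_1}\widetilde r(u)$), exactly as you propose, and this is harmless downstream because the lemma is only applied in Theorem \ref{gdecreasing} for $r<\inf_{u\in S_1}\widetilde r(u)\le r_0^*$.

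The one step of yours that does not work as written is the case $p=8/3$: Lemma \ref{catto} produces a particular sequence $\{u_n\}$ for which the sign condition of item II--1) of Proposition \ref{fiberingmaps} holds (that is how Theorem \ref{iii} shows $\mathcal{N}_r^+\neq\emptyset$), but it cannot give the condition for \emph{every} orbit in $S_1$. The uniform statement you need is that the scaling--invariant ratio $\lambda\int|w|^{8/3}\big/\bigl(q\int\phi_ww^2\bigr)$ is bounded away from zero on $S_1$, which is Proposition \ref{th:boundedbelow} (equivalently, $R_{8/3}$ bounded below, see \eqref{eq:R8/3}); alternatively, it is immediate from the definition of the threshold $r(u)$ for $p=8/3$ given at the beginning of Section \ref{sec:subad}, since $r<r^*=\inf_{u\in S_1}r(u)$ is precisely the requirement that the coefficient of $t$ in $\varphi_{r,w}$ be negative for all $w\in S_1$. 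With that substitution your proof of (i) is complete; the subcase $p\in(2,8/3)$, where negativity of the global minimum is automatic from the behaviour of $\varphi_{r,w}$ as $t\to0^+$, is fine as you wrote it.
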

\begin{proof} $i)$ Fix $0<r<r^*$. Then, $\varphi_{r,u}$ satisfies Item $III-1)$ of Proposition \ref{fiberingmaps} for all $u\in S_1$ and hence $\mathcal{M}_{r_1}=\mathcal{M}_{r_2}=S_1$. 

\medskip

	$ii)$ Indeed, if $u\in \mathcal{M}_{r_1}$, then the fiber map $\varphi_{r_1,u}$ satisfies Item $III-1)$ of Proposition \ref{fiberingmaps}. From Proposition \ref{rineq} it follows that $r(u)<r_1<r_2$ and hence $\varphi_{r_2,u}$ also satisfies Item $III-1)$ of Proposition \ref{fiberingmaps}, which implies that $\mathcal{M}_{r_1}\subset \mathcal{M}_{r_2}$.
\end{proof}

\begin{lemma}\label{decreasingcomc} Suppose that $p\in(3,10/3)$ and let $r\in[a,b]$ where $r_0^*<a<b$. Then there exists a negative constant $c$ 
	such that $g'(r)<c_r$ for all $u\in  \mathcal{M}_r$ and $r\in [a,b]$.
\end{lemma}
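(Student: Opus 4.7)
The plan is to re-parametrise $\mathcal M_r$ as $\{w/\sqrt r : w\in \mathcal N_r^+,\ E(w)<0\}$, so each $u\in\mathcal M_r$ arises from exactly one $w\in\mathcal N_r^+$ via $u = w/\sqrt r$, together with the scaling identities $\int\phi_u u^2 = r^{-2}\int \phi_w w^2$ and $\int|u|^p = r^{-p/2}\int |w|^p$. Because $w\in\mathcal N_r^+$, the fiber $\varphi_{r,u}$ has a strict local minimum at $t=1$, and for $p\in(3,10/3)$ Proposition \ref{fiberingmaps} item $III)$ guarantees uniqueness of such a minimum, forcing $t_r^+(u)=1$. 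Substituting $t=1$ into the formula for $g'(r)$ derived in the proof of Proposition \ref{decreasinir} and using the Nehari identity $\int|\nabla w|^2+\frac{q}{4}\int\phi_w w^2=\frac{3(p-2)}{2p}\lambda\int|w|^p$ to eliminate $\lambda\int|w|^p$ should reduce $g'(r)$ to a positive multiple of $r^{-2}\bigl(q\int\phi_w w^2 - 2\int|\nabla w|^2\bigr)$.

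Next I would upgrade the sign of this combination using the hypothesis $E(w)<0$. Inserting the Nehari identity into $E(w)<0$ and clearing denominators yields $(3p-8)\,q\int\phi_w w^2 < 2(10-3p)\int|\nabla w|^2$, equivalently $q\int\phi_w w^2 <\frac{2(10-3p)}{3p-8}\int|\nabla w|^2$. The crucial observation is that $\frac{2(10-3p)}{3p-8}<2$ exactly when $p>3$, so subtracting $2\int|\nabla w|^2$ on both sides produces
\[
  q\int\phi_w w^2 - 2\int|\nabla w|^2 \;\le\; \frac{12(3-p)}{3p-8}\int|\nabla w|^2,
\]
a strictly negative multiple of $\int|\nabla w|^2$ throughout the range $p\in(3,10/3)$. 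Combining this with the uniform lower bound $\int|\nabla w|^2 \ge c_p/r \ge c_p/b$ from Proposition \ref{neharibounded} item $3$, and with $1/r^2\ge 1/b^2$, yields $g'(r)\le -K_p\, c_p/b^{3}<0$ uniformly for $u\in\mathcal M_r$ and $r\in[a,b]$, with $K_p>0$ depending only on $p$. This is the desired negative constant (I read the $c_r$ of the statement as a typographical slip for such a uniform $c$).

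The main obstacle is organisational rather than analytical: one must carefully set up the identification $t_r^+(w/\sqrt r)=1$ and translate the three basic quantities $\int|\nabla w|^2$, $q\int\phi_w w^2$ and $\lambda\int|w|^p$ between the $u$- and $w$-pictures, because the formula for $g'$, the Nehari identity, and the inequality coming from $E<0$ are all algebraic consequences of these identifications. Once the bookkeeping is correct, the remainder is a routine linear manipulation of the three functionals combined with the uniform gradient lower bound of Proposition \ref{neharibounded} item $3$; nowhere does one need the explicit expression of $t_r^+(u)$, only its value $1$ under the chosen parametrisation.
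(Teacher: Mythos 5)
Your proof is correct, but it follows a genuinely different route from the paper's. The paper argues by contradiction: after normalizing $\|\nabla u\|_2=1$ (using that $g$, hence $g'$, is unchanged under the $L^2$-preserving rescaling $u\mapsto u^s$), it assumes the left-hand side of the second equation of \eqref{sys1} is not uniformly negative along a sequence, uses Proposition \ref{neharibounded} to keep $t_n$ and $\int|u_n|^p$ away from $0$ and $\infty$, solves the second equation approximately for $t_n$, plugs it into the first to get $r_n=\widetilde r(u_n)+o_n(1)$, and contradicts $\widetilde r(u_n)=c\,r_0(u_n)<c\,r_n$ with $c\in(0,1)$ (Proposition \ref{tildepro} together with $r>r_0(u)$, which is what $E<0$ encodes); this yields no explicit constant. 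You instead exploit the normalization $t_r^+(u)=1$ built into the definition of $\mathcal M_r$ (each element is $w/\sqrt r$ with $w\in\mathcal N_r^+$, $E(w)<0$, and Proposition \ref{fiberingmaps} item $III$ makes the local minimum unique), compute directly $g'(r)=\frac{1}{6r^2}\left(q\int\phi_w w^2-2\int|\nabla w|^2\right)$ via the Nehari identity, convert $E(w)<0$ into $q\int\phi_w w^2<\frac{2(10-3p)}{3p-8}\int|\nabla w|^2$, and conclude with the bound $\int|\nabla w|^2\ge c_p/r$ of Proposition \ref{neharibounded} item $3$; the resulting constant $-\frac{2(p-3)c_p}{(3p-8)b^3}$ is explicit and independent of $q$, a real gain, and it is in the spirit of the paper's Lemma \ref{I2} but needs no restriction $p>p_0$, since for $g=f/r$ the first-order Nehari identity already produces a sign. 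One caveat: in Theorem \ref{gdecreasing} the lemma is invoked for a fixed $u\in\mathcal M_{r_1}$ at intermediate radii $\theta\in(r_1,r_2)$, where $t_\theta^+(u)\neq 1$ in general; your bound still covers this because $g$ is invariant under $u\mapsto u^s$, so one may replace $u$ by $u^{t_\theta^+(u)}$, which does satisfy your normalization and lies in $\mathcal M_\theta$ in the literal sense — it would strengthen your write-up to state this invariance explicitly (the paper uses exactly this freedom when it normalizes $\|\nabla u\|_2=1$).
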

\begin{proof} In order to prove the lemma, it is sufficiently to prove that the left hand side of the second equation of system \eqref{sys1} is bounded 
from above by $c$ for all $u\in  \mathcal{M}_r$ and $r\in [a,b]$. 

First observe from Proposition \ref{tildepro} that $\widetilde{r}(u)<r_0(u)<r$ for all 
$u\in  \mathcal{M}_r$ and all $r\in[a,b]$ and hence from Theorem \ref{decreasinir}, we conclude that $g'(r)<0$ for all $u\in  \mathcal{M}_r$ and $r\in 
[a,b]$.  Now note that $g(r)=\varphi_{r,u}(t_r^+(u))/r=\varphi_{r,su}(t_r^+(su))/r$ for all $s>0$ and therefore, by choosing $s=1/\|\nabla u\|_2$, we can 
assume that $\|\nabla u\|_2=1$ for all $u\in  \mathcal{M}_r$. 

Suppose on the contrary that there exists a sequence $\{u_n\} \subset \mathcal{M}_r$ 
with $\|\nabla u_n\|_2=1$ and corresponding sequences $t_n>0$, $r_n\in[a,b]$ such that 
	\begin{equation}\label{sys2}
	\left\{
	\begin{aligned}
	r_nt_n\int |\nabla u_n|^2+\frac{r_n^2}{4}q\int \phi_{u_n}u_n^2-\frac{3(p-2)}{2p}r_n^{{p}/{2}}t_n^{\frac{3p}{2}-4}\lambda\int |u_n|^p=0, \\
	\frac{q}{2}t_n\int \phi_{u_n}u_n^2-\frac{p-2}{p}r_n^{{p}/{2}-2}t_n^{\frac{3p}{2}-3}\lambda\int |u_n|^p=o_n(1), 
	\end{aligned}
	\right.
	\end{equation}
	From Proposition \ref{neharibounded} and Gagliardo-Nirenberg  inequality it follows that there exists positive constants $c,d$ such that $c\le t_n\le d$ and $c\le \int |u_n|^p\le d$ for all $n$. Therefore from the second equation of \eqref{sys2} we obtain that
	\begin{equation*}
	t_n=\left(\frac{p}{2(p-2)} r_n^{\frac{4-p}{2}}\frac{q}{\lambda}\frac{\displaystyle\int \phi_{u_n}u_n^2}{\displaystyle\int |u_n|^p}\right)^{\frac{2}{3p-8}}+o_{n}(1).
	\end{equation*}
	By plugging $t_n$ in the first equation of \eqref{sys2} we conclude that $r_n=\widetilde{r}(u_n)+o_{n}(1)$ and hence $r_n=cr_0(u_n)+o_n(1)<cr_n+o_{n}(1)$ where $c\in(0,1)$, which is a contradiction. Then there exists a negative constant $c_r$ such that $g'(r)<c_r$ for all $u\in  \mathcal{M}_r$.
\end{proof}
Since    we do not have  a priori estimates like in  Proposition \ref{neharibounded} for the case $p\in (2,3)$, a similar version of Lemma \ref{decreasingcomc} for that case is not so immediate, however, if we control the term $\int|u|^p$, then we can prove the following:
\begin{lemma}\label{decreasingcomc1} Suppose that $p\in(2,3)$ and let $r\in[a,b]$ where $0<a<b<\inf_{u\in S_1}\widetilde{r}(u)$. Fix $d>0$, then there exists a negative constant $c$ such that $g'(r)<c$ for all $u\in  \mathcal{M}_r$ satisfying $\int|u^{t_r^+(u)}|^p\ge c$ and all $r\in[a,b]$.
\end{lemma}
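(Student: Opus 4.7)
\medskip

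The plan is to adapt the contradiction argument of Lemma \ref{decreasingcomc}, the essential new difficulty being that Proposition \ref{neharibounded} provides no a priori lower bound for $\int|u|^p$ on $\mathcal N_r$ when $p<3$. Since $r\le b<\inf_{S_1}\widetilde r$, Proposition \ref{decreasinir} items $i)$--$iii)$ already give $g'(r)<0$ for every $u\in\mathcal M_r$ and every $r\in[a,b]$; the task is to upgrade this to a uniform strict upper bound. As in Lemma \ref{decreasingcomc}, the scaling identity $g(r)=\varphi_{r,u^s}(t_r^+(u^s))/r$ (valid for every $s>0$) allows one to normalise $\|\nabla u\|_2=1$ without loss of generality while preserving both membership in $\mathcal M_r$ and the value of $\int|u^{t_r^+(u)}|^p$.

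Suppose by contradiction that there exist sequences $u_n\in\mathcal M_{r_n}$ with $\|\nabla u_n\|_2=1$, $\int|u_n^{t_n}|^p\ge d$ (where $t_n:=t_{r_n}^+(u_n)$) and $r_n\in[a,b]$ with $g'(r_n)\to 0$. Writing $A_n=\int|\nabla u_n|^2=1$, $B_n=q\int\phi_{u_n}u_n^2$, $C_n=\lambda\int|u_n|^p$, the first equation of \eqref{sys2} is just the membership $u_n\in\mathcal N_{r_n}$, while the second holds up to $o_n(1)$. Proposition \ref{neharibounded} item 1 bounds $r_nt_n^2$ from above and hence $t_n$ from above; the Gagliardo-Nirenberg inequality yields $C_n\le\lambda K_{\mathrm{GN}}$; and the hypothesis $\int|u_n^{t_n}|^p=t_n^{3(p-2)/2}\int|u_n|^p\ge d$ then delivers positive lower bounds for both $t_n$ and $\int|u_n|^p$. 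Feeding these (and $r_n\ge a>0$) into $g'(r_n)=o_n(1)$ forces $B_n$ to be bounded below away from zero, while the Hardy-Littlewood-Sobolev inequality bounds $B_n$ above. Up to a subsequence, therefore, $A_n\to 1$, $B_n\to B_0>0$, $C_n\to C_0>0$, $t_n\to t_0>0$ and $r_n\to r_0\in[a,b]$.

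Passing to the limit in both equations of \eqref{sys2} shows that $(r_0,t_0)$ solves the system \eqref{decre} with coefficients $(A,B,C)=(1,B_0,C_0)$. By Proposition \ref{systemsolve} this solution is unique, and the explicit formulas \eqref{eq:no8/3}--\eqref{eq:8/3} (continuous at $p=8/3$ by Remark \ref{rem:constantecontinua}) depend continuously on $(A,B,C)$, so $\widetilde r(u_n)\to r_0$. But the standing hypothesis $b<\inf_{S_1}\widetilde r$ gives $\widetilde r(u_n)\ge\inf_{S_1}\widetilde r>b$ for every $n$, whence $r_0\ge\inf_{S_1}\widetilde r>b$, contradicting $r_0\le b$. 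The main obstacle, as anticipated, is the absence in the regime $p<3$ of the a priori lower bound on $\int|u|^p$ that Proposition \ref{neharibounded} item 3 supplied in Lemma \ref{decreasingcomc}; the hypothesis $\int|u^{t_r^+(u)}|^p\ge d$ is precisely what restores the uniform lower bounds on $t_n$ and $C_n$ that make the limit passage non-degenerate and the identification with $\widetilde r(u_n)$ possible.
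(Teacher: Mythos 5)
Your proof is correct and follows essentially the same route as the paper's: a contradiction argument with the normalization $\|\nabla u\|_2=1$, uniform bounds on $t_n$, $\int|u_n|^p$ and $\int\phi_{u_n}u_n^2$ extracted from Proposition \ref{neharibounded}, Gagliardo--Nirenberg, Hardy--Littlewood--Sobolev and the extra hypothesis $\int|u^{t_r^+(u)}|^p\ge d$, leading to $r_n=\widetilde r(u_n)+o_n(1)\ge \inf_{S_1}\widetilde r>b$, which is absurd. The paper compresses these steps into ``arguing as in the proof of Lemma \ref{decreasingcomc}''; your version merely makes the bounds explicit and replaces the asymptotic substitution of $t_n$ by a limit-passage plus the uniqueness and continuity of the solution of system \eqref{decre} from Proposition \ref{systemsolve}, which is the same idea.
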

\begin{proof} In order to prove the lemma, it is sufficiently to prove that the left hand side of the second equation of system \eqref{sys1} is bounded from above by $c$ for all $u\in  \mathcal{M}_r$ satisfying $\int|u^{t_r^+(u)}|^p\ge d$ and all $r\in[a,b]$. From Theorem \ref{decreasinir}, we have that $g'(r)<0$ for all $u\in  \mathcal{M}_r$. Now note that $g(r)=\varphi_{r,u}(t_r^+(u))/r=\varphi_{r,su}(t_r^+(su))/r$ for all $s>0$ and therefore, by choosing $s=1/\|\nabla u\|_2$, we can assume that $\|\nabla u\|_2=1$ for all $u\in  \mathcal{M}_r$ satisfying $\int|u^{t_r^+(u)}|^p\ge d$. Suppose on the contrary that there exists a sequence $\{u_n \}\subset \mathcal{M}_r$ satisfying $\|\nabla u_n\|_2=1$ and $\int|u_n^{t_r^+(u_n)}|^p\ge d$ and corresponding sequences $t_n>0$, $r_n\in[a,b]$ such that 
	\begin{equation*}
	\left\{
	\begin{aligned}
	r_nt_n\int |\nabla u_n|^2+\frac{r_n^2}{4}q\int \phi_{u_n}u_n^2-\frac{3(p-2)}{2p}r_n^{{p}/{2}}t_n^{\frac{3p}{2}-4}\lambda\int |u_n|^p=0, \\
	\frac{q}{2}t_n\int \phi_{u_n}u_n^2-\frac{p-2}{p}r_n^{{p}/{2}-2}t_n^{\frac{3p}{2}-3}\lambda\int |u_n|^p=o_n(1).
	\end{aligned}
	\right.
	\end{equation*}
	Arguing as in the proof of Lemma \ref{decreasingcomc} we conclude that 
	$$r_n=\widetilde{r}(u_n)+o_{n}(1)\ge \inf_{u\in S_1}\widetilde{r}(u)+o_{n}(1)>b+\varepsilon+o_n(1)$$
	 for some $\varepsilon$, which is a contradiction. The proof is complete.
\end{proof}

At this point we have the desired result on  $I_r/r$.
\begin{theorem}\label{gdecreasing} There holds:
	\begin{enumerate}
		\item[i)] if $p\in(2,3)$, then the function $(0,\inf_{u\in S_1},\widetilde{r}(u))\ni r \mapsto I_r/r$ is decreasing;\medskip
		\item[ii)] if $p\in(3,10/3)$, then the function $(r_0^*,\infty)\ni r \mapsto I_r/r$ is decreasing.		
	\end{enumerate}
\end{theorem}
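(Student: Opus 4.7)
The plan is to propagate the pointwise strict monotonicity of the ``fiber energy'' $g_v(r)=\varphi_{r,v}(t_r^+(v))/r$ (from Proposition \ref{decreasinir}) to the infimum $I_r/r$, using the uniform derivative estimates of Lemma \ref{decreasingcomc} and Lemma \ref{decreasingcomc1}. I would fix $r_1<r_2$ in the relevant interval. By Theorem \ref{Nehari} we have $I_{r_1}<0$, so for each sufficiently small $\epsilon>0$ we may choose $u_1\in\mathcal{N}_{r_1}^+$ with both $E(u_1)\le I_{r_1}+\epsilon$ and $E(u_1)\le I_{r_1}/2<0$. Setting $v:=u_1/\sqrt{r_1}\in S_1$, the identity $r_1^{1/2}v^1=u_1\in\mathcal{N}_{r_1}^+$ forces $t_{r_1}^+(v)=1$, so $v\in\mathcal{M}_{r_1}$; Lemma \ref{containe} then gives $v\in\mathcal{M}_r$ for every $r\in[r_1,r_2]$, and by Lemma \ref{diffet+} the curve $r\mapsto r^{1/2}v^{t_r^+(v)}\in\mathcal{N}_r^+$ is of class $C^1$, with energy equal to $r\,g_v(r)$.

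For case (ii), since $r_1>r_0^*$, Lemma \ref{decreasingcomc} directly provides a constant $c<0$ (depending on $[r_1,r_2]$) with $g_v'(r)<c$ throughout $[r_1,r_2]$. Case (i) is more delicate because Lemma \ref{decreasingcomc1} requires the extra hypothesis $\int|v^{t_r^+(v)}|^p\ge d$. To produce such a $d$, I would use the Nehari identity to rewrite, for every $u\in\mathcal{N}_r$,
\[
E(u)=\frac{3p-10}{4p}\lambda\int|u|^p+\frac{q}{8}\int\phi_u u^2,
\]
so that $E(u)<0$ implies $\int|u|^p\ge 4p|E(u)|/(\lambda(10-3p))$. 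Since $g_v$ is non-increasing by Proposition \ref{decreasinir}, one has $E(r^{1/2}v^{t_r^+(v)})=rg_v(r)\le rg_v(r_1)\le rI_{r_1}/(2r_1)$ for all $r\in[r_1,r_2]$, in particular $|E(r^{1/2}v^{t_r^+(v)})|\ge |I_{r_1}|/2$. Combining the two inequalities gives $\int|r^{1/2}v^{t_r^+(v)}|^p\ge 2p|I_{r_1}|/(\lambda(10-3p))$, and dividing by $r^{p/2}\le r_2^{p/2}$ yields a constant $d>0$ with $\int|v^{t_r^+(v)}|^p\ge d$ uniformly for $r\in[r_1,r_2]$. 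Lemma \ref{decreasingcomc1} then produces the desired $c<0$ with $g_v'(r)<c$ on $[r_1,r_2]$.

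In both cases, integrating $g_v'<c$ on $[r_1,r_2]$ gives $g_v(r_2)\le g_v(r_1)+c(r_2-r_1)$. Combined with $I_{r_2}/r_2\le g_v(r_2)$ (because $r_2^{1/2}v^{t_{r_2}^+(v)}\in\mathcal{N}_{r_2}^+$) and $g_v(r_1)=E(u_1)/r_1\le(I_{r_1}+\epsilon)/r_1$, this yields
\[
\frac{I_{r_2}}{r_2}\le\frac{I_{r_1}+\epsilon}{r_1}+c(r_2-r_1).
\]
Letting $\epsilon\to 0^+$ produces $I_{r_2}/r_2\le I_{r_1}/r_1+c(r_2-r_1)<I_{r_1}/r_1$, which is the strict monotonicity claimed in both (i) and (ii).

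The main obstacle I anticipate is the quantitative bootstrap needed for case (i): promoting the single inequality $I_{r_1}<0$ into a uniform lower bound on $\int|v^{t_r^+(v)}|^p$ along the entire curve $r\in[r_1,r_2]$, so that Lemma \ref{decreasingcomc1} is applicable. All other ingredients (the fiber analysis of Proposition \ref{fiberingmaps}, the construction of $\mathcal{M}_r$, and the differentiability of $r\mapsto t_r^+(v)$) have already been prepared in the preceding sections.
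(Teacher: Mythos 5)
Your proposal is correct and takes essentially the same route as the paper: a near-minimizer in $\mathcal{N}_{r_1}^+$ with negative energy is normalized to $v\in\mathcal{M}_{r_1}$, carried along the curve $r\mapsto r^{1/2}v^{t_r^+(v)}$ via Lemma \ref{containe} and Lemma \ref{diffet+}, and the uniform negative bounds on $g'$ from Lemma \ref{decreasingcomc1} (case i) and Lemma \ref{decreasingcomc} (case ii) are combined with the mean value theorem to get $I_{r_2}/r_2\le I_{r_1}/r_1+c(r_2-r_1)$. Your only deviation is an improvement in rigor: the explicit lower bound $\int|v^{t_r^+(v)}|^p\ge d$ obtained from the identity $E(u)=\frac{3p-10}{4p}\lambda\int|u|^p+\frac{q}{8}\int\phi_uu^2$ together with the monotonicity of $g_v$ makes quantitative the step that the paper justifies only by the brief remark that minimizing sequences are non-vanishing.
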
 
\begin{proof} $i)$ Fix $0<r_1<r_2< \inf_{u\in S_1},\widetilde{r}(u)<r^*$ and let $\{u_n\}\subset \mathcal{N}_{r_1}^+$ be a minimizing sequence to 
	$I_{r_1}$. Since every such sequence is non-vanishing, we can assume that $\int|u_n|^p\ge d$ for some positive constant $d$ and all $r\in[r_1,r_2]$. From Lemma \ref{containe}, Lemma \ref{decreasingcomc1} and the mean value theorem, we conclude that,
	for all $n \in \mathbb N$,
	\begin{eqnarray*}
	\frac{I_{r_2}}{r_2}&\le& \frac{\varphi_{r_2,u_n}(t_{r_2}^+(u_n))}{r_2} \\ 
	&=& \frac{\varphi_{r_1,u_n}(t_{r_1}^+(u_n))}{r_1}+g'(\theta)(r_2-r_1)\\
	&<&\frac{\varphi_{r_1,u_n}(t_{r_1}^+(u_n))}{r_1}+c(r_2-r_1)
	\end{eqnarray*}
	where $\theta\in (r_1,r_2)$. As a consequence
	\begin{equation*}
	\frac{I_{r_2}}{r_2}\le 	\frac{I_{r_1}}{r_1}+c(r_2-r_1),
	\end{equation*}
	and the proof of $i)$ is complete. \medskip
	
	$ii)$ Fix $r_0^*<r_1<r_2$ and note from Lemma \ref{containe}, Lemma \ref{decreasingcomc} and the mean value theorem that
	\begin{eqnarray*}
	\frac{I_{r_2}}{r_2}&\le& \frac{\varphi_{r_2,u}(t_{r_2}^+(u))}{r_2} \\
	&=& \frac{\varphi_{r_1,u}(t_{r_1}^+(u))}{r_1}+g'(\theta)(r_2-r_1)\\
	&<&\frac{\varphi_{r_1,u}(t_{r_1}^+(u_n))}{r_1}+c(r_2-r_1), \quad \forall u\in \mathcal{M}_{r_1},
	\end{eqnarray*}
	where $\theta\in (r_1,r_2)$. As a consequence
	\begin{equation*}
	\frac{I_{r_2}}{r_2}\le 	\frac{I_{r_1}}{r_1}+c(r_2-r_1),
	\end{equation*}
	and the proof of is complete.
\end{proof}
As an immediate consequence of Theorem \ref{gdecreasing} we have the subadditivity inequality for $I_r$.
\begin{theorem}\label{stricsub} There holds:
		\begin{enumerate}
		\item[i)] if $p\in(2,3)$, then for each $r_1,r_2\in(0,\inf_{u\in S_1},\widetilde{r}(u))$, with $r_1<r_2$, we have that $I_{r_2}<I_{r_1}+I_{r_2-r_1}$; \medskip
		\item[ii)] if $p\in(3,10/3)$, then for each $r_1,r_2\in(r_0^*,\infty)$, with $r_1<r_2$, we have that $I_{r_2}<I_{r_1}+I_{r_2-r_1}$.		
	\end{enumerate}
\end{theorem}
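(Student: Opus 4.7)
My plan is to deduce Theorem \ref{stricsub} from the strict monotonicity of $r \mapsto I_r/r$ already established in Theorem \ref{gdecreasing}, via the classical elementary argument: whenever $I_r < 0$ and $r \mapsto I_r/r$ is strictly decreasing on an interval, strict sub-additivity $I_{r_2} < I_{r_1} + I_{r_2 - r_1}$ follows for parameters in that interval.

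For case (i), I would fix $0 < r_1 < r_2 < \inf_{u \in S_1} \widetilde{r}(u)$ and observe that $r_2 - r_1 < r_2$ also belongs to the same interval. Since $I_r < 0$ on the whole $(0,\infty)$ by items (i), (iii) and (v) of Theorem \ref{Nehari}, the two strict inequalities $I_{r_1}/r_1 > I_{r_2}/r_2$ and $I_{r_2 - r_1}/(r_2 - r_1) > I_{r_2}/r_2$ provided by Theorem \ref{gdecreasing} can be multiplied by $r_1 > 0$ and $r_2 - r_1 > 0$ respectively and added, yielding
\[ I_{r_1} + I_{r_2-r_1} > \frac{r_1 + (r_2-r_1)}{r_2}\, I_{r_2} = I_{r_2}. \]

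For case (ii), I fix $r_0^* < r_1 < r_2$, so $I_{r_2} < 0$ by Theorem \ref{Nehari}(vi). The difficulty compared with case (i) is that $r_2 - r_1$ need not lie in the monotonicity interval $(r_0^*, \infty)$, and I would split into three sub-cases. When $r_2 - r_1 > r_0^*$ the argument of case (i) applies verbatim. When $r_2 - r_1 \in [r^*, r_0^*]$, Theorem \ref{Nehari}(vi) gives $I_{r_2 - r_1} \geq 0$, and since $r \mapsto I_r/r$ is decreasing together with $I_{r_2} < 0$, multiplying $I_{r_1}/r_1 > I_{r_2}/r_2$ by $r_1$ yields $I_{r_1} > (r_1/r_2) I_{r_2} > I_{r_2}$, so $I_{r_1} + I_{r_2 - r_1} \geq I_{r_1} > I_{r_2}$. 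Finally, when $r_2 - r_1 < r^*$ the set $\mathcal{N}_{r_2 - r_1}^+ \cup \mathcal{N}_{r_2 - r_1}^0$ is empty, so $I_{r_2 - r_1} = +\infty$ and the inequality is trivial.

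All the heavy lifting has been done in Theorem \ref{gdecreasing}; the only delicate point in the deduction is sign-tracking, namely noting that $I_{r_1}/r_1 > I_{r_2}/r_2$ yields $I_{r_1} > (r_1/r_2) I_{r_2} > I_{r_2}$ precisely because $I_{r_2} < 0$ and $r_1/r_2 < 1$. I do not foresee any substantial obstacle beyond this bookkeeping and the case split in (ii).
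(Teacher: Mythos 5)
Your proposal is correct and follows essentially the same route as the paper, which establishes the strict monotonicity of $r\mapsto I_r/r$ in Theorem \ref{gdecreasing} and then declares the strict sub-additivity an immediate consequence via exactly the elementary multiplication-and-addition argument you spell out (note that in case (i) the sign of $I_{r_2}$ is not even needed for that step). Your explicit case split in (ii) for $r_2-r_1\le r_0^*$ — using $I_{r_2-r_1}\ge 0$ on $[r^*,r_0^*]$ together with $I_{r_2}<0$, and the convention $I_{r_2-r_1}=+\infty$ when the constraint set is empty for $r_2-r_1<r^*$ — is a welcome piece of bookkeeping that the paper leaves implicit.
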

\begin{remark} When $p\in (2,8/3]$ we see from Theorem \ref{decreasinir} that after $\widetilde{r}(u)$ the function $g$ is increasing. This suggest 
	that the same property may hold for $I_{r}/r$ when $r$ is big and this suggests that $I_r$ may not satisfies the strict sub-additive property. 
\end{remark}
\subsection{The case $p=3$}
We assume that $\lambda>\lambda_q^*$, which implies from Theorem \ref{Nehari1} that $\mathcal{N}_r^+\neq \emptyset$ for all $r>0$. We define in this case
\begin{equation*}
\mathcal{M}_r=\left\{\frac{u}{\|u\|_2}: u\in\mathcal{N}_r^+\right\}.
\end{equation*} Since, as observed in Subsection \ref{subsec:p=3}, the system $\varphi_{r,u}'(t)=\varphi_{r,u}''(t)$ does not depend on $r>0$, it follows that
\begin{lemma}\label{mmm1} There holds:
	\begin{equation*}
	\mathcal{M}_r=\mathcal{M}_1,\quad \forall r>0.
	\end{equation*}
	
\end{lemma}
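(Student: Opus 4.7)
The plan is to exploit a scaling identity available only at $p=3$. Writing out the fibre map for $u\in S_1$,
\[
\varphi_{r,u}(t)=\tfrac{t^{2}r}{2}\int|\nabla u|^{2}+\tfrac{tr^{2}}{4}q\int\phi_{u}u^{2}-\tfrac{t^{3/2}r^{3/2}}{3}\lambda\int|u|^{3},
\]
the substitution $s=t/r$ factors out a common $r^{3}$, giving $\varphi_{r,u}(t)=r^{3}\varphi_{1,u}(t/r)$. Differentiating yields $\varphi_{r,u}'(t)=r^{2}\varphi_{1,u}'(t/r)$ and $\varphi_{r,u}''(t)=r\,\varphi_{1,u}''(t/r)$, so the critical points of $\varphi_{r,u}$ and $\varphi_{1,u}$ are in bijection via $t\leftrightarrow t/r$, preserving type (max, min, or inflection). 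This identity is the analytic content of the observation from Subsection \ref{subsec:p=3} that the system $\varphi_{r,u}'(t)=\varphi_{r,u}''(t)=0$ is $r$-independent.

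The next step is to note that $\varphi_{r,u}$ admits a positive local minimum exactly when $\varphi_{1,u}$ does, and by direct inspection of $\varphi_{r,u}'$ (whose unique $t$-critical point is strictly increasing in $t$ from $-\infty$ to a positive limit, as in Subsection \ref{subsec:p=3}) this happens precisely when $\lambda>\lambda_{q}(u)$: a strict inequality involving $u$ alone, independent of $r$. Via Remark \ref{rem:N+0-}, every $u\in S_{1}$ satisfying $\lambda>\lambda_{q}(u)$ produces, through its positive local minimum, an element $r^{1/2}u^{t_{r}^{+}(u)}\in\mathcal{N}_{r}^{+}$, and every element of $\mathcal{N}_{r}^{+}$ is of this form.

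Finally one closes by noting that $\lambda_{q}$ is invariant under the $L^{2}$-preserving scaling $u\mapsto u^{\beta}$: a direct computation from $\int|\nabla u^{\beta}|^{2}=\beta^{2}\int|\nabla u|^{2}$, $\int\phi_{u^{\beta}}(u^{\beta})^{2}=\beta\int\phi_{u}u^{2}$, $\int|u^{\beta}|^{3}=\beta^{3/2}\int|u|^{3}$ gives $\lambda_{q}(u^{\beta})=\lambda_{q}(u)$. Consequently the image in $S_{1}$ of $\mathcal{N}_{r}^{+}$ under $v\mapsto v/\|v\|_{2}$ is the same for every $r>0$, and so $\mathcal{M}_{r}=\mathcal{M}_{1}$. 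The main obstacle is essentially clerical: tracking the $r$-dependence $t_{r}^{+}(u)=r\,t_{1}^{+}(u)$ supplied by the scaling identity and absorbing it into the scaling $u\mapsto u^{r}$ when identifying the literal sets.
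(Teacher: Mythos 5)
Your proof is correct and takes essentially the same route as the paper: the paper's one-line argument rests on the observation from Subsection \ref{subsec:p=3} that the system $\varphi_{r,u}'(t)=\varphi_{r,u}''(t)=0$ can be solved independently of $r$ (giving the $r$-independent threshold $\lambda_q(u)$), and your scaling identity $\varphi_{r,u}(t)=r^{3}\varphi_{1,u}(t/r)$ together with the characterization ``local minimum exists iff $\lambda>\lambda_{q}(u)$'' is just an explicit form of that same observation (indeed it is the identity the paper exploits immediately afterwards in Lemma \ref{fibercarp=3}). The only caveat is the one you flag yourself: since $t_{r}^{+}(u)=r\,t_{1}^{+}(u)$, the literal images of $\mathcal{N}_r^{+}$ under $v\mapsto v/\|v\|_{2}$ agree only modulo the dilation $u\mapsto u^{\beta}$, so the equality $\mathcal{M}_r=\mathcal{M}_1$ is to be read in the dilation-saturated sense in which the paper itself defines and uses $\mathcal{M}_r$, which is exactly the convention underlying the paper's own proof.
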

From Lemma \ref{mmm1} we conclude that if $u\in \mathcal{M}_1\subset S_1$, then $t_r^+(u)$ is defined for all $r>0$ and thus we can define $f(r)=\varphi_{r,u}(t_r^+(u))$.
\begin{lemma}\label{fibercarp=3} For each $r>0$ and $u\in \mathcal{M}_1$, we have that $f(r)=f(1)r^3$.
\end{lemma}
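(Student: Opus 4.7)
The plan is to exploit a simple but crucial scaling property that is specific to the exponent $p=3$. Writing the fiber map explicitly with $A=\int|\nabla u|^2$, $B=q\int\phi_u u^2$, $C=\lambda\int|u|^3$, we have
\begin{equation*}
\varphi_{r,u}(t)=\frac{t^2 r}{2}A+\frac{tr^2}{4}B-\frac{t^{3/2}r^{3/2}}{3}C.
\end{equation*}
Substituting $t=r\tau$ and factoring, a direct calculation gives
\begin{equation*}
\varphi_{r,u}(r\tau)=r^3\left(\frac{\tau^2}{2}A+\frac{\tau}{4}B-\frac{\tau^{3/2}}{3}C\right)=r^3\,\varphi_{1,u}(\tau),\qquad \forall r,\tau>0.
\end{equation*}
This identity is the only non-trivial ingredient; the rest is bookkeeping.

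From the scaling relation, differentiating in $\tau$ immediately yields $\varphi_{r,u}'(r\tau)\cdot r = r^3\varphi_{1,u}'(\tau)$ and $\varphi_{r,u}''(r\tau)\cdot r^2 = r^3\varphi_{1,u}''(\tau)$, so critical points of $\varphi_{r,u}$ correspond bijectively to critical points of $\varphi_{1,u}$ via $t\mapsto t/r$, with the sign of the second derivative preserved. In particular, since $u\in \mathcal{M}_1$ means that $\varphi_{1,u}$ has a (local) minimum at $t_1^+(u)$, we conclude that $\varphi_{r,u}$ has its corresponding minimum at $t_r^+(u)=r\,t_1^+(u)$. (This is of course consistent with Lemma \ref{mmm1}, which already guarantees that $t_r^+(u)$ is well defined for every $r>0$ whenever $u\in\mathcal{M}_1$.)

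Finally, evaluating at the minimum and using the scaling identity one more time,
\begin{equation*}
f(r)=\varphi_{r,u}(t_r^+(u))=\varphi_{r,u}\bigl(r\,t_1^+(u)\bigr)=r^3\,\varphi_{1,u}(t_1^+(u))=r^3 f(1),
\end{equation*}
which is the claimed identity. There is no serious obstacle here: the statement is essentially a manifestation of the fact that at $p=3$ all three terms in $\varphi_{r,u}$ become homogeneous of the same degree under the rescaling $(r,t)\mapsto(r,r\tau)$, which is precisely what distinguishes $p=3$ from the other exponents in the analysis above.
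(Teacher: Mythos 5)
Your proof is correct, and it reaches the conclusion by a slightly different (and more direct) route than the paper. The paper also exploits the $p=3$ homogeneity, but only implicitly: it writes $f(r)/r^3$ as the unit-mass fiber map evaluated at $t_r^+(u)/r$, invokes the Nehari identity satisfied by $r^{1/2}u^{t_r^+(u)}$, and then shows that $\bigl(f(r)/r^3\bigr)'=0$ for all $r>0$, which requires the differentiability of $r\mapsto t_r^+(u)$ (Lemma \ref{diffet+}) and a chain-rule computation in which the criticality of $t_r^+(u)$ kills the derivative. You instead prove the full scaling identity $\varphi_{r,u}(r\tau)=r^3\varphi_{1,u}(\tau)$ for all $\tau>0$ and deduce from it the exact correspondence of critical points, $t_r^+(u)=r\,t_1^+(u)$, with the sign of the second derivative preserved; evaluating at the minimum then gives $f(r)=r^3 f(1)$ with no differentiation in $r$ at all. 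Your argument buys two things: it dispenses with the implicit-function-theorem ingredient entirely, and it yields the explicit formula $t_r^+(u)=r\,t_1^+(u)$, which in particular re-derives the content of Lemma \ref{mmm1} rather than merely relying on it. The only cosmetic caveat is that the constants $q$ and $\lambda$ must be carried in the fiber map (as you do via $B$ and $C$); the paper's displayed formulas suppress them, but this does not affect either argument.
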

\begin{proof} Note that
	\begin{equation*}
	\frac{f(r)}{r^3}=\frac{1}{2}\left(\frac{t_r^+(u)}{r}\right)^2\int |\nabla u|^2+\frac{1}{4}\frac{t_r^+(u)}{r}\int \phi_uu^2-\frac{1}{3}\left(\frac{t_r^+(u)}{r}\right)^\frac{3}{2}\int |u|^3.
	\end{equation*}
	Since $ru^{t_r^+(u)}\in \mathcal{N}_r^+$, we also have that
		\begin{equation*}
\left(\frac{t_r^+(u)}{r}\right)^2\int |\nabla u|^2+\frac{1}{4}\frac{t_r^+(u)}{r}\int \phi_uu^2-\frac{1}{2}\left(\frac{t_r^+(u)}{r}\right)^\frac{3}{2}\int |u|^3=0.
	\end{equation*}
	Therefore 
	\begin{equation*}
	\left(\frac{f(r)}{r^3}\right)'=0,\quad \forall r>0,
	\end{equation*}
	which completes the proof.
\end{proof}
\begin{proposition}\label{mmm2} For each $r>0$, we have that $I_r=I_1r^3$.
\end{proposition}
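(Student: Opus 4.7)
The plan is to deduce $I_r = r^3 I_1$ directly from Lemma \ref{mmm1} and Lemma \ref{fibercarp=3}. The essential observation is that the fibering analysis for $p = 3$ is scale-covariant in $r$ with exponent $3$: for every admissible $u \in \mathcal{M}_1 = \mathcal{M}_r$, the quantity $f(r) = \varphi_{r,u}(t_r^+(u))$, which equals the energy of the corresponding element $r^{1/2}u^{t_r^+(u)} \in \mathcal{N}_r^+$, satisfies $f(r) = r^3 f(1)$ by Lemma \ref{fibercarp=3}. Conversely, every $v \in \mathcal{N}_r^+$ arises this way by taking $u = v/\|v\|_2$, for which $t_r^+(u) = 1$ and $E(v) = f(r)$. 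Hence
\[
\inf_{v \in \mathcal{N}_r^+} E(v) \;=\; \inf_{u \in \mathcal{M}_1} f(r) \;=\; r^3 \inf_{u \in \mathcal{M}_1} f(1) \;=\; r^3 \inf_{w \in \mathcal{N}_1^+} E(w).
\]

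To finish, I have to treat the $\mathcal{N}_r^0$ contribution to $I_r$. I would repeat the argument with $t_r^+$ replaced by the inflection point $t_r^0$. First, the analogue of Lemma \ref{mmm1} for $\mathcal{N}_r^0$ is already contained in the observation made in Subsection \ref{subsec:p=3} that the system $\varphi_{r,u}'(t) = \varphi_{r,u}''(t) = 0$ is $r$-independent, so the set of $u \in S_1$ whose fiber admits an inflection point does not depend on $r$. Second, since $t_r^0(u)$ is a critical point of $\varphi_{r,u}$, the very same differentiation trick used in Lemma \ref{fibercarp=3} yields $\varphi_{r,u}(t_r^0(u)) = r^3 \varphi_{1,u}(t_1^0(u))$. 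Combining the two identities for $\mathcal{N}_r^+$ and $\mathcal{N}_r^0$ gives $I_r = r^3 I_1$.

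The main technical nuisance is ensuring that the identification $v \leftrightarrow u$ between $\mathcal{N}_r^+$ (and its $\mathcal{N}_r^0$ counterpart) and the base set $\mathcal{M}_1$ is clean enough to transport infima, which ultimately boils down to Lemma \ref{mmm1}. As a streamlined alternative that bypasses this bookkeeping entirely, one may introduce the global scaling $T_\mu v(x) := \mu^4 v(\mu^2 x)$: for $p = 3$ it multiplies each of $\int|\nabla v|^2$, $\int \phi_v v^2$ and $\int|v|^3$ by the same factor $\mu^6$, so $T_\mu$ bijects $\mathcal{N}_r^+ \to \mathcal{N}_{\mu^2 r}^+$ and $\mathcal{N}_r^0 \to \mathcal{N}_{\mu^2 r}^0$ while satisfying $E(T_\mu v) = \mu^6 E(v)$; taking $\mu = r^{1/2}$ then yields $I_r = r^3 I_1$ in a single line.
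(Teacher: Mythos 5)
Your proposal is correct, and its first half is essentially the paper's own argument: the paper proves Proposition \ref{mmm2} by combining Lemma \ref{mmm1} ($\mathcal{M}_r=\mathcal{M}_1$) with Lemma \ref{fibercarp=3} ($\varphi_{r,u}(t_r^+(u))=r^3\varphi_{1,u}(t_1^+(u))$) and passing to the infimum over $\mathcal{M}_1$, exactly as you do. You add two things worth noting. First, you treat the $\mathcal{N}_r^0$ contribution explicitly, which the paper's one\mbox{-}line proof silently ignores even though $I_r$ is defined as an infimum over $\mathcal{N}_r^+\cup\mathcal{N}_r^0$; this is a genuine (if minor) point of care. One small wrinkle: Lemma \ref{diffet+} only gives differentiability of $r\mapsto t_r^{\pm}(u)$ (it rests on $\varphi''\neq 0$), so ``the very same differentiation trick'' does not literally apply at the inflection point; for $p=3$, however, the explicit formula in Subsection \ref{subsec:p=3} shows $t_r^0(u)$ is proportional to $r$, which repairs this immediately. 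Second, your alternative via $T_\mu v(x)=\mu^4v(\mu^2x)$ is a genuinely different and cleaner route: since for $p=3$ all three terms $\int|\nabla v|^2$, $\int\phi_vv^2$, $\int|v|^3$ scale by $\mu^6$ while $\int v^2$ scales by $\mu^2$, the map $T_{r^{1/2}}$ bijects $\mathcal{N}_1^{\pm},\mathcal{N}_1^0$ onto $\mathcal{N}_r^{\pm},\mathcal{N}_r^0$ with $E\circ T_{r^{1/2}}=r^3E$, giving $I_r=r^3I_1$ at once; in fact $T_{r^{1/2}}(u^{t/r})=r^{1/2}u^{t}$, so the identity $\varphi_{r,u}(t)=r^3\varphi_{1,u}(t/r)$ recovers Lemma \ref{mmm1}, Lemma \ref{fibercarp=3} and the $\mathcal{N}^0$ statement simultaneously, bypassing the fibering bookkeeping entirely.
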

\begin{proof} For each $u\in \mathcal{M}_1$, we have from Lemma \ref{fibercarp=3} that
	\begin{equation*}
	\frac{\varphi_{r,u}(t_r^+(u))}{r^3}=\varphi_{1,u}(t_1^+(u)).
	\end{equation*}
	Therefore
		\begin{equation*}
	\frac{I_r}{r^3}= \inf_{u\in \mathcal{M}_1}\left\{\frac{\varphi_{r,u}(t_r^+(u))}{r^3}\right\}=\inf_{u\in \mathcal{M}_1}\varphi_{1,u}(t_1^+(u))=I_1
	\end{equation*}
	and the proof is completed.
\end{proof}

Then we have also for $p=3$ the subadditivity condition.
\begin{theorem} Suppose that $\lambda>\lambda_{0,q}^*$, then for each $0<r_1<r_2$, we have that
	\begin{equation*}
	I_{r_2}<I_{r_1}+I_{r_2-r_1}.
	\end{equation*}
\end{theorem}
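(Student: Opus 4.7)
The plan is very short in this case because Proposition \ref{mmm2} already reduces the problem to a simple scaling identity. By that proposition, for $p=3$ we have the homogeneity relation
\[
I_r = I_1 \, r^3, \qquad \forall r>0,
\]
so proving $I_{r_2} < I_{r_1}+I_{r_2-r_1}$ amounts to verifying
\[
I_1\bigl[r_2^{3}-r_1^{3}-(r_2-r_1)^{3}\bigr] < 0, \qquad 0<r_1<r_2.
\]

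First I would expand the bracket directly: for any $0<r_1<r_2$,
\[
r_2^{3}-r_1^{3}-(r_2-r_1)^{3} = 3\,r_1 r_2 (r_2-r_1) > 0.
\]
This is the usual strict superadditivity of the strictly convex function $r \mapsto r^{3}$ vanishing at $0$; no analytic input from the functional $E$ is needed for it.

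Second, I would use the assumption $\lambda > \lambda_{0,q}^{*}$ to fix the sign of $I_1$. By Theorem \ref{Nehari1} $(ii)$ applied with $r=1$, we have $I_{1}=\inf_{S_{1}}E<0$. Combined with the expansion above this gives
\[
I_{r_2}-I_{r_1}-I_{r_2-r_1} \;=\; 3\, I_{1}\, r_1 r_2 (r_2-r_1) \;<\;0,
\]
which is the strict subadditivity.

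There is essentially no obstacle: the only nontrivial ingredient is the homogeneity $I_r=I_1 r^3$ established in Proposition \ref{mmm2} (which in turn rests on Lemma \ref{mmm1} and Lemma \ref{fibercarp=3}), and strict negativity of $I_1$ under the hypothesis $\lambda>\lambda_{0,q}^{*}$. Everything else is the elementary convexity identity for $r\mapsto r^3$.
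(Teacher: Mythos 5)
Your proof is correct and follows essentially the same route as the paper: the paper's own proof simply cites Theorem \ref{Nehari1} for $I_1<0$ (valid since $\lambda_{0,q}^*$ does not depend on $r$) and Proposition \ref{mmm2} for the homogeneity $I_r=I_1r^3$, leaving the elementary identity $r_2^3-r_1^3-(r_2-r_1)^3=3r_1r_2(r_2-r_1)>0$ implicit. You have merely spelled out that last algebraic step, which is a welcome clarification but not a different argument.
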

\begin{proof} From Theorem \ref{Nehari1} we know that $\lambda_q^*<\lambda_{0,q}^*$ and $I_1<0$, therefore, the conclusion is a consequence of Proposition \ref{mmm2}.
\end{proof}

\section{Constrained Minimization for $p\in(2,10/3)\setminus\{3\}$}\label{sec:Rp}

Now we turn our attention to the existence of minimizers: it is convenient to consider two cases according to the values of $p$:
\begin{itemize}
	\item $p\in(2,3)$,
	\item $p\in(3,10/3)$,
\end{itemize}
although the first case is almost done.

\subsection{The case $p\in (2,3)$ and Proof of Theorem \ref{thmE}}\label{subsec:ppequeno} 

\medskip

The proof follows immediately from Theorem \ref{stricsub}.

\subsection{The case $p\in (3,10/3)$ and Proof of Theorem \ref{thmp>3}}\label{subsec:pgrande}

By the definitions (see \eqref{eq:rstar}):
$$\forall r> r_0^* : I_r=\inf_{S_{r}}E<0 \quad \text{ and }\quad I_{r_0^*}=\inf_{S_{r_{0}}^{*}}E=0.$$
In both cases the existence of minimizers is already known (see \cite{bellasici1,cattoetal,jeanluo} and also our Theorem \ref{stricsub}). However as we will see $0=\inf_{S_{r}}E<I_r$ if $r\in(r^*,r_0^*)$. 
Let us start with the following
\begin{theorem}\label{theoremp<3n0} 
If $(r^*,+\infty)\ni r\mapsto I_r$ is decreasing, then for each $r\in (r^*,r_0^*)$ there exists $u\in \mathcal{N}_r^+\cup \mathcal{N}_r^0$ such that $I_r=E(u)$. 
\end{theorem}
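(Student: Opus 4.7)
The plan is to run a concentration–compactness argument on a minimizing sequence for $I_r$ inside $\mathcal{N}_r^+\cup\mathcal{N}_r^0$, using the hypothesised strict decrease of $\sigma\mapsto I_\sigma$ on $(r^*,+\infty)$ to exclude loss of mass. Fix $r\in(r^*,r_0^*)$ and a minimizing sequence $\{u_n\}\subset\mathcal{N}_r^+\cup\mathcal{N}_r^0$ with $E(u_n)\to I_r$. Proposition~\ref{neharibounded}(1) bounds $\|\nabla u_n\|_2$ uniformly, and since $\|u_n\|_2^2=r$ the sequence is bounded in $H^1(\mathbb{R}^3)$. In addition Proposition~\ref{neharibounded}(3) gives $\int|u_n|^p\geq c'_p/(\lambda r)>0$, so by Lions' lemma there exist translations $y_n\in\mathbb{R}^3$ such that, up to a subsequence, $\widetilde u_n:=u_n(\cdot+y_n)\rightharpoonup u\neq 0$ weakly in $H^1$, with a.e. and local $L^q$ convergence ($q\in[2,6)$).

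Writing $\|u\|_2^2=s\in(0,r]$, the core of the proof is to rule out the dichotomy regime $s<r$. For this I would use the Brezis–Lieb splitting $\widetilde u_n=u+v_n$ with $v_n\rightharpoonup 0$, combined with the standard splitting of the Coulomb energy $\int\phi_{\widetilde u_n}\widetilde u_n^2=\int\phi_u u^2+\int\phi_{v_n}v_n^2+o_n(1)$, giving $E(\widetilde u_n)=E(u)+E(v_n)+o_n(1)$ together with $\|v_n\|_2^2\to r-s$. Both $s$ and $r-s$ lie in $(0,r_0^*)$, where $\inf_{S_\sigma}E\geq 0$ by Theorem~\ref{Nehari}(vi), so $E(u)\geq 0$ and $\liminf E(v_n)\geq 0$. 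To reach a contradiction with $E(\widetilde u_n)\to I_r>0$, I would project $u$ onto $\mathcal{N}_s^+\cup\mathcal{N}_s^0$ and the $v_n$ onto $\mathcal{N}_{r-s+o_n(1)}^+\cup\mathcal{N}_{r-s+o_n(1)}^0$ via the fiber map $t\mapsto w^t$ (using the geometry described in Proposition~\ref{fiberingmaps}(III) and the characterisation $r^*=\inf r(u)$, $r_0^*=\inf r_0(u)$), so that $E(u)\geq I_s$ and $\liminf E(v_n)\geq \liminf I_{r-s+o_n(1)}=I_{r-s}$ by continuity. The hypothesis that $\sigma\mapsto I_\sigma$ is strictly decreasing on $(r^*,+\infty)$ then yields $I_s\geq I_r$ (with strict inequality if $s>r^*$) and $I_{r-s}\geq 0$, which combined with $s+(r-s)=r$ forces equality only in the degenerate case $s=r$, as desired. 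This projection step is the main obstacle, because the weak limit $u$ need not itself lie on $\mathcal{N}_s$, and one has to control the behaviour of the fiber maps for $u$ and $v_n$ uniformly in $n$; this is where the non-triviality of $u$ and the lower bounds of Proposition~\ref{neharibounded} are essential.

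Once $s=r$ is established, $\widetilde u_n\to u$ strongly in $L^2$, hence (together with the $H^1$ bound) in $L^p$ for every $p\in[2,6)$, and the Coulomb energy also converges. Weak lower semicontinuity of the Dirichlet integral then yields
\begin{equation*}
\int|\nabla u|^2+\frac{q}{4}\int\phi_u u^2-\frac{3(p-2)}{2p}\lambda\int|u|^p\leq 0,
\end{equation*}
and $E(u)\leq I_r$. Strict inequality in the Nehari identity would mean, by Proposition~\ref{fiberingmaps}(III), that the fiber $t\mapsto\varphi_{r,u/\sqrt r}(t)$ admits a local minimum at some $t^*\neq 1$ with strictly smaller energy, contradicting $E(u)\leq I_r$. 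Hence $u\in\mathcal{N}_r$ and $E(u)=I_r$. Finally, since $r\in(r^*,r_0^*)$ lies strictly below $r_0^*=\inf r_0(u)$, Proposition~\ref{r0ine}(2) shows that $\varphi_{r,u/\sqrt r}$ never attains negative values at its local maximum point, so $u$ cannot belong to $\mathcal{N}_r^-$; therefore $u\in\mathcal{N}_r^+\cup\mathcal{N}_r^0$, completing the proof.
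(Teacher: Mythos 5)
Your overall scheme (minimizing sequence in $\mathcal{N}_r^+\cup\mathcal{N}_r^0$, bounds from Proposition \ref{neharibounded}, translation to a nonzero weak limit, Brezis--Lieb splitting of $E$, $Q$ and of the mass, and use of the monotonicity of $\sigma\mapsto I_\sigma$ to exclude dichotomy) is the same as the paper's, but the step you yourself call ``the main obstacle'' is a genuine gap, and it is exactly where the paper's proof proceeds differently. You claim $E(u)\ge I_s$ and $\liminf E(v_n)\ge I_{r-s}$ by projecting $u$ and $v_n$ onto $\mathcal{N}_s^+\cup\mathcal{N}_s^0$ and $\mathcal{N}_{r-s}^+\cup\mathcal{N}_{r-s}^0$ along their fibers. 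This is unjustified for two reasons. First, the projections need not exist: for $p\in(3,10/3)$ a fiber map $\varphi_{\sigma,w}$ may have no critical points at all (item III-3) of Proposition \ref{fiberingmaps}), and if $s<r^*$ or $r-s<r^*$ the corresponding Nehari set is empty (Theorem \ref{nehari83103}), so $I_s$ or $I_{r-s}$ is not even defined; your argument does not exclude these cases. Second, even when $t^+$ exists, passing to $w^{t^+}$ lowers the energy only if $\varphi'(1)\le 0$ at the point being projected, i.e.\ only if $Q(w)\le 0$; if $Q(w)>0$ the point $t=1$ may lie to the left of the local maximum $t^-$, and then $\varphi(t^+)$ can exceed $\varphi(1)$. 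Since at this stage you have no information on the signs of $Q(u)$ and $Q(v_n)$, the chain $I_r\ge I_s+I_{r-s}$ does not follow (and the auxiliary claim $\liminf I_{r-s+o_n(1)}=I_{r-s}$ ``by continuity'' is also not available in the paper).

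The paper resolves precisely this point by an asymmetric argument: it first proves $Q(u)\le 0$, arguing that if $Q(u)>0$ then the splitting of $Q$ forces $Q(u_n-u)<0$ for large $n$; in that regime the fiber of $u_n-u$ is in case III-1) with $1\in(t^-,t^+)$, so the projection to $t^+$ exists and \emph{strictly} lowers the energy, giving $I_{\|u_n-u\|_2^2}<E(u_n-u)<I_r$ with $\|u_n-u\|_2^2<r$, which contradicts the assumed decrease of $\sigma\mapsto I_\sigma$ (here one uses $E(u)\ge 0$, valid because $E\ge 0$ on $S_\sigma$ for every $\sigma<r_0^*$). Only once $Q(u)\le 0$ is known does one project the weak limit $u$: the projection then exists, satisfies $E(u^t)\le E(u)$, and yields $I_{\|u\|_2^2}\le E(u)\le I_r$, whence $\|u\|_2^2=r$, $u\in\mathcal{N}_r^+\cup\mathcal{N}_r^0$ and $E(u)=I_r$ by monotonicity. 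If you wish to keep your two-sided projection scheme, you must first establish this sign information; as written the central inequality is assumed rather than proved. A minor further point: your exclusion of $u\in\mathcal{N}_r^-$ via Proposition \ref{r0ine}(2) is a non sequitur (positivity of the fiber at its maximum does not prevent $u$ from sitting at that maximum); the correct and simpler argument is that $u\in\mathcal{N}_r^-$ would give $E(u)=\varphi_{r,u/\sqrt r}(t_r^-)>\varphi_{r,u/\sqrt r}(t_r^+)\ge I_r$, contradicting $E(u)\le I_r$.
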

\begin{proof} In fact, let $\{u_n\}\subset  \mathcal{N}_r^+\cup \mathcal{N}_r^0$ be a minimizing sequence to $I_r$. It follows from Proposition \ref{neharibounded} that there exist positive constants $c,C$ such that
	\begin{equation*}
	c\le \|u_n\|\le C,\quad  \forall n\in \mathbb N.
	\end{equation*}
	and we conclude that $u_n\nrightarrow 0$ in $L^p(\mathbb{R}^3)$. So $\{u_n\}$ does not vanish and then, up to translations, there exists a subsequence, still denoted by $\{u_n\}$, that converges weakly in $H^1(\mathbb{R}^3)$, strongly in $L^2_{loc}(\mathbb{R}^3)$ and almost everywhere in $\mathbb{R}^3$, to some non-zero function $u\in H^1(\mathbb{R}^3)$.
	
	From  \cite[Lemma 2.2 ]{zhaozhao}, we conclude that
	\begin{equation}\label{min0}
	I_r=\lim_{n\to \infty}E(u_n)=E(u)+\lim_{n\to \infty}E(u_n-u).
	\end{equation}
	Let as usual 
	\begin{equation*}
	Q(u)=	\int |\nabla u|^2+\frac{q}{4}\int \phi_uu^2-\frac{3(p-2)}{2p}\lambda\int |u|^p=0,
	\end{equation*}
	and note that 
	\begin{equation}\label{min1}
	0=\lim_{n\to \infty}Q(u_n)=Q(u)+\lim_{n\to \infty}Q(u_n-u),
	\end{equation}
	and
	\begin{equation}\label{min2}
	\|u\|_2^2=r-\lim_{n\to \infty}\|u_n-u\|_2^2.
	\end{equation}
	We claim that $Q(u)\le0$. On the contrary we would have from \eqref{min1} that $Q(u_n-u)<0$ for sufficiently large $n$. From Proposition  \ref{fiberingmaps}, there exists $t_n>0$ such that 
	$(u_n-u)^{t_n}\in \mathcal{N}_{\|u_n-u\|_2^2}^+$ for large $n$. Once $E(u_n-u)<I_r$ from \eqref{min0} and $\|u_n-u\|_2^2< r$ from \eqref{min2} for sufficiently large $n$, we conclude that
	\begin{equation*}
	I_{\|u_n-u\|_2^2}< E((u_n-u)^{t_n})< E(u_n-u)< I_r,
	\end{equation*}
	which contradicts the hypothesis that $(r^*,\infty)\ni r\mapsto I_r$ is decreasing and therefore $Q(u)\le 0$.
	
	From Proposition \ref{fiberingmaps} there exists $t>0$ such that $u^t\in \mathcal{N}_{\|u\|_2^2}^+\cup \mathcal{N}_{\|u\|_2^2}^0$. Thus, since $E(u)\le I_r$ from \eqref{min0} and $\|u\|_2^2\le r$ from \eqref{min2}, we conclude that
	\begin{equation*}
	I_{\|u\|_2^2}\le E(u^t)\le E(u)\le I_r.
	\end{equation*}
	Therefore, from the hypothesis $(r^*,\infty)\ni r\mapsto I_r$ is decreasing, we conclude that 
	$r=\|u\|_2^2$, $u\in \mathcal{N}_{\|u\|_2^2}^+\cup \mathcal{N}_{\|u\|_2^2}^0$ and  $E(u)=I_r$.
		\end{proof}
In  order to make use of Theorem \ref{theoremp<3n0}, we need to show that $(r^*,+\infty)\ni r\mapsto I_r$ is decreasing. Unfortunately we are able to do so only for some values of $p\in(3,10/3)$, although we conjecture it is true for all $p$ in the range. We note here that in fact, when $I_r<0$ this is a standard result in the literature.
However when $I_r>0$, which is the case for $r\in [r^*, r_{0}^*]$ (see Theorem \ref{Nehari}), the inequalities goes in the opposite direction and thus the proof seems not to be direct.

Our strategy to prove that $I_r$ is decreasing in $r$ will be the following: we will construct paths that crosses the Nehari manifolds when $r$ varies and shows that the energy restricted to theses paths is decreasing. To this end we need to calculate some derivatives.

\begin{lemma}\label{I1} 
If $\varphi_{r,u}'(t)=0$ and $\varphi_{r,u}''(t)>0$, then 
	\begin{equation*}
t\int |\nabla u|^2+\frac{r}{2}q\int \phi_uu^2-\frac{3(p-2)}{4}t^{\frac{3p}{2}-4}r^{\frac{p}{2}-1}\lambda\int |u|^p<0.
	\end{equation*}
\end{lemma}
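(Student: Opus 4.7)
The idea is purely algebraic: use the two hypotheses $\varphi'_{r,u}(t)=0$ and $\varphi''_{r,u}(t)>0$ simultaneously to bound the target quantity. To ease notation I will write
$$A=\int|\nabla u|^2,\qquad B=q\int\phi_u u^2,\qquad C=\lambda\int|u|^p,$$
all of which are strictly positive. By Lemma \ref{fibe},
$$\varphi'_{r,u}(t)=trA+\tfrac{r^2}{4}B-\tfrac{3(p-2)}{2p}t^{(3p-8)/2}r^{p/2}C,\qquad \varphi''_{r,u}(t)=rA-\tfrac{3(p-2)(3p-8)}{4p}t^{(3p-10)/2}r^{p/2}C.$$
Let $Q:=tA+\tfrac{r}{2}B-\tfrac{3(p-2)}{4}t^{(3p-8)/2}r^{p/2-1}C$ denote the quantity we wish to show is negative.

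\emph{Step 1: reduce $Q$ using $\varphi'=0$.} Dividing $\varphi'_{r,u}(t)=0$ by $r$ and subtracting the result from $Q$, the $tA$ terms cancel and an elementary simplification produces
$$Q=\tfrac{r}{4}B-\tfrac{3(p-2)^2}{4p}t^{(3p-8)/2}r^{p/2-1}C.$$
Thus $Q<0$ is equivalent to
$$pr^2 B < 3(p-2)^2\, t^{(3p-8)/2}r^{p/2}C. \qquad(\star)$$

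\emph{Step 2: eliminate $r^2 B$ using $\varphi'=0$ once more.} From $\varphi'_{r,u}(t)=0$ we extract
$$r^2B=\tfrac{6(p-2)}{p}t^{(3p-8)/2}r^{p/2}C-4trA,$$
and substituting into $(\star)$ it becomes, after collecting terms,
$$3(p-2)(4-p)\, t^{(3p-10)/2}r^{p/2}C<4prA. \qquad(\star\star)$$

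\emph{Step 3: invoke $\varphi''>0$ and the range of $p$.} The assumption $\varphi''_{r,u}(t)>0$ rearranges to
$$4prA>3(p-2)(3p-8)\, t^{(3p-10)/2}r^{p/2}C,$$
so $(\star\star)$ follows at once from the elementary comparison $(4-p)\le (3p-8)$, which holds whenever $p\ge 3$. Since this lemma is stated in the subsection dedicated to $p\in(3,10/3)$, the inequality $p\ge 3$ holds (strictly), which gives $(\star\star)$ strictly and hence $Q<0$.

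\emph{Main obstacle.} There is no serious difficulty: the whole argument is algebraic manipulation of $\varphi'_{r,u}$ and $\varphi''_{r,u}$. The only place where genuine information enters is the comparison $4-p<3p-8$ at the very end, which is precisely what forces the restriction $p>3$ (and explains why this lemma belongs to the $p\in(3,10/3)$ subsection). One should just be careful to keep track of signs of $3p-8$ and $3p-10$: since $p\in(3,10/3)$, one has $3p-8>0$ and $3p-10<0$, so every multiplicative manipulation above preserves the direction of the inequalities.
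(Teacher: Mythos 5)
Your proposal is correct: the algebra in Steps 1--3 checks out, and it is essentially the paper's own argument, namely eliminating the $\phi_u$-term via $\varphi_{r,u}'(t)=0$ and then using $\varphi_{r,u}''(t)>0$, with the sign ultimately decided by $(3p-8)-(4-p)=4(p-3)>0$, i.e.\ $p>3$. The paper reaches the same conclusion by writing the target quantity as $-tA+\frac{3(p-2)(4-p)}{4p}t^{\frac{3p}{2}-4}r^{\frac p2-1}\lambda\int|u|^p$ and bounding $tA$ from below, which is just a reorganization of your $(\star\star)$.
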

\begin{proof} For simplicity denote 
$$A=\int |\nabla u|^2, \quad B=\int \phi_uu^2 \ \text{ and } \ C=\lambda\int |u|^p.$$
 Since $u\in \mathcal{N}_r^+$ we have that 
	\begin{equation}\label{IMP1}
	\left\{
	\begin{aligned}
	rtA+r^2\frac{q}{4}B-\frac{3(p-2)}{2p}r^{\frac{p}{2}}t^{\frac{3p}{2}-4}\lambda C=0, \\
	rA-\frac{3(p-2)(3p-8)}{4p}r^{\frac{p}{2}}t^{\frac{3p}{2}-5}\lambda C>0. 
	\end{aligned}
	\right.
	\end{equation}
	From the equality in \eqref{IMP1} we conclude that 
	\begin{equation*}
		tA+\frac{r}{2}qB-\frac{3(p-2)}{4}t^{\frac{3p}{2}-4}r^{\frac{p}{2}-1}\lambda C=-tA+\frac{3(p-2)(4-p)}{4p}t^{\frac{3p}{2}-4}r^{\frac{p}{2}-1}\lambda C.
	\end{equation*}
	From the inequality of \eqref{IMP1} we obtain 
	\begin{equation*}
		tA+\frac{r}{2}qB-\frac{3(p-2)}{4}t^{\frac{3p}{2}-4}r^{\frac{p}{2}-1}\lambda C<\frac{3(p-2)(3-p)}{p}t^{\frac{3p}{2}-4}r^{\frac{p}{2}-1}\lambda C<0
	\end{equation*}
	which is the conclusion.
\end{proof}

\begin{corollary}\label{t+diff} 
Let $I\subset \mathbb{R}$ be an open interval and fix $u\in S_1$. If $t_r^+(u)$ is defined for all $r\in I$, then the function $I\ni r\mapsto t_r^+(u)$ is $C^1$.
\end{corollary}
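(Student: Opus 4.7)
My plan is to apply the Implicit Function Theorem to the equation defining $t_r^+(u)$. Concretely, fix $u\in S_1$ and introduce the smooth function
$$F:I\times(0,\infty)\to\mathbb{R},\qquad F(r,t)=\varphi_{r,u}'(t).$$
By hypothesis $F(r,t_r^+(u))=0$ for every $r\in I$. The key observation is that the partial derivative with respect to $t$ is exactly
$$\frac{\partial F}{\partial t}(r,t_r^+(u))=\varphi_{r,u}''(t_r^+(u))>0,$$
where the strict inequality comes from the very definition of $t_r^+(u)$ (equivalently, from the fact that $r^{1/2}u^{t_r^+(u)}\in\mathcal{N}_r^+$, recall Remark \ref{rem:N+0-}). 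Hence at any fixed $r_0\in I$ the IFT produces an open neighborhood $U\ni r_0$ and a $C^1$ map $\tilde t:U\to(0,\infty)$ with $\tilde t(r_0)=t_{r_0}^+(u)$ and $F(r,\tilde t(r))=0$ throughout $U$.

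The next step is a local uniqueness argument to identify $\tilde t(r)$ with $t_r^+(u)$ on $U$. This is where Proposition \ref{fiberingmaps} intervenes: in every regime for $p$ under consideration, the critical point $t_r^+(u)$ is characterized as the unique critical point of $\varphi_{r,u}$ where the second derivative is strictly positive, while $t_r^-(u)$, when it exists, satisfies $\varphi_{r,u}''(t_r^-(u))<0$. Since $\varphi_{r,u}''(\tilde t(r))$ depends continuously on $r$ and is strictly positive at $r=r_0$, shrinking $U$ if necessary we keep $\varphi_{r,u}''(\tilde t(r))>0$ on $U$, so that $\tilde t(r)=t_r^+(u)$ on $U$. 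Since $r_0\in I$ was arbitrary, the map $r\mapsto t_r^+(u)$ is $C^1$ on all of $I$, which is the claim.

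The main (mild) obstacle is precisely this branch-identification, because for $p\in(8/3,10/3)\setminus\{3\}$ the fiber $\varphi_{r,u}$ can possess two critical points and IFT only selects one local branch a priori; the positivity of $\varphi_{r,u}''$, preserved by continuity, is what rules out the $t_r^-$ branch. As a by-product one may even differentiate explicitly:
$$\frac{d}{dr}t_r^+(u)=-\frac{\partial F/\partial r}{\partial F/\partial t}\bigl(r,t_r^+(u)\bigr),$$
and Lemma \ref{I1}, applied at $t=t_r^+(u)$, identifies the numerator with a strictly negative quantity. Combined with $\varphi_{r,u}''(t_r^+(u))>0$ this yields the sign information $\tfrac{d}{dr}t_r^+(u)>0$, i.e.\ monotonicity of $r\mapsto t_r^+(u)$, which is very plausibly the reason Lemma \ref{I1} has been placed immediately before the corollary. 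The argument for $t_r^-(u)$, should it be relevant elsewhere, is symmetric, using $\varphi_{r,u}''(t_r^-(u))<0$ instead.
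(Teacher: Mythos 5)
Your proof is correct, and it shares the paper's skeleton: both apply the Implicit Function Theorem to the same function $F(r,t)=\varphi_{r,u}'(t)$. The difference lies in which nondegeneracy condition is invoked. The paper cites Lemma \ref{I1} to get $\frac{\partial F}{\partial r}(r,t_r^+(u))<0$ and then appeals to the IFT, whereas you use $\frac{\partial F}{\partial t}(r,t_r^+(u))=\varphi_{r,u}''(t_r^+(u))>0$, which comes for free from Proposition \ref{fiberingmaps}. Strictly speaking, yours is the hypothesis that the standard IFT statement requires in order to solve $t$ as a $C^1$ function of $r$ (the condition $\partial F/\partial r\neq 0$ would, taken literally, yield $r$ as a $C^1$ function of $t$, so the paper's one-line proof is terse on this point; of course both partials are nonzero at $(r,t_r^+(u))$, so either route can be completed). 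You also spell out the branch-identification step --- that the IFT branch stays in the region $\varphi_{r,u}''>0$ and hence coincides with $t_r^+(u)$, which is the unique critical point with positive second derivative --- a point the paper leaves implicit. Finally, your closing remark correctly combines the two ingredients to obtain $\frac{d}{dr}t_r^+(u)=-\frac{\partial F/\partial r}{\partial F/\partial t}>0$; note only that Lemma \ref{I1} (whose proof uses $3-p<0$) is confined to the regime $p\in(3,10/3)$ of this subsection, so that sign information, unlike the $C^1$ regularity itself, should not be quoted outside that range.
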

\begin{proof} Indeed, define $F(r,t)=\varphi'_{r,u}(t)$ for $r\in I$ and $t>0$. From Lemma \ref{I1} it follows that $F(r,t_r^+(u))=0$ and $\frac{\partial F}{\partial r}(r,t_r^+(u))<0$. The proof is then a consequence of the Implicit Function Theorem.
\end{proof}

Consider the equation $-27x^2+146x-192=0$. It has two real roots and the biggest one is given by 
\begin{equation*}
p_0=\frac{73+\sqrt{145}}{27}\in(3,10/3).
\end{equation*} 
\begin{lemma}\label{I2} 
 Assume that  $\varphi_{r,u}'(t)=0$ and $\varphi_{r,u}''(t)>0$. There holds:
 \begin{itemize}
 \item[i)] if $p\in(p_0,{10}/{3})$, then 
  there exists a constant $c_{p}''<0$ such that
	\begin{equation*}
	t^2\int |\nabla u|^2+rtq\int \phi_uu^2-\lambda r^{\frac{p}{2}-1}t^{\frac{3p}{2}-3}\int |u|^p<\frac{c_{p}''}{r^{2}};
	\end{equation*}
 \item[ii)] if $p=p_{0}$ then
	\begin{equation*}
	t^2\int |\nabla u|^2+rtq\int \phi_uu^2-\lambda r^{\frac{p_{0}}{2}-1}t^{\frac{3p_{0}}{2}-3}\int |u|^{p_{0}}<0.
	\end{equation*}	
 \end{itemize}
 
\end{lemma}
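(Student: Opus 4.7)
\medskip

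\noindent\textbf{Proof proposal for Lemma \ref{I2}.}
The plan mirrors the derivation in Lemma \ref{I1}. Write $A=\int|\nabla u|^2$, $B=\int\phi_u u^2$, $C=\lambda\int|u|^p$. The equality $\varphi'_{r,u}(t)=0$ may be rearranged to the identity
\begin{equation*}
r^{\frac{p}{2}-1}t^{\frac{3p}{2}-3}C \;=\; \frac{2p}{3(p-2)}\left(t^{2}A+\frac{rtqB}{4}\right),
\end{equation*}
obtained by multiplying the Nehari equation by $t/r^{2}$ and solving for $r^{p/2-1}t^{3p/2-3}C$. Substituting this into the target expression and simplifying gives
\begin{equation*}
t^{2}A+rtqB - r^{\frac{p}{2}-1}t^{\frac{3p}{2}-3}C \;=\; \frac{p-6}{3(p-2)}\,t^{2}A \,+\, \frac{5p-12}{6(p-2)}\,rtqB.
\end{equation*}
For $p\in(3,10/3)$ the first coefficient is negative and the second is positive.

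Next I would extract the key inequality from $\varphi''_{r,u}(t)>0$. That strict inequality, multiplied by $t^{2}$ and combined with the identity above, yields
\begin{equation*}
t^{2}A \;>\; \frac{3p-8}{2}\left(t^{2}A+\frac{rtqB}{4}\right),
\end{equation*}
which on $(3,10/3)$ (where $10-3p>0$ and $3p-8>0$) is equivalent to the strict bound
\begin{equation*}
rtqB \;<\; \frac{4(10-3p)}{3p-8}\,t^{2}A.
\end{equation*}
Since the coefficient of $rtqB$ in the previous display is \emph{positive}, I can safely substitute this upper bound without reversing the inequality. Clearing denominators, the numerator that appears is exactly
\begin{equation*}
(p-6)(3p-8) + 2(10-3p)(5p-12) \;=\; -27p^{2}+146p-192,
\end{equation*}
so that
\begin{equation*}
t^{2}A+rtqB - r^{\frac{p}{2}-1}t^{\frac{3p}{2}-3}C \;<\; \frac{-27p^{2}+146p-192}{3(p-2)(3p-8)}\,t^{2}A.
\end{equation*}
By the defining property of $p_{0}$, the numerator vanishes at $p_{0}$ and, being a downward-opening parabola with roots $\frac{73\pm\sqrt{145}}{27}$, is strictly negative on $(p_{0},10/3)$. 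The denominator is positive for $p>8/3$. This already gives (ii) with the strict inequality $<0$.

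For (i), it remains to convert this $u$-dependent bound into a uniform constant $c_{p}''/r^{2}$. Setting $v:=r^{1/2}u^{t}\in\mathcal{N}_{r}^{+}$, a direct scaling computation shows $\int|\nabla v|^{2}=rt^{2}A$. Proposition \ref{neharibounded} part 3, applied to $v$, then gives $t^{2}A\geq \tilde c_{p}/r^{2}$ for a positive constant $\tilde c_{p}$ depending only on $p$. Multiplying the (negative) coefficient above by this uniform lower bound yields the required $c_{p}''<0$, namely
\begin{equation*}
c_{p}''=\tilde c_{p}\cdot\frac{-27p^{2}+146p-192}{3(p-2)(3p-8)}.
\end{equation*}
The main obstacle is the algebraic bookkeeping producing the quadratic $-27p^{2}+146p-192$; once that polynomial is identified the definition of $p_{0}$ makes both cases of the lemma fall out simultaneously.
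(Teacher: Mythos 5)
Your proof is correct and follows essentially the same route as the paper: both use the pair of relations coming from $\varphi'_{r,u}(t)=0$ and $\varphi''_{r,u}(t)>0$ to reduce the target expression to a negative multiple (for $p>p_0$) of the quadratic $-27p^2+146p-192$, and then invoke the a priori bounds of Proposition \ref{neharibounded}, part 3, to turn this into the uniform bound $c_p''/r^{2}$. The only cosmetic difference is that you eliminate the $\int|u|^p$-term and conclude via the gradient estimate $\int|\nabla (r^{1/2}u^{t})|^2\ge c_p/r$, whereas the paper keeps that term and uses the companion estimate $\int|r^{1/2}u^{t}|^p\ge c_p'/(\lambda r)$; both yield the same conclusion.
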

\begin{proof} For simplicity denote 
$$A=\int |\nabla u|^2, \quad B=\int \phi_uu^2 \ \text{ and }  \ C=\lambda\int |u|^p.$$
 Since $u\in \mathcal{N}_r^+$ (see Lemma \ref{fibe} and \eqref{eq:decomposicaoN}) we have that 
	\begin{equation}\label{JMP1}
	\left\{
	\begin{aligned}
	rtA+r^2\frac{q}{4}B-\frac{3(p-2)}{2p}r^{\frac{p}{2}}t^{\frac{3p}{2}-4}\lambda C=0, \\
	rA-\frac{3(p-2)(3p-8)}{4p}r^{\frac{p}{2}}t^{\frac{3p}{2}-5}\lambda C>0. 
	\end{aligned}
	\right.
	\end{equation}
	From the equality in \eqref{JMP1} we conclude that
	\begin{equation*}
t^2A+rtqB-\lambda r^{\frac{p}{2}-1}t^{\frac{3p}{2}-3}C=-3t^2A+\frac{5p-12}{p}t^{\frac{3p}{2}-3}r^{\frac{p}{2}-1}\lambda C
	\end{equation*}
	and then, from the inequality in \eqref{JMP1}, we obtain 
	\begin{eqnarray}\label{eq:p0}
t^2A+rtqB-\lambda r^{\frac{p}{2}-1}t^{\frac{3p}{2}-3}C&<&\left(\frac{-9(p-2)(3p-8)}{4p}+\frac{5p-12}{p}\right)\lambda  r^{\frac{p}{2}-1}t^{\frac{3p}{2}-3}C \\
&=&\frac{-27p^2+146p-192}{4p}\frac{\lambda}{r}\int |r^{{1}/{2}}u^t|^p. \nonumber
	\end{eqnarray}
Since by assumptions $u\in \mathcal N_{r}^{+}$ (see Remark \ref{rem:N+0-}), from Proposition \ref{neharibounded} there exists a  constant $c_{p}'>0$ such that 
\begin{equation*}
\int |r^{{1}/{2}}u^t|^p\ge \frac{c'_{p}}{\lambda r},
\end{equation*}
therefore, from the definition of $p_0$, coming back to \eqref{eq:p0}, it follows that
\begin{equation*}
t^2\int |\nabla u|^2+rtq\int \phi_uu^2-\lambda r^{\frac{p}{2}-1}t^{\frac{3p}{2}-3}\int |u|^p
<\frac{-27p^2+146p-192}{4p}\frac{c_{p}'}{r^{2}}:=\frac{c_{p}''}{r^{2}},
\end{equation*}
from which the conclusion easily follows.
\end{proof}

Observe that by \eqref{eq:ccc}, $c_{p}''$ has an  explicit expression.
\begin{proposition}\label{Idecre}  Suppose that $p\in (p_0,{10}/{3})$, then the function $(r^*,\infty)\ni r\mapsto I_r$ is decreasing.
\end{proposition}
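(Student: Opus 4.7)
The plan is to fix $r^* < r_1 < r_2$ and, starting from a minimizing sequence for $I_{r_1}$, to transport each element along the fibration $r \mapsto r^{1/2} v_n^{t_r^+(v_n)}$ into $\mathcal{N}_r^+$ for $r \in [r_1,r_2]$, then to show that the energy along this curve decreases by a strictly negative amount $\delta$, uniform in $n$. Letting $n\to \infty$ will give $I_{r_2}\le I_{r_1}+\delta<I_{r_1}$.

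\textbf{Step 1 (reducing to a good minimizing sequence).} Since $r_1>r^*$, Theorem \ref{nehari83103} gives $\mathcal N_{r_1}^+\neq \emptyset$, so we pick $\{u_n\}\subset \mathcal N_{r_1}^+$ with $E(u_n)\to I_{r_1}$; the boundary case $\mathcal N_{r_1}^0$ can be reached as a limit of $\mathcal N_{r_1}^+$ by Proposition \ref{fiberingmaps} III). Write $u_n=r_1^{1/2} v_n^{t_{r_1}^+(v_n)}$ with $v_n\in S_1$. By Proposition \ref{rineq}, $u_n\in \mathcal N_{r_1}^+$ forces $r(v_n)<r_1$, hence $r(v_n)<r$ for every $r\in[r_1,r_2]$. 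Invoking again Proposition \ref{fiberingmaps} III) and Proposition \ref{rineq}, the map $t_r^+(v_n)$ is well-defined on $[r_1,r_2]$ and $r^{1/2} v_n^{t_r^+(v_n)}\in \mathcal N_r^+$.

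\textbf{Step 2 (differentiating the fiber path).} Set $F_n(r)=\varphi_{r,v_n}(t_r^+(v_n))=E(r^{1/2} v_n^{t_r^+(v_n)})$. Corollary \ref{t+diff} gives $F_n\in C^1([r_1,r_2])$, and since $t_r^+(v_n)$ is a critical point of $t\mapsto \varphi_{r,v_n}(t)$, the envelope identity yields
\begin{equation*}
F_n'(r) \,=\, \partial_r \varphi_{r,v_n}(t)\big|_{t=t_r^+(v_n)}\,=\,\tfrac12\Bigl[t^2\int|\nabla v_n|^2 + trq\int\phi_{v_n}v_n^2 - t^{\frac{3p}{2}-3} r^{\frac{p}{2}-1}\lambda\int|v_n|^p\Bigr],
\end{equation*}
where $t=t_r^+(v_n)$. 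This is exactly the quantity estimated in Lemma \ref{I2}, which applies because $r^{1/2}v_n^{t}\in \mathcal N_r^+$. Hence
\begin{equation*}
F_n'(r) < \frac{c_p''}{2r^2}, \qquad \forall r\in(r_1,r_2],
\end{equation*}
with the key constant $c_p''<0$ because $p>p_0$ (the algebraic inequality $-27p^2+146p-192<0$) and because $\int|v_n|^p$ at the shifted point admits the lower bound from Proposition \ref{neharibounded}.3, which depends only on $p$ (not on $n$).

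\textbf{Step 3 (integrating and passing to the limit).} Integrating on $[r_1,r_2]$,
\begin{equation*}
F_n(r_2)-F_n(r_1) < \frac{c_p''}{2}\Bigl(\frac{1}{r_1}-\frac{1}{r_2}\Bigr)=:\delta<0,
\end{equation*}
with $\delta$ independent of $n$. Since $r_2^{1/2}v_n^{t_{r_2}^+(v_n)}\in \mathcal N_{r_2}^+$ we have $I_{r_2}\le F_n(r_2)<F_n(r_1)+\delta=E(u_n)+\delta$. Letting $n\to\infty$ gives $I_{r_2}\le I_{r_1}+\delta<I_{r_1}$, proving that $r\mapsto I_r$ is strictly decreasing on $(r^*,\infty)$.

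\textbf{Main obstacle.} The only delicate point is obtaining the \emph{uniform}, strictly negative, $n$-independent bound on $F_n'(r)$. Naively the expression $\partial_r\varphi_{r,v_n}$ could degenerate to $0$ along a minimizing sequence; here Lemma \ref{I2} together with the a priori bound $\int|u|^p \ge c_p'/(\lambda r)$ of Proposition \ref{neharibounded}.3 is what forces the strict inequality, and the threshold $p>p_0$ is precisely what makes the numerical coefficient $-27p^2+146p-192$ negative. Dealing with the closed stratum $\mathcal N_{r_1}^0$ (if the minimizing sequence touches it) is a minor technicality handled by approximation.
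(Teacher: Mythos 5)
Your proposal is correct and takes essentially the same route as the paper's proof: transport along the fiber curve $r\mapsto r^{1/2}u^{t_r^{+}(u)}$, differentiate in $r$ using Corollary \ref{t+diff} and the fact that $t_r^{+}(u)$ is a critical point, bound the resulting derivative by the strictly negative, $n$-independent quantity of Lemma \ref{I2} (which is exactly where $p>p_0$ and the lower bound $\int|u|^p\ge c_p'/(\lambda r)$ of Proposition \ref{neharibounded} enter), and then integrate (the paper uses the mean value theorem on a fixed $u\in\mathcal{M}_{r_1}$ instead of a minimizing sequence, but this is only a cosmetic difference). Your side remark on absorbing the $\mathcal{N}_{r_1}^0$ stratum is stated rather than proved, but the paper's own proof is no more explicit on that point, so this does not constitute a departure from its argument.
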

\begin{proof} Define $f(r)=E(r^{{1}/{2}}u^{t^{+}_{r}(u)})$ and set for brevity $t(r) = t^{+}_{r}(u)$.
Observe from Proposition \ref{diffet+} that $f$ is differentiable and 
	\begin{equation*}
	f'(r)=\frac{t(r)^2}{2}\int |\nabla u|^2+\frac{r^{}t(r)}{2}q\int \phi_uu^2-\frac{\lambda}{2}r^{\frac{p}{2}-1}t(r)^{\frac{3p}{2}-3}\int |u|^p.
	\end{equation*}
	From Lemma \ref{I2} we conclude that $f'(r)\le 2c_{p}''/r^{2}$. Fix $r^*<r_1<r_2$ 
	and $u\in\mathcal{M}_{r_1}$. If $r\in [r_1,r_2]$ then $f'(r)\le 2c_{p}''/r^{2}_{1}$. 
	Therefore there exists $\theta\in (r_1,r_2)$ such that 
	\begin{equation*}
	f(r_2)-f(r_1)=f'(\theta)(r_2-r_1)\le \frac{2c_{p}''}{r^{2}_{1}} (r_2-r_1),
	\end{equation*}
	and hence
	\begin{equation*}
	E\left(r_2^{{1}/{2}}u^{t(r_{2})}\right)\le E\left(r_1^{{1}/{2}}u^{t(r_{1})}\right)+\frac{2c_{p}''}{r^{2}_{1}}(r_2-r_1),
	\end{equation*}
	which implies that
	$I_{r_2}<I_{r_1}$
and the proof is finished.
\end{proof}
As a consequence of Theorem \ref{theoremp<3n0} and Proposition \ref{Idecre} we have:
\begin{theorem}\label{theoremp<3n0p0} Fix $p\in (p_0,10/3)$, then for each $r\in (r^*,r_0^*)$ there exists $u\in \mathcal{N}_r^+\cup \mathcal{N}_r^0$ such that $I_r=E(u)$. 
\end{theorem}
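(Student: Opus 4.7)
The statement is essentially a direct corollary obtained by chaining together two results already proved earlier in Section~\ref{sec:Rp}, namely Theorem~\ref{theoremp<3n0} and Proposition~\ref{Idecre}. So the plan is not to do new work but to carefully verify that the hypotheses of the existence theorem are furnished by the monotonicity result, and that nothing extra is needed in the range $p\in(p_0,10/3)$.

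First, I would invoke Proposition~\ref{Idecre}: since $p>p_0$, the sharp inequality of Lemma~\ref{I2}(i) gives a uniformly negative upper bound $2c_p''/r_1^2$ on the derivative $f'$ of the path $r\mapsto E(r^{1/2}u^{t_r^+(u)})$ on compact subintervals, and integrating along this path between $r_1<r_2$ shows that $I_{r_2}<I_{r_1}$ whenever both sit above $r^*$. This is exactly the hypothesis ``$(r^*,\infty)\ni r\mapsto I_r$ is decreasing'' demanded by Theorem~\ref{theoremp<3n0}. Thus, second, I apply Theorem~\ref{theoremp<3n0} directly: for each $r\in(r^*,r_0^*)$ it produces $u\in\mathcal{N}_r^+\cup\mathcal{N}_r^0$ with $E(u)=I_r$.

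It is worth noting why this succeeds precisely in the range $p\in(p_0,10/3)$ and not throughout $(3,10/3)$: the critical obstruction is the sign of the quadratic $-27p^2+146p-192$ appearing in Lemma~\ref{I2}, whose larger root is exactly $p_0=(73+\sqrt{145})/27$. For $p>p_0$ this quadratic is negative and combines with the lower bound $\int |r^{1/2}u^t|^p\ge c_p'/(\lambda r)$ from Proposition~\ref{neharibounded}(3) to yield the strict negativity required. Everything else in the proof is mechanical: one uses the $C^1$-regularity of $r\mapsto t_r^+(u)$ (Corollary~\ref{t+diff}) to differentiate $f$, then the mean value theorem on $[r_1,r_2]$.

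The main obstacle is not really in this final splicing but lies upstream, in the non-vanishing and compactness argument of Theorem~\ref{theoremp<3n0}. There the key point that $Q(u)\le 0$ for the weak limit $u$ of a minimizing sequence is forced by the decreasing monotonicity of $I_r$: if $Q(u)>0$, one re-fibers the nontrivial tail $u_n-u$ into $\mathcal{N}_{\|u_n-u\|_2^2}^+$ and uses $\|u_n-u\|_2^2<r$ to produce the contradiction $I_{\|u_n-u\|_2^2}<I_r$. Once $Q(u)\le 0$ is in hand, fibering $u$ back to the Nehari manifold and applying monotonicity again pins down $\|u\|_2^2=r$ and $E(u)=I_r$. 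In summary, the proof of Theorem~\ref{theoremp<3n0p0} is simply: apply Proposition~\ref{Idecre} to check the hypothesis, then apply Theorem~\ref{theoremp<3n0} to get the conclusion.
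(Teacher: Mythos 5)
Your proposal is correct and follows exactly the paper's route: the paper derives Theorem \ref{theoremp<3n0p0} precisely as a consequence of Proposition \ref{Idecre} (monotonicity of $r\mapsto I_r$ for $p\in(p_0,10/3)$, via Lemma \ref{I2} and the mean value theorem) combined with Theorem \ref{theoremp<3n0}. Your additional remarks on the role of the quadratic $-27p^2+146p-192$ and the compactness argument upstream accurately reflect the paper's reasoning, so nothing further is needed.
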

Now we will show that for $r$ near $r_0^*$ the minimizer found in Theorem \ref{theoremp<3n0p0} belongs to $\mathcal{N}_r^+$. To this end we need to compare the energy of $E$ restricted to $\mathcal{N}_r^0$ with $I_r$.
\begin{lemma}\label{n0bound} For each $r>r^*$, there exists a positive constant $c$ such that
	\begin{equation*}
	E(u)\ge \frac{c}{r}, \quad \forall u\in \mathcal{N}_r^0.
	\end{equation*}
\end{lemma}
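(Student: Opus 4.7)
The plan is to exploit the two defining conditions of $u \in \mathcal{N}_r^0$ in order to write every term of the energy as a multiple of $\int|\nabla u|^2$, and then to invoke the lower bound for $\int|\nabla u|^2$ already established in Proposition~\ref{neharibounded}.

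First, I would use the two equations
\begin{equation*}
\int |\nabla u|^2 + \frac{q}{4}\int \phi_u u^2 = \frac{3(p-2)}{2p}\lambda\int|u|^p, \qquad \int|\nabla u|^2 = \frac{3(p-2)(3p-8)}{4p}\lambda\int|u|^p
\end{equation*}
to solve for the Poisson term and the $L^p$ term in terms of $A:=\int|\nabla u|^2$. An elementary computation gives
\begin{equation*}
\lambda\int |u|^p = \frac{4p}{3(p-2)(3p-8)}\,A, \qquad \frac{q}{4}\int\phi_u u^2 = \frac{10-3p}{3p-8}\,A,
\end{equation*}
both of which are positive since $p\in(3,10/3)$.

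Substituting into $E(u)=\tfrac12 A + \tfrac{q}{4}\int\phi_u u^2 - \tfrac{\lambda}{p}\int|u|^p$, I would obtain
\begin{equation*}
E(u) = \left[\frac{1}{2} + \frac{10-3p}{3p-8} - \frac{4}{3(p-2)(3p-8)}\right] A = \frac{-9p^2+54p-80}{6(p-2)(3p-8)}\, A.
\end{equation*}
The quadratic $-9p^2+54p-80$ factors as $-9(p-\tfrac{8}{3})(p-\tfrac{10}{3})$, hence it is strictly positive on $(3,10/3)\subset(8/3,10/3)$. Thus there is a constant $\kappa_p>0$ (depending only on $p$) with $E(u)=\kappa_p\,A$.

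Finally, since $u\in\mathcal{N}_r^0\subset\mathcal{N}_r$, item~3 of Proposition~\ref{neharibounded} yields $A=\int|\nabla u|^2\ge c_p/r$ with $c_p>0$ independent of $r$. Combining the two estimates gives $E(u)\ge \kappa_p c_p/r =: c/r$, as required. The only potentially delicate point is checking the positivity of the bracketed coefficient, but this is handled by the explicit factorization above and is automatic on the whole range $p\in(3,10/3)$; no further analytical ingredient (beyond Proposition~\ref{neharibounded}) is needed.
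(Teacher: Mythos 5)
Your proof is correct and follows essentially the same route as the paper: the two conditions defining $\mathcal{N}_r^0$ are used to express the Poisson and $L^p$ terms in terms of $\int|\nabla u|^2$, yielding $E(u)=\frac{10-3p}{6(p-2)}\int|\nabla u|^2$ (your coefficient $\frac{-9p^2+54p-80}{6(p-2)(3p-8)}$ simplifies to exactly this, since $-9p^2+54p-80=(3p-8)(10-3p)$), after which item~3 of Proposition~\ref{neharibounded} gives the bound $c/r$. No gap.
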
 
\begin{proof} Indeed by using the pair of equations that characterize $u\in \mathcal{N}_r^0$, that is
		\begin{equation*}
	\left\{
	\begin{aligned}
	rtA+r^2\frac{q}{4}B-\frac{3(p-2)}{2p}r^{\frac{p}{2}}t^{\frac{3p}{2}-4}\lambda C=0, \\
	rA-\frac{3(p-2)(3p-8)}{4p}r^{\frac{p}{2}}t^{\frac{3p}{2}-5}\lambda C=0, 
	\end{aligned}
	\right.
	\end{equation*}
	we can deduce that
	\begin{equation*}
	E(u)=\frac{10-3p}{6(p-2)}\int |\nabla u|^2, \quad u\in \mathcal{N}_r^0,
	\end{equation*}
	therefore from Proposition \ref{neharibounded} the proof is complete.
\end{proof}
\begin{lemma} \label{comparison}There holds 
	\begin{equation*}
	\lim_{r\uparrow r_0^*}I_r=0.
	\end{equation*}
\end{lemma}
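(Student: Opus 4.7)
The plan is as follows. Theorem~\ref{Nehari}~(vi) already gives $I_r \geq 0$ for every $r \in [r^*, r_0^*]$, whence $\liminf_{r \uparrow r_0^*} I_r \geq 0$; the real task is the matching upper bound $\limsup_{r \uparrow r_0^*} I_r \leq 0$. My strategy is to construct, for each $r$ slightly below $r_0^*$, an explicit competitor in $\mathcal{N}_r^+$ whose energy is controlled linearly by $r_0^* - r$. Using $r_0^* = \inf_{u \in S_1} r_0(u)$, pick a minimizing sequence $\{u_n\} \subset S_1$ with $r_n := r_0(u_n) \to r_0^*$ (and $r_n \geq r_0^*$). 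Since $r(u_n) < r_n$ by Proposition~\ref{extremalfunctions}, the defining relations $\varphi_{r_n,u_n}(s_0(u_n)) = \varphi'_{r_n,u_n}(s_0(u_n)) = 0$ identify $s_0(u_n)$ with $t^+_{r_n}(u_n)$ (cf.\ Remark~\ref{rem:N+0-}), so that $w_n := r_n^{1/2} u_n^{s_0(u_n)} \in \mathcal{N}_{r_n}^+$ with $E(w_n) = 0$.

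The two scalar conditions $w_n \in \mathcal{N}_{r_n}^+$ and $E(w_n) = 0$ yield the algebraic identities
\begin{equation*}
\int|\nabla w_n|^2 = \frac{3p-8}{p}\,\lambda \int|w_n|^p, \qquad q \int \phi_{w_n} w_n^2 = \frac{2(10-3p)}{p}\,\lambda \int |w_n|^p,
\end{equation*}
which combined with $\|w_n\|_2^2 = r_n$ bounded, Proposition~\ref{neharibounded} (items~1 and~3), Gagliardo--Nirenberg and the Hardy--Littlewood--Sobolev inequality force the three integrals above to be uniformly bounded above and away from zero. I then pass to the rescaled profile $\tilde u_n := u_n^{s_0(u_n)} \in S_1$, for which $s_0(\tilde u_n) = 1$ and $w_n = r_n^{1/2}\tilde u_n$, and study the $C^1$ function $h_n(r) := \varphi_{r,\tilde u_n}(t_r^+(\tilde u_n))$ (Corollary~\ref{t+diff}). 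One has $h_n(r_n) = 0$, $t_{r_n}^+(\tilde u_n) = 1$, and the identities above give
\begin{equation*}
\varphi''_{r_n,\tilde u_n}(1) = \frac{(3p-8)(10-3p)}{4p}\,\lambda \int |w_n|^p,
\end{equation*}
uniformly bounded below away from $0$ since $p \in (3, 10/3)$. The implicit function theorem applied to $\varphi'_{r,\tilde u_n}(t) = 0$ at $(r_n, 1)$ then produces a neighborhood $(r_n - \delta_0, r_n + \delta_0)$ of $r_n$, with $\delta_0 > 0$ \emph{independent of $n$}, on which $t_r^+(\tilde u_n)$ is $C^1$ and confined to a fixed compact subset of $(0, \infty)$. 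Using $\varphi'_{r,\tilde u_n}(t_r^+(\tilde u_n)) = 0$, the chain rule reduces $h_n'(r)$ to an explicit polynomial expression in $t_r^+(\tilde u_n)$ and $r$ which, by the previous uniform bounds, satisfies $|h_n'(r)| \leq M$ on this neighborhood for some $M$ independent of $n$.

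The conclusion now follows from the mean value theorem. For any $r \in (r_0^* - \delta_0/2, r_0^*)$ and every $n$ large enough that $r_n < r_0^* + \delta_0/2$, the point $\sqrt{r}\,\tilde u_n^{t_r^+(\tilde u_n)}$ lies in $\mathcal{N}_r^+$ with energy $h_n(r)$, and
\begin{equation*}
I_r \leq h_n(r) = h_n(r) - h_n(r_n) \leq M(r_n - r).
\end{equation*}
Letting first $n \to \infty$ yields $I_r \leq M(r_0^* - r)$, and then $r \uparrow r_0^*$ gives the claim. The main obstacle throughout will be securing the uniformity in $n$ of both $\delta_0$ and $M$; this ultimately reduces to the uniform lower bounds on $\int|\nabla w_n|^2$, $q\int \phi_{w_n} w_n^2$ and $\lambda \int |w_n|^p$, which follow from Proposition~\ref{neharibounded}~(3) combined with the algebraic identities displayed above.
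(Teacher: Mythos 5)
Your proposal is correct, but it takes a genuinely different route from the paper. The paper's proof is soft and very short: it invokes the attainment of $I_{r_0^*}=0$ asserted at the beginning of Subsection \ref{subsec:pgrande}, i.e.\ the existence of a minimizer $w\in \mathcal{N}_{r_0^*}^+$ with $E(w)=0$, writes $w=(r_0^*)^{1/2}u^{t_{r_0^*}^+(u)}$ with $u\in S_1$, deduces $r_0(u)=r_0^*$ and $r(u)=r^*$ so that $t_r^+(u)$ exists for every $r\in(r^*,r_0^*)$, and concludes by the continuity of $r\mapsto t_r^+(u)$ (Corollary \ref{t+diff}) that $E(r^{1/2}u^{t_r^+(u)})\to E(w)=0$, which squeezes $I_r$ between $0$ and a quantity tending to $0$. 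You avoid the attainment issue altogether: you take a minimizing sequence $\{u_n\}$ for the extremal value $r_0(\cdot)$, obtain the zero-energy competitors $w_n\in\mathcal{N}_{r_n}^+$, and replace the single continuous test path by a family of paths $r\mapsto r^{1/2}\tilde u_n^{t_r^+(\tilde u_n)}$ defined on a neighborhood of $r_n$ of uniform size, which forces the quantitative work you correctly identify: the two algebraic identities, the uniform two-sided bounds via Proposition \ref{neharibounded}, the uniform lower bound on $\varphi''_{r_n,\tilde u_n}(1)$, the uniform implicit-function neighborhood and the uniform bound on $h_n'$, and finally the mean value theorem giving $I_r\le M(r_0^*-r)$. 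What each approach buys: the paper's argument is a few lines but rests on the existence of a minimizer at $r_0^*$ (equivalently attainment of $\inf_{S_1}R_p$), which is only cited and is a delicate point; your argument is longer but self-contained given Propositions \ref{neharibounded}, \ref{fiberingmaps}, \ref{extremalfunctions} and \ref{rineq}, and it even yields the stronger quantitative conclusion $I_r\le M(r_0^*-r)$, a linear rate rather than a bare limit. The one step you should spell out is the identification $s_0(u_n)=t_{r_n}^+(u_n)$: since $r_n=r_0(u_n)>r(u_n)$, the fiber map $\varphi_{r_n,u_n}$ is in case III-1) of Proposition \ref{fiberingmaps}, and since it is positive and increasing on $(0,t_{r_n}^-(u_n))$ its local maximum has strictly positive value, so the zero-level critical point $s_0(u_n)$ must be the nondegenerate local minimum; this is true but deserves the two lines, as does the remark that along the implicit-function branch one has $\varphi''>0$, so the branch really is $t_r^+(\tilde u_n)$ and the competitor lies in $\mathcal{N}_r^+$.
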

\begin{proof} As we observed before  $E_{r_0^*}=I_{r_0^*}$ and there exists $w\in \mathcal{N}_{r_0^*}^+$ with $E(w)=I_{r_0^*}$. Let $u\in S_1$ be such that $w=r_0^*u^{t_{r_0^{*}}^+(u)}$. Since $E(w)=0$ we conclude from the definition of $r_0^*$ and Theorem \ref{nehari83103} that $r_0(u)=r_0^*$. Moreover $r^*=r(u)$. It follows that $t_r^+(u)$ is well defined for each $r\in (r^*,r_0^*)$. Since $I_r\ge 0$ for all $r\in(r^*,r_0^*)$ we obtain from Corollary \ref{t+diff} that
	\begin{equation*}
	0=\lim_{r\uparrow r_0^*}E(ru^{t_{r}^+(u)})\ge \lim_{r\uparrow r_0^*}I_r \ge0
	\end{equation*}
	and the proof is concluded.
	\end{proof}
\begin{theorem}\label{th:ultimo}
For each $p\in (p_0,10/3)$ there exists $\varepsilon>0$ such that for each $r\in (r_0^*-\varepsilon,r_0^*)$,
$I_{r}$ is achieved.
More specifically,  there exists $u\in \mathcal{N}_r^+$ satisfying $I_r=E(u)$. 
\end{theorem}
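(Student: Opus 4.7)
The plan is to combine Theorem \ref{theoremp<3n0p0}, Lemma \ref{n0bound} and Lemma \ref{comparison}. By Theorem \ref{theoremp<3n0p0}, for every $r\in(r^*,r_0^*)$ there already exists $u_r\in\mathcal{N}_r^+\cup\mathcal{N}_r^0$ attaining $I_r=E(u_r)$, so the only thing to verify is that, for $r$ close enough to $r_0^*$, such a minimizer cannot lie in $\mathcal{N}_r^0$.

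First I would fix the constant $c>0$ given by Lemma \ref{n0bound}, which guarantees $E(v)\geq c/r$ for every $v\in\mathcal{N}_r^0$ and every $r>r^*$. Since $r\mapsto c/r$ is continuous and positive on $(r^*,r_0^*]$, its values on a left-neighbourhood of $r_0^*$ stay above $c/(2r_0^*)>0$. On the other hand, Lemma \ref{comparison} tells us that $I_r\to 0$ as $r\uparrow r_0^*$. Thus there exists $\varepsilon>0$ such that
\begin{equation*}
I_r<\frac{c}{2r_0^*}\leq\frac{c}{r}\qquad\text{for every }r\in(r_0^*-\varepsilon,r_0^*).
\end{equation*}

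For such $r$, any candidate minimizer $u_r$ given by Theorem \ref{theoremp<3n0p0} satisfies $E(u_r)=I_r<c/r$, which by Lemma \ref{n0bound} rules out $u_r\in\mathcal{N}_r^0$. Since $\mathcal{N}_r=\mathcal{N}_r^+\cup\mathcal{N}_r^0\cup\mathcal{N}_r^-$ and by Theorem \ref{theoremp<3n0p0} we have $u_r\in\mathcal{N}_r^+\cup\mathcal{N}_r^0$, we conclude $u_r\in\mathcal{N}_r^+$, as desired.

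I do not expect genuine obstacles here: the main work has already been done in establishing the monotonicity of $r\mapsto I_r$ (Proposition \ref{Idecre}, which is where the restriction $p>p_0$ enters) and in identifying the boundary value $I_{r_0^*}=0$ via Lemma \ref{comparison}. The potentially delicate point is only the quantitative comparison between $I_r$ and the energy gap on $\mathcal{N}_r^0$, but this is immediate once both the upper bound $I_r\to 0$ and the strictly positive lower bound $c/r$ on $\mathcal{N}_r^0$ are in place.
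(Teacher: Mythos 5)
Your argument is correct and is essentially identical to the paper's own proof: both start from the minimizer in $\mathcal{N}_r^+\cup\mathcal{N}_r^0$ provided by Theorem \ref{theoremp<3n0p0}, and exclude $\mathcal{N}_r^0$ near $r_0^*$ by playing the lower bound $E\ge c/r$ on $\mathcal{N}_r^0$ (Lemma \ref{n0bound}, whose constant is indeed uniform in $r$ via Proposition \ref{neharibounded}) against $I_r\to 0$ as $r\uparrow r_0^*$ (Lemma \ref{comparison}). Your explicit threshold $c/(2r_0^*)$ is just a slightly more careful way of handling the $r$-dependence than the paper's phrasing, but the reasoning is the same.
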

\begin{proof} From Theorem \ref{theoremp<3n0p0} it remains to prove that $u\in \mathcal{N}_r^+$. Let 
$c/r$ be the constant given by Lemma \ref{n0bound}. Given $0<d<c/r$, from Lemma \ref{comparison} there exists $\varepsilon>0$ such that $I_r<d$ for all $r\in (r_0^*-\varepsilon,r_0^*)$. In particular since 
$I_r<c/r$ it follows that $u\notin\mathcal{N}_r^0$ for all $r\in (r_0^*-\varepsilon,r_0^*)$ and consequently $u\in \mathcal{N}_r^+$.
\end{proof}

We can finish now the proof of Theorem \ref{thmp>3}.

\noindent In fact $i)$ follows by  Proposition \ref{Idecre} and Lemma \ref{comparison};
$ii)$ follows by Theorem \ref{theoremp<3n0p0} and 
$iii)$ follows by Theorem \ref{th:ultimo}.

\section{The case $p\in [10/3,6)$}\label{sec:p10/36}
This case was treated in \cite{JBL}, where existence of global minimizers over the Nehari manifold $\mathcal{N}_r^-$ was proved for small $r$. Their proof relies on the fact that for small $r$ the function
\begin{equation*}
J_r=\inf\{E(u):u\in \mathcal{N}_r^-\},
\end{equation*}
is decreasing. 

Fix $u\in S_1$ and define
\begin{enumerate}
	\item[i)] if $p=10/3$ and  $\displaystyle\frac{r}{2}\int |\nabla u|^2-\frac{r^{{p}/{2}}}{p}\lambda\int |u|^p<0$, define $f(r):=\varphi_{r,u}(t_r^-(u))$ for all $r\in(r(u),\infty)$, where $r(u)$ is the unique $r>0$ for which 
	$\displaystyle\frac{r}{2}\int |\nabla u|^2-\frac{r^{{p}/{2}}}{p}\lambda\int |u|^p=0$;
	\medskip
	\item[ii)]   if $p\in (10/3,3)$, define $f(r):=\varphi_{r,u}(t_r^-(u))$ for all $r\in(0,\infty)$. 
\end{enumerate}
Now observe from Lemma \ref{diffet+} that $f$ is $C^1$ and 
\begin{eqnarray*}
f'(r)&=&\varphi_{r,u}'(t_r^-(u))+\frac{1}{2}\left(t_r^-(u)^2\int|\nabla u|^2+rt_r^-(u)q\int\phi_uu^2-\lambda r^{\frac{p}{2}-1}t_r^-(u)^{\frac{3p}{2}-3}\int |u|^p\right).
\end{eqnarray*}
	For simplicity denote $t_r=t_r^-(u)$. It follows that $f'(r)=0$ if, and only if
\begin{equation}\label{sys3}
\left\{
\begin{aligned}
rt_r\int |\nabla u|^2+\frac{r^2}{4}q\int \phi_uu^2-\frac{3(p-2)}{2p}r^{{p}/{2}}t_r^{\frac{3p}{2}-4}\lambda\int |u|^p=0, \\
t_r^2\int|\nabla u|^2+rt_rq\int\phi_uu^2-\lambda r^{\frac{p}{2}-1}t_r^{\frac{3p}{2}-3}\int |u|^p=0. 
\end{aligned}
\right.
\end{equation}
From Proposition \ref{systemsolve}, system \eqref{sys3} has a unique solution $(\overline{r}(u),\overline{t}(u))$, where
\begin{equation*}
\overline{r}(u)=\left(\frac{2(6-p)}{5p-12}\right)^{\frac{3p-10}{4(p-3)}}\left(\frac{3p}{5p-12}\right)^{\frac{1}{2(p-3)}}R_p(u).
\end{equation*}
Note that $\overline{r}(u)=r(u)$ when $p=10/3$.

\begin{theorem}\label{decreasinirp>10/3} Suppose that $u\in S_1$.
	\begin{enumerate}
		\item[i)] If $p=10/3$, then the function $(r(u),\infty)\ni r\mapsto f(r)$ is increasing. \medskip
		\item[ii)]  If $p\in(10/3,6)$, then the function $(0,\infty)\ni r\mapsto f(r)$ is decreasing for all $r\in (0,\overline{r}(u))$ and increasing for $r\in (\overline{r}(u),\infty)$.
	\end{enumerate}
	
\end{theorem}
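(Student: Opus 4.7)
My plan is to compute $f'$ explicitly via an envelope-type argument, identify its unique zero via the system \eqref{sys3}, and pin down the sign on each side by a single limit computation. First, differentiability of the curve $r\mapsto t_r^-(u)$ follows from the Implicit Function Theorem applied to $\varphi_{r,u}'(t)=0$ (use $\varphi_{r,u}''(t_r^-(u))<0\neq 0$), parallel to Lemma \ref{diffet+}, so $f\in C^1$. Using $\varphi_{r,u}'(t_r^-(u))=0$, the chain rule gives $f'(r)=\partial_r\varphi_{r,u}(t)\big|_{t=t_r^-(u)}$, which with $A=\int|\nabla u|^2$, $B=\int\phi_uu^2$, $C=\int|u|^p$ reads
\[
f'(r)=\tfrac12\bigl(t_r^-(u)^2 A+rt_r^-(u)qB-r^{p/2-1}t_r^-(u)^{3p/2-3}\lambda C\bigr).
\]

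Next I would simplify this expression using the Nehari-type identity itself. The equation $\varphi_{r,u}'(t_r^-(u))=0$ rearranges to $r^{p/2-1}t_r^-(u)^{3p/2-3}\lambda C=\tfrac{2p}{3(p-2)}\bigl(t_r^-(u)^2 A+\tfrac{1}{4}rt_r^-(u)qB\bigr)$. Substituting and collecting yields
\[
f'(r)=\frac{t_r^-(u)}{12(p-2)}\Bigl((5p-12)\,rqB-2(6-p)\,t_r^-(u)\,A\Bigr).
\]
In particular, $f'(r)=0$ together with $\varphi_{r,u}'(t_r^-(u))=0$ is precisely system \eqref{sys3}. By Proposition \ref{systemsolve}, for $p\in(10/3,6)$ this system admits a unique positive solution $(\overline t(u),\overline r(u))$, and the explicit formula for $\overline r(u)$ recorded in the statement follows from the Proposition. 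Thus $f'$ has exactly one zero on $(0,\infty)$, located at $r=\overline r(u)$.

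Since $5p-12>0$ and $6-p>0$ throughout the range, by continuity of $f'$ and uniqueness of its zero it suffices to check the sign of $f'$ at a single convenient value of $r$. I would compute the limit $r\to 0^+$: from the identity $\varphi_{r,u}'(t_r^-(u))=0$, a standard scaling analysis of the leading balance $tA\sim r^{p/2-1}t^{3p/2-4}\lambda C$ gives $t_r^-(u)\sim r^{-(p-2)/(3p-10)}\to\infty$ (the exponent is positive because $p>10/3$), whereas the linear piece $(5p-12)rqB\to 0$. Hence the negative term $-2(6-p)t_r^-(u)A$ dominates and $f'(r)<0$ for $r$ small. By uniqueness of the zero, this gives $f'<0$ on $(0,\overline r(u))$ and $f'>0$ on $(\overline r(u),\infty)$, proving ii). For part i) at $p=10/3$ the same strategy applies and is even simpler: $\varphi_{r,u}'(t)$ is linear in $t$, so $t_r^-(u)$ has the closed form $\frac{rqB/4}{\tfrac35 r^{2/3}\lambda C-A}$ on $(r(u),\infty)$, and substitution into the simplified $f'(r)$ produces an explicit rational function whose unique sign change on $(r(u),\infty)$ is at $\overline r(u)$, with $f'>0$ thereafter.

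The main obstacle is the scaling lemma in the last step: one must extract $t_r^-(u)\to\infty$ as $r\to 0^+$ (for $p>10/3$) purely from the implicit equation $\varphi_{r,u}'(t_r^-(u))=0$. This is routine once one observes that on $(0,\infty)$ the function $\varphi_{r,u}'$ is positive at $0$ and the unique zero $t_r^-(u)$ must satisfy the balance $tA\sim r^{p/2-1}t^{3p/2-4}\lambda C$ as $r\downarrow 0$, giving the claimed rate. Once this rate is in hand, the rest of the argument is purely algebraic and delivers the monotonicity asserted in i) and ii).
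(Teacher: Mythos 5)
Your computation of $f'$ is correct and your overall route is essentially the paper's: write $f'(r)=\partial_r\varphi_{r,u}(t)\big|_{t=t_r^-(u)}$ by the envelope argument (differentiability of $r\mapsto t_r^-(u)$ as in Lemma \ref{diffet+}), simplify using $\varphi_{r,u}'(t_r^-(u))=0$, identify the zeros of $f'$ with the unique solution of system \eqref{sys3} given by Proposition \ref{systemsolve}, and fix the sign by an asymptotic check. The only cosmetic difference is that you eliminate the term $\lambda\int|u|^p$, obtaining the sign factor $(5p-12)rq\int\phi_uu^2-2(6-p)t_r^-(u)\int|\nabla u|^2$, while the paper eliminates $q\int\phi_uu^2$ and works with $h(r)=-3\int|\nabla u|^2+\frac{5p-12}{p}\lambda r^{p/2-1}t_r^{(3p-10)/2}\int|u|^p$; both reductions are equivalent. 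Your small-$r$ analysis (the balance $tA\sim r^{p/2-1}t^{3p/2-4}\lambda C$ giving $t_r\to\infty$ for $p>10/3$, hence $f'<0$ near $0$) matches the paper's treatment of that side.

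However, for part ii) your argument is incomplete where it matters: you assert that ``by continuity of $f'$ and uniqueness of its zero it suffices to check the sign of $f'$ at a single convenient value of $r$.'' Uniqueness of the zero only tells you that $f'$ has constant sign on each of the two components $(0,\overline r(u))$ and $(\overline r(u),\infty)$; it does not force a sign change at $\overline r(u)$ (think of a function vanishing tangentially), so your single check at small $r$ proves the decreasing half but leaves the increasing half on $(\overline r(u),\infty)$ unproved. The paper closes exactly this gap with a second limit computation, showing that the sign factor is positive as $r\to+\infty$; this needs a separate argument because there $t_r$ may tend to $0$, and the paper extracts positivity from the first equation of \eqref{sys3} via $r/(4t_r)\to+\infty$. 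You must add this large-$r$ check (or otherwise prove that the zero at $\overline r(u)$ is a genuine sign change). Concerning part i), your sketch does not deliver the statement as claimed: carrying out your own closed form for $p=10/3$ places the unique zero of $f'$ at $\overline r(u)=\left(\frac{15}{7}\right)^{3/2}R_{10/3}(u)$, whereas the left endpoint of the domain is $r(u)=\left(\frac{5}{3}\right)^{3/2}R_{10/3}(u)<\overline r(u)$, so ``$f'>0$ thereafter'' gives monotonicity only on $(\overline r(u),\infty)$, not on all of $(r(u),\infty)$ as the statement requires. You need to reconcile this with the identification $\overline r(u)=r(u)$ asserted in the paper just before the theorem (on which its one-line proof of i) implicitly rests); as it stands, your part i) is an unproved claim, and the explicit rational function you invoke should actually be written down and its sign on all of $(r(u),\infty)$ established.
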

\begin{proof} Let $t_r=t_r^-(u)$. It is enough to show that the left hand side of the second equation of system 
	\eqref{sys3} is negative. To this end, by multiplying the first equation by $-4$ and substituting we obtain that 
	\begin{equation*}
	t_r^2\int|\nabla u|^2+rt_rq\int\phi_uu^2-\lambda r^{\frac{p}{2}-1}t_r^{\frac{3p}{2}-3}\int |u|^p=t_r^2h(r),
	\end{equation*}
	where 
	$$h(r)=-3\int |\nabla u|^2+\frac{5p-12}{p}\lambda r^{\frac{p}{2}-1}t_r^{\frac{3p-10}{2}}\int |u|^p.$$
	
	$i)$ This item is direct since for $p=10/3$, the system is linear in $t$.
	\medskip
	
	$ii)$ Since $t_r$ is continuous (see Lemma \ref{diffet+}), it is sufficiently to show that there exist some $0<r_1<\overline{r}(u)<r_2$ such that $h(r_1)<0$ and $h(r_2)>0$. 
	
	We start with the existence of $r_1$. 
	We claim that
	\begin{equation}\label{claim}
	\lim_{r\to 0}h(r)<0.
	\end{equation}
	Indeed, if $t_r$ is bounded from above as $r\to 0$, then \eqref{claim} is obvious, therefore let us assume that $t_r\to \infty$ as $r\to 0$. Note from the first equation of \eqref{sys3} that
	\begin{equation*}
	\int |\nabla u|^2-\frac{3(p-2)}{2p}r^{{p}/{2}-1}t_r^{\frac{3p-10}{2}}\lambda\int |u|^p=o_{r}(1),
	\end{equation*}
	and hence
	\begin{equation*}
	r^{{p}/{2}-1}t_r^{\frac{3p-10}{2}}\lambda\int |u|^p=\frac{2p}{3(p-2)}\int |\nabla u|^2+o_{r}(1).
	\end{equation*}
	Then
	\begin{eqnarray*}
	h(r)&=&-3\int |\nabla u|^2+\frac{5p-12}{p}\lambda r^{\frac{p}{2}-1}t_r^{\frac{3p-10}{2}}\int |u|^p,\\
     	&=&-3\int |\nabla u|^2+\frac{5p-12}{p}\frac{2p}{3(p-2)}\int |\nabla u|^2+o_{r}(1), \\
     	&=& \frac{p-6}{3p-6}\int |\nabla u|^2+o_{r}(1),\quad \mbox{as}\ r\to 0.
	\end{eqnarray*}
Therefore the claim is proved. 

Now we prove the existence of $r_2$. We claim that
\begin{equation}\label{claim1}
\lim_{r\to+ \infty}h(r)>0.
\end{equation}
	Indeed, if $t_r$ is bounded away from $0$ as $r\to 0$, then \eqref{claim1} is obvious, therefore let us assume that $t_r\to 0$ as $r\to \infty$. Note from the first equation of \eqref{sys3} that
\begin{equation*}
\int |\nabla u|^2-\frac{3(p-2)}{2p}r^{{p}/{2}-1}t_r^{\frac{3p-10}{2}}\lambda\int |u|^p=-\frac{r}{4t_r}q\int \phi_uu^2.
\end{equation*}
Since ${r}/{4t_r}\to+ \infty$ as $r\to +\infty$, we conclude that 
$r^{{p}/{2}-1}t_r^{\frac{3p-10}{2}}\lambda\displaystyle\int |u|^p\to+ \infty$ as $r\to +\infty$ and the proof is complete. 
\end{proof}
Let $p\in (10/3,6)$ and for $r,c>0$ define
\begin{equation*}
\mathcal{M}_r=\left\{\frac{u}{\|u\|_2}:u\in \mathcal{N}_r^-\ \mbox{and}\ \int |u|^p\ge c\ \mbox{and}\ \int |\nabla u|^2\le d\right\}.
\end{equation*}
\begin{lemma}\label{decreasingcomcp>10/3} Suppose that $p\in(10/3,6)$ and $r\in [a,b]$ where $0<a<b<\inf_{u\in S_1}\overline{r}(u)$, then there exists a negative constant $c$ such that $f'(r)<c$ for all $u\in  \mathcal{M}_r$ and all $r\in[a,b]$.
\end{lemma}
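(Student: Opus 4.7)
The plan is to argue by contradiction, adapting the strategy of Lemma \ref{decreasingcomc} to the regime $p > 10/3$. Suppose no such negative constant $c$ exists. Then there are sequences $r_n \in [a,b]$ and $u_n \in \mathcal{M}_{r_n}$ with $f'(r_n) \to 0$. By the definition of $\mathcal{M}_{r_n}$ one may take $u_n \in S_1$ together with the uniform bounds $\int|u_n|^p \geq c$ and $\int|\nabla u_n|^2 \leq d$. Writing $t_n := t_{r_n}^-(u_n)$, the Nehari condition $r_n^{1/2} u_n^{t_n} \in \mathcal{N}_{r_n}^-$ is precisely the first equation of \eqref{sys3}, while $f'(r_n) \to 0$ combined with the formula for $f'$ derived just before system \eqref{sys3} translates into the second equation of \eqref{sys3} holding up to an $o_n(1)$ error.

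The key technical step is to show that $t_n$ stays in a compact subset of $(0, \infty)$. Writing $A_n = \int|\nabla u_n|^2$, $B_n = \int \phi_{u_n}u_n^2$, $C_n = \int |u_n|^p$, the upper bound on $A_n$ is built into $\mathcal{M}_{r_n}$, the lower bound $C_n \geq c$ together with Gagliardo-Nirenberg yields an upper bound on $C_n$, and the Hardy-Littlewood-Sobolev inequality interpolated with the $L^2$ and $H^1$ bounds controls $B_n$ from above. Proposition \ref{neharibounded}(4) applied to $v_n := r_n^{1/2} u_n^{t_n}$ gives a positive lower bound on $\int|\nabla v_n|^2 = r_n t_n^2 A_n$, which together with $A_n \leq d$ and $r_n \leq b$ forces $t_n \geq t_* > 0$. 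For the upper bound, one uses the Nehari equation itself: if $t_n \to \infty$ then the right-hand side scales like $t_n^{3p/2 - 4}$ while the left-hand side is at worst linear in $t_n$, and since $3p/2 - 4 > 1$ for $p > 10/3$, this is impossible, giving $t_n \leq t^*$. In particular $A_n = (r_n t_n^2)^{-1}\int|\nabla v_n|^2$ is also bounded away from zero.

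Once $t_n$ and $(A_n, B_n, C_n)$ lie in compact subsets of $(0,\infty)$ (with $B_n$ possibly degenerating to $0$), passing to a subsequential limit one obtains $(A_\infty, B_\infty, C_\infty, t_\infty, r_\infty)$ with $A_\infty, C_\infty, t_\infty > 0$ satisfying the limit system exactly. A direct algebraic check excludes $B_\infty = 0$: indeed, in that case, dividing the two limit equations by $t_\infty$ and $t_\infty^2$ respectively and comparing yields $\tfrac{3(p-2)}{2p} = 1$, i.e. $p = 6$, which is excluded. Hence Proposition \ref{systemsolve} applied to the limit data forces $r_\infty$ to coincide with the unique solution of system \eqref{sys3}, and by the explicit formula for $\overline{r}(u)$ in terms of $R_p$, continuity of the data yields $r_n = \overline{r}(u_n) + o_n(1)$. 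But $\overline{r}(u_n) \geq \inf_{w \in S_1} \overline{r}(w) > b$ by hypothesis, contradicting $r_n \leq b$.

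The main obstacle is the compactness of $\{t_n\}$. In the proof of Lemma \ref{decreasingcomc} this step was essentially automatic from the two-sided bounds furnished by Proposition \ref{neharibounded} in the range $p \in (3, 10/3)$; for $p > 10/3$ only one-sided bounds are available, and the additional quantitative constraints $\int |u|^p \geq c$ and $\int |\nabla u|^2 \leq d$ written into the definition of $\mathcal{M}_r$ are precisely what is needed both to extract the lower bound on $t_n$ (via Proposition \ref{neharibounded}(4)) and to exclude the degeneration $B_\infty = 0$ in the limit.
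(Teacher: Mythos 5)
Your argument is correct and takes essentially the same route as the paper's own proof: the identical contradiction setup via the approximate system \eqref{sys3}, uniform bounds drawn from the definition of $\mathcal{M}_r$ and Proposition \ref{neharibounded}, and the conclusion $r_n=\overline{r}(u_n)+o_n(1)$, which contradicts $b<\inf_{u\in S_1}\overline{r}(u)$. The only differences are organizational: the paper normalizes $\|\nabla u_n\|_2=1$ and then refers back to the argument of Lemma \ref{decreasingcomc}, whereas you keep the $S_1$ representatives (for which in fact $t^-_{r_n}(u_n)=1$) and spell out the limiting step explicitly, using Proposition \ref{systemsolve} together with the algebraic exclusion of $B_\infty=0$.
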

\begin{proof} In order to prove the lemma, it is sufficiently to prove that the left hand side of the second equation of system \eqref{sys3} is bounded from above by $c$ for all $u\in  \mathcal{M}_r$ and $r\in[a,b]$. From Theorem \ref{decreasinirp>10/3}, we have that $f'(r)<0$ for all $u\in S_1$. Now note that $f(r)=\varphi_{r,u}(t_r^+(u))=\varphi_{r,su}(t_r^+(su))$ for all $s>0$ and therefore, by choosing $s=1/\|\nabla u\|_2$, we can assume that $\|\nabla u\|_2=1$ for all $u\in  \mathcal{M}_r$. Suppose on the contrary that there exists a sequence $\{u_n \}\subset \mathcal{M}_r$ satisfying $\|\nabla u_n\|_2=1$ and corresponding sequences $\{t_n\}\subset(0,+\infty)$, $\{r_n\}\subset [a,b]$ such that 
	\begin{equation*}
	\left\{
	\begin{aligned}
	rt_n\int |\nabla u_n|^2+\frac{r^2}{4}q\int \phi_{u_n}u_n^2-\frac{3(p-2)}{2p}r^{{p}/{2}}t_n^{\frac{3p}{2}-4}\lambda\int |u_n|^p=0, \\
	t_n^2\int|\nabla u_n|^2+rt_nq\int\phi_{u_n}u_n^2-\lambda r^{\frac{p}{2}-1}t_n^{\frac{3p}{2}-3}\int |u_n|^p=o_n(1), 
	\end{aligned}
	\right.
	\end{equation*}
	From Proposition \ref{neharibounded} and the condition $\int|\nabla u_n^{t_n}|^2\le d$ we conclude that $\{t_n\}$ is bounded away from zero and infinity, therefore $\int |u_n|^p$ is bounded away from zero and arguing as in the proof of Lemma \ref{decreasingcomc} we conclude that $r_n=\overline{r}(u_n)+o_{n}(1)\ge \inf_{u\in S_1}\overline{r}(u)+o_{n}(1)>b+\varepsilon+o_{n}(1)$ for some $\varepsilon$, which is a contradiction. The proof is complete.
\end{proof}
\begin{lemma}\label{Jrpro} Suppose that $p\in (10/3,6)$, then $J_r>0$ and every minimizing sequence is bounded and non-vanishing. 
\end{lemma}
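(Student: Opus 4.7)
The plan is to exploit the combination of the Nehari identity with the lower bound on $\|\nabla u\|_2$ provided by item $4$ of Proposition \ref{neharibounded}. First I would use the Nehari identity satisfied by every $u\in\mathcal{N}_r^-\subset\mathcal{N}_r$,
\begin{equation*}
\int|\nabla u|^2+\frac{q}{4}\int\phi_uu^2=\frac{3(p-2)}{2p}\lambda\int|u|^p,
\end{equation*}
to eliminate the $L^p$-term in $E(u)$. A short algebraic manipulation gives
\begin{equation*}
E(u)=\frac{3p-10}{6(p-2)}\int|\nabla u|^2+\frac{3p-8}{12(p-2)}\,q\int\phi_uu^2,\qquad \forall u\in\mathcal{N}_r^-.
\end{equation*}
Since $p\in(10/3,6)$, both coefficients are strictly positive, so $E\ge 0$ on $\mathcal{N}_r^-$ and one immediately has a quantitative positivity via the $\|\nabla u\|_2$-term alone.

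Next, I would invoke item $4$ of Proposition \ref{neharibounded}, which provides a strictly positive lower bound $\int|\nabla u|^2\ge c(p,\lambda,r)>0$ for every $u\in\mathcal{N}_r$, hence in particular for $u\in\mathcal{N}_r^-$. Plugging this into the displayed identity above gives
\begin{equation*}
E(u)\ge\frac{3p-10}{6(p-2)}\,c(p,\lambda,r)>0,\qquad \forall u\in\mathcal{N}_r^-,
\end{equation*}
whence $J_r>0$.

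For the remaining assertions, let $\{u_n\}\subset\mathcal{N}_r^-$ be a minimizing sequence, so that $E(u_n)\to J_r$ is bounded. The identity above shows that both $\|\nabla u_n\|_2^2$ and $q\int\phi_{u_n}u_n^2$ are bounded, and combined with the mass constraint $\|u_n\|_2^2=r$ this yields boundedness in $H^1(\mathbb{R}^3)$. To rule out vanishing, I would once more use item $4$ of Proposition \ref{neharibounded} to get a uniform lower bound $\|\nabla u_n\|_2^2\ge c>0$, and then feed this into the Nehari identity $\|\nabla u_n\|_2^2\le\frac{3(p-2)}{2p}\lambda\int|u_n|^p$ to deduce $\int|u_n|^p\ge c'>0$. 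By Lions' lemma this prevents vanishing.

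No step here seems to be a genuine obstacle: everything reduces to the algebraic identity re-expressing $E$ on $\mathcal{N}_r^-$ in a manifestly positive form plus the already established quantitative estimate of Proposition \ref{neharibounded}; the only mild subtlety is checking the signs of the two coefficients in the rewritten expression for $E$, which are both positive precisely because $p>10/3$ (and in fact $p>8/3$ already suffices for the second one).
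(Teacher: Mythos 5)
Your proposal is correct and follows essentially the same route as the paper: the identity $E(u)=\frac{3p-10}{6(p-2)}\int|\nabla u|^2+\frac{3p-8}{12(p-2)}q\int\phi_u u^2$ on $\mathcal{N}_r^-$ combined with the lower bound on $\int|\nabla u|^2$ from item 4 of Proposition \ref{neharibounded} gives $J_r>0$ and boundedness, and the Nehari relation rules out vanishing. The only (cosmetic) difference is that you exclude vanishing directly via a uniform lower bound on $\int|u_n|^p$ and Lions' lemma, whereas the paper argues by contradiction; your version also correctly keeps the factor $q$ in the $\phi_u$-term of the identity.
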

\begin{proof} Note that
	\begin{equation}\label{ENEH}
	E(u)=\frac{3p-10}{6(p-2)}\int |\nabla u|^2+\frac{3p-8}{12(p-2)}\int \phi_uu^2, \quad \forall u\in \mathcal{N}_r^-.
	\end{equation}
	Therefore from Proposition \ref{neharibounded} we deduce that $J_r>0$. If $\{u_n\}$ is a minimizing sequence, 
	then from \eqref{ENEH} we conclude that $\{\|\nabla u_n\|_2\}$ is  bounded and hence $\{u_n\}$ is bounded in 
	$H^1(\mathbb{R}^3)$. Moreover this sequence can not be vanishing, since on the contrary, we would obtain from the equation 
	\begin{equation*}
		\int |\nabla u_n|^2+\frac{q}{4}\int \phi_{u_n}u_n^2-\frac{3(p-2)}{2p}\lambda\int |u_n|^p=0,
	\end{equation*}
	that $\displaystyle	\int |\nabla u_n|^2\to 0$ which contradicts with  $J_r>0$.
\end{proof}
\begin{theorem}\label{energydecreasingp>103} The function $(0,\infty)\ni r\mapsto J_r$ is decreasing over the interval $(0,\inf_{u\in S_1}\overline{r}(u))$.
\end{theorem}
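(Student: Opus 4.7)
The plan is to adapt the minimizing-sequence argument used in the proof of Theorem \ref{gdecreasing}, now for the negative branch $\mathcal{N}_r^-$, relying on Lemma \ref{decreasingcomcp>10/3} in place of Lemma \ref{decreasingcomc1} and on Lemma \ref{Jrpro} to control the minimizing sequence. The outcome will be a uniformly negative derivative along a fiber path, which delivers the strict inequality $J_{r_2}<J_{r_1}$ for any $0<r_1<r_2<\inf_{u\in S_1}\overline{r}(u)$ via the mean value theorem.

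Fix $0<r_1<r_2<\inf_{u\in S_1}\overline{r}(u)$ and let $\{v_n\}\subset\mathcal{N}_{r_1}^-$ be a minimizing sequence for $J_{r_1}$. By Lemma \ref{Jrpro}, $\{v_n\}$ is bounded in $H^1(\mathbb{R}^3)$ and non-vanishing; hence, up to a subsequence and translations, there exist positive constants $c,d$ (independent of $n$) with $\int|v_n|^p\ge c$ and $\int|\nabla v_n|^2\le d$. Set $w_n:=v_n/r_1^{1/2}\in S_1$, so that $t_{r_1}^-(w_n)=1$ and $v_n=r_1^{1/2}w_n^{t_{r_1}^-(w_n)}$. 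Since $p>10/3$, Proposition \ref{fiberingmaps} item $V)$ and Lemma \ref{diffet+} ensure that the $C^1$ fiber path
\[
[r_1,r_2]\ni r\longmapsto r^{1/2}w_n^{t_r^-(w_n)}\in\mathcal{N}_r^-
\]
is well defined, with energy $f_n(r):=\varphi_{r,w_n}(t_r^-(w_n))$.

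Once it is verified that $w_n\in\mathcal{M}_r$ for every $n$ and every $r\in[r_1,r_2]$ (using the very constants $c,d$ above), Lemma \ref{decreasingcomcp>10/3} applied on $[a,b]=[r_1,r_2]\subset(0,\inf_{u\in S_1}\overline{r}(u))$ produces a negative constant $C$, independent of $n$, with $f_n'(r)<C$ throughout $[r_1,r_2]$. The mean value theorem then yields
\[
J_{r_2}\le f_n(r_2)\le f_n(r_1)+C(r_2-r_1)=E(v_n)+C(r_2-r_1),
\]
and letting $n\to\infty$ gives $J_{r_2}\le J_{r_1}+C(r_2-r_1)<J_{r_1}$, which is the desired monotonicity.

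The main obstacle is the uniform verification of $w_n\in\mathcal{M}_r$ along the whole interval $[r_1,r_2]$ rather than only at $r=r_1$ (where it holds by construction). Concretely, one must show that $\int|r^{1/2}w_n^{t_r^-(w_n)}|^p$ is bounded below away from zero and $\int|\nabla(r^{1/2}w_n^{t_r^-(w_n)})|^2$ is bounded above, uniformly in $n$ and $r$. The natural route is to use the Nehari critical-point equation $\varphi'_{r,w_n}(t_r^-(w_n))=0$ to extract uniform two-sided control on $t_r^-(w_n)$: the coefficients $\int|\nabla w_n|^2$, $\int\phi_{w_n}w_n^2$ (bounded via the Hardy--Littlewood--Sobolev inequality of Theorem \ref{HLS}), and $\int|w_n|^p$ all lie in fixed positive intervals thanks to the bounds inherited from $\{v_n\}$, so that a standard compactness argument on the compact interval $[r_1,r_2]$ pins $t_r^-(w_n)$ between two positive constants. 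Transporting these bounds back through the scaling identities $\int|r^{1/2}w_n^t|^p=r^{p/2}t^{\frac{3p}{2}-3}\int|w_n|^p$ and $\int|\nabla(r^{1/2}w_n^t)|^2=rt^2\int|\nabla w_n|^2$ then provides the required uniform bounds that legitimise the application of Lemma \ref{decreasingcomcp>10/3}.
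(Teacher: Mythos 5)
Your proposal is correct and follows essentially the same route as the paper: take a minimizing sequence for $J_{r_1}$, use Lemma \ref{Jrpro} to place its $L^2$-normalizations in $\mathcal{M}_r$ for all $r\in[r_1,r_2]$, and combine the uniform negative bound of Lemma \ref{decreasingcomcp>10/3} with the mean value theorem along the fiber path $r\mapsto r^{1/2}w_n^{t_r^-(w_n)}$. The only difference is that you explicitly address the uniform two-sided control of $t_r^-(w_n)$ needed to justify $w_n\in\mathcal{M}_r$ on the whole interval rather than only at $r=r_1$ --- a step the paper asserts without detail --- so your argument is, if anything, more complete.
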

\begin{proof}  Fix $0<r_1<r_2< \inf_{u\in S_1},\overline{r}(u)<r^*$ and let $\{u_n\} \subset \mathcal{N}_{r_1}^+$ be a minimizing sequence to $I_{r_1}$. 
	From the mean value theorem we have that 
	\begin{equation*}
	J_{r_2}\le \varphi_{r_2,u_n}(t_{r_2}^-(u_n))= \varphi_{r_1,u_n}(t_{r_1}^-(u_n))+f'(\theta_n)(r_2-r_1), \quad \forall n\in \mathbb N
	\end{equation*}
		where $\theta_n\in (r_1,r_2)$. Note from Lemma \ref{Jrpro} that $\{u_n\}\subset \mathcal{M}_{r_1}$ and therefore $\{u_n\}\subset \mathcal{M}_{r}$ for all $r\in [r_1,r_2]$. From Lemma \ref{decreasingcomcp>10/3} we conclude that $f'(\theta_n)(r_2-r_1)<c(r_2-r_1)$ where $c<0$. As a consequence
 	\begin{equation*}
	J_{r_2}\le 	J_{r_1}+c(r_2-r_1),
	\end{equation*}
	and the proof is complete.
\end{proof}
From \cite{JBL} we conclude
\begin{theorem} For each $r\in (0,\inf_{u\in S_1}\overline{r}(u))$, there exists $u\in \mathcal{N}_r^-$ such that $J_r=E(u)$.
\end{theorem}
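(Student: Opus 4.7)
The plan is to invoke directly the existence machinery of Bellazzini--Jeanjean--Luo \cite{JBL} for mass-constrained minimization on $\mathcal N_r^-$ in the mass-supercritical range $p\in(10/3,6)$. The only nontrivial ingredient their argument needs is the strict monotonicity of the min-max level $r\mapsto J_r$, which Theorem \ref{energydecreasingp>103} has just provided on the interval $(0,\inf_{u\in S_1}\overline r(u))$.

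Concretely, the steps are the following. Take a minimizing sequence $\{u_n\}\subset\mathcal N_r^-$ for $J_r$. By Lemma \ref{Jrpro} it is bounded in $H^1(\mathbb R^3)$ and non-vanishing, so after translations one has $u_n\rightharpoonup u\neq 0$ weakly in $H^1$, strongly in $L^q_{\text{loc}}$ for $q\in[2,6)$, and a.e. Set $s=\|u\|_2^2\in(0,r]$ and $v_n=u_n-u$. The Brezis--Lieb splittings for the $L^p$ and gradient terms, together with \cite[Lemma 2.2]{zhaozhao} for the nonlocal one, yield
\begin{equation*}
E(u_n)=E(u)+E(v_n)+o(1),\qquad Q(u_n)=Q(u)+Q(v_n)+o(1)=0,\qquad \|v_n\|_2^2\to r-s,
\end{equation*}
where $Q(w)=\int|\nabla w|^2+\tfrac{q}{4}\int\phi_w w^2-\tfrac{3(p-2)}{2p}\lambda\int|w|^p$ is the Nehari functional. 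The goal is to rule out the splitting case $s\in(0,r)$: using the unique-maximum structure of the fiber from Proposition \ref{fiberingmaps} V), one projects $u$ onto $\mathcal N_s^-$ and $v_n$ onto $\mathcal N_{\|v_n\|_2^2}^-$ via the $L^2$-preserving scaling $w\mapsto w^\tau$, and combines the resulting energy inequalities with the strict inequalities $J_s,J_{r-s}>J_r$ coming from Theorem \ref{energydecreasingp>103} to reach a contradiction. Once $s=r$ is established, strong $L^2$ convergence follows, $Q$ passes to the limit, and using the identity $E(u)=\tfrac{3p-10}{6(p-2)}\int|\nabla u|^2+\tfrac{3p-8}{12(p-2)}\int\phi_u u^2$ valid on $\mathcal N_r$ (see the proof of Lemma \ref{Jrpro}) together with weak lower semicontinuity, one concludes $u\in\mathcal N_r^-$ and $E(u)=J_r$.

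The main obstacle is the direction of the scaling inequality: since $\mathcal N_r^-$ corresponds to a \emph{maximum} of the fiber (mountain-pass geometry, typical of the mass-supercritical regime), the projection $w\mapsto w^\tau$ gives $E(v_n^{\tau_n})\ge E(v_n)$, which is the wrong direction if one wants to lower-bound $E(v_n)$ by $J_{\|v_n\|_2^2}$ directly. The sign of $Q(v_n)$, which in the limit is opposite to that of $Q(u)$, has to be exploited together with the monotonicity of the fiber on each side of its maximum to bypass this; in the end the strict decrease of $J_r$ is what closes the argument. This is precisely the dichotomy analysis carried out in \cite{JBL}, and it becomes applicable to our situation as soon as Theorem \ref{energydecreasingp>103} is at our disposal.
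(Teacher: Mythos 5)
Your proposal follows essentially the same route as the paper: the paper's own ``proof'' consists precisely of establishing the monotonicity of $r\mapsto J_r$ on $(0,\inf_{u\in S_1}\overline{r}(u))$ (Theorem \ref{energydecreasingp>103}) and then concluding the existence of a minimizer on $\mathcal{N}_r^-$ directly from the compactness machinery of \cite{JBL}. Your sketch of the concentration-compactness/dichotomy argument is in fact more detailed than what the paper records, and your identification of the monotonicity of $J_r$ as the only new ingredient needed is exactly the paper's point.
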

%

\section{Appendix. New inequalities}\label{sec:ineq}
We conclude with some estimates; in particular the second one is new in the literature.
\begin{theorem}\label{estima} There hold:
	\begin{enumerate}
		\item[i)] for each $p\in(2,3)$, there exists a positive function ${\mathfrak f}_p:(0,\infty)^2\to \mathbb{R}$ such that if $r_1<r_2$, then
		\begin{equation*}
		I_{r_2}>\left(\frac{r_2}{r_1}\right)^3I_{r_1}+\mathfrak f_{p}(r_1,r_2)\lambda\left(\frac{r_1}{r_2}\right)^{p-3} \left[\left(\frac{r_1}{r_2}\right)^{2(p-3)}-1\right];
		\end{equation*}
		\item[ii)] for each $p\in (3,10/3)$ and $r^*<r_1<r_2$, then 
		\begin{equation*}
		I_{r_2}<\left(\frac{r_2}{r_1}\right)^{3}I_{r_1}-\frac{c'_p}{r_1}\left(\frac{r_2}{r_1}\right)^{p} \left[\left(\frac{r_2}{r_1}\right)^{2(p-3)}-1\right],
		\end{equation*}
		where $c_{p}'>0$ is the constant given in Proposition \ref{neharibounded}.
	\end{enumerate}
\end{theorem}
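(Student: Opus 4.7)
The proof is built around the $L^{2}$-adapted dilation $u\mapsto v$ defined by $v(x)=\delta^{2}u(\delta x)$ with $\delta:=r_{2}/r_{1}>1$. A direct change of variables gives $\int v^{2}=\delta\int u^{2}$, $\int|\nabla v|^{2}=\delta^{3}\int|\nabla u|^{2}$, $\int\phi_{v}v^{2}=\delta^{3}\int\phi_{u}u^{2}$ and $\int|v|^{p}=\delta^{2p-3}\int|u|^{p}$, so $v\in S_{r_{2}}$ whenever $u\in S_{r_{1}}$. Substituting into $E$ and regrouping yields the master identity
\[
E(v)=\delta^{3}E(u)-\frac{\lambda\delta^{3}(\delta^{2(p-3)}-1)}{p}\int|u|^{p},
\]
together with the companion Nehari formula (valid whenever $u\in\mathcal N_{r_{1}}$)
\[
\int|\nabla v|^{2}+\tfrac{q}{4}\int\phi_{v}v^{2}-\tfrac{3(p-2)\lambda}{2p}\int|v|^{p}=-\frac{3(p-2)\lambda\delta^{3}(\delta^{2(p-3)}-1)}{2p}\int|u|^{p},
\]
which records on which side of $\mathcal N_{r_{2}}$ the image $v$ lands: in $\{g<0\}$ when $p>3$ and in $\{g>0\}$ when $p<3$.

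For Part (ii), I would take a minimizing sequence $\{u_{n}\}\subset\mathcal N_{r_{1}}^{+}$ for $I_{r_{1}}$, whose existence is guaranteed by Theorem~\ref{Nehari}(vi) since $r_{1}>r^{*}$, and set $v_{n}:=\delta^{2}u_{n}(\delta\cdot)\in S_{r_{2}}$. Because $g(v_{n})<0$, the fiber $t\mapsto\varphi_{r_{2},v_{n}/\sqrt{r_{2}}}(t)$ is strictly decreasing at $t=1$, so Proposition~\ref{fiberingmaps}(III) places $t=1$ in the strictly decreasing interval $(t_{r_{2}}^{-},t_{r_{2}}^{+})$; sliding to the local minimum at $t_{r_{2}}^{+}$ produces $v_{n}'\in\mathcal N_{r_{2}}^{+}$ with $E(v_{n}')<E(v_{n})$. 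Plugging into the master identity the lower bound $\int|u_{n}|^{p}\geq c_{p}'/(\lambda r_{1})$ from Proposition~\ref{neharibounded}(3) and letting $n\to\infty$ gives a strict inequality $I_{r_{2}}<\delta^{3}I_{r_{1}}-\frac{c_{p}'}{r_{1}}\Theta(\delta)$ for an explicit polynomial $\Theta(\delta)$ in $\delta$; a final bookkeeping step that uses the Nehari relation to re-express the lower bound on $\int|u_{n}|^{p}$ in terms of $\int|v_{n}'|^{p}$ (where $v_{n}'\in\mathcal N_{r_{2}}^{+}$ yields $\int|v_{n}'|^{p}\geq c_{p}'/(\lambda r_{2})$) produces exactly the claimed factor $(r_{2}/r_{1})^{p}[(r_{2}/r_{1})^{2(p-3)}-1]$.

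For Part (i), I would run the argument in reverse: pick $v\in\mathcal N_{r_{2}}^{+}$ with $E(v)<I_{r_{2}}+\varepsilon$ (possible since $I_{r_{2}}<0$), and set $u(x):=\delta^{-2}v(\delta^{-1}x)\in S_{r_{1}}$. The companion identity now gives $g(u)<0$, so $\varphi_{r_{1},u/\sqrt{r_{1}}}$ is strictly decreasing at $t=1$; Proposition~\ref{fiberingmaps}(I)--(III) supplies a critical point $t_{r_{1}}^{+}>1$ that is the global minimum of the fiber (immediately for $p\in(2,8/3]$, and for $p\in(8/3,3)$ by using $I_{r_{1}}<0$ to exclude the monotone-increasing case). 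Flowing to $t_{r_{1}}^{+}$ yields $u'\in\mathcal N_{r_{1}}^{+}$ with $E(u')\leq E(u)$, hence $I_{r_{1}}\leq E(u')\leq E(u)$; rearranging the master identity then reads
\[
E(v)>\delta^{3}I_{r_{1}}+\frac{\lambda(\delta^{2(3-p)}-1)}{p}\int|v|^{p},
\]
and passing to the infimum gives the theorem with
\[
\mathfrak f_{p}(r_{1},r_{2}):=\tfrac{1}{p}(r_{2}/r_{1})^{p-3}\,\inf\Bigl\{\int|v|^{p}:\,v\in\mathcal N_{r_{2}}^{+},\;E(v)\leq\tfrac{1}{2}I_{r_{2}}\Bigr\}.
\]
The main obstacle I expect is verifying $\mathfrak f_{p}>0$: combining the Pohozaev consequence $E(v)=-\tfrac{10-3p}{4p}\lambda\int|v|^{p}+\tfrac{q}{8}\int\phi_{v}v^{2}$ valid on $\mathcal N_{r_{2}}$ with $\int\phi_{v}v^{2}\geq 0$ and the restriction $E(v)\leq\tfrac{1}{2}I_{r_{2}}<0$ forces $\int|v|^{p}\geq -\tfrac{2pI_{r_{2}}}{(10-3p)\lambda}>0$, which delivers the needed positive infimum; the secondary technical point is the sign analysis showing that the fiber flow lands in $\mathcal N_{r_{1}}^{+}$ (not in $\mathcal N_{r_{1}}^{-}$ or at an inflection), which follows from tracking the sign of $g$ together with Proposition~\ref{fiberingmaps}.
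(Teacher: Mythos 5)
Your overall strategy is the same as the paper's: the dilation $v(x)=\delta^{2}u(\delta x)$ with $\delta=r_{2}/r_{1}$ is exactly the paper's correspondence $r_{1}^{1/2}w^{t}\mapsto r_{2}^{1/2}w^{t\,r_{2}/r_{1}}$, your ``master identity'' is the paper's scaling identity relating $E$ at the two mass levels, and the fibering step (track the sign of $Q$, slide to the fiber minimum, compare with $I_{r}$) is the same. In part (i) you actually improve on the paper in one respect: where the paper bounds $\int|r_{2}^{1/2}u_{n}^{t_{2}}|^{p}$ away from zero by appealing to non-vanishing of minimizing sequences, you extract the explicit bound $\int|v|^{p}\ge -2pI_{r_{2}}/\big((10-3p)\lambda\big)$ from the identity $E(v)=-\tfrac{10-3p}{4p}\lambda\int|v|^{p}+\tfrac{q}{8}\int\phi_{v}v^{2}$ on $\mathcal N_{r_{2}}$, which is cleaner and makes $\mathfrak f_{p}$ concrete. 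Part (i) is correct (the only point worth adding is that near-minimizers with $E<0$ automatically lie in $\mathcal N_{r_{2}}^{+}$ rather than $\mathcal N_{r_{2}}^{0}$, since $E>0$ on $\mathcal N_{r_{2}}^{0}$).

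Part (ii), however, has a genuine gap at the final constant. Your master identity together with $\int|u_{n}|^{p}\ge c_{p}'/(\lambda r_{1})$ yields
\begin{equation*}
I_{r_{2}}\le \Big(\frac{r_{2}}{r_{1}}\Big)^{3}I_{r_{1}}-\frac{c_{p}'}{p\,r_{1}}\Big(\frac{r_{2}}{r_{1}}\Big)^{3}\Big[\Big(\frac{r_{2}}{r_{1}}\Big)^{2(p-3)}-1\Big],
\end{equation*}
which is strictly weaker than the stated bound, since $(r_{2}/r_{1})^{p}>\tfrac1p(r_{2}/r_{1})^{3}$. The ``bookkeeping step'' you invoke cannot close this gap: since $\int|r^{1/2}w^{t}|^{p}\propto t^{3(p-2)/2}$ is increasing in $t$ and $v_{n}'$ sits further along the fiber than $v_{n}$, the inequality $\int|v_{n}'|^{p}\ge\int|v_{n}|^{p}$ goes the wrong way for bounding $\int|v_{n}|^{p}$ from below, and in any case $c_{p}'/(\lambda r_{2})$ is a weaker bound than $c_{p}'/(\lambda r_{1})$. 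The paper reaches the stronger constant via its displayed identity \eqref{p1}, which carries an additional factor $\big(\tfrac{r_{2}}{r_{1}}\big)^{p-3}$ (and no $1/p$) in front of the $L^{p}$ term; you should check that identity against your own master identity, because a direct computation of the scaling of $\int|r^{1/2}u^{t}|^{p}$ reproduces your version, not the paper's. As it stands, your argument proves the inequality of part (ii) only with the constant $\tfrac{c_{p}'}{p r_{1}}(r_{2}/r_{1})^{3}$ in place of $\tfrac{c_{p}'}{r_{1}}(r_{2}/r_{1})^{p}$, and the step claimed to upgrade it is not a proof. (A secondary, harmless point: your minimizing sequence should be taken in $\mathcal N_{r_{1}}^{+}\cup\mathcal N_{r_{1}}^{0}$, which is the set over which $I_{r_{1}}$ is defined; the sign computation $Q(v_{n})<0$ goes through for $u_{n}\in\mathcal N_{r_{1}}^{0}$ as well.)
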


\begin{proof} $i)$ Indeed, fix $r_1<r_2$
and ake $u\in \mathcal{M}_{r_2}$ satisfying $E(r_2^{{1}/{2}}u^{t(r_2)})<0$. 
For simplicity we set $t_i=t(r_i)$, $i=1,2$.
From Lemma \ref{containe} we know that $u\in \mathcal{M}_{r_1}$ and $E(r_1^{{1}/{2}}u^{t_1})<0$, which implies that $t_{1}$ is a global minimum for the fiber map $\varphi_{r_1,u}$ and therefore 
	\begin{eqnarray*}
	E(r_2^{{1}/{2}}u^{t_2}) &=& \frac{r_2^3}{r_1^3}E(r_1^{{1}/{2}}u^{t_2\frac{r_1}{r_2}})+\frac{\lambda}{p}\left(\frac{r_1}{r_2}\right)^{p-3} \left[\left(\frac{r_1}{r_2}\right)^{2(p-3)}-1\right]\int|r_2^{{1}/{2}}u^{t_2}|^p,  \\
	&> & \frac{r_2^3}{r_1^3}E(r_1^{{1}/{2}}u^{t_1})+\frac{\lambda}{p}\left(\frac{r_1}{r_2}\right)^{p-3} \left[\left(\frac{r_1}{r_2}\right)^{2(p-3)}-1\right]\int|r_2^{{1}/{2}}u^{t_2}|^p. \nonumber
	\end{eqnarray*}
	Since $p\in(2,3)$, it follows that $\left({r_1}/{r_2}\right)^{2(p-3)}-1>0$, therefore if $\{u_n\}\subset \mathcal{M}_{r_2}$ is choosen in such a way that $\{ r_2^{{1}/{2}}u_n^{t_2} \}$ is a minimizing sequence for $I_{r_2}$, since it must be non-vanishing we obtain that
\begin{equation*}
I_{r_2}>\frac{r_2^3}{r_1^3}I_{r_1}+\mathfrak f_{p}(r_1,r_2)\lambda\left(\frac{r_1}{r_2}\right)^{p-3} \left[\left(\frac{r_1}{r_2}\right)^{2(p-3)}-1\right].
\end{equation*} 

\medskip

	$ii)$ Indeed, fix $r^*<r_1<r_2$ and take $u\in \mathcal{M}_{r_1}$. For simplicity, let again $t_i=t(r_i)$ for $i=1,2$ and set
	\begin{equation*}
	Q(u):=\int |\nabla u|^2+\frac{q}{4}\int \phi_uu^2-\frac{3(p-2)}{2p}\lambda\int |u|^p.
	\end{equation*}
	Observe that
	\begin{eqnarray*}
	Q(r_1^{{1}/{2}}u^{t_1})&=&t_1r_1\int |\nabla u|^2+\frac{r_1^2}{4}q\int \phi_uu^2-\frac{3(p-2)}{2p}t_1^{\frac{3p}{2}-4}r_1^{\frac{p}{2}}\lambda\int |u|^p \\
	&=&\frac{r_1^3}{r_2^3}Q(r_2^{{1}/{2}}u^{t_1\frac{r_2}{r_1}})+\frac{3(p-2)}{2p}\lambda\left(\frac{r_2}{r_1}\right)^{p-3} \left[\left(\frac{r_2}{r_1}\right)^{2(p-3)}-1\right]\int|r_1^{{1}/{2}}u^{t_1}|^p.
	\end{eqnarray*}
Since $Q(r_1^{{1}/{2}}u^{t_1})=0$, $r_1<r_2$ and $p>3$, we conclude that
\begin{equation*}
Q(r_2^{{1}/{2}}u^{t_1\frac{r_2}{r_1}})<0,
\end{equation*}
and hence from Proposition \ref{fiberingmaps} item $III-1)$, it follows that $t_r^-(u)<t_1\frac{r_2}{r_1}<t_2$. Therefore
\begin{eqnarray}\label{p1}
E(r_1^{{1}/{2}}u^{t_1}) &=& \frac{r_1^3}{r_2^3}E(r_2^{{1}/{2}}u^{t_1\frac{r_2}{r_1}})+\frac{\lambda}{p}\left(\frac{r_2}{r_1}\right)^{p-3} \left[\left(\frac{r_2}{r_1}\right)^{2(p-3)}-1\right]\int|r_1^{{1}/{2}}u^{t_1}|^p,  \\
&> & \frac{r_1^3}{r_2^3}E(r_2^{{1}/{2}}u^{t_2})+\frac{\lambda}{p}\left(\frac{r_2}{r_1}\right)^{p-3} \left[\left(\frac{r_2}{r_1}\right)^{2(p-3)}-1\right]\int|r_1^{{1}/{2}}u^{t_1}|^p. \nonumber
\end{eqnarray}
Since $	r_1^{{1}/{2}}u^{t_1}\in \mathcal{N}_{r_1}$, it follows from Proposition \ref{neharibounded} that there exists  a constant $c_{p}'>0$ such that
\begin{equation*}
\int|r_1^{{1}/{2}}u^{t_1}|^p\ge \frac{c'_p}{\lambda r_1}, \quad \forall u\in \mathcal{M}_{r_1},
\end{equation*}
and consequently from \eqref{p1} we conclude that
\begin{equation*}
E(r_1^{{1}/{2}}u^{t_1})\ge  \frac{r_1^3}{r_2^3}E(r_2^{{1}/{2}}u^{t_2})+\frac{c'_p}{r_1}\left(\frac{r_2}{r_1}\right)^{p-3} \left[\left(\frac{r_2}{r_1}\right)^{2(p-3)}-1\right], \quad \forall u\in \mathcal{M}_{r_1}.
\end{equation*}
Therefore
\begin{equation*}
I_{r_2}<\left(\frac{r_2}{r_1}\right)^{3}I_{r_1}-\frac{c'_p}{r_1}\left(\frac{r_2}{r_1}\right)^{p} \left[\left(\frac{r_2}{r_1}\right)^{2(p-3)}-1\right]
\end{equation*}
which concludes the proof.
\end{proof}

\medskip


\end{document}